\documentclass{article}

\usepackage[linesnumbered,ruled,vlined]{algorithm2e}
\usepackage{amsfonts}
\usepackage{booktabs}
\usepackage{amsmath}
\allowdisplaybreaks
\usepackage{amssymb}
\usepackage{amsthm}
\usepackage{bbm}
\usepackage{cases}
\usepackage{bm}
\usepackage{float}
\usepackage{braket}
\usepackage{graphicx}
\usepackage{subcaption}
\usepackage{caption}
\usepackage[title]{appendix}
\usepackage{dsfont}
\usepackage{booktabs}
\usepackage{enumitem}
\usepackage[margin=1.25in]{geometry}
\usepackage{graphicx}
\usepackage{hyperref}
\usepackage{lineno}
\usepackage{mathrsfs}
\usepackage{mathtools}
\usepackage{multirow}
\usepackage{multicol}
\usepackage{cleveref}
\usepackage{ulem}
\usepackage{xcolor}
\allowdisplaybreaks[3]
\numberwithin{equation}{section}
\theoremstyle{plain}
\newtheorem{The}{Theorem}[section]  %自定义定理
\newtheorem{lem}{Lemma}[section]
\newtheorem{cor}{Corollary}[section]

\newtheorem{assumption}{Assumption}[section]

\theoremstyle{plain}

\theoremstyle{plain}
\newtheorem{remark}{Remark}[section]

\crefname{The}{Theorem}{Theorems}
\crefname{lem}{Lemma}{Lemmas}
\crefname{cor}{Corollary}{Corollaries}
\crefname{prop}{Proposition}{Propositions}
\crefname{assumption}{Assumption}{Assumptions}
\crefname{Def}{Definition}{Definitions}
\crefname{remark}{Remark}{Remarks}
\crefname{section}{Section}{Sections}

\hypersetup{colorlinks,breaklinks}
\hypersetup{
	colorlinks=true,
	allcolors={green!50!black},
	urlcolor={red!50!black}
}

\newcommand{\dd}{\mathrm{\,d}}

\newcommand{\Expect}{\mathbb{E}}

\newcommand{\one}{\mathbf{1}}

\newcommand{\gs}{\text{GS}}
\newcommand{\calB}{\mathcal{B}}

\newcommand{\calF}{\mathcal{F}}

\newcommand{\calI}{\mathcal{I}}
\newcommand{\calK}{\mathcal{K}}
\newcommand{\calL}{\mathcal{L}}
\newcommand{\calN}{\mathcal{N}}

\newcommand{\calO}{\mathcal{O}}

\newcommand{\C}{\mathbb{C}}
\newcommand{\N}{\mathbb{N}}

\newcommand{\R}{\mathbb{R}}

\newcommand{\RR}{\boldsymbol{R}}

\newcommand{\bx}{\boldsymbol{x}}

\newcommand{\bfx}{\mathbf{x}}

\newcommand{\bu}{\boldsymbol{u}}
\newcommand{\bv}{\boldsymbol{v}}
\newcommand{\bw}{\boldsymbol{w}}
\newcommand{\by}{\boldsymbol{y}}
\newcommand{\bz}{\boldsymbol{z}}

\newcommand{\btheta}{\boldsymbol{\theta}}

\renewcommand{\i}{\mathbf{i}}

\renewcommand{\Re}{\mathrm{Re}}
\newcommand{\scrH}{\mathscr{H}}

\newcommand{\scrF}{\mathscr{F}}

\newcommand{\lrbrace}[1]{\left\{#1\right\}}
\newcommand{\lrbracket}[1]{\left(#1\right)}
\newcommand{\lrsquare}[1]{\left[#1\right]}

\newcommand{\abs}[1]{\left|#1\right|}

\newcommand{\norm}[1]{\left\Vert#1\right\Vert}

\newcommand{\rank}{\mathrm{rank}}

\newcommand{\Eloc}{E_{\text{loc}}}
\definecolor{lightblue}{rgb}{0.957,0.963,0.975}
\definecolor{midblue}{rgb}{0.937,0.943,0.965}
\definecolor{deepblue}{rgb}{0.325,0.427,0.569}
\definecolor{blocktitleblue}{rgb}{0.225,0.427,0.669}
\definecolor{lightred}{rgb}{0.996,0.969,0.969}
\definecolor{midred}{rgb}{0.976,0.949,0.949}
\definecolor{deepred}{rgb}{0.686,0.133,0.098}
\definecolor{deepgreen}{rgb}{0,0.5,0}
\definecolor{halfgray}{gray}{0.55}

\newcommand{\blue}[1]{\textcolor[rgb]{0,0,.8}{#1}}

\definecolor{lightpurple}{rgb}{0.978,0.978,1.0}
\definecolor{deeppurple}{rgb}{0.353,0.275,0.478}

\title{
   Momentum Stability and Adaptive Control in Stochastic Reconfiguration
}
\date{\today}

\author{Yuyang Wang\thanks{State Key Laboratory of Mathematical Sciences
, Academy of Mathematics and Systems Science, Chinese Academy of Sciences, Beijing 100190, China and School of Mathematical Sciences, University of Chinese Academy of Sciences, Beijing 100049, China (email: \href{mailto:wangyuyang@lsec.cc.ac.cn}{\blue{wangyuyang@lsec.cc.ac.cn}}).}  \and Xin Liu\thanks{State Key Laboratory of Mathematical Sciences, Academy of Mathematics and Systems Science, Chinese Academy of Sciences, Beijing 100190, China and School of Mathematical Sciences, University of Chinese Academy of Sciences, Beijing 100049, China (e-mail: \href{mailto:liuxin@lsec.cc.ac.cn}{\blue{liuxin@lsec.cc.ac.cn}}).}}

\begin{document}
	
	\maketitle

    \begin{abstract}

Variational Monte Carlo (VMC) combined with expressive neural network wavefunctions has become a powerful route to high-accuracy ground-state calculations, yet its practical success hinges on efficient and stable wavefunction optimization. While stochastic reconfiguration (SR) provides a geometry-aware preconditioner motivated by imaginary-time evolution, its Kaczmarz-inspired variant, subsampled projected-increment natural gradient descent (SPRING), achieves state-of-the-art empirical performance. However, the effectiveness of SPRING is highly sensitive to the choice of a momentum-like parameter $\mu$. The original sensitivity of $\mu$ and the instability observed at $\mu=1$, have remained unclear. In this work, we clarify the distinct mechanisms governing the regimes $\mu<1$ and $\mu=1$. We establish convergence guarantees for $0\le\mu<1$ under mild assumptions, and construct counterexamples showing that $\mu=1$ can induce divergence via uncontrolled growth along kernel-related directions when the step-size is not summable. Motivated by these theoretical insights and numerical observations, we further propose \textit{Principal Range Informed MomEntum SR} (PRIME-SR), a tuning-free momentum-adaptive SR method based on effective spectral dimension and subspace overlap. PRIME-SR achieves performance comparable to optimally tuned SPRING while significantly improving robustness in VMC optimization.

    \end{abstract}

    \section{Introduction}
\label{sec:introduction}

\par Computing the ground-state of a quantum many-body system is a fundamental problem in computational physics, materials science, and quantum chemistry. Such a system is governed by a self-adjoint Hamiltonian operator $\scrH$, and the goal is to determine its lowest eigenvalue $E_{\gs}$ called ground-state energy, and the associated eigenfunction $\psi_{\gs}$ called ground-state wavefunction,
\begin{equation*}
\mathscr{H} \psi_{\gs} = E_{\gs} \psi_{\gs},
\qquad
\psi_{\gs}\in\calF :=\{ \psi:\Omega^N \to \mathbb{C}, \psi\neq 0 \text{ satisfies certain physical symmetry.}\}
\end{equation*}
where $N$ denotes the number of particles and $\Omega$ is the single-particle configuration space. In realistic settings, accurately approximating ground-state remains challenging due to the curse of dimensionality, the strong correlations encoded in $\mathscr{H}$, and additional physical constraints that generally prevent $\psi$ from factorizing \cite{wang2025complexitytensorproductfunctions}.

\par Variational methods based on the Rayleigh-Ritz principle provide a flexible route to this problem by recasting the eigenvalue computation as minimization of the energy functional
\begin{equation*}
    E_{\gs}=\inf_{\psi\in\calF} \dfrac{\braket{\psi|\scrH|\psi}}{\braket{\psi|\psi}}.
\end{equation*}
From this viewpoint, Variational Monte Carlo (VMC) \cite{mcmillan1965ground,foulkes2001quantum} considers a parametrized family of trial wavefunctions $\psi_{\btheta}$ with $\btheta \in \mathbb{R}^{N_p}$ and turns the optimization over wavefunctions into an optimization over parameters in $\mathbb{R}^{N_p}$. By rewriting the objective as an expectation and estimating it using Monte Carlo sampling, VMC evaluates high-dimensional expectation values without explicitly representing $\psi_{\btheta}$ over the full configuration space (see Section~\ref{sec:vmc} for details). This feature enables VMC to handle system sizes far beyond those accessible to deterministic discretization-based methods such as configuration interaction \cite{shavitt1977method} and multi-configuration self-consistent field \cite{hinze1973mc}.

\par In recent years, neural networks have substantially increased the expressiveness of trial wavefunctions. Following the seminal work of Carleo and Troyer \cite{carleo2017solving}, a broad class of neural network-based and physics-inspired wavefunctions has been developed \cite{pfau2020ab,hermann2020deep,vonself,li2024computational,zhou2024multilevel}, together with theoretical studies of their representational capability \cite{abrahamsen2025anti,ye2024widetilde,wang2025complexitytensorproductfunctions}. These advances have substantially enhanced the approximation power of wavefunctions, thereby enabling near-exact ground-state energy computations in both electronic-structure and lattice settings \cite{hermann2023ab}, and opening the era of neural network-based VMC (NN-VMC).

\par Despite these successes, optimizing neural network wavefunctions remains challenging \cite{wang2025optimization}. Standard optimizers in deep learning such as Adam \cite{kingma2014adam} and AdamW \cite{loshchilov2017decoupled} often perform poorly in this context because of the complex landscape of neural network wavefunctions \cite{pfau2020ab,vonself}. In practice, a variety of optimization methods based on the geometry and physics of VMC have been developed. These include stochastic reconfiguration (SR) and related natural gradient methods \cite{pfau2020ab,sorella1998green,sorella2001generalized,stokes2020quantum}, variants of the linear method \cite{umrigar2005energy,zhao2017blocked,webber2022rayleigh}, and more recent manifold-based approaches \cite{neklyudov2023wasserstein,armegioiu2025functional,li2025acceleratednaturalgradientmethod}. While these methods have demonstrated strong empirical performance, many of them require solving large linear systems whose size scales with the number of model parameters $N_p$, resulting in a per-iteration computational cost of $\calO(N_p^3)$ and consequently limiting their scalability.

\par Among these approaches, SR-based methods are the most widely used in large scale NN-VMC applications (see Section~\ref{sec:sr} for a detailed introduction to SR). In contrast to linear method and manifold-based approaches, which often require additional backpropagation steps or lack effective cost-reduction structures, SR admits a natural reformulation that enables scalable approximations \cite{martens2015optimizing,pfau2020ab,chen2024empowering,
goldshlager2024kaczmarz,rende2024simple,zhou2025wssr,gu2025solvinghubbardmodelneural}.
This has motivated a range of scalable SR variants that approximate or reformulate the SR update in a computationally tractable way for large neural network wavefunctions.

\par Existing scalable SR methods can be broadly categorized by the approximation viewpoint they adopt. Structure exploiting approaches approximate the SR matrix using block-diagonal and Kronecker-factorized forms, leading to K-FAC-type methods \cite{martens2015optimizing,pfau2020ab}. Low-rank approaches exploit the intrinsic rank deficiency of the SR matrix, for instance via truncated or warm-started singular value decomposition (SVD) techniques
\cite{zhou2025wssr}. Finally, least-squares-based approaches interpret the SR update as an underdetermined linear system and select specific solutions, such as Minimum-step SR (MinSR) \cite{chen2024empowering,rende2024simple} and the Subsampled Projected-Increment Natural Gradient (SPRING) method \cite{goldshlager2024kaczmarz}. Notably, these least-squares formulations allow the use of the Sherman-Morrison-Woodbury identity, reducing the per-iteration cost to nearly linear in the number of parameters.

\par Among these SR variants, SPRING has attracted particular attention due to its empirical state-of-the-art performance in NN-VMC \cite{goldshlager2024kaczmarz,gu2025solvinghubbardmodelneural}.
Motivated by randomized Kaczmarz projection methods \cite{karczmarz1937angenaherte,needell2014paved}, SPRING reuses the previous update direction by projecting it onto the subspace identified by the current batch of samples, and then combines this projected direction with the current gradient information to form the next update (see Section~\ref{sec:spring} for details). 

\par However, a central practical issue is that the stability and performance of SPRING depend critically on how strongly the previous direction is reused. The original work \cite{goldshlager2024kaczmarz} reported that naively using the projected previous direction can lead to unstable optimization trajectories. To control the contribution of historical information, SPRING introduces a parameter $\mu \in [0,1)$ that weights the projected previous update direction. Although $\mu=1$ can lead to severe instability or even divergence, empirically, values of $\mu$ close to 1 often accelerate convergence. Moreover, SPRING can be highly sensitive to $\mu$: a value that performs well on one problem may perform poorly on another, as illustrated in Fig.~\ref{fig:spring_sensetive} (see Section~\ref{sec:experiment} for experimental details). These observations motivate the two main goals of this paper: (i) to understand the mechanisms distinguishing the regimes $\mu<1$ and $\mu=1$, and (ii) to develop a principled momentum-adaptive SR method that avoids manual tuning while retaining the practical benefits of momentum reuse.

\begin{figure}
    \centering
    \begin{subfigure}[t]{0.45\textwidth}
        \centering
        \includegraphics[width=\linewidth]{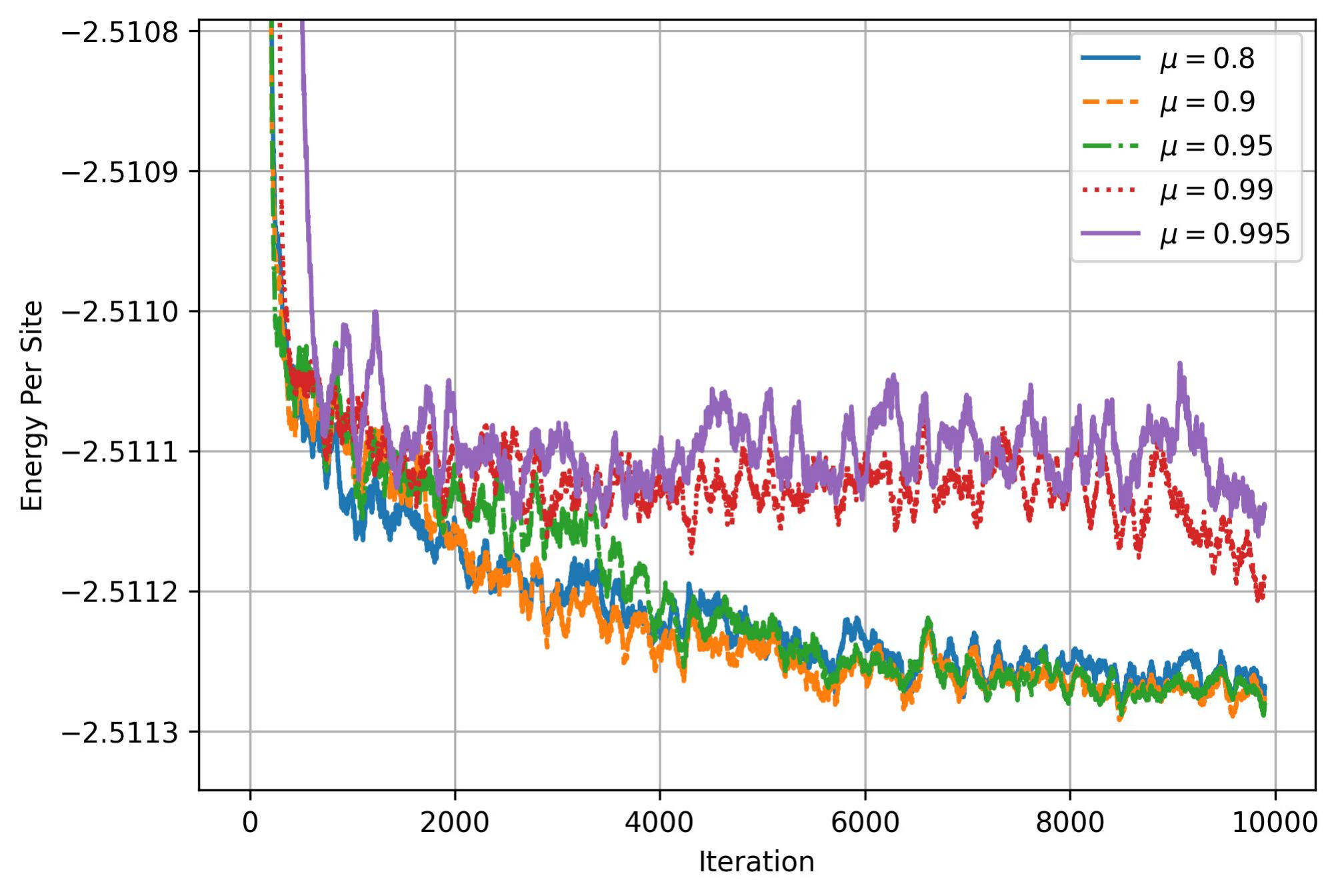}
    \end{subfigure}
    \hfill
    \begin{subfigure}[t]{0.45\textwidth}
        \centering
        \includegraphics[width=\linewidth]{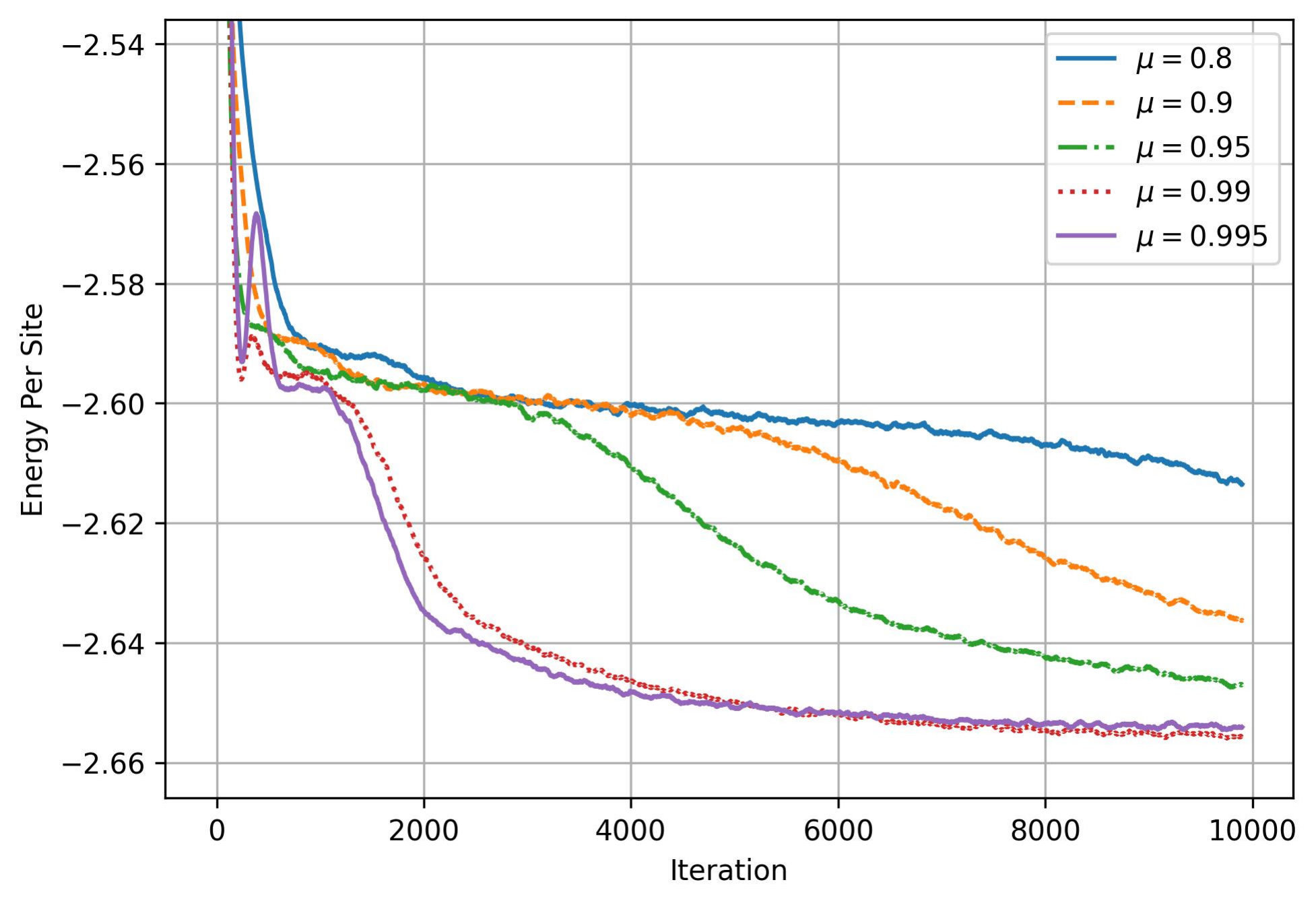}
    \end{subfigure}
    \caption{Relative energy error of SPRING for different choices of $\mu$ on the two-dimensional transverse-field Ising (2D-TFI) model with $N=10\times10$ sites and transverse-field strength $h=2$ (left) and the two-dimensional Heisenberg (2D-Heisenberg) model with $N=10\times10$ sites (right).}
    \label{fig:spring_sensetive}
\end{figure}

\par \textbf{Contributions.} Our main contributions are as follows:
    \begin{enumerate}
        \item[(1)] \textbf{Convergence guarantees for SPRING with $0\le \mu<1$.} We establish convergence of noiseless SPRING to a first-order stationary point. In the stochastic setting with independent and identically distributed (i.i.d.) Monte Carlo samples, we show that the expected norm of the gradient converges to zero up to a sampling error.

        \item[(2)] \textbf{Divergence counterexamples for SPRING with $\mu=1$.} Exploiting the fact that the VMC gradient lies in the range of the SR matrix, we construct explicit counterexamples showing that SPRING with $\mu = 1$ can exhibit uncontrolled growth along kernel-related directions when the step-size is not summable, while the same constructions remain convergent for $0\le \mu<1$.

        \item[(3)] \textbf{PRIME-SR: a tuning-free momentum-adaptive SR method.} Motivated by the theoretical insights and numerical observations, we propose \textit{Principal Range Informed MomEntum SR} (PRIME-SR), a new SR variant that adaptively controls momentum reuse through effective spectral dimension and subspace overlap indicators derived from the SR matrix. PRIME-SR achieves performance comparable to optimally tuned SPRING. In particular, in electronic-structure problems where SPRING is sensitive to initialization and can be unstable, PRIME-SR remains consistently stable across different initializations.
    \end{enumerate}

\vskip 0.2cm

\par \textbf{Organization.} The rest of this paper is organized as follows. In Section~\ref{sec:background}, we review the VMC framework, SR, and the SPRING algorithm. In Section~\ref{sec:theory}, we present the theoretical analysis of SPRING, including convergence results for $0 \le \mu < 1$ and divergence mechanisms at $\mu = 1$. In Section~\ref{sec:adaptive}, we introduce PRIME-SR, a momentum-adaptive SR method motivated by kernel-range decompositions and numerical observations. In Section~\ref{sec:experiment}, we report numerical results on lattice spin systems as well as atomic and molecular systems. Finally, in Section~\ref{sec:conclusion}, we summarize our findings and discuss future directions.

\vskip 0.2cm

\par \textbf{Notations.} Throughout this paper, $\R$ and $\C$ denote the real and complex number fields, and $\i$ denotes the imaginary unit. Bold letters (e.g., $\boldsymbol{x}$) represent vectors. The configuration space of particles is denoted by $\Omega$, and $\Omega^N$ denotes the $N$-fold Cartesian product of $\Omega$. We write $\mathbf{x}\in\Omega^N$ for particle configuration and $\boldsymbol{x}\in\Omega$ for a single-particle configuration. The Hamiltonian is denoted by $\mathscr{H}$, the wavefunction by $\psi$, and the parameters of a parameterized wavefunction by $\boldsymbol{\theta}$. The symbol $\mathbf{1}$ denotes the all-ones vector of appropriate dimension, and $\sigma^{x}$, $\sigma^{y}$, and $\sigma^{z}$ denote the Pauli matrices. We use the standard asymptotic notation $f_k=\Omega(g_k)$ to mean that there exist constants $c>0$ and $k_0\ge 0$ such that $f_k \ge c\, g_k$ for all $k\ge k_0$.

\par The inner product is written in Dirac notation $\braket{\cdot|\cdot}$: it corresponds to the $\calL^2$ inner product in continuous settings and the $\ell^2$ inner product for discrete vectors. The norm $\|\cdot\|$ denotes the $2$-norm unless stated otherwise (matrix spectral norm, vector Euclidean norm, and function $\calL^2$ norm). We use $\Expect$ for expectation, ``i.i.d.'' for independent and identically distributed, and ``a.s.'' for almost surely. We use $\mathscr{F}_k$ to denote a $\sigma$-algebra filtration for discrete iterations, and $\sigma(X)$ for the $\sigma$-algebra generated by the random variable $X$.
    \section{Background and Preliminaries}
\label{sec:background}

\par In this section, we briefly review the VMC framework, the SR method, and the SPRING algorithm. The exposition is intentionally concise and focuses on aspects that are directly relevant to the subsequent sections.

\subsection{Variational Monte Carlo}
\label{sec:vmc}

\par Consider a quantum system of $N$ particles with configuration $\bfx=(\bx_1,\dots,\bx_N)\in\Omega^N$ and wavefunction $\psi:\Omega^N\to\C$. Here $\Omega$ denotes the single-particle configuration space, which is either discrete (e.g., lattice spin systems) or continuous (e.g., electronic systems). The system is governed by a self-adjoint Hamiltonian operator $\scrH$, and the goal is to compute its ground-state energy and wavefunction. Moreover, for the class of Hamiltonians considered in this work, including the electronic Hamiltonian and the TFI and Heisenberg models (see Appendix~\ref{sec:experiment_setting} for details), the real part of a ground-state wavefunction is also a ground-state wavefunction. Therefore, unless stated otherwise, we restrict attention to real-valued wavefunctions of the form $\psi:\Omega^N\mapsto\R$.

\par By the Rayleigh-Ritz variational principle, the ground-state energy can be written as
\begin{equation}
    E_{\gs} = \inf_{\psi\in\calF} L[\psi]:=\dfrac{\braket{\psi|\scrH|\psi}}{\braket{\psi|\psi}}= \dfrac{\int_{\Omega^N} \psi(\bfx) \braket{\bfx|\scrH|\psi}\dd\bfx}{\int_{\Omega^N} \abs{\psi(\bfx)}^2\dd\bfx},\quad \text{s. t. }\psi \text{ satisfies certain constraints.}
    \label{eq:rr_principle}
\end{equation}
When the configuration space is discrete, the integral denotes summation over $\Omega^N$:
\begin{equation*}
    \int_{\Omega^N} \cdot \dd\bfx:=\sum_{\bx_1\in\Omega}\cdots\sum_{\bx_N\in\Omega}\cdot.
\end{equation*}
In VMC, one introduces a parametrized ans\"atz $\psi_{\btheta}:\Omega^N\mapsto\R$ with parameters $\btheta\in\R^{N_p}$ satisfying $\psi_{\btheta}\in\calF$. The ground-state energy $E_{\gs}$ is then approximated by solving
\begin{equation*}
    \tilde{E}_{\gs} := \min_{\btheta\in \R^{N_p}} L(\btheta):=L[\psi_{\btheta}].
\end{equation*}
Since the physical constraints (e.g., antisymmetry) are typically enforced by the parameterization, $\tilde{E}_{\gs}$ provides an upper bound on the true ground-state energy, i.e., $\tilde{E}_{\gs}\ge E_{\gs}$.

\par Direct evaluation of $L(\btheta)$ involves high-dimensional integrals or exponentially large sums. In VMC, the objective can be rewritten as an expectation:
\begin{equation*}
    L(\btheta)=\int \underbrace{\dfrac{\abs{\psi_{\btheta}(\bfx)}^2}{\int\abs{\psi_{\btheta}}^2\dd\bfx'}}_{=:\pi_{\btheta}(\bfx)}\cdot \underbrace{\dfrac{\braket{\bfx|\scrH| \psi_{\btheta}}}{\psi_{\btheta}(\bfx)}}_{=:\Eloc(\btheta;\bfx)}\dd\bfx = \Expect_{X\sim\pi_{\btheta}}\lrsquare{\Eloc(\btheta;X)}.
\end{equation*}
Here $\pi_{\btheta}\propto \abs{\psi_{\btheta}}^2$ is the probability distribution induced by the wavefunction: for continuous systems it is a probability density, while for discrete systems it is a probability mass function on $\Omega^N$. The quantity $\Eloc(\btheta;\bfx)$ is called the local energy \cite{becca2017quantum}.

\begin{remark}
For continuous systems, $\scrH$ acts on functions and $\braket{\bfx|\scrH|\psi}=\scrH\psi(\bfx)$. For discrete systems, $\psi$ can be viewed as a vector $\ket{\psi}$ indexed by $\Omega^N$, and $\scrH$ is represented by a matrix, $\braket{\bfx|\scrH|\psi} = \sum_{\bfx'\in\Omega^N} \psi(\bfx')\braket{\bfx|\scrH|\bfx'}$, where $\braket{\bfx|\scrH|\bfx'}$ is the $(\bfx,\bfx')$-th element of matrix $\scrH$. Although this sum formally runs over an exponentially large configuration space, many Hamiltonians of interest in this work, such as the TFI and Heisenberg models, are sparse. Consequently, only a small number of terms are nonzero, and the cost of evaluating the local energy is typically $\calO(N)$.
\end{remark}

\par Therefore, within the VMC framework, computing the ground-state reduces to the stochastic optimization problem
\begin{equation}
    \min_{\btheta\in\R^{N_p}}L(\btheta) = \Expect_{X\sim\pi_{\btheta}}\lrsquare{\Eloc(\btheta;X)}.
    \label{eq:vmc_opt_problem}
\end{equation}
Moreover, the gradient of $L(\btheta)$ admits the expression \cite{lin2023explicitly},
\begin{equation}
    g(\btheta):=\nabla_{\btheta}L(\btheta)=2\Expect_{X\sim\pi_{\btheta}}\lrsquare{ \bar{E}(\btheta;X)O(\btheta;X)},
    \label{eq:vmc_grad}
\end{equation}
where 
\begin{equation}
    \begin{split}
        \bar{E}(\btheta;\bfx)&:= \Eloc(\btheta;\bfx) - L(\btheta) \in \R,\\
        O(\btheta;\bfx)&:= \nabla_{\btheta}\log\abs{\psi_{\btheta}(\bfx)} - \Expect_{X\sim\pi_{\btheta}}\lrsquare{\nabla_{\btheta}\log\abs{\psi_{\btheta}(X)}} \in \R^{N_p}.
    \end{split}
    \label{eq:O_and_bar_E}
\end{equation}
In practice, both $L(\btheta)$ and $g(\btheta)$ are estimated using Markov chain Monte Carlo (MCMC) samples \cite{foulkes2001quantum}. Let $N_s$ denote the sample size, and let $\calB:=\{X_1,\dots,X_{N_s}\}$ be a batch of samples. The standard Monte Carlo estimators of $L(\btheta)$ and $g(\btheta)$ are
\begin{equation}
    \begin{split}
        L(\btheta)&\approx L(\btheta;\calB):=\dfrac{1}{N_s}\sum_{i=1}^{N_s}\Eloc(\btheta;X_i)\\
        g(\btheta)&\approx g(\btheta;\calB):=\dfrac{2}{N_s-1}\sum_{i=1}^{N_s}\bar{E}(\btheta;X_i,\calB)O(\btheta;X_i,\calB),
    \end{split}
    \label{eq:grad_mc}
\end{equation}
where
\begin{equation}
    \begin{split}
        \bar{E}(\btheta;X_i,\calB)&:= \Eloc(\btheta;X_i)-L(\btheta;\calB),\\
        O(\btheta;X_i,\calB)&:= \nabla_{\btheta}\log\abs{\psi_{\btheta}(X_i)} - \dfrac{1}{N_s}\sum_{j=1}^{N_s} \nabla_{\btheta}\log\abs{\psi_{\btheta}(X_j)}.
    \end{split}
    \label{eq:O_E_single}
\end{equation}
For a more compact representation, define
\begin{equation}
    \begin{split}
        \bar{E}(\btheta;\calB)&:=\dfrac{1}{\sqrt{N_s-1}}\lrbracket{\bar{E}(\btheta;X_1,\calB),\dots,\bar{E}(\btheta;X_{N_s},\calB)}^\top \in \R^{N_s}\\
        O(\btheta;\calB)&:= \dfrac{1}{\sqrt{N_s-1}}\lrbracket{O(\btheta;X_1,\calB),\dots,O(\btheta;X_{N_s},\calB)}\in \R^{N_p\times N_s}.
    \end{split}
    \label{eq:O_E_mc_form}
\end{equation}
Under these definitions, the gradient estimator can be written compactly as $g(\btheta;\calB)=2O(\btheta;\calB)\bar{E}(\btheta;\calB)$. Moreover, it is unbiased under i.i.d. sampling:
\begin{lem}[\cite{abrahamsen2024convergence}]
    Let $\calB:=\{X_1,\dots,X_{N_s}\}$ and $X_1,\dots,X_{N_s}\stackrel{\text{i.i.d}}{\sim}\pi_{\btheta}$. Then $\Expect_{\calB}[g(\btheta;\calB)] = g(\btheta)$.
    \label{lem:grad_unbiased}
\end{lem}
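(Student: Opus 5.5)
The plan is to reduce the estimator to a standard unbiased sample covariance. Write $A_i := \Eloc(\btheta;X_i)\in\R$ and $\bv_i := \nabla_{\btheta}\log\abs{\psi_{\btheta}(X_i)}\in\R^{N_p}$. Let $\bar A := \frac1{N_s}\sum_j A_j$ and $\bar{\bv} := \frac1{N_s}\sum_j \bv_j$. Then by \eqref{eq:grad_mc} and \eqref{eq:O_E_single},
\begin{equation*}
g(\btheta;\calB)=\frac{2}{N_s-1}\sum_{i=1}^{N_s}(A_i-\bar A)(\bv_i-\bar{\bv}),
\end{equation*}
which is twice the unbiased sample cross-covariance of $A$ and $\bv$. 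On the population side, expanding \eqref{eq:vmc_grad} with \eqref{eq:O_and_bar_E} gives
\begin{equation*}
g(\btheta)=2\Expect\big[(A-\Expect A)(\bv-\Expect\bv)\big]=2\,\mathrm{Cov}(A,\bv),
\end{equation*}
where $A=\Eloc(\btheta;X)$, $\bv=\nabla_{\btheta}\log\abs{\psi_{\btheta}(X)}$, and $X\sim\pi_{\btheta}$. So it suffices to show that the sample cross-covariance is unbiased, working componentwise in $\bv$.

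For the key identity, first use $\sum_i (A_i-\bar A)=0$ to rewrite
\begin{equation*}
\sum_i (A_i-\bar A)(\bv_i-\bar{\bv})=\sum_i A_i\bv_i - N_s\bar A\bar{\bv}.
\end{equation*}
Taking expectations under i.i.d.\ sampling, $\Expect\sum_i A_i\bv_i=N_s\Expect[A\bv]$. Expanding the product of means,
\begin{equation*}
\Expect[N_s\bar A\bar{\bv}]=\frac1{N_s}\sum_{i,j}\Expect[A_i\bv_j].
\end{equation*}
The $N_s$ diagonal terms each give $\Expect[A\bv]$. The $N_s(N_s-1)$ off-diagonal terms each give $\Expect A\,\Expect\bv$ by independence. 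Hence
\begin{equation*}
\Expect\Big[\sum_i A_i\bv_i - N_s\bar A\bar{\bv}\Big]=(N_s-1)\big(\Expect[A\bv]-\Expect A\,\Expect\bv\big)=(N_s-1)\,\mathrm{Cov}(A,\bv).
\end{equation*}
Dividing by $N_s-1$ and multiplying by $2$ yields $\Expect_{\calB}[g(\btheta;\calB)]=g(\btheta)$.

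The only real obstacle is integrability. The argument needs $\Expect\abs{A}$, $\Expect\norm{\bv}$ and $\Expect\norm{A\bv}$ to be finite so that the expectations exist and linearity applies. Local energies can be heavy-tailed near nodes of $\psi_{\btheta}$. I would therefore state this finiteness as a standing assumption, implicit in \eqref{eq:vmc_grad} being well defined. Under it, each step above is just linearity plus independence, and $N_s\ge2$ is needed for the normalization to make sense.
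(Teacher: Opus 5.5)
Your proof is correct and matches the paper's approach. The paper cites this lemma rather than proving it, but it proves the analogous \cref{lem:sr_matrix_unbiased}, and writes $g(\btheta_k;\calB_k)$ in the proof of \cref{lem:grad_S_mc_error}, using the same split of the centered sum $\sum_i A_i\bv_i - N_s\bar A\bar{\bv}$ into $N_s$ diagonal terms and $N_s(N_s-1)$ independent off-diagonal terms $\Expect[A_i\bv_j]$. Your integrability caveat is appropriate, and the fourth-moment bounds in \cref{assume:1} cover it.
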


\subsection{Stochastic Reconfiguration}
\label{sec:sr}

\par SR and its variants remain among the most widely used optimization methods in VMC, originally motivated by imaginary-time evolution \cite{sorella1998green,sorella2001generalized}. In essence, SR chooses the parameter update so that the variation of the parameterized wavefunction matches an infinitesimal imaginary-time evolution step (see \cite{becca2017quantum} for details). The resulting SR update can be written as
\begin{equation*}
     \begin{split}
                 \btheta_{k+1} &:= \btheta_k + \eta_k \Delta\btheta_k,\\
        \Delta\btheta_k&\in\mathop{\arg\min}_{\Delta\btheta\in\R^{N_p}} ~\Delta\btheta^\top S(\btheta_k)\Delta\btheta+g(\btheta_k)^\top \Delta\btheta,
     \end{split}
\end{equation*}
where $\eta_k$ denotes the step-size, and $S(\btheta):=\Expect_{X\sim \pi_{\btheta}}\lrsquare{O(\btheta;X)O(\btheta;X)^\top} \in R^{N_p\times N_p}$ is the SR matrix, which is also known as the Fisher information matrix \cite{pfau2020ab}. Using the expression of $g(\btheta)$ in Eq.~\eqref{eq:vmc_grad}, the SR direction can equivalently be characterized as
\begin{equation}
    \Delta\btheta_k= \mathop{\arg\min}_{\Delta\btheta\in\R^{N_p}} \Expect_{X\sim \pi_{\btheta_k}}\lrsquare{ \abs{O(\btheta_k;X)^\top\Delta\btheta + \bar{E}(\btheta_k;X)}^2}.
    \label{eq:sr_update_full_least}
\end{equation}

\par In practice, let $\calB_k:=\{X_{k,1},\dots,X_{k,N_s}\}$ be the sample batch drawn from $\pi_{\btheta_k}$. Using the notation in Eq.~\eqref{eq:O_E_mc_form}, the SR update becomes
\begin{equation}
    \begin{split}
        \btheta_{k+1} &= \btheta_k + \eta_k\Delta\btheta_k,\\
        \Delta\btheta_k & \in \mathop{\arg\min}_{\Delta\btheta \in \R^{N_p}} \norm{O(\btheta_k;\calB_k)^\top \Delta\btheta + \bar{E}(\btheta_k;\calB_k)}^2.
    \end{split}
    \label{eq:sr_update_discrete}
\end{equation}
Equivalently, $\Delta\btheta_k$ solves the linear least-squares system $O(\btheta_k;\calB_k)^\top\Delta\btheta=-\bar{E}(\btheta_k;\calB_k)$ with $N_p$ unknowns and $N_s$ equations.

\par In the NN-VMC era, the number of parameters can be extremely large, and one typically has $N_p \gg N_s$. On the one hand, directly solving an $N_p$-dimensional linear system is impractical; on the other hand, Eq.~\eqref{eq:sr_update_discrete} becomes an underdetermined least-squares problem with infinitely many solutions. In \cite{chen2024empowering}, the authors exploited this underdetermined structure and proposed selecting the minimum-norm solution, leading to the MinSR method whose update can be computed at cost $\calO(N_s^3+N_pN_s^2)$. It highlights the role of the underdetermined structure in large scale SR. This perspective also motivates the SPRING method.

\subsection{SPRING}
\label{sec:spring}

\par SPRING, introduced in \cite{goldshlager2024kaczmarz}, is a recent SR variant motivated by the randomized Kaczmarz method for overdetermined least-squares problems \cite{karczmarz1937angenaherte,needell2014paved}. Since the parameter $\btheta$ usually changes gradually, the SR equations often vary only mildly from one iteration to the next. Based on randomized Kaczmarz method, the SPRING uses the previous update direction as a reference and corrects it with the current SR system. This leads to the following projection-inspired update:
\begin{equation}
    \Delta\btheta_k :=\mathop{\arg\min}_{\Delta\btheta\in\R^{N_p}} \norm{\Delta\btheta-\Delta\btheta_{k-1}}^2 + \dfrac{1}{\lambda}\norm{O(\btheta_k;\calB_k)^\top \Delta\btheta + \bar{E}(\btheta_k;\calB_k)}^2,
    \label{eq:spring_update_unstable}
\end{equation}
where $\lambda>0$ is a regularization parameter.

\par However, as demonstrated in \cite{goldshlager2024kaczmarz}, the update in Eq.~\eqref{eq:spring_update_unstable} can yield unstable optimization trajectories, and the origin of this instability was left unexplained. To improve stability, the authors introduced a decay factor, which is also called a momentum parameter, $\mu\in[0,1)$ applied to the previous update direction, resulting in the practical SPRING update
\begin{equation}
\begin{split}
    \Delta\btheta_k &:=\mathop{\arg\min}_{\Delta\btheta\in\R^{N_p}} \norm{\Delta\btheta-\mu\Delta\btheta_{k-1}}^2 + \dfrac{1}{\lambda}\norm{O(\btheta_k;\calB_k)^\top \Delta\btheta + \bar{E}(\btheta_k;\calB_k)}^2\\
    &=\lrbracket{\lambda I+O(\btheta_k;\calB_k)O(\btheta_k;\calB_k)^\top}^{-1}\lrbracket{\lambda\mu \Delta\btheta_{k-1}-O(\btheta_k;\calB_k)\bar{E}(\btheta_k;\calB_k)}.
\end{split}
    \label{eq:spring_update_argmin}
\end{equation}
Using the Sherman-Morrison-Woodbury identity, the computational cost of SPRING can be reduced to $\calO(N_s^3+N_s^2N_p)$, and the update can be written as
\begin{equation}
    \Delta\btheta_k = \mu\Delta\btheta_{k-1}-O(\btheta_k;\calB_k)\lrbracket{\lambda I + O(\btheta_k;\calB_k)^\top O(\btheta_k;\calB_k)}^{-1}\zeta_k,
    \label{eq:spring_update_SMW}
\end{equation}
where
\begin{equation}
    \zeta_k := \mu O(\btheta_k;\calB_k)^\top\Delta\btheta_{k-1}+\bar{E}(\btheta_k;\calB_k)\in\R^{N_s}.
    \label{eq:spring_zeta}
\end{equation}

\par To further stabilize the optimization, \cite{goldshlager2024kaczmarz} introduced two additional techniques. The first addresses the potential singularity of $O(\btheta_k;\calB_k)^\top O(\btheta_k;\calB_k)$ due to finite sampling. Let $\one\in \R^{N_s}$ denote the all-ones vector. By the definition of $O(\btheta_k;\calB_k)$ in Eqs.~\eqref{eq:O_E_single} and \eqref{eq:O_E_mc_form}, we have $O(\btheta_k;\calB_k)\one = 0$. The authors proposed replacing Eq.~\eqref{eq:spring_update_SMW} by
\begin{equation}
    \Delta\btheta_k = \mu\Delta\btheta_{k-1}-O(\btheta_k;\calB_k)\lrbracket{\lambda I + O(\btheta_k;\calB_k)^\top O(\btheta_k;\calB_k)+\frac{1}{N_s}\boldsymbol{1}\boldsymbol{1}^\top}^{-1}\zeta_k,
    \label{eq:spring_update_stable}
\end{equation}
since this modification does not change the update in exact arithmetic (see the derivation in \cite{goldshlager2024kaczmarz}):
\begin{equation*}
    O(\btheta_k;\calB_k)\lrbracket{\lambda I + O(\btheta_k;\calB_k)^\top O(\btheta_k;\calB_k)+\frac{1}{N_s}\boldsymbol{1}\boldsymbol{1}^\top}^{-1} = O(\btheta_k;\calB_k)\lrbracket{\lambda I + O(\btheta_k;\calB_k)^\top O(\btheta_k;\calB_k)}^{-1}.
\end{equation*}
The second technique is a norm constraint inspired by K-FAC \cite{martens2015optimizing}, which controls the update magnitude at each iteration,
\begin{equation}
    \btheta_{k+1} := \btheta_k + \Delta\btheta_k\cdot \min\lrbrace{\eta_k, \frac{\sqrt{C}}{\norm{\Delta\btheta_k}}},
    \label{eq:spring_norm_constraint}
\end{equation}
where $C>0$ is a prescribed constant. The resulting SPRING procedure is summarized in Algorithm~\ref{alg:spring}.

\begin{algorithm}[htbp]
        \caption{SPRING}
        \label{alg:spring}
    \KwIn{Initial parameter $\btheta_0\in\R^{N_p}$, sample size $N_s\in\N$, step-size $\eta_k$, regularization parameter $\lambda>0$, momentum parameter $\mu\in [0,1)$, and norm-constraint parameter $C>0$.}

    $\Delta\btheta_{-1}:=\boldsymbol{0}$;

    \While{the stopping criterion is not met}{
Sample $\calB_k:=\{X_{k,1},\dots,X_{k,N_s}\}$ from $\pi_{\btheta_k}$;
    
Use $\calB_k$ to construct $O(\btheta_k;\calB_k)\in\R^{N_p\times N_s}$ and $\bar{E}(\btheta_k;\calB_k)\in\R^{N_s}$;

Compute $\zeta_k$ by Eq.~\eqref{eq:spring_zeta}, and compute $\Delta\btheta_k$ by Eq.~\eqref{eq:spring_update_stable};

Update $\btheta_{k+1}$ by Eq.~\eqref{eq:spring_norm_constraint};

Set $k:=k+1$ and update $\eta_k$.

    }
    \KwOut{Optimized parameters and energy.}
    \end{algorithm}
    \section{Theoretical Analysis of SPRING}
\label{sec:theory}

\par In this section, we develop a theoretical analysis of the SPRING iteration, with particular emphasis on the role of the momentum parameter $\mu$. We show that the regimes $0 \le \mu < 1$ and $\mu = 1$ exhibit fundamentally different behaviors, both theoretically and numerically.

\par We distinguish between two settings. First, we analyze a full-batch (deterministic) variant of SPRING, where the SR matrix and gradient are evaluated in expectation form. This setting allows us to isolate the intrinsic dynamics of the algorithm without stochastic effects. Second, we consider the practical stochastic setting induced by Monte Carlo sampling, where both the SR matrix and the gradient are estimated from finite samples.

\par Our main conclusions are as follows. When $0 \le \mu < 1$, SPRING admits convergence guarantees under mild conditions. In contrast, when $\mu = 1$, SPRING can diverge even in the full-batch setting, driven by an uncontrolled accumulation along kernel-related directions of the SR matrix.

\par Recall that the practical SPRING update can be written in compact form as
\begin{equation}
        \begin{cases}
            \btheta_{k+1} = \btheta_k + \eta_k \Delta\btheta_k,\\
            \Delta\btheta_k = \lrbracket{\lambda I +S(\btheta_k;\calB_k)}^{-1} \lrbracket{\lambda \mu \Delta\btheta_{k-1} - \dfrac{1}{2}g(\btheta_k;\calB_k)},
        \end{cases}
        \label{eq:p-spring}
        \tag{SPRING}
\end{equation}
and its full-batch counterpart is
        \begin{equation}
        \begin{cases}
            \btheta_{k+1} = \btheta_k + \eta_k \Delta\btheta_k,\\
            \Delta\btheta_k = \lrbracket{\lambda I +S(\btheta_k)}^{-1} \lrbracket{\lambda \mu \Delta\btheta_{k-1} - \dfrac{1}{2}g(\btheta_k)}.
        \end{cases}
        \label{eq:full-spring}
        \tag{F-SPRING}
    \end{equation}

\begin{remark}
The full-batch setting serves only as a noise-free baseline for analyzing the optimization behavior of SPRING. In this regime, all quantities are evaluated exactly over the full configuration space, so the residual error is not affected by Monte Carlo sampling noise. However, for lattice models, the deterministic cost grows exponentially with the system size, and exact evaluation is therefore feasible only for small systems. Hence, the full-batch setting is adopted here only for analysis and diagnostic comparison on small lattice models, and should not be regarded as a practical optimization setting.
\end{remark}

\subsection{Convergence Analysis of SPRING with $0\le \mu<1$}
\label{sec:convergence_mu_0_1}

\par We begin with the full-batch iteration \eqref{eq:full-spring}. This deterministic setting serves as a baseline for understanding SPRING and forms the foundation for the subsequent stochastic analysis.

\par We impose the following assumptions, which provide sufficient smoothness and moment control to carry out the convergence analysis.
\begin{assumption}
\label{assume:1}
The following hold:
\begin{enumerate}
    \item[(1)]    There exists $C_m>0$, such that for any $\btheta\in\R^{N_p}$,
    \begin{equation*}
\Expect_{X\sim\pi_{\btheta}}\lrsquare{\norm{\nabla_{\btheta}\log\abs{\psi_{\btheta}(X)}}^4}\le C_m,\quad \Expect_{X\sim\pi_{\btheta}}\lrsquare{\Eloc(\btheta;X)^4}\le C_m.
    \end{equation*}
    \item[(2)]  The gradient $g(\btheta)$ is $C_g$-Lipschitz continuous, i.e., there exists $C_g>0$, such that
    \begin{equation*}
        \norm{g(\btheta) - g(\btheta')}\le C_g \norm{\btheta-\btheta'},\quad \forall~\btheta,\btheta'\in \R^{N_p}.
    \end{equation*}
    \item[(3)] For any $k \ge 0$, the step-size $\eta_k$ satisfies
    \begin{equation*}
        0<\eta_k\le \eta_{k-1},\quad \eta_0\le \dfrac{2\lambda(1-\mu)}{C_g}.
    \end{equation*}
\end{enumerate}
\end{assumption}

\par The fourth-order moment bounds in Assumption~\ref{assume:1} control higher-order fluctuations of the stochastic quantities and are standard in analyses of Monte Carlo estimators.

\begin{remark}
We note that related moment bounds and smoothness have also appeared in the existing works that study the convergence of optimization methods for VMC, for example in the analysis of stochastic gradient descent (SGD) \cite{abrahamsen2024convergence}. In that work, Lipschitz continuity of the gradient is obtained by imposing additional regularity assumptions on second-order derivatives of $\psi_{\btheta}$. Here, for simplicity and clarity, we directly assume Lipschitz continuity of $g(\btheta)$, which is a common and convenient condition in optimization.
\end{remark}

\par Lipschitz continuity of the gradient implies a quadratic upper bound for the objective $L(\btheta)$.
\begin{lem}[\cite{nocedal2006numerical}]
    If $g(\btheta)=\nabla_{\btheta}L(\btheta)$ is $C_g$-Lipschitz continuous, then
    \begin{equation*}
        L(\btheta')-L(\btheta)\le g(\btheta)^\top (\btheta'-\btheta) +\dfrac{C_g}{2}\norm{\btheta'-\btheta}^2,\quad \forall~\btheta,\btheta'\in \R^{N_p}.
    \end{equation*}
    \label{lem:2_upper_bound}
\end{lem}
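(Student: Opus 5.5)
The plan is to use the fundamental theorem of calculus along the segment joining $\btheta$ and $\btheta'$, and then bound the resulting integral with the Lipschitz assumption. Set $\bv:=\btheta'-\btheta$ and $\phi(t):=L(\btheta+t\bv)$ for $t\in[0,1]$. Since $g=\nabla_{\btheta}L$ is Lipschitz, it is continuous, so $L$ is continuously differentiable and $\phi'(t)=g(\btheta+t\bv)^\top\bv$. This gives
\begin{equation*}
L(\btheta')-L(\btheta)=\phi(1)-\phi(0)=\int_0^1 g(\btheta+t\bv)^\top\bv\dd t .
\end{equation*}

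Next I would add and subtract $g(\btheta)^\top\bv$ inside the integral. This splits the difference into the linear term $g(\btheta)^\top\bv$ plus the remainder
\begin{equation*}
\int_0^1 \bigl(g(\btheta+t\bv)-g(\btheta)\bigr)^\top\bv\dd t .
\end{equation*}

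Finally, I would bound the remainder. By Cauchy--Schwarz and Assumption~\ref{assume:1}(2), the integrand is at most $\norm{g(\btheta+t\bv)-g(\btheta)}\,\norm{\bv}\le C_g t\norm{\bv}^2$. Integrating over $t\in[0,1]$ gives $\frac{C_g}{2}\norm{\bv}^2$, which is exactly the claimed inequality.

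The only point needing any care is justifying the integral representation, that is, that $L$ is $C^1$. This follows directly from the statement's hypothesis that $g=\nabla_{\btheta}L$ exists and is continuous (being Lipschitz). Everything else is routine, and the result is the standard descent lemma as presented in \cite{nocedal2006numerical}.
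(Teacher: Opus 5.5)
Your argument is correct and is the standard descent-lemma proof. The paper gives no proof of its own and simply cites \cite{nocedal2006numerical}, which obtains the bound from the integral form of Taylor's theorem followed by the estimate $\norm{g(\btheta+t\bv)-g(\btheta)}\le C_g t\norm{\bv}$, just as you do. One cosmetic point: the Lipschitz property you use is already the lemma's own hypothesis, so you don't need to appeal to Assumption~\ref{assume:1}(2).
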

Using this bound, we obtain the following convergence result for the full-batch SPRING iteration.

\begin{The}
    Under \cref{assume:1}, let $\{\btheta_k\}$ be generated by \eqref{eq:full-spring}. Then for any $0\le \mu<1$, there exist $C>0$, such that, for any $K\ge 1$,
    \begin{equation*}
        \sum_{k=1}^{K} \eta_k \norm{g(\btheta_k)}^2 \le C.
    \end{equation*}
    \label{the:full_spring_convergence}
\end{The}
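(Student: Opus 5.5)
The plan is a Lyapunov (potential-function) argument. It combines the descent inequality of \cref{lem:2_upper_bound} with the implicit form of the \eqref{eq:full-spring} direction. Write $S_k:=S(\btheta_k)$, $g_k:=g(\btheta_k)$, $\Delta_k:=\Delta\btheta_k$.

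\textbf{Step 1: solve the update for the gradient.} Rearranging the update gives
\begin{equation*}
\tfrac12 g_k=\lambda\mu\Delta_{k-1}-(\lambda I+S_k)\Delta_k .
\end{equation*}
Two consequences follow.
\begin{itemize}
\item[(a)] Since $S_k\succeq 0$ and $2\Delta_{k-1}^\top\Delta_k\le \|\Delta_{k-1}\|^2+\|\Delta_k\|^2$,
\begin{equation*}
g_k^\top\Delta_k = 2\lambda\mu\Delta_{k-1}^\top\Delta_k-2\Delta_k^\top(\lambda I+S_k)\Delta_k\le \lambda\mu\|\Delta_{k-1}\|^2-\lambda(2-\mu)\|\Delta_k\|^2 .
\end{equation*}
\item[(b)] $\|g_k\|^2\le c_1\bigl(\|\Delta_{k-1}\|^2+\|\Delta_k\|^2\bigr)$, with $c_1$ depending on $\lambda,\mu$ and $\sup_{\btheta}\|S(\btheta)\|$. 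This supremum is finite by \cref{assume:1}(1). Indeed, $\|S(\btheta)\|\le \Expect\|O\|^2\le \Expect\|\nabla\log|\psi_{\btheta}|\|^2\le C_m^{1/2}$, using that centering does not increase the second moment, together with Jensen's inequality.
\end{itemize}

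\textbf{Step 2: descent inequality and potential.} Apply \cref{lem:2_upper_bound} with $\btheta'=\btheta_{k+1}=\btheta_k+\eta_k\Delta_k$ and insert (a):
\begin{equation*}
L(\btheta_{k+1})\le L(\btheta_k)+\eta_k\lambda\mu\|\Delta_{k-1}\|^2-\eta_k\Bigl(\lambda(2-\mu)-\tfrac{C_g\eta_k}{2}\Bigr)\|\Delta_k\|^2 .
\end{equation*}
The cross term $\eta_k\lambda\mu\|\Delta_{k-1}\|^2$ is the only delicate point. The momentum lets the previous direction re-enter the bound, so plain monotone decrease of $L$ cannot be expected. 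I would absorb it with the potential
\begin{equation*}
\Phi_k:=L(\btheta_k)+\lambda\mu\,\eta_{k-1}\|\Delta_{k-1}\|^2 .
\end{equation*}
Monotonicity $\eta_k\le\eta_{k-1}$ gives $\eta_k\lambda\mu\|\Delta_{k-1}\|^2\le \lambda\mu\eta_{k-1}\|\Delta_{k-1}\|^2$. Hence
\begin{equation*}
\Phi_{k+1}\le\Phi_k-\eta_k\Bigl(2\lambda(1-\mu)-\tfrac{C_g\eta_k}{2}\Bigr)\|\Delta_k\|^2 .
\end{equation*}
By \cref{assume:1}(3), $\eta_k\le\eta_0\le 2\lambda(1-\mu)/C_g$, so the bracket is at least $\lambda(1-\mu)>0$. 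This is exactly where $\mu<1$ enters: at $\mu=1$ the coefficient vanishes and the argument collapses.

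\textbf{Step 3: telescoping.} $L$ is bounded below, for instance by $E_{\gs}$, or directly by $|L(\btheta)|\le C_m^{1/4}$ from the moment bound. Summing from $k=1$ to $K$ therefore gives
\begin{equation*}
\lambda(1-\mu)\sum_{k=1}^K\eta_k\|\Delta_k\|^2\le \Phi_1-\inf L=:C_0<\infty ,
\end{equation*}
uniformly in $K$. Finally combine with (b):
\begin{equation*}
\sum_{k=1}^K\eta_k\|g_k\|^2\le c_1\sum_{k=1}^K\bigl(\eta_{k-1}\|\Delta_{k-1}\|^2+\eta_k\|\Delta_k\|^2\bigr),
\end{equation*}
again using $\eta_k\le\eta_{k-1}$. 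The right-hand side is bounded by $2c_1C_0/(\lambda(1-\mu))$ plus the initial term $c_1\eta_0\|\Delta_0\|^2$. This yields the constant $C$. The main obstacle is the momentum cross term in Step 2, and the shifted potential $\Phi_k$ together with nonincreasing step-sizes is designed to handle it.
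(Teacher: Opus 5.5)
Your proof is correct. Its first half is essentially the paper's auxiliary summability lemma. The paper also inserts $g(\btheta_k)=2\lambda\mu\Delta\btheta_{k-1}-2(\lambda I+S(\btheta_k))\Delta\btheta_k$ into \cref{lem:2_upper_bound}. It uses the potential $L(\btheta_k)+\lambda\mu\eta_k\norm{\Delta\btheta_{k-1}}^2$, with weight $\eta_k$ where you use $\eta_{k-1}$; both work with nonincreasing step-sizes. This gives $\sum_k\eta_k\norm{\Delta\btheta_k}^2<\infty$ with the same coefficient $2\lambda(1-\mu)-C_g\eta_0/2\ge\lambda(1-\mu)$.

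The two routes differ in how summability becomes a gradient bound. The paper makes a second pass through the descent inequality. It bounds $g(\btheta_k)^\top\Delta\btheta_k\le-\frac{2-\mu}{4(\lambda+\sqrt{C_m})}\norm{g(\btheta_k)}^2+\mu(\lambda+\sqrt{C_m})\norm{\Delta\btheta_{k-1}}^2$, using $g^\top(\lambda I+S)^{-1}g\ge\norm{g}^2/(\lambda+\sqrt{C_m})$ and Young's inequality. It then telescopes $L$ again and invokes summability.

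You instead read the same identity as an explicit formula for $g(\btheta_k)$ and bound it directly by $\norm{\Delta\btheta_{k-1}}$ and $\norm{\Delta\btheta_k}$. This needs $\norm{S(\btheta)}_2\le\sqrt{C_m}$ only as an upper bound and avoids the second telescoping. It also gives the constant $C=c_1\bigl(\eta_0\norm{\Delta\btheta_0}^2+2C_0/(\lambda(1-\mu))\bigr)$ at once.

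The paper's decomposition buys reuse: it is the template for the stochastic \cref{the:p_spring_convergence}. There $\Delta\btheta_k$ is built from $g(\btheta_k;\calB_k)$ and $S(\btheta_k;\calB_k)$, so your identity holds only for the idealized direction $\Delta\btheta_k^\star$. Transplanting your argument would require an extra $\norm{\Delta\btheta_k-\Delta\btheta_k^\star}^2$ term, controlled via \cref{lem:biased_direction_error}.
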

\begin{proof}
    See Appendix \ref{sec:proof_full_convergence}.
\end{proof}

\par \cref{the:full_spring_convergence} implies that, when $0 \le \mu < 1$, the accumulated squared gradient norm is finite. As a direct consequence, we obtain the following corollary.
   \begin{cor}
        Under the assumptions of \cref{the:full_spring_convergence}:
        \begin{enumerate}
            \item[(1)] If $\sum_k \eta_k = \infty$, then
            \begin{equation*}
                \min_{1\le k \le K}\norm{g(\btheta_k)}^2 = \calO\lrbracket{\dfrac{1}{\sum_{k=1}^K \eta_k}}\quad \text{and} \quad \liminf_{k\to \infty} \norm{g(\btheta_k)}=0.
            \end{equation*}
            \item[(2)] If $\eta_k \equiv \eta>0$, then $\displaystyle\lim_{k\to\infty}g(\btheta_k) = 0$.
        \end{enumerate}
        \label{cor:full_spring}
    \end{cor}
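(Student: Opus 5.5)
Both parts follow from \cref{the:full_spring_convergence}, which gives $\sum_{k=1}^K \eta_k\norm{g(\btheta_k)}^2\le C$ uniformly in $K$; hence $\sum_{k\ge1}\eta_k\norm{g(\btheta_k)}^2<\infty$.

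For part (1), we bound the weighted sum from below by its minimum term:
\begin{equation*}
\min_{1\le k\le K}\norm{g(\btheta_k)}^2\cdot\sum_{k=1}^K\eta_k\le \sum_{k=1}^K\eta_k\norm{g(\btheta_k)}^2\le C .
\end{equation*}
Dividing by $\sum_{k=1}^K\eta_k>0$ gives the $\calO(1/\sum_{k=1}^K\eta_k)$ rate. Since $\sum_k\eta_k=\infty$, the right-hand side tends to zero, so $\min_{1\le k\le K}\norm{g(\btheta_k)}\to0$. We then argue by contradiction: if $\liminf_k\norm{g(\btheta_k)}=2\epsilon>0$, then $\norm{g(\btheta_k)}\ge\epsilon$ for all $k\ge k_0$. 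This forces $\sum_{k\ge k_0}\eta_k\epsilon^2\le C$, contradicting $\sum_k\eta_k=\infty$.

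For part (2), with $\eta_k\equiv\eta$ the theorem gives $\sum_k\norm{g(\btheta_k)}^2\le C/\eta<\infty$. The terms of a convergent series tend to zero, so $\norm{g(\btheta_k)}\to0$, i.e.\ $g(\btheta_k)\to0$. The constant step still satisfies \cref{assume:1}(3) provided $\eta\le 2\lambda(1-\mu)/C_g$, which is implicit in ``under the assumptions of the theorem''. Neither part has a real obstacle; the only point requiring care is that the constant $C$ is independent of $K$, which the theorem statement guarantees.
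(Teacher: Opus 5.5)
Your proposal is correct and is essentially the argument the paper has in mind. The paper gives no separate proof; it presents the corollary as a direct consequence of the $K$-uniform bound $\sum_{k=1}^K\eta_k\norm{g(\btheta_k)}^2\le C$ from \cref{the:full_spring_convergence}, and your min-times-sum estimate, the contradiction argument for the $\liminf$, and the vanishing-terms argument for constant $\eta$ are exactly that standard derivation. (Your contradiction step writes the $\liminf$ as $2\epsilon$, tacitly assuming it is finite. This is harmless: $\norm{g(\btheta)}\le 2\sqrt{C_m}$ by \cref{lem:g_S_bounded}, or you can simply pick any $\epsilon>0$ below a positive $\liminf$.)
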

    
\par These results establish convergence to first-order stationary points in the deterministic setting. Notably, the restriction $\mu < 1$ is essential: as $\mu \to 1$, the admissible step-size bound will vanish.

\par While \cref{the:full_spring_convergence} establishes convergence guarantees in the idealized full-batch setting, we now turn to the practical SPRING update with Monte Carlo sampling, given by \eqref{eq:p-spring}. In this setting, both the gradient and the SR matrix are estimated from finite samples, which introduces additional stochastic noise. To avoid complications arising from non-stationary Markov chains, we impose the following simplifying assumption on the sampled batches.

\begin{assumption}
    For $k\ge 0$, the sampled configurations $\calB_k:=\{X_{k,i}\}_{i=1}^{N_s}$ satisfy:
    \begin{equation*}
        X_{k,1},\dots,X_{k,N_s} \stackrel{\text{i.i.d.}}{\sim} \pi_{\btheta_k}.
    \end{equation*}
    \label{assume:sample}
\end{assumption}

\par Under \cref{assume:sample}, \cref{lem:grad_unbiased} shows that the gradient estimator $g(\btheta_k;\calB_k)$ is unbiased. The same assumption also implies that the SR matrix is an unbiased estimator of $S(\btheta_k)$.

\begin{lem}
    Under \cref{assume:sample}, for any $k\ge 0$, $\Expect_{\calB_k}[S(\btheta_k;\calB_k)] = S(\btheta_k)$.
    \label{lem:sr_matrix_unbiased}
\end{lem}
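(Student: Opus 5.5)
We take $S(\btheta_k;\calB_k):=O(\btheta_k;\calB_k)O(\btheta_k;\calB_k)^\top$, which is the matrix appearing in \eqref{eq:p-spring}. The plan is to recognize it as the classical unbiased sample covariance (Bessel-corrected, with factor $1/(N_s-1)$) of the log-derivative vectors. We also recognize $S(\btheta_k)$ as the population covariance of the same vectors, and then verify unbiasedness by a direct second-moment computation, mirroring the argument behind \cref{lem:grad_unbiased}.

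\textbf{Step 1: rewrite both matrices as covariances.} Fix $k$ and write $\btheta=\btheta_k$, $v_i:=\nabla_{\btheta}\log\abs{\psi_{\btheta}(X_{k,i})}$, $\bar v:=\frac{1}{N_s}\sum_j v_j$, and $m:=\Expect_{X\sim\pi_{\btheta}}[\nabla_{\btheta}\log\abs{\psi_{\btheta}(X)}]$. By \eqref{eq:O_and_bar_E}, $S(\btheta)=\Expect[(v-m)(v-m)^\top]=\Expect[vv^\top]-mm^\top$. By \eqref{eq:O_E_single} and \eqref{eq:O_E_mc_form},
\begin{equation*}
S(\btheta;\calB_k)=\frac{1}{N_s-1}\sum_{i=1}^{N_s}(v_i-\bar v)(v_i-\bar v)^\top=\frac{1}{N_s-1}\Bigl(\sum_{i=1}^{N_s}v_iv_i^\top-N_s\bar v\bar v^\top\Bigr).
\end{equation*}
All expectations involved are finite because the second moments of $v$ are finite. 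This follows from the fourth-moment bound in \cref{assume:1}(1) via Jensen's inequality, or more minimally from finiteness of $S(\btheta)$ itself.

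\textbf{Step 2: take expectations under \cref{assume:sample}.} Conditionally on $\btheta_k$, the $v_i$ are i.i.d. Hence $\Expect[\sum_i v_iv_i^\top]=N_s\Expect[vv^\top]$. Expanding $\bar v\bar v^\top=\frac{1}{N_s^2}\sum_{i,j}v_iv_j^\top$ gives
\begin{equation*}
\Expect[\bar v\bar v^\top]=\frac{1}{N_s}\Expect[vv^\top]+\frac{N_s-1}{N_s}mm^\top,
\end{equation*}
since the diagonal terms contribute $N_s\Expect[vv^\top]$ and, by independence, the $N_s(N_s-1)$ off-diagonal terms each contribute $mm^\top$. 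Substituting these into the formula from Step 1 gives
\begin{equation*}
\Expect_{\calB_k}[S(\btheta;\calB_k)]=\frac{1}{N_s-1}\bigl((N_s-1)\Expect[vv^\top]-(N_s-1)mm^\top\bigr)=S(\btheta).
\end{equation*}

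There is no real obstacle here. The only points needing care are: (i) conditioning on $\btheta_k$, that is, on the past filtration $\scrF_k$, so that \cref{assume:sample} applies with $\btheta_k$ treated as fixed; and (ii) the bookkeeping of diagonal versus off-diagonal terms in $\Expect[\bar v\bar v^\top]$, which is exactly where the Bessel factor $N_s-1$ cancels.
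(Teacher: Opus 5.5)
Your proof is correct and essentially matches the paper's argument. Both expand the Bessel-corrected sample covariance into diagonal terms $v_iv_i^\top$, with mean $\Expect[vv^\top]$, and off-diagonal terms $v_iv_j^\top$ for $i\neq j$, with mean $mm^\top$ by independence; the paper merely regroups it as $\frac{1}{N_s}\sum_i v_iv_i^\top-\frac{1}{N_s(N_s-1)}\sum_{i\neq j}v_iv_j^\top$ before taking expectations. One indexing slip: in the paper's convention the $\sigma$-algebra that fixes $\btheta_k$ is $\scrF_{k-1}$, whereas $\scrF_k$ already contains $\calB_k$, so conditioning on $\scrF_k$ would make the expectation trivial.
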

\begin{proof}
    See Appendix \ref{sec:proof_sr_unbiased}.
\end{proof}

\par Despite the unbiasedness of both $g(\btheta_k;\calB_k)$ and $S(\btheta_k;\calB_k)$, the SPRING update direction in \eqref{eq:p-spring} is conditionally biased due to the matrix inversion. Specifically, conditioning on the $\sigma$-algebra generated by previous samples $\scrF_{k-1}:=\sigma\lrbracket{X_{i,j},\,0\le i\le k-1,\,1\le j\le N_s}$, we generally have
\begin{equation*}
    \Expect\lrsquare{\lrbracket{\lambda I +S(\btheta_k;\calB_k)}^{-1} \lrbracket{\lambda \mu \Delta\btheta_{k-1} - \dfrac{1}{2}g(\btheta_k;\calB_k)}|\scrF_{k-1}}\neq \lrbracket{\lambda I +S(\btheta_k)}^{-1} \lrbracket{\lambda \mu \Delta\btheta_{k-1} - \dfrac{1}{2}g(\btheta_k)},
\end{equation*}
This conditional bias introduces an additional error term in the convergence analysis and prevents convergence to an exact first-order stationary point.

    \begin{The}
    Under \cref{assume:1,assume:sample}, let $\{\btheta_k\}$ be generated by \eqref{eq:p-spring}. Then for any $0\le \mu<1$, there exist $C_1,~C_2,~C_3>0$, such that, for any $K\ge 1$,
    \begin{equation}
    \sum_{k=1}^K \eta_k\Expect[\norm{g(\btheta_k)}^2] \le C_1 + C_2\dfrac{1}{N_s}\sum_{k=1}^K\eta_k + C_3\dfrac{1}{N_s^2}\sum_{k=1}^{K}\eta_k.
        \label{eq:p_spring_result}
    \end{equation}
    Here, $\Expect[\cdot]$ denotes the total expectation.
        \label{the:p_spring_convergence}
    \end{The}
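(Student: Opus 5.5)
We mirror the deterministic argument of \cref{the:full_spring_convergence} and treat the sampling effects as perturbations. Write $S_k:=S(\btheta_k;\calB_k)$, $g_k:=g(\btheta_k;\calB_k)$, $A_k:=(\lambda I+S_k)^{-1}$, and $\bar A_k:=(\lambda I+S(\btheta_k))^{-1}$. Since $S_k\succeq 0$, we have $\|A_k\|\le 1/\lambda$.

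\textbf{Step 1 (moment bounds and $\|\Delta\btheta_k\|$).} Unrolling the recursion $\Delta\btheta_k=\lambda\mu A_k\Delta\btheta_{k-1}-\tfrac12 A_k g_k$ with $\|\lambda\mu A_k\|\le\mu$ gives
\begin{equation*}
\|\Delta\btheta_k\|\le \frac{1}{2\lambda}\sum_{j\le k}\mu^{k-j}\|g_j\|.
\end{equation*}
Assumption~\ref{assume:1}(1) and Cauchy--Schwarz bound $\Expect\|g_j\|^2$ uniformly in $j$. Hence $\Expect\|\Delta\btheta_k\|^2\le C/(1-\mu)^2$ uniformly in $k$.

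\textbf{Step 2 (descent inequality).} By \cref{lem:2_upper_bound},
\begin{equation*}
L(\btheta_{k+1})\le L(\btheta_k)+\eta_k g(\btheta_k)^\top\Delta\btheta_k+\tfrac{C_g}{2}\eta_k^2\|\Delta\btheta_k\|^2 .
\end{equation*}
We split $\Delta\btheta_k=\bar\Delta_k+e_k$, where $\bar\Delta_k:=\bar A_k(\lambda\mu\Delta\btheta_{k-1}-\tfrac12 g(\btheta_k))$ is the update that would be taken with exact quantities, given the same $\Delta\btheta_{k-1}$. The term $g^\top\bar\Delta_k$ is handled exactly as in the full-batch proof. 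We reuse the Lyapunov function from that proof, of the form
\begin{equation*}
\Phi_k:=L(\btheta_k)+c\,\eta_{k-1}\,\Delta\btheta_{k-1}^\top(\lambda I+S(\btheta_{k-1}))\Delta\btheta_{k-1}
\end{equation*}
or an equivalent weighted $\|\Delta\btheta_{k-1}\|^2$ term. The factor $(1-\mu)$ in the stepsize condition $\eta_0\le 2\lambda(1-\mu)/C_g$ absorbs the momentum cross terms and yields a negative term $-c'\eta_k\|g(\btheta_k)\|^2$. This gives
\begin{equation*}
\Expect\Phi_{k+1}\le\Expect\Phi_k-c'\eta_k\Expect\|g(\btheta_k)\|^2+\eta_k\Expect[g(\btheta_k)^\top e_k]+\eta_k\,\mathrm{(quadratic\ error)} .
\end{equation*}
Monotonicity of $\eta_k$ is used to telescope the weighted momentum term.

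\textbf{Step 3 (the bias term; main obstacle).} We expand
\begin{equation*}
e_k=(A_k-\bar A_k)(\lambda\mu\Delta\btheta_{k-1}-\tfrac12 g_k)-\tfrac12\bar A_k(g_k-g(\btheta_k)).
\end{equation*}
Conditional on $\scrF_{k-1}$, the last piece has zero mean by \cref{lem:grad_unbiased}, because $\bar A_k$ and $\btheta_k$ are $\scrF_{k-1}$-measurable. For the first piece we use the resolvent identity twice:
\begin{equation*}
A_k-\bar A_k=-\bar A_k(S_k-S(\btheta_k))\bar A_k+\bar A_k(S_k-S(\btheta_k))A_k(S_k-S(\btheta_k))\bar A_k .
\end{equation*}
\begin{itemize}
\item The first-order term multiplying the $\scrF_{k-1}$-measurable vector $\Delta\btheta_{k-1}$ has conditional mean zero by \cref{lem:sr_matrix_unbiased}.
\item Its product with $g_k-g(\btheta_k)$ is a covariance of two sample means. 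It is $O(1/N_s)$ by the fourth moment bounds.
\item The second-order term is bounded by $\lambda^{-3}\|S_k-S(\btheta_k)\|^2$, whose expectation is $O(1/N_s)$. Its products with the fluctuations give $O(1/N_s^2)$ contributions, controlled via fourth moments and Cauchy--Schwarz.
\end{itemize}
The same $1/N_s$ and $1/N_s^2$ bounds control $\Expect\|g_k-g(\btheta_k)\|^2$, and hence $\Expect\|e_k\|^2$. Altogether,
\begin{equation*}
|\Expect[g(\btheta_k)^\top e_k]|\le \tfrac{c'}{2}\Expect\|g(\btheta_k)\|^2+\frac{C_2'}{N_s}+\frac{C_3'}{N_s^2},
\end{equation*}
using Young's inequality. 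There is one complication. The centering by the empirical mean makes $S_k$ and $g_k$ U-statistic-like rather than plain averages, and the batch-dependent centering introduces the $1/N_s^2$ correction terms. These must be computed carefully, and this is where the moment Assumption~\ref{assume:1}(1) is genuinely needed (fourth moments of $O$ and $\Eloc$).

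\textbf{Step 4 (summation).} We sum from $1$ to $K$ and telescope $\Phi$. Since $L\ge E_{\gs}$ is bounded below and the initial momentum term is finite, this yields \eqref{eq:p_spring_result}. The constant $C_1$ collects $\Phi_1-\inf L$ together with the $\eta_k^2$ terms absorbed via the stepsize bound; the $N_s$-dependent constants come from Step 3.
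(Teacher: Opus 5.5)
Your outline follows the paper's proof in its architecture:
\begin{itemize}
\item the same comparison step $\Delta\btheta_k^{\star}=(\lambda I+S(\btheta_k))^{-1}(\lambda\mu\Delta\btheta_{k-1}-\tfrac12 g(\btheta_k))$ and error $e_k=\Delta\btheta_k-\Delta\btheta_k^{\star}$;
\item unbiasedness of $g(\btheta_k;\calB_k)$ to remove the gradient-noise part of the conditional bias;
\item the $\mathcal{O}(1/N_s)$ Monte Carlo variance bounds for $g$ and $S$ (\cref{lem:grad_S_mc_error}, where the empirical centering is handled);
\item a uniform bound on $\Expect\norm{\Delta\btheta_k}^2$ from the contraction $\norm{\lambda\mu A_k}\le\mu$ (\cref{lem:expect_delta_theta_bound});
\item a momentum Lyapunov argument telescoped under the step-size condition.
\end{itemize}

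\textbf{Moment bookkeeping in Step~3.} This step does not go through as you state it. You let $S_k-S(\btheta_k)$ act on the random vector $\lambda\mu\Delta\btheta_{k-1}-\tfrac12 g_k$. That produces joint fluctuation terms: $\Expect[\norm{S_k-S(\btheta_k)}^2\norm{g_k-g(\btheta_k)}]$ in the bias, and, if you bound $\Expect\norm{e_k}^2$ from the same decomposition, $\Expect[\norm{S_k-S(\btheta_k)}^2\norm{g_k}^2]$. Controlling these by Cauchy--Schwarz requires $\Expect\norm{S_k-S(\btheta_k)}^4$, i.e.\ eighth moments of $\nabla_{\btheta}\log\abs{\psi_{\btheta}}$. \cref{assume:1}(1) does not provide these. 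Moreover, the bias piece would be of order $N_s^{-3/2}$, not $N_s^{-2}$.

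The paper avoids this by regrouping, so that the matrix error only multiplies vectors whose second moments are already controlled:
\begin{itemize}
\item For the second moment it writes $e_k=\bar A_k(S(\btheta_k)-S_k)A_k(\lambda\mu\Delta\btheta_{k-1}-\tfrac12 g(\btheta_k))+\tfrac12 A_k(g(\btheta_k)-g_k)$, whose vector is $\scrF_{k-1}$-measurable.
\item For the bias it writes $e_k=\bar A_k(S(\btheta_k)-S_k)\Delta\btheta_k+\tfrac12\bar A_k(g(\btheta_k)-g_k)$. This gives $\norm{\Expect[e_k|\scrF_{k-1}]}\le\lambda^{-1}\sqrt{6C_m/N_s}\,(\Expect[\norm{\Delta\btheta_k}^2|\scrF_{k-1}])^{1/2}$.
\end{itemize}
That bias is only $\mathcal{O}(N_s^{-1/2})$, but it enters through Young's inequality squared, so your second-order resolvent expansion is unnecessary.

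Likewise, the $N_s^{-2}$ term in the paper does not come from the centering. It comes from multiplying the $1/N_s$ variance bound for $e_k$ by $\Expect\norm{\Delta\btheta_{k-1}}^2$, which itself has a $1/N_s$ piece. Since $N_s^{-2}\le N_s^{-1}$, it is harmless anyway.

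\textbf{Organization of the Lyapunov argument.} The paper never extracts $-\norm{g(\btheta_k)}^2$ from the Lyapunov function. \cref{lem:sum_delta_theta} uses $F_k=\Expect[L(\btheta_k)]+\lambda\mu\eta_k\Expect\norm{\Delta\btheta_{k-1}}^2$ only to bound $\sum_k\eta_k\Expect\norm{\Delta\btheta_k}^2$. A second descent step, with $g(\btheta_k)^\top\Delta\btheta_k^{\star}\le\frac{\mu-4}{8(\lambda+\sqrt{C_m})}\norm{g(\btheta_k)}^2+2\mu(\lambda+\sqrt{C_m})\norm{\Delta\btheta_{k-1}}^2$, then produces the gradient term; its momentum remainder is summable by the first pass.

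Your one-pass variant is also valid, since $\norm{g(\btheta_k)}\le2\lambda\mu\norm{\Delta\btheta_{k-1}}+2(\lambda+\sqrt{C_m})\norm{\Delta\btheta_k^{\star}}$. It needs two things:
\begin{itemize}
\item the weight on $\norm{\Delta\btheta_{k-1}}^2$ taken slightly above $\lambda\mu$;
\item the split $\norm{\Delta\btheta_k}^2\le(1+\delta)\norm{\Delta\btheta_k^{\star}}^2+(1+\delta^{-1})\norm{e_k}^2$.
\end{itemize}
Then both $\norm{\Delta\btheta_k^{\star}}^2$ and $\norm{\Delta\btheta_{k-1}}^2$ get negative coefficients. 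This does check out under $\eta_0\le2\lambda(1-\mu)/C_g$, but in your write-up it is asserted rather than verified.
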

    \begin{proof}
        See Appendix \ref{sec:proof_p_spring_convergence}.
    \end{proof}

\par The bound in Eq.~\eqref{eq:p_spring_result} shows that Monte Carlo noise yields a residual term of order $\mathcal{O}(1/N_s)$. Consequently, the parameter sequence $\{\btheta_k\}$ converges to an $\mathcal{O}(1/N_s)$-neighborhood of a first-order stationary point.
\begin{cor}
    Under the assumptions of \cref{the:p_spring_convergence}, we have
    \begin{equation*}
        \min_{1\le k\le K}\Expect\lrsquare{\norm{g(\btheta_k)}^2} = \calO\lrbracket{\dfrac{1}{\sum_{k=1}^K\eta_k}} + \calO\lrbracket{\dfrac{1}{N_s}}.
    \end{equation*}
\end{cor}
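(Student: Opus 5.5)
The corollary follows directly from \cref{the:p_spring_convergence}. We lower-bound the left-hand side of Eq.~\eqref{eq:p_spring_result} by a weighted minimum and divide by the total step-size. Since every $\eta_k>0$ and each term $\Expect[\norm{g(\btheta_k)}^2]$ is nonnegative, we have
\begin{equation*}
\lrbracket{\sum_{k=1}^K\eta_k}\min_{1\le k\le K}\Expect\lrsquare{\norm{g(\btheta_k)}^2}\le \sum_{k=1}^K \eta_k\Expect\lrsquare{\norm{g(\btheta_k)}^2}.
\end{equation*}
Combining this with Eq.~\eqref{eq:p_spring_result} and dividing by $\sum_{k=1}^K\eta_k>0$ gives
\begin{equation*}
\min_{1\le k\le K}\Expect\lrsquare{\norm{g(\btheta_k)}^2}\le \dfrac{C_1}{\sum_{k=1}^K\eta_k}+\dfrac{C_2}{N_s}+\dfrac{C_3}{N_s^2}.
\end{equation*}

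Next we absorb the last term into the middle one. Since $N_s\ge 1$, we have $C_3/N_s^2\le C_3/N_s$, so the right-hand side is at most $C_1/\sum_{k=1}^K\eta_k+(C_2+C_3)/N_s$. The constants $C_1,C_2,C_3$ from \cref{the:p_spring_convergence} do not depend on $K$. The hidden constants in the $\calO(\cdot)$ notation therefore also do not depend on $K$, and this yields the stated bound.

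The only point that needs care is that the constants $C_1$, $C_2$, $C_3$ must be uniform in both $K$ and $N_s$, because the $\calO(1/N_s)$ statement is meant asymptotically in $N_s$. I would check this in the proof of \cref{the:p_spring_convergence}. There the constants should come only from $C_m$, $C_g$, $\lambda$, $\mu$, $\eta_0$ and $L(\btheta_1)-\inf L$. The variance and bias bounds of the Monte Carlo estimators under \cref{assume:1,assume:sample} supply the explicit $1/N_s$ and $1/N_s^2$ factors. If some constant did depend on $N_s$, for instance through $N_s/(N_s-1)$ factors, it is bounded for $N_s\ge 2$, so uniformity still holds. Apart from this verification, the argument is a one-line averaging step.
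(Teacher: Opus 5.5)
Your proof is correct and matches the paper's intended argument. The paper states this corollary without a separate proof, as a one-line consequence of \cref{the:p_spring_convergence}: bound the weighted sum below by $\bigl(\sum_{k=1}^K\eta_k\bigr)\min_k\Expect[\norm{g(\btheta_k)}^2]$, divide by $\sum_{k=1}^K\eta_k$, and absorb $C_3/N_s^2$ into the $1/N_s$ term. Your uniformity check also goes through. The only sample-dependent quantities in the paper's constants are $\Expect[\norm{\Delta\btheta_0}^2]\le \frac{1}{4\lambda^2}\Expect[\norm{g(\btheta_0;\calB_0)}^2]$ and $\Expect[L(\btheta_1)]\le C_m^{1/4}$. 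Both are bounded independently of $N_s$ by \cref{lem:g_S_bounded,lem:grad_S_mc_error} and \cref{assume:1}.
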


\begin{remark}
In contrast to SPRING, the SGD update in VMC admits a conditionally unbiased update direction under \cref{assume:sample}, enabling first-order convergence results \cite{abrahamsen2024convergence}. From this viewpoint, our analysis is closer in spirit to studies that explicitly account for Monte Carlo/MCMC-induced bias in the update direction, such as \cite{li2023convergence}. The resulting $\mathcal{O}(1/N_s)$ residual term is of the same type. We emphasize that \eqref{eq:p_spring_result} is an upper bound; in practice, SR preconditioning can yield substantially faster convergence.
\end{remark}

\par A natural question is whether the $\mathcal{O}(1/N_s)$ term is merely an artifact of the proof or an intrinsic consequence of stochastic sampling. To investigate this, we conduct numerical experiments on the one-dimensional transverse-field Ising (1D-TFI) model with $N=10$ sites and transverse-field strength $h=1$, using a restricted Boltzmann machine (RBM) ans\"atz \cite{carleo2017solving} with $D=5N$ hidden units (see Appendix~\ref{sec:experiment_setting} for details). All experiments are implemented using NetKet \cite{netket3:2022} and JAX \cite{jax2018github}.

\par For SPRING, we fix the regularization parameter $\lambda=10^{-3}$ and do not apply the norm-constraint stabilization. Unless otherwise specified, all other algorithmic components follow Algorithm \ref{alg:spring}. We run SPRING with $\mu=0$ and $\mu=0.99$ under both full-batch evaluation and Monte Carlo sampling with varying sample sizes $N_s$. To eliminate MCMC non-stationarity in this experiment, we use direct sampling throughout. For each run, we record the full-batch gradient norm over $K=100000$ iterations. The results are reported in Fig.~\ref{fig:convergence_test}. 

\par In this experiment, the full-batch size is $2^N=1024$. The results show that, even when $N_s$ exceeds the full-batch size, stochastic sampling prevents convergence to an exact stationary point. Moreover, the empirical results are well fitted by $\sqrt{b/N_s + c/N_s^2}$, consistent with the bound in Eq.~\eqref{eq:p_spring_result}. These observations indicate that the $\mathcal{O}(1/N_s)$ residual term reflects an intrinsic limitation of stochastic sampling rather than a proof artifact.

\begin{figure}
    \centering

    \begin{subfigure}[t]{0.48\textwidth}
        \centering
        \includegraphics[width=0.48\linewidth]{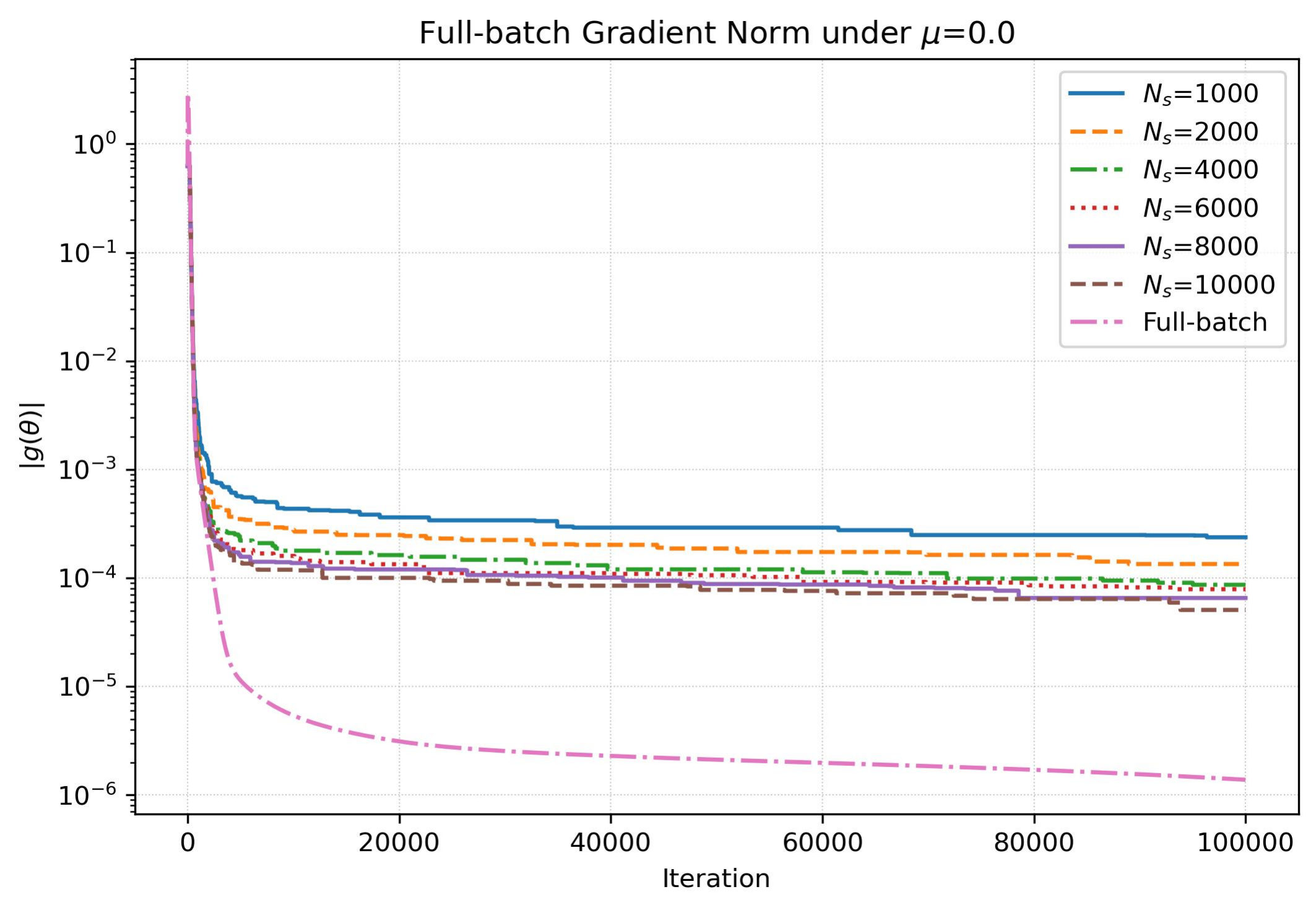}
        \includegraphics[width=0.48\linewidth]{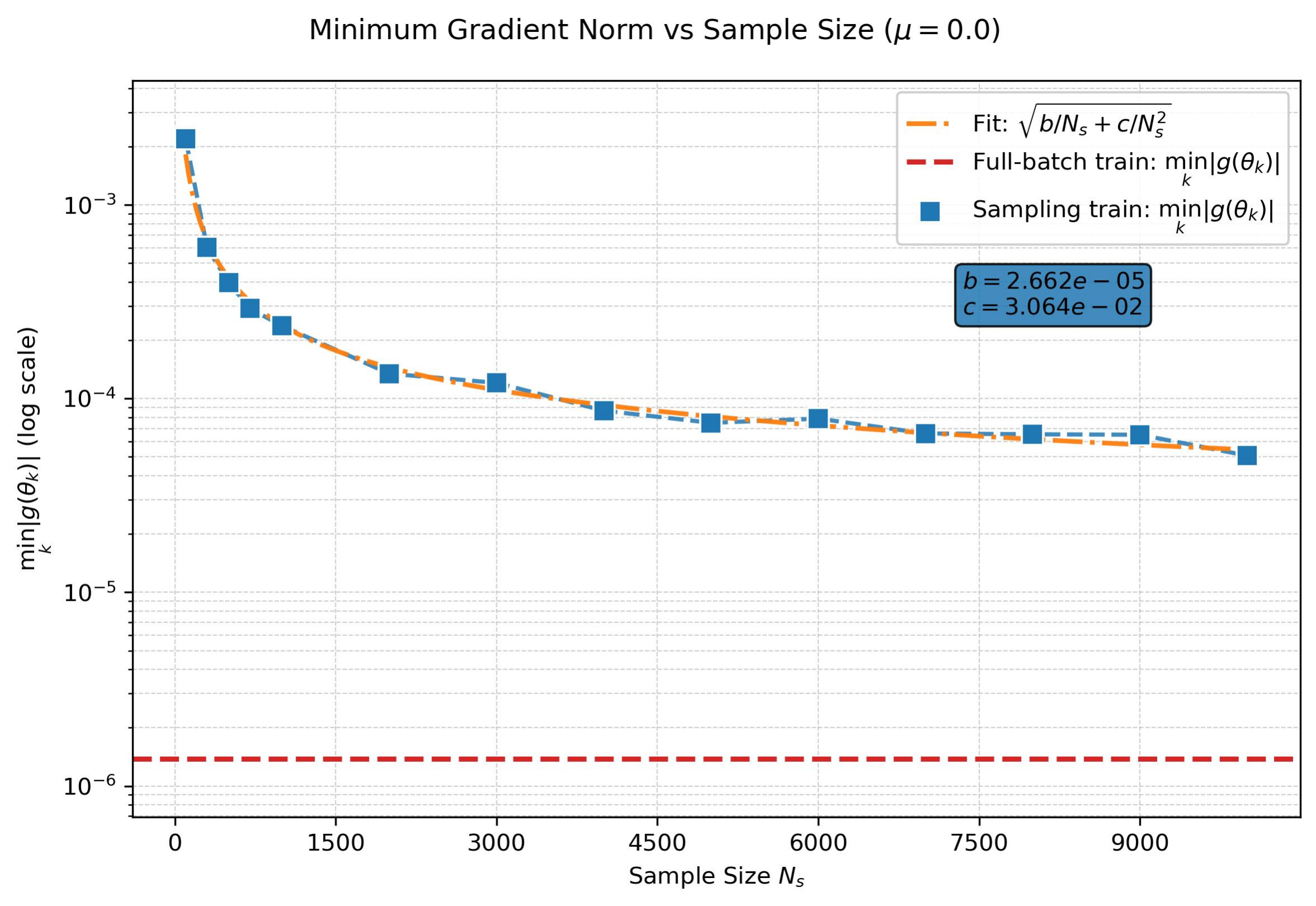}
        \caption{$\mu=0$, $\eta_k=0.01$.}
    \end{subfigure}
    \hfill
    \begin{subfigure}[t]{0.48\textwidth}
        \centering
        \includegraphics[width=0.48\linewidth]{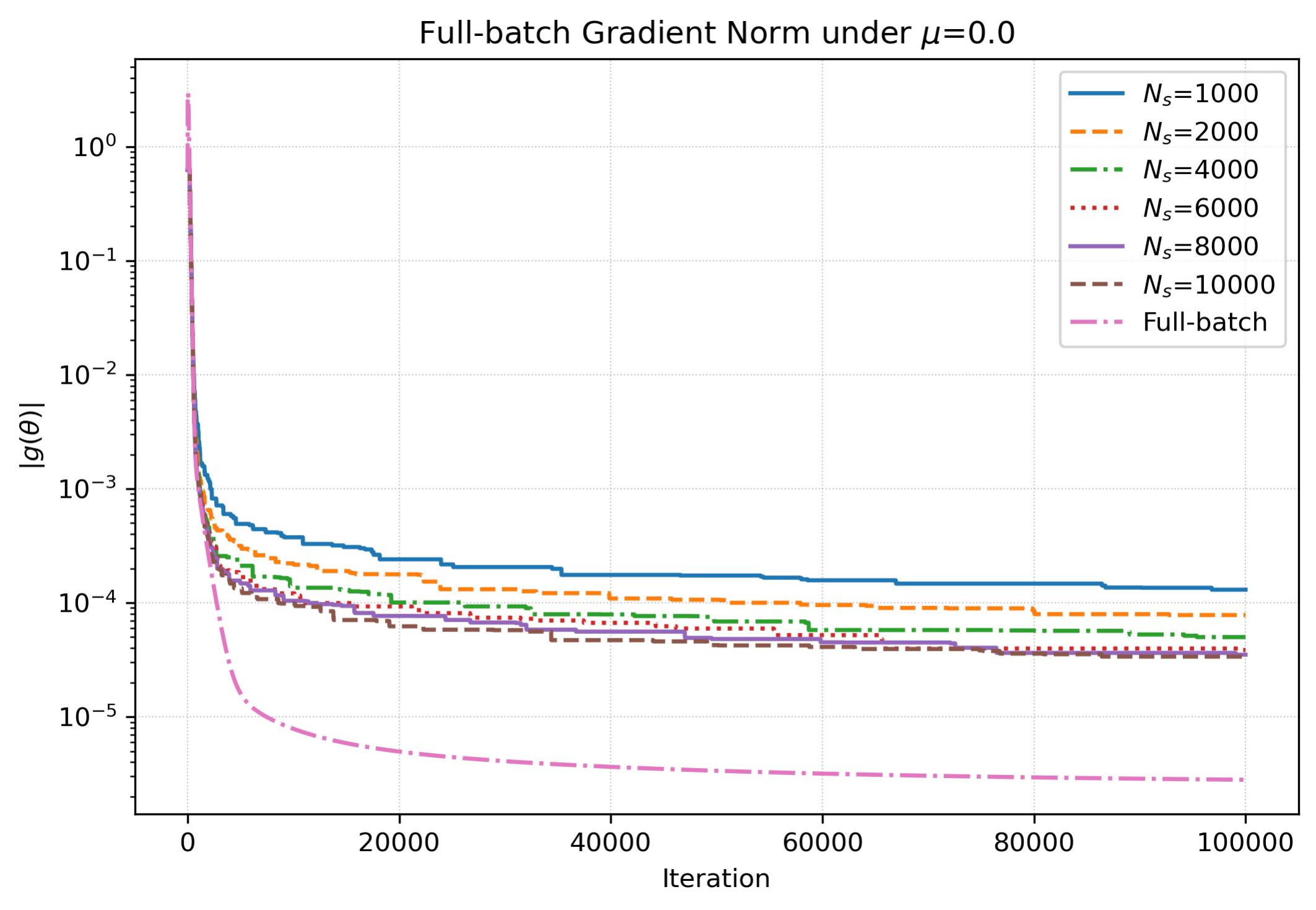}
        \includegraphics[width=0.48\linewidth]{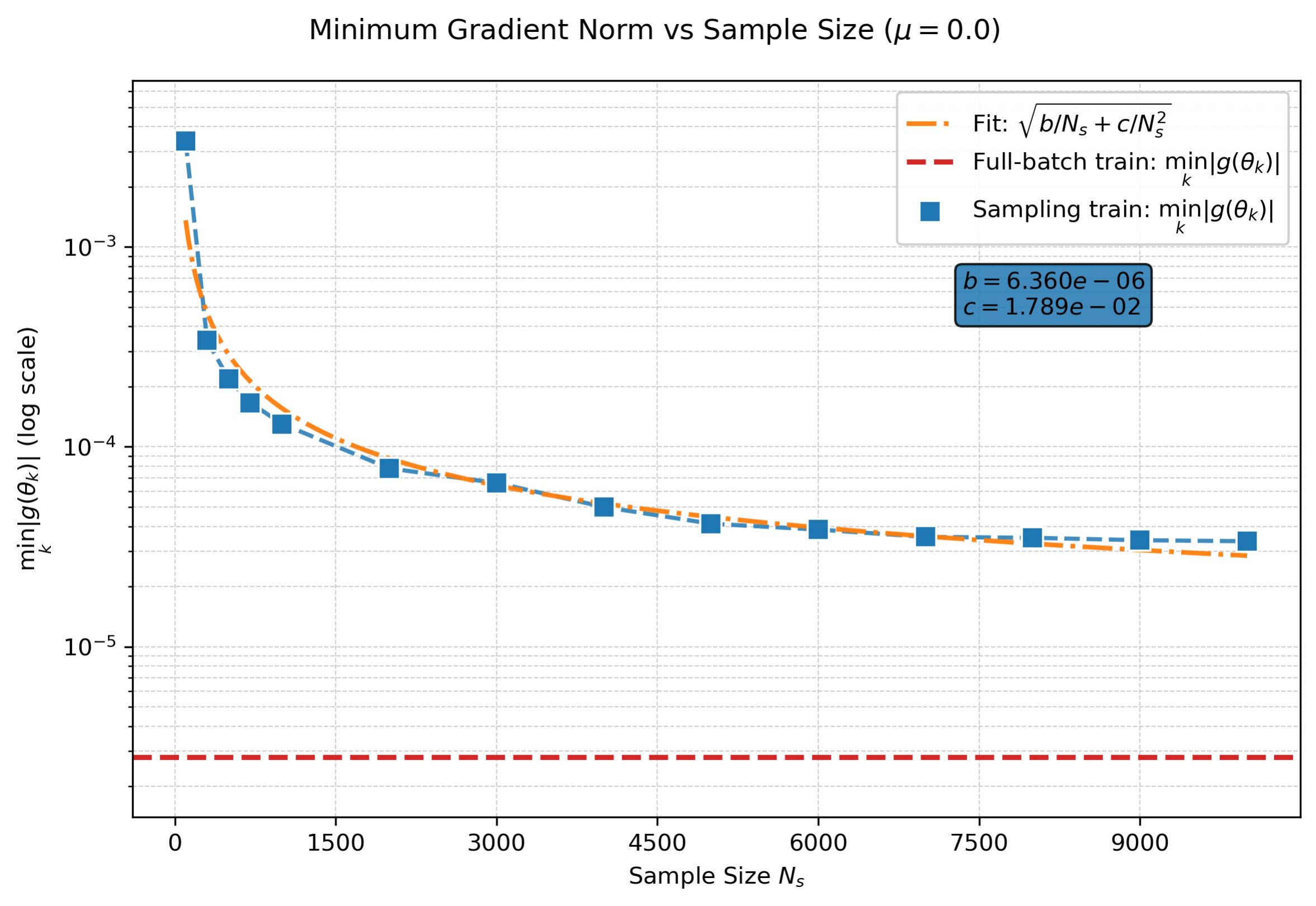}
        \caption{$\mu=0$, $\eta_k=0.01/(1+10^{-4}k)$.}
    \end{subfigure}

    \vspace{0.8em}

    \begin{subfigure}[t]{0.48\textwidth}
        \centering
        \includegraphics[width=0.48\linewidth]{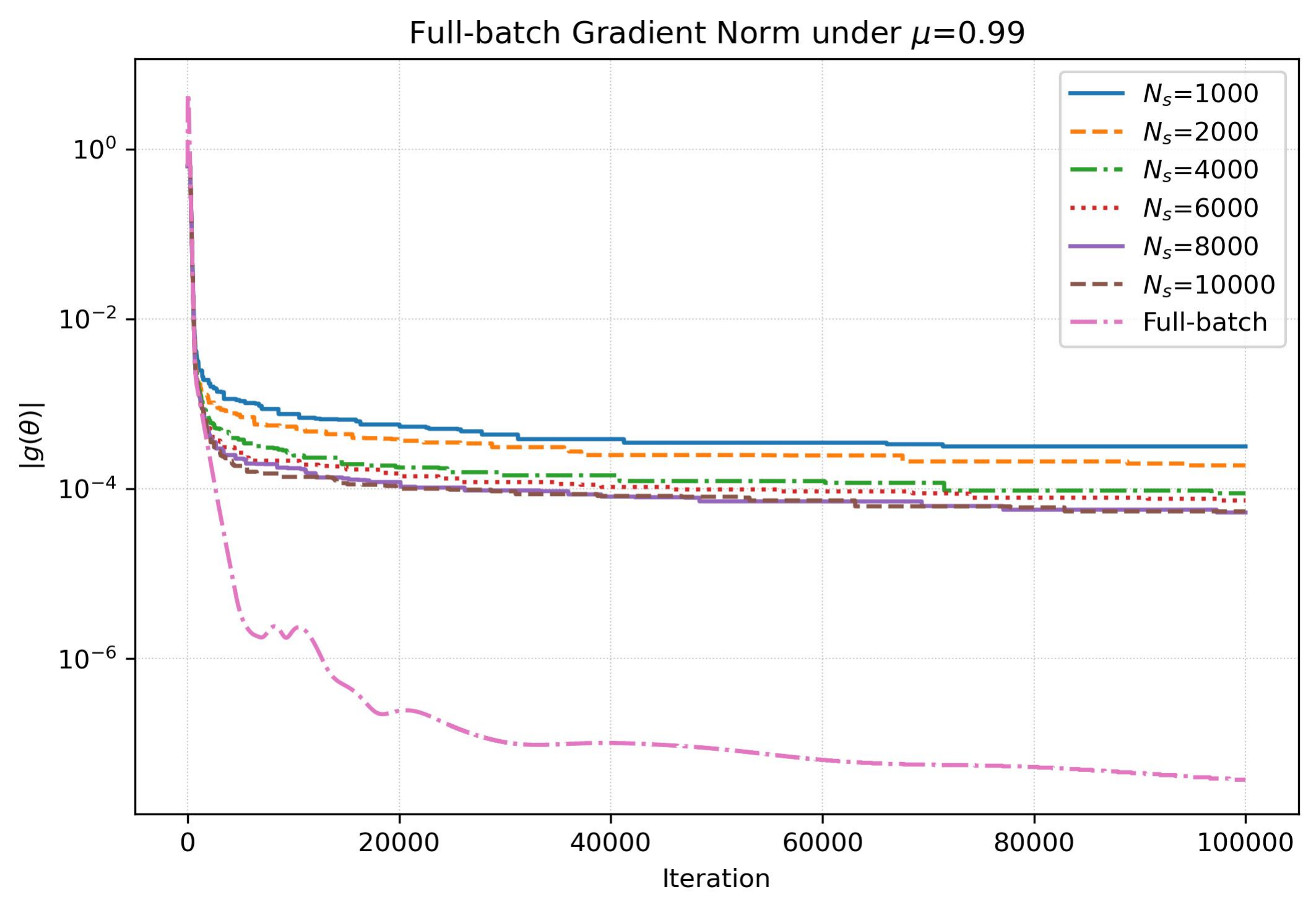}
        \includegraphics[width=0.48\linewidth]{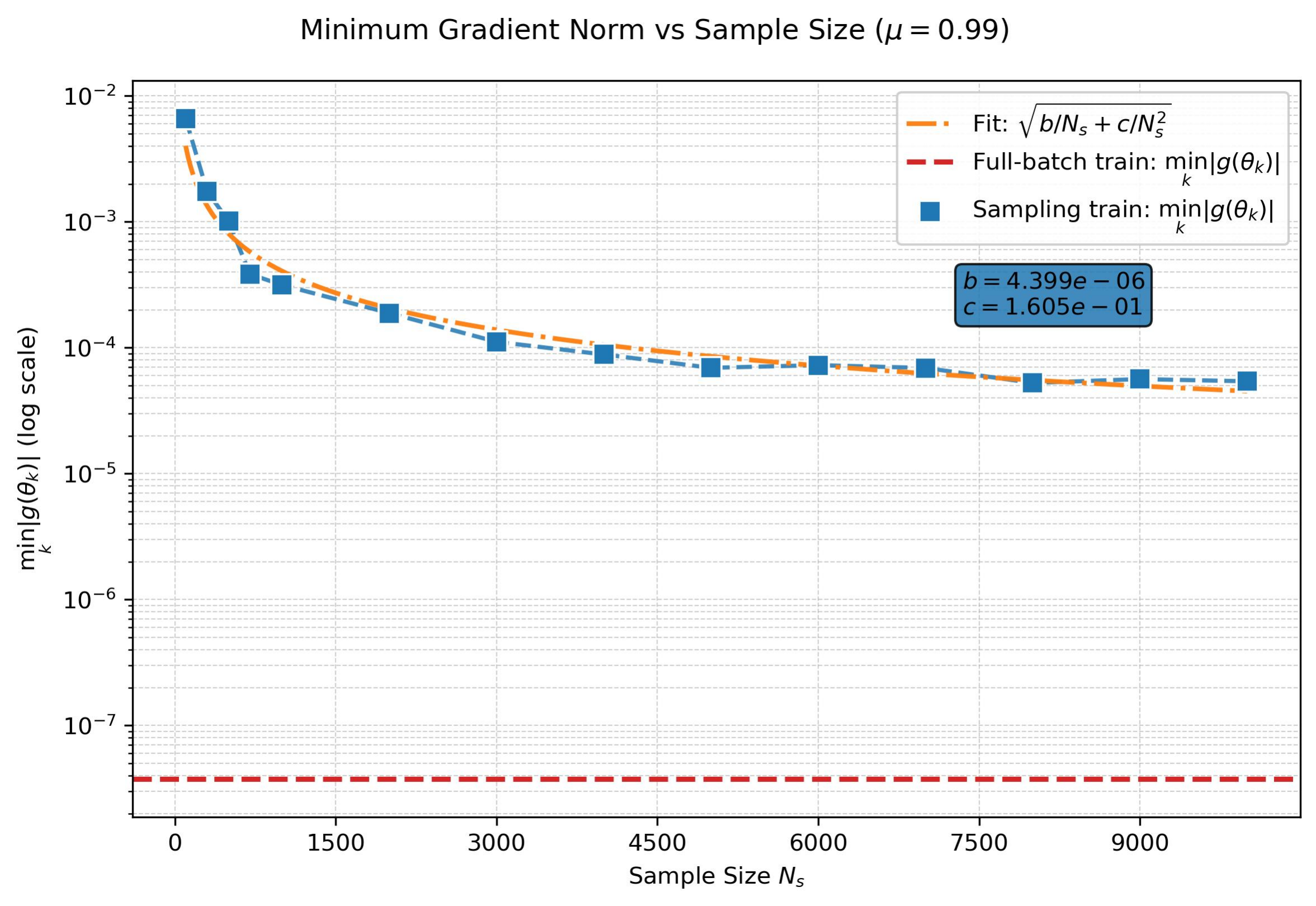}
        \caption{$\mu=0.99$, $\eta_k=0.01$.}
    \end{subfigure}
    \hfill
    \begin{subfigure}[t]{0.48\textwidth}
        \centering
        \includegraphics[width=0.48\linewidth]{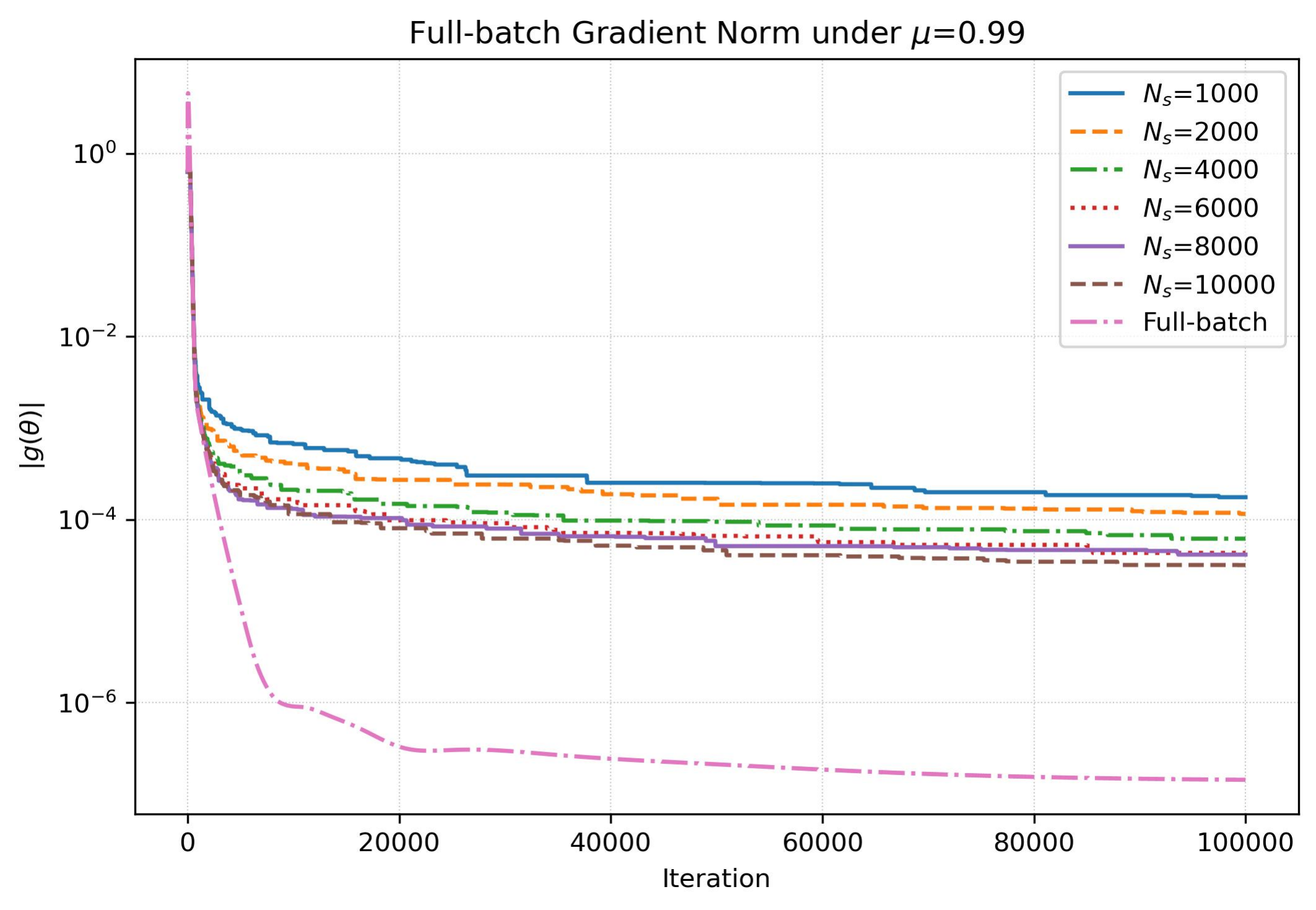}
        \includegraphics[width=0.48\linewidth]{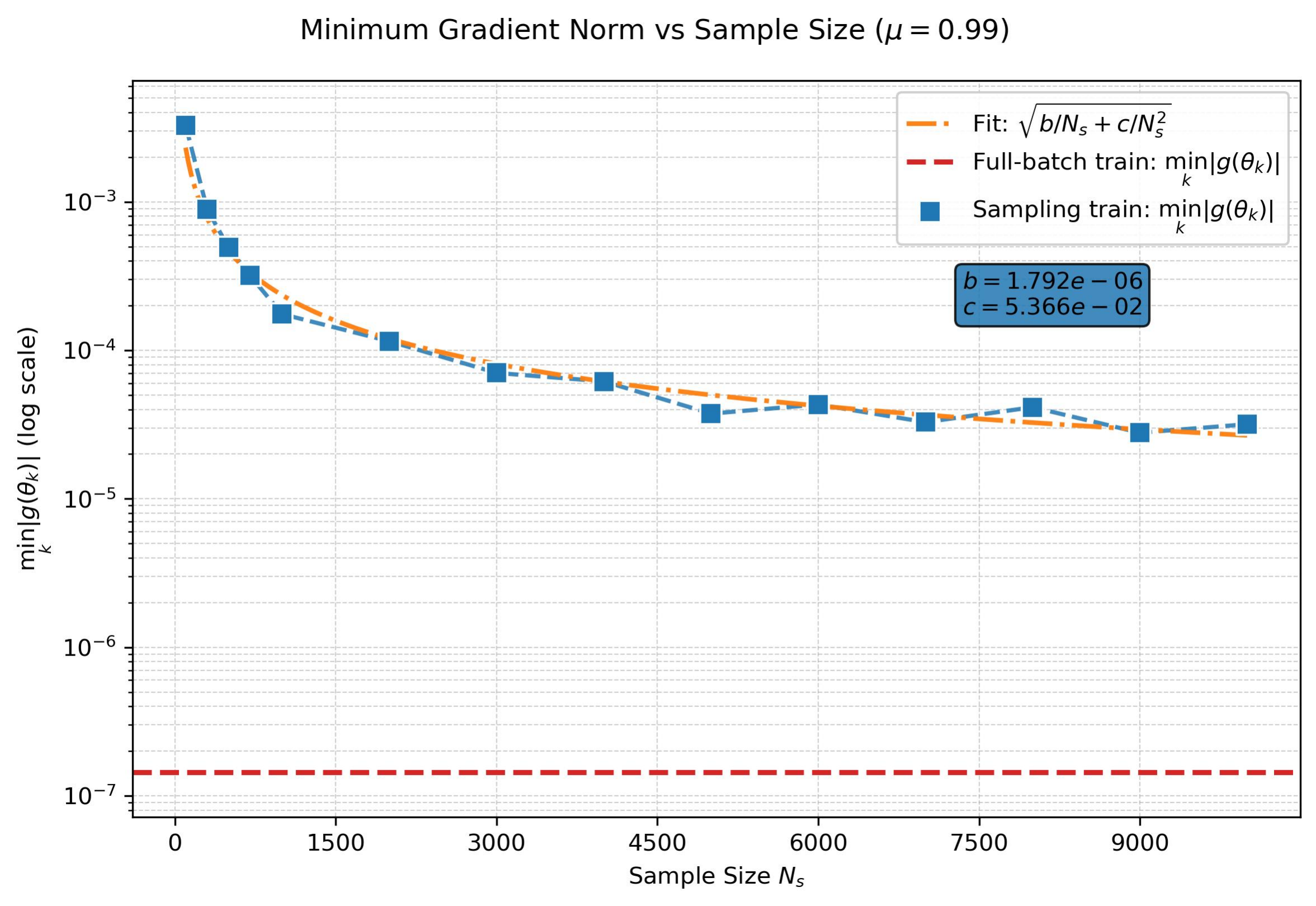}
        \caption{$\mu=0.99$, $\eta_k=0.01/(1+10^{-4}k)$.}
    \end{subfigure}

    \caption{Convergence test on the 1D-TFI model with $N=10$ sites. Each block corresponds to a fixed momentum parameter $\mu$ and a step-size schedule. Left panels: evolution of the minimum gradient norm $\min_{0\le k\le K}\norm{g(\btheta_k)}$ over iterations, for full-batch evaluation and for different sample sizes $N_s$. Right panels: the minimum gradient norm attained over $K=100000$ iterations as a function of $N_s$. The red dashed line indicates the full-batch result, and the orange curve shows the fit $\sqrt{b/N_s + c/N_s^2}$.}

    \label{fig:convergence_test}
\end{figure}

\subsection{Divergence Counterexample of SPRING with $\mu=1$}
\label{sec:divergence_mu_1}

\par We now analyze the regime $\mu=1$, which exhibits dynamics that are qualitatively different from the case $0\le \mu<1$. Throughout this subsection, we focus on the full-batch update \eqref{eq:full-spring}. A key structural property of VMC plays a central role: the gradient $g(\btheta)$ always lies in the range of the SR matrix $S(\btheta)$.
\begin{lem}
    Let $\mathcal{R}(S(\btheta))$ denote the range space of $S(\btheta)$. Then $g(\btheta) \in \mathcal{R}(S(\btheta))$.
\end{lem}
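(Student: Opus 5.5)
Since $S(\btheta)$ is symmetric positive semidefinite, $\R^{N_p}=\mathcal{R}(S(\btheta))\oplus\ker S(\btheta)$ is an orthogonal decomposition, i.e.\ $\mathcal{R}(S(\btheta))=\ker S(\btheta)^{\perp}$. The plan is therefore to prove $g(\btheta)\perp \ker S(\btheta)$: take any $\bv\in\ker S(\btheta)$ and show $\bv^\top g(\btheta)=0$.

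\textbf{Step 1: a kernel vector annihilates $O(\btheta;X)$ almost surely.} Let $\bv\in\ker S(\btheta)$. Then
\begin{equation*}
0=\bv^\top S(\btheta)\bv=\Expect_{X\sim\pi_{\btheta}}\lrsquare{\abs{O(\btheta;X)^\top\bv}^2}.
\end{equation*}
Hence $O(\btheta;X)^\top\bv=0$ for $\pi_{\btheta}$-almost every $X$. In the discrete case this holds at every configuration with $\pi_{\btheta}(\bfx)>0$.

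\textbf{Step 2: conclude via the gradient formula.} Using Eq.~\eqref{eq:vmc_grad},
\begin{equation*}
\bv^\top g(\btheta)=2\Expect_{X\sim\pi_{\btheta}}\lrsquare{\bar{E}(\btheta;X)\,O(\btheta;X)^\top\bv}=0,
\end{equation*}
because the integrand vanishes almost surely. So $g(\btheta)\in\ker S(\btheta)^\perp=\mathcal{R}(S(\btheta))$.

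An alternative route, which I would mention as a sanity check, goes through the least-squares form of SR. By the characterization in Eq.~\eqref{eq:sr_update_full_least}, the minimizer satisfies the normal equation $S(\btheta)\Delta\btheta=-\Expect[O\bar E]=-\tfrac12 g(\btheta)$. This equation is consistent because it comes from minimizing a quadratic that is bounded below, and consistency means exactly that $g$ lies in the range. That argument still needs the existence of a minimizer, however, so the direct kernel argument above is cleaner.

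\textbf{Main obstacle: integrability.} The only delicate point is justifying that $\Expect[\bar E\, O^\top \bv]$ is well defined, so that the almost-sure vanishing can be applied inside the expectation. The moment bounds of Assumption~\ref{assume:1}(1) give finite second moments, and Cauchy--Schwarz then gives $\Expect\abs{\bar E\,O^\top\bv}\le (\Expect \bar E^2)^{1/2}(\bv^\top S\bv)^{1/2}=0$. This settles the point directly and bypasses any measure-theoretic subtlety.
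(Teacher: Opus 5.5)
Your proof is correct and is essentially the paper's own argument: for $w\in\calK(S(\btheta))$ you get $\Expect_{X\sim\pi_{\btheta}}[\abs{O(\btheta;X)^\top w}^2]=w^\top S(\btheta)w=0$, hence $O(\btheta;X)^\top w=0$ $\pi_{\btheta}$-a.s., hence $g(\btheta)^\top w=0$, and $\calK(S(\btheta))^\perp=\mathcal{R}(S(\btheta))$ by symmetry of $S(\btheta)$. The integrability issue you flag is not really an obstacle: once $g(\btheta)$ is defined, the integrand $\bar{E}(\btheta;X)\,O(\btheta;X)^\top w$ vanishes almost surely, so its expectation is zero without any appeal to moment bounds.
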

\begin{proof}
    Denote $\calK(S(\btheta))$ the kernel space of $S(\btheta)$. For any $w\in\calK(S(\btheta))$, we have
    \begin{equation*}
        w^\top S(\btheta)w= \Expect_{X\sim\pi_{\btheta}}\lrsquare{\abs{O(\btheta;X)^\top w}^2}=0,
    \end{equation*}
    which implies $O(\btheta;X)^\top w = 0$ $\pi_{\btheta}$-a.s. Consequently,
    \begin{equation*}
        g(\btheta)^\top w = \Expect_{X\sim\pi_{\btheta}}[\bar{E}(\btheta;X)O(\btheta;X)^\top w]=0,\quad \forall~w\in\calK(S(\btheta)).
    \end{equation*}
    Therefore, $g(\btheta)\perp \calK(S(\btheta))$ and hence $g(\btheta)\in\mathcal{R}(S(\btheta))$.
\end{proof}

\begin{remark}
    The same property holds for the Monte Carlo estimators. Indeed, with the usual constructions
    $g(\btheta;\calB)=2O(\btheta;\calB)\bar{E}(\btheta;\calB)$ and $S(\btheta;\calB)=O(\btheta;\calB)O(\btheta;\calB)^\top$, we also have $g(\btheta;\calB)\in \mathcal{R}(S(\btheta;\calB))$.
\end{remark}

\par Let $P^{\calK}(\btheta)\in\R^{N_p\times N_p}$ denote the orthogonal projector onto the kernel space $\calK(S(\btheta))$. Projecting the full-batch update \eqref{eq:full-spring} onto $\calK(S(\btheta_k))$ yields
\begin{equation}
    P^{\calK}(\btheta_k)\Delta\btheta_k=\mu P^{\calK}(\btheta_k)\Delta\btheta_{k-1}.
    \label{eq:kernel_project_change}
\end{equation}
This identity shows that the kernel component of the update is driven purely by the momentum term. Since both the kernel space and the projector $P^{\calK}(\btheta_k)$ depend on $\btheta_k$, directly analyzing \eqref{eq:kernel_project_change} is challenging. To gain intuition, we consider a simplified setting where the SR matrix is fixed.

\paragraph{Heuristic analysis with fixed SR matrix.}
Assume $S(\btheta)\equiv S\in\R^{N_p\times N_p}$ with $0<\rank(S)<N_p$, and let $P^{\calK}$ denote the associated kernel projector. Then \eqref{eq:kernel_project_change} reduces to
\begin{equation}
P^{\calK}\Delta\btheta_k = \mu P^{\calK}\Delta\btheta_{k-1}=\cdots = \mu^k P^{\calK}\Delta\btheta_0.
    \label{eq:kernel_fix_delta_theta}
\end{equation}
Consequently, the kernel component of the parameter iterate satisfies
\begin{equation}
P^{\calK}\btheta_{k+1} = P^{\calK}\btheta_0+\sum_{m=0}^{k}\eta_m P^{\calK}\Delta\btheta_m =  P^{\calK}\btheta_0+\lrbracket{\sum_{m=0}^{k}\eta_m\mu^m}P^{\calK}\Delta\btheta_0.
    \label{eq:kernel_norm_mu}
\end{equation}

\par When $0\le \mu<1$, the exponentially decay in \eqref{eq:kernel_fix_delta_theta} typically keeps the kernel contribution bounded for standard step-size schedules. In contrast, when $\mu=1$, \eqref{eq:kernel_norm_mu} becomes
\begin{equation*}
    P^{\calK}\btheta_{k+1} = P^{\calK}\btheta_0 + \lrbracket{\sum_{m=0}^k\eta_m}P^{\calK}\Delta\btheta_0.
\end{equation*}
If $P^{\calK}\Delta\btheta_0\neq 0$ and the step-size sequence is not summable, the kernel component grows without bound. Since commonly used step-size schedules in VMC satisfy $\eta_k=\mathcal{O}(1/k)$, this heuristic argument explains why $\mu=1$ can lead to divergence.

\begin{remark}
    In standard implementations, one often initializes $\Delta\btheta_{-1}=0$, so that the first update reduces to an SR step and satisfies
    \begin{equation*}
        \Delta\btheta_0=\dfrac{\btheta_1 - \btheta_0}{\eta_0}= -\dfrac{1}{2}\lrbracket{\lambda I +S}^{-1}g(\btheta_0) \Longrightarrow~P^{\calK}\Delta\btheta_0=0.
    \end{equation*}
    This does not contradict the above mechanism: in practice $S(\btheta_k)$ varies with $k$, and finite-precision effects can introduce nonzero kernel components after the first iterations. For the heuristic argument, we therefore view $\btheta_1$ as given and analyze the subsequent dynamics of $\{\btheta_k\}_{k\ge 2}$.
\end{remark}

More generally, if there exists a subspace 
\begin{equation*}
    \calK_0 \subset \bigcap_{k}\calK(S(\btheta_k;\calB_k)),
\end{equation*}
then for \eqref{eq:p-spring}, we can similarly have
\begin{equation*}
    P^{\calK_0}\Delta\btheta_k=\mu^k P^{\calK_0}\Delta\btheta_0,\quad P^{\calK_0}\btheta_{k+1}=P^{\calK_0}\btheta_0 + \lrbracket{\sum_{m=0}^k \eta_m\mu^m}P^{\calK_0}\Delta\btheta_0,
\end{equation*}
where $P^{\calK_0}$ is the orthogonal projector onto $\calK_0$.

\paragraph{Explicit counterexamples.}
The heuristic mechanism above can be realized by explicit wavefunction constructions for which the SR matrix is independent of $\btheta$. Highlight that, under the stochastic setting, the divergence will still happen in some situations.

\medskip
\noindent\textbf{Continuous example.}
Consider the Gaussian-type wavefunction on $\R^{3N}$,
\begin{equation}
\label{eq:gaussian_wave}
\psi_{\btheta}(x)=e^{-\frac14 (x-A\btheta)^\top \Sigma^{-1}(x-A\btheta)},\qquad x\in\R^{3N},
\end{equation}
where $\Sigma\in\R^{3N\times 3N}$ is symmetric positive definite and $A\in\R^{3N\times N_p}$ satisfies $0<\rank(A)<N_p$. Then $\pi_{\btheta}$ is the Gaussian distribution $\calN(A\btheta,\Sigma)$. A direct computation gives
\begin{equation*}
    \nabla_{\btheta}\log\psi_{\btheta}(\bfx)=\frac12 A^\top \Sigma^{-1}(\bfx-A\btheta),\quad \Expect_{X\sim\pi_{\btheta}}[\nabla_{\btheta}\log\psi_{\btheta}(\bfx)]=0.
\end{equation*}
Hence the SR matrix is 
\begin{equation*}
    S(\btheta) = \frac14\Expect_{X\sim\pi_{\btheta}}[A^\top \Sigma^{-1}(X-A\btheta)(X-A\btheta)^\top \Sigma^{-1}A] =\frac14 A^\top \Sigma^{-1} A,
\end{equation*}
which is independent of $\btheta$, and satisfies $0< \rank(S)\le \rank(A)<N_p$. In the stochastic setting,
\begin{equation*}
    O(\btheta_k;X_{k,i},\calB_k)=-\dfrac{1}{2}A^\top \Sigma^{-1}\lrbracket{X_{k,i}-A\btheta_k-\dfrac{1}{N_s}\sum_{j=1}^{N_s}\lrbracket{X_{k,j}-A\btheta_k}},
\end{equation*}
so, since 
\begin{equation*}
S(\btheta_k;\calB_k)=\frac{1}{N_s-1}\sum_{i=1}^{N_s}O(\btheta_k;X_{k,i},\calB_k)O(\btheta_k;X_{k,i},\calB_k)^\top,    
\end{equation*}
we have $\calK(A)\subset \calK(S(\btheta_k;\calB_k))$ for any $k\ge 0$.

\medskip
\noindent\textbf{Discrete example.}
Consider the complex-valued wavefunction
\begin{equation}
\label{eq:discrete_complex_wave}
\psi_{\btheta}(\bfx)=2^{-N/2}e^{\i\,\bfx^\top A\btheta},\qquad \bfx\in\{-1,1\}^N,
\end{equation}
where $\i$ denotes the imaginary unit and $A\in\R^{N\times N_p}$ satisfies $0<\rank(A)<N_p$. In this case, $\pi_{\btheta}$ is the uniform distribution on $\{-1,1\}^N$. Consequently, we have $\Expect_{X\sim\pi_{\btheta}}[X]=0$ and $\Expect_{X\sim\pi_{\btheta}}[XX^\top]=I_N$. For complex-valued wavefunctions, the logarithmic derivative is defined by
\begin{equation*}
    \nabla_{\btheta}\log\psi_{\btheta}(\bfx):=\dfrac{\nabla_{\btheta}\psi_{\btheta}(\bfx)}{\psi_{\btheta}(\bfx)} = \i A^\top \bfx,
\end{equation*}
which satisfies $\Expect_{X\sim\pi_{\btheta}}[\nabla_{\btheta}\log\psi_{\btheta}(\bfx)]=0$. The SR matrix is typically defined via the Hermitian covariance of logarithmic derivatives, or equivalently by taking its real part (see \cite{becca2017quantum} for details). In particular,
\begin{align*}
S(\btheta)
&:=\Re\,\Expect_{X\sim\pi_{\btheta}}\left[O(\btheta;X) O(\btheta;X)^{*}\right]\\
&=\Re\,\Expect_{X\sim\pi_{\btheta}}\left[(\i A^\top X)\,(-\i X^\top A)\right] \\
&=\Expect_{X\sim\pi_{\btheta}}\left[A^\top (XX^\top)A\right]\\
&=A^\top A,
\end{align*}
where $(\cdot)^{*}$ denotes the conjugate transpose. Thus, $S(\btheta)$ is independent of $\btheta$ and satisfies $0<\rank(S)\le \rank(A)<N_p$. In the stochastic setting,
\begin{equation*}
    O(\btheta_k;X_{k,i},\calB_k)=\i A^\top \lrbracket{X_{k,i}-\dfrac{1}{N_s}\sum_{j=1}^{N_s}X_{k,j}},
\end{equation*}
and therefore $\calK(A)\subset \calK(S(\btheta_k;\calB_k))$.

\medskip
\par Combining these observations yields the following result.
\begin{The}
Let $\{\btheta_k\}_{k\ge 2}$ be generated by \eqref{eq:full-spring} or \eqref{eq:p-spring} with $\mu=1$. Then there exist a wavefunction and an initialization such that
\[
    \norm{\btheta_k}
    =
    \Omega\!\left(\sum_{m=1}^k \eta_m\right).
\]
\label{the:mu_1_divergence}
\end{The}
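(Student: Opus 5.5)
We take one of the two explicit constructions above, say the Gaussian wavefunction \eqref{eq:gaussian_wave} with $A$ chosen so that $0<\rank(A)<N_p$, and use the kernel-projection mechanism. The key structural fact is that $\calK_0:=\calK(A)$ is a fixed nontrivial subspace contained in $\calK(S(\btheta))$ for every $\btheta$ and in $\calK(S(\btheta_k;\calB_k))$ for every $k$ and every sample batch. Let $P:=P^{\calK_0}$ denote the orthogonal projector onto $\calK_0$.

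\textbf{Step 1: the gradient has no kernel component.} By the lemma stating $g(\btheta)\in\mathcal{R}(S(\btheta))$, and the subsequent remark for the Monte Carlo estimator, the vectors $g(\btheta_k)$ and $g(\btheta_k;\calB_k)$ are orthogonal to $\calK_0$. Hence $P g=0$ in both settings.

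\textbf{Step 2: the resolvent commutes with $P$.} Since $S$ (or $S(\btheta_k;\calB_k)$) is symmetric and annihilates $\calK_0$, it maps $\calK_0^\perp$ into itself. Therefore $(\lambda I+S)^{-1}$ acts as $\lambda^{-1}I$ on $\calK_0$ and preserves $\calK_0^\perp$, so $P(\lambda I+S)^{-1}=\lambda^{-1}P$.

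\textbf{Step 3: the recursion on $\calK_0$.} Applying $P$ to \eqref{eq:full-spring} or \eqref{eq:p-spring} with $\mu=1$ gives $P\Delta\btheta_k=P\Delta\btheta_{k-1}$. Consequently, for every $k\ge1$,
\begin{equation*}
P\btheta_{k+1}=P\btheta_1+\Big(\sum_{m=1}^{k}\eta_m\Big)P\Delta\btheta_1 .
\end{equation*}

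\textbf{Step 4: choose the initialization.} We need $v:=P\Delta\btheta_1\neq0$. This is where the remark about $\Delta\btheta_{-1}=0$ matters: that choice forces the kernel component to vanish. Following the paper's convention of treating $\btheta_1$ (and the incoming direction) as given and studying $\{\btheta_k\}_{k\ge2}$, we take an initial momentum $\Delta\btheta_0$ with $P\Delta\btheta_0=v\neq0$. For example, pick any nonzero $w\in\calK(A)$ and set $\Delta\btheta_0=w$. By Step 3 with $k=1$, $P\Delta\btheta_1=P\Delta\btheta_0=v$, independently of the randomness.

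\textbf{Step 5: conclude.} Since $\norm{P}\le1$,
\begin{equation*}
\norm{\btheta_{k+1}}\ge\norm{P\btheta_{k+1}}\ge\Big(\sum_{m=1}^k\eta_m\Big)\norm{v}-\norm{P\btheta_1}.
\end{equation*}
This bound holds deterministically, so it also holds almost surely in the stochastic case. If $\sum\eta_m=\infty$, the subtracted constant is eventually at most half of the first term, giving $\norm{\btheta_k}\ge c\sum_{m=1}^k\eta_m$ for large $k$, with an index shift absorbed using $\eta_k\le\eta_{k-1}$. If the series is summable, we instead choose $\btheta_1$ with $P\btheta_1=0$, so the lower bound holds with no additive constant.

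\textbf{Main obstacle.} The only delicate point is the initialization in Step 4: with the default $\Delta\btheta_{-1}=0$, the kernel component is identically zero. The statement's phrase ``there exist \dots an initialization'' must therefore be read as allowing a nonzero initial momentum in $\calK_0$, or equivalently a perturbation such as the finite-precision effects mentioned in the remark. The discrete example \eqref{eq:discrete_complex_wave} works verbatim in the same way, using the real-part SR matrix.
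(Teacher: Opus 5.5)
Your proposal is correct and follows essentially the same argument as the paper. The paper takes the Gaussian or discrete counterexample, in which $\calK(A)$ lies in every exact and sampled SR kernel, and uses $g\perp\calK(A)$ to show that with $\mu=1$ the kernel component of $\Delta\btheta_k$ stays constant. A nonzero kernel component in the initial momentum then makes $P\btheta_k$ grow linearly in $\sum_m\eta_m$.

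Your Steps 2 and 5 make explicit what the paper leaves implicit: the identity $P(\lambda I+S)^{-1}=\lambda^{-1}P$, and the handling of the additive constant and the index shift. As in the paper, that shift tacitly uses nonincreasing step sizes. Your reading of ``initialization'' as a nonzero initial momentum in $\calK_0$ is exactly what the paper's remark on $\Delta\btheta_{-1}=0$ requires.
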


\par Theorem~\ref{the:mu_1_divergence} describes a worst-case scenario, revealing that the kernel-related directions are the source of instability. In contrast, for the two explicit counterexamples above, we can also establish convergence for $0\le \mu<1$: in the continuous case with the electronic Hamiltonian in Eq.~\eqref{eq:elec_ham}, and in the discrete case with a spin-lattice Hamiltonian.

\begin{The}
    Let $\psi_{\btheta}(\bfx)$ be given by Eq.~\eqref{eq:gaussian_wave} or Eq.~\eqref{eq:discrete_complex_wave}, and let the Hamiltonian $\scrH$ be the electronic Hamiltonian in Eq.~\eqref{eq:elec_ham} or a spin-lattice Hamiltonian. Let $\{\btheta_k\}$ be generated by \eqref{eq:full-spring}. Under \cref{assume:1} (3), for any $0\le \mu<1$, there exist $C>0$, such that, for any $K\ge 1$,
    $$\sum_{k=1}^K \eta_k \norm{g(\btheta_k)}^2 \le C.$$
    \label{the:counter_less_1_converge}
\end{The}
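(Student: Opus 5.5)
The plan is to reduce the statement to the mechanism behind \cref{the:full_spring_convergence} and to check by hand only the ingredients that argument actually uses. We cannot simply invoke that theorem. In the continuous case the fourth-moment bound of \cref{assume:1}~(1) fails for the local energy of the electronic Hamiltonian, since $\Expect[|x|^{-4}]$ under a three-dimensional Gaussian diverges at the origin. What the descent argument for \eqref{eq:full-spring} really needs is:
\begin{enumerate}
\item[(a)] $L$ is bounded below;
\item[(b)] $g$ is $C_g$-Lipschitz, so that \cref{lem:2_upper_bound} applies;
\item[(c)] $S(\btheta)$ is positive semidefinite with $\sup_{\btheta}\norm{S(\btheta)}<\infty$;
\item[(d)] the step-size condition \cref{assume:1}~(3).
\end{enumerate}
In both examples $S$ is the constant matrix $\frac14A^\top\Sigma^{-1}A$ or $A^\top A$, so (c) is immediate, and (d) is assumed. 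It remains to verify (a) and (b) for each model.

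\textbf{Continuous case.} Since $\pi_{\btheta}=\calN(A\btheta,\Sigma)$ and $\psi_{\btheta}$ is a translate of a fixed Gaussian by $A\btheta$, the kinetic part of $L$ does not depend on $\btheta$. Hence $L(\btheta)=c_0+V_\Sigma(A\btheta)$, where $V_\Sigma$ is the convolution of the Coulomb potential with the Gaussian density $\calN(0,\Sigma)$. The key step is to show that $V_\Sigma$ is bounded and has bounded Hessian. For this I will use that each pair term $|x_i-x_j|^{-1}$ or $|x_i-R_I|^{-1}$ depends on a single linear combination $y$ of $x$, and its Gaussian average is a smooth function of the mean of $y$. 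That function is bounded, because $|y|^{-1}$ is locally integrable in $\R^3$ and decays at infinity. Its second derivatives are bounded after integrating by parts twice onto the Gaussian kernel. Then $g(\btheta)=A^\top\nabla V_\Sigma(A\btheta)$ is Lipschitz with constant $\norm{A}^2\sup\norm{\nabla^2V_\Sigma}$, and $L$ is bounded below. I expect this smoothing estimate to be the main technical obstacle, in particular handling the singular pair terms uniformly in the mean.

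\textbf{Discrete case.} The local energy is $\Eloc(\btheta;\bfx)=\sum_{\bfx'}\braket{\bfx|\scrH|\bfx'}e^{\i(\bfx'-\bfx)^\top A\btheta}$ and $\pi_{\btheta}$ is uniform. Hence $L$ is a finite trigonometric polynomial in $A\btheta$ (taking real parts). It is therefore bounded, and all its derivatives are bounded, which gives (a) and (b).

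\textbf{Descent argument.} With (a)--(d) in hand, I will rerun the energy argument. From \eqref{eq:full-spring} we have
\begin{equation*}
g(\btheta_k)=-2\bigl[(\lambda I+S)\Delta\btheta_k-\lambda\mu\Delta\btheta_{k-1}\bigr],
\end{equation*}
so, using $S\succeq 0$ and Young's inequality,
\begin{equation*}
g(\btheta_k)^\top\Delta\btheta_k\le-2\lambda\norm{\Delta\btheta_k}^2+\lambda\mu\bigl(\norm{\Delta\btheta_k}^2+\norm{\Delta\btheta_{k-1}}^2\bigr).
\end{equation*}
Combining this with \cref{lem:2_upper_bound} and $\eta_k\le\eta_{k-1}$, the potential $\Phi_k:=L(\btheta_k)+\lambda\mu\eta_{k-1}\norm{\Delta\btheta_{k-1}}^2$ satisfies
\begin{equation*}
\Phi_{k+1}\le\Phi_k-\eta_k\Bigl(2\lambda(1-\mu)-\tfrac{C_g}{2}\eta_k\Bigr)\norm{\Delta\btheta_k}^2.
\end{equation*}
The condition $\eta_0\le 2\lambda(1-\mu)/C_g$ makes the bracket at least $\lambda(1-\mu)$. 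Telescoping and using (a) then gives $\sum_k\eta_k\norm{\Delta\btheta_k}^2\le C'$. Finally,
\begin{equation*}
\norm{g(\btheta_k)}^2\le 8\bigl(\norm{\lambda I+S}^2\norm{\Delta\btheta_k}^2+\lambda^2\mu^2\norm{\Delta\btheta_{k-1}}^2\bigr),
\end{equation*}
and with $\eta_k\le\eta_{k-1}$ this yields the stated bound on $\sum_{k=1}^K\eta_k\norm{g(\btheta_k)}^2$.
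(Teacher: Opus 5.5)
Your proposal is correct and follows essentially the same route as the paper. Both reduce to the energy argument of \cref{the:full_spring_convergence}. Both then verify Lipschitz continuity of $g$ and lower-boundedness of $L$. In the continuous case this is done by writing $L$ as a constant kinetic term plus Gaussian-smoothed Coulomb potentials, moving derivatives onto the Gaussian kernel, and obtaining uniform bounds by splitting near and away from the singularity. In the discrete case it is done by recognizing $L$ as a bounded trigonometric polynomial in $A\btheta$.

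Your version is slightly tighter in two places. First, you note explicitly that \cref{assume:1}~(1) fails for Coulomb local energies, which is why the earlier theorem cannot simply be cited. Second, you close with the direct bound $\norm{g(\btheta_k)}^2\le 8\bigl(\norm{\lambda I+S}^2\norm{\Delta\btheta_k}^2+\lambda^2\mu^2\norm{\Delta\btheta_{k-1}}^2\bigr)$ instead of the paper's estimate of $g(\btheta_k)^\top\Delta\btheta_k$. As a result, the uniform boundedness of $g$, which the paper also verifies, is not actually needed.
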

\begin{proof}
    See Appendix \ref{sec:proof_counter}.
\end{proof}

    \section{PRIME-SR: A Momentum-Adaptive SR Method}
\label{sec:adaptive}

\par In this section, we introduce Principal Range Informed MomEntum SR (PRIME-SR), a tuning-free momentum-adaptive variant of SPRING. The design of PRIME-SR is guided by the theoretical insight from Section~\ref{sec:divergence_mu_1}: under aggressive momentum reuse at $\mu=1$, instability arises from kernel-related directions. This suggests that momentum control should link to the sampled spectral information revealed by the SR matrix at the current iteration.

\par Our basic viewpoint is that momentum control should depend on two complementary aspects of the SR information: first, how the sampled spectrum is distributed; second, how reliable the associated sampled directions are. The first aspect is encoded by an effective spectral dimension indicator, while the second is characterized by a subspace-overlap indicator. PRIME-SR combines these two signals to determine the momentum parameter adaptively: stronger momentum reuse is encouraged only when both the spectral and range space information are favorable. Throughout this section, unless stated otherwise, we use the shorthand notations
\[
\bar{E}_k:=\bar{E}(\btheta_k;\calB_k),\qquad
O_k:=O(\btheta_k;\calB_k),\qquad
S_k:=S(\btheta_k;\calB_k),\qquad
g_k:=g(\btheta_k;\calB_k).
\]

\subsection{Design Principle and Effective Spectral Dimension}
\label{subsec:adaptive_ideal}

\par We begin with the spectral aspect, and let the SVD of $O_k\in\R^{N_p\times N_s}$ be
\begin{equation}
    O_k =
    \begin{bmatrix}
        U_k^{\mathcal R} & U_k^{\mathcal K}
    \end{bmatrix}
    \begin{bmatrix}
        \Sigma_k & 0\\
        0 & 0
    \end{bmatrix}
    \begin{bmatrix}
        (V_k^{\mathcal R})^\top\\
        (V_k^{\mathcal K})^\top
    \end{bmatrix},
    \qquad
    \Sigma_k=\mathrm{diag}(\varsigma_{k,1},\dots,\varsigma_{k,r_k}),
    \label{eq:O_svd_decomp}
\end{equation}
where $\varsigma_{k,1},\dots,\varsigma_{k,r_k}>0$ are the nonzero singular values of $O_k$. Therefore, the SR matrix $S_k = O_k O_k^\top$ has nonzero eigenvalues $\varsigma_{k,1}^2,\dots,\varsigma_{k,r_k}^2$, and $r_k=\text{rank}(S_k)$.

\par To quantify how the sampled spectrum is distributed, we introduce the following effective spectral dimension:
\begin{equation}
    \alpha_k
    :=
    \frac{\left(\sum_{i=1}^{r_k}\varsigma_{k,i}^2\right)^2}
         {\sum_{i=1}^{r_k}\varsigma_{k,i}^4}
    \le r_k ,
    \label{eq:beta_k}
\end{equation}
where the upper bound $r_k$ is from the Cauchy-Schwarz inequality. This quantity approximately measures the number of principal spectral directions. Indeed, if $\varsigma_{k,1}^2\gg \varsigma_{k,i}^2$ for all $i\neq 1$, then $\alpha_k\approx 1$; if all nonzero singular values are equal, then $\alpha_k=r_k$. Hence $\alpha_k$ may be viewed as an effective dimension of the sampled spectrum.

\par The relevance of $\alpha_k$ to momentum control is the following. A smaller $\alpha_k$ means that the sampled spectral information is concentrated on fewer dominant directions, which in turn suggests a potentially larger kernel-related subspace. In this regime, strong momentum reuse carries a higher risk of amplifying unstable components, and the momentum parameter should therefore be reduced. Conversely, a larger $\alpha_k$ indicates that the sampled spectral information is spread over more effective directions, which is more favorable for aggressive momentum reuse.

\par Figure~\ref{fig:beta_indicator} illustrates the behavior of $\alpha_k$ on the 1D-TFI model with $N=10$, including the full-batch reference and sampled runs with different sample sizes. All other experimental settings are the same as those in Section~\ref{sec:convergence_mu_0_1}. An important empirical observation is that $\alpha_k$ is relatively insensitive to the sample size and remains close to its full-batch counterpart. This indicates that $\alpha_k$ stably captures the spectral structure at the current iteration and is therefore suitable as an online stability indicator.

\begin{figure}[htbp]
    \centering
    \includegraphics[width=0.45\linewidth]{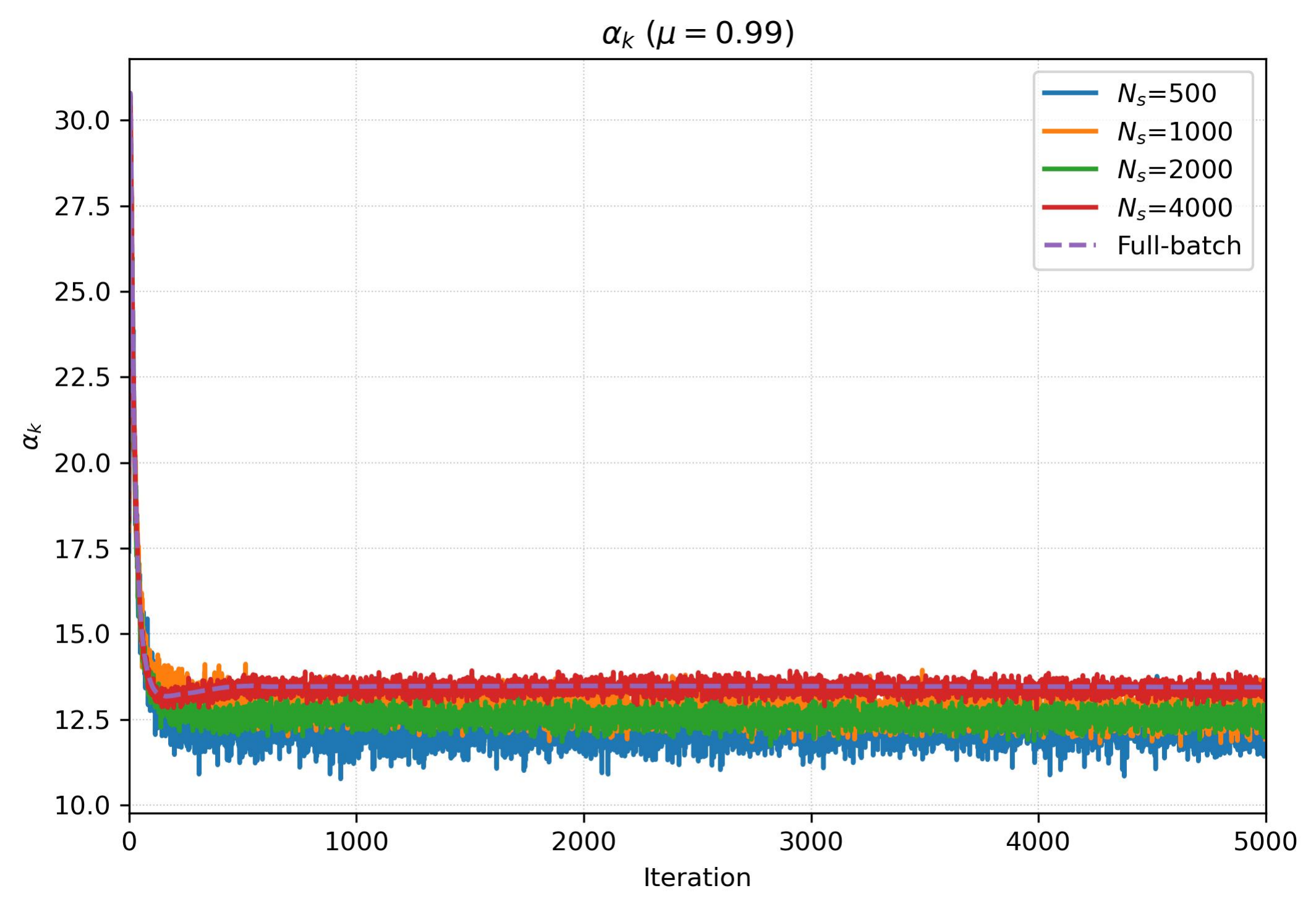}
    \caption{Effective spectral dimension $\alpha_k$ on the 1D-TFI model with $N=10$ under SPRING with $\mu=0.99$. The sampled values remain close to the full-batch reference across different sample sizes.}
    \label{fig:beta_indicator}
\end{figure}

\subsection{Subspace Overlap as a Reliability Indicator}

\par The indicator $\alpha_k$ describes only the sampled spectrum. It does not characterize whether the associated sampled directions are trustworthy. To address this second aspect, we now examine the range space information carried by the singular subspaces of $O_k$. From the decomposition in Eq.~\eqref{eq:O_svd_decomp}, the nonzero spectrum is shared by
\[
S_k = O_k O_k^\top \in \R^{N_p\times N_p}
\qquad\text{and}\qquad
T_k = O_k^\top O_k \in \R^{N_s\times N_s}.
\]
Meanwhile, the associated range space information is carried by the nonzero left and right singular subspaces:
\[
\mathrm{span}(U_k^{\mathcal R})=\mathcal R(S_k),
\qquad
\mathrm{span}(V_k^{\mathcal R})=\mathcal R(T_k).
\]
Since $\alpha_k$ already provides an effective dimension, we define the principal range spaces using the leading $\lceil\alpha_k\rceil$ directions. Specifically, let $U_{k,\alpha}^{\mathcal R}$ and $V_{k,\alpha}^{\mathcal R}$ denote the submatrices of $U_k^{\mathcal R}$ and $V_k^{\mathcal R}$ formed by the leading $\lceil\alpha_k\rceil$ columns.

\par The role of subspace overlap is to quantify the reliability of the sampled range space information. Ideally, such reliability should be measured by comparing the effective sampled subspace with the corresponding effective subspace of the true SR matrix $S(\btheta_k)$. In other words, the ideal quantity is the overlap between the principal range spaces of $S_k$ and $S(\btheta_k)$. Since $S(\btheta_k)$ is unavailable in practice, we introduce two practical surrogates:
\begin{itemize}
    \item the left-subspace surrogate, given by the overlap between the principal range spaces of $S_k$ and $S_{k-1}$;
    \item the right-subspace surrogate, given by the overlap between the principal range spaces of $T_k$ and $T_{k-1}$.
\end{itemize}
These are defined as
\begin{equation}
    \beta_k^{(U)}
    :=
    \left\|
    \left(U_{k,\alpha}^{\mathcal R}\right)^\top
    U_{k-1,\alpha}^{\mathcal R}
    \right\|_F,
    \qquad
    \beta_k^{(V)}
    :=
    \left\|
    \left(V_{k,\alpha}^{\mathcal R}\right)^\top
    V_{k-1,\alpha}^{\mathcal R}
    \right\|_F.
    \label{eq:left_right_overlap}
\end{equation}
These quantities measure the overall alignment between the principal sampled directions at two successive iterations. A small overlap indicates that the principal directions extracted from the two sampled matrices differ significantly, suggesting that sampling does not yet stably resolve the underlying effective subspace. By contrast, a large overlap indicates that the dominant sampled directions are consistently identified across iterations, suggesting that sampling is sufficiently informative to capture the underlying effective subspace more reliably. Therefore, larger values of $\beta_k^{(U)}$ or $\beta_k^{(V)}$ indicate more reliable sampled directions for momentum reuse.

Figure~\ref{fig:left_right_overlap} reports both indicators on the 1D-TFI model with $N=10$. We observe that both the left- and right-subspace overlaps clearly separate different sample sizes: larger sample sizes lead to larger overlap values. This confirms that both indicators reflect the reliability of sampled range space information.

\begin{figure}[htbp]
    \centering
    \begin{minipage}{0.45\linewidth}
        \centering
        \includegraphics[width=\linewidth]{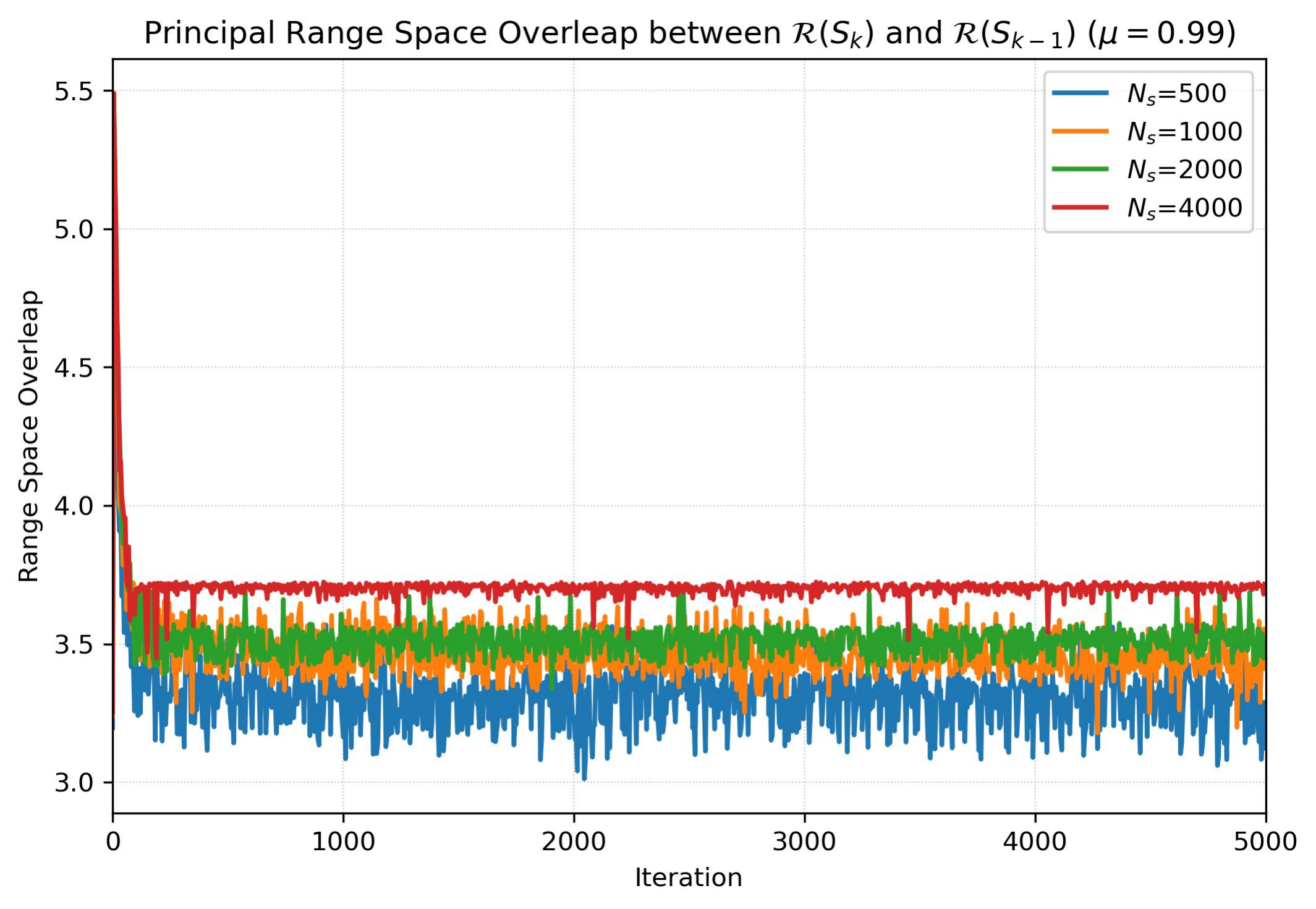}

        \vspace{0.2em}
        \small Left-subspace overlap: $\beta_k^{(U)}$
    \end{minipage}
    \hfill
    \begin{minipage}{0.45\linewidth}
        \centering
        \includegraphics[width=\linewidth]{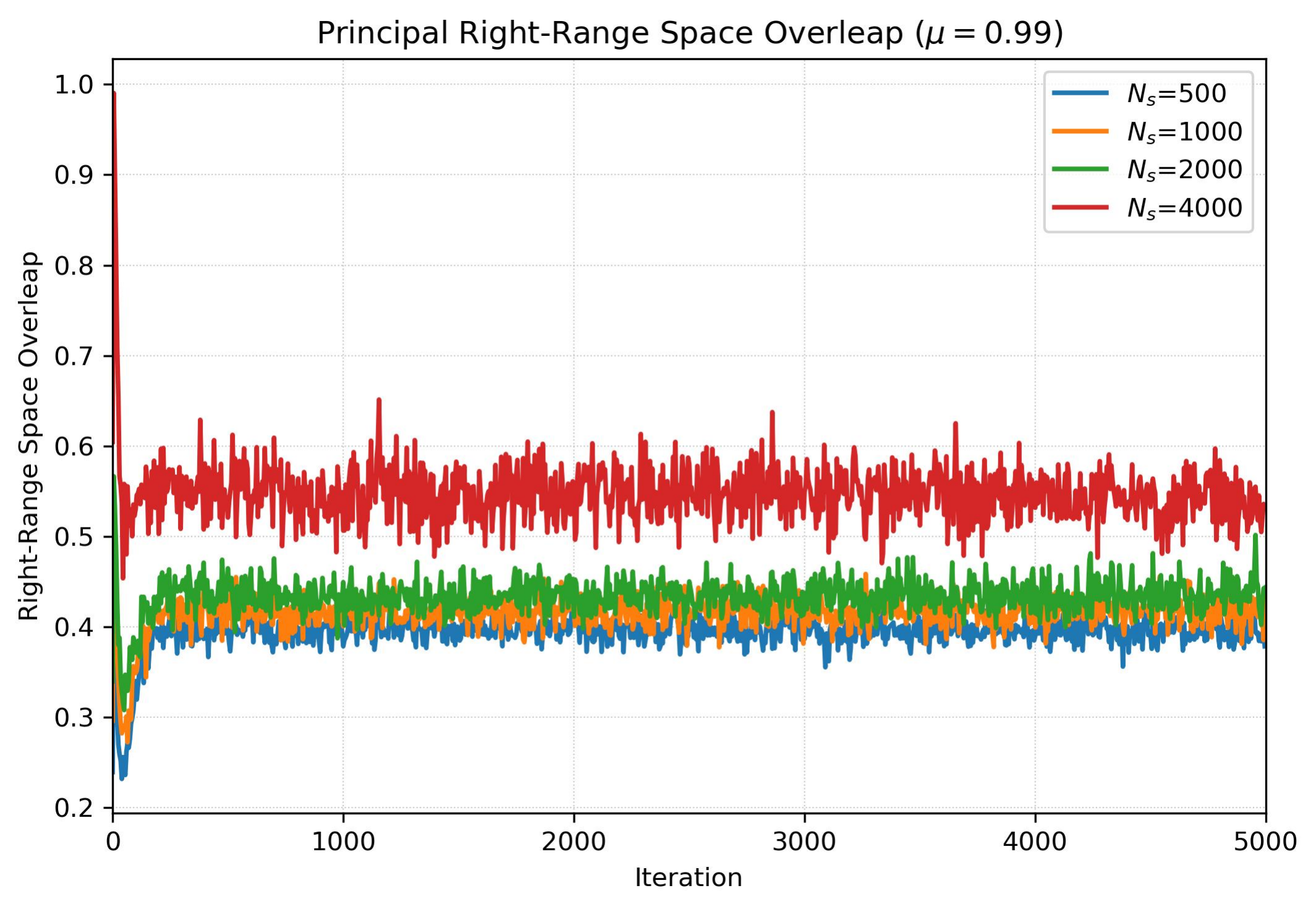}

        \vspace{0.2em}
        \small Right-subspace overlap: $\beta_k^{(V)}$
    \end{minipage}
    \caption{Left and right principal range space overlap indicators on the 1D-TFI model with $N=10$. Both indicators separate different sample sizes and thus reflect the reliability of sampled range space information.}
    \label{fig:left_right_overlap}
\end{figure}

\subsection{Practical Rule and the PRIME-SR Algorithm}
\label{subsec:adaptive_practical}

\par Both indicators in Eq.~\eqref{eq:left_right_overlap} are informative, but they differ in computational convenience. The left-subspace overlap is more directly connected to $\mathcal R(S_k)$. However, computing it requires access to the left singular vectors of $O_k$, which is expensive when $N_p$ is large. We therefore adopt the right-subspace viewpoint and work with the smaller matrix $T_k := O_k^\top O_k \in \R^{N_s\times N_s}$. Since $N_s\ll N_p$ in the applications considered here, this choice is substantially more efficient. Accordingly, PRIME-SR uses the right principal-range-space overlap as its practical reliability indicator:
\begin{equation}
    \tilde{\beta}_k
    :=
    \left\|
    \left(V_{k,\alpha}^{\mathcal R}\right)^\top
    V_{k-1,\alpha}^{\mathcal R}
    \right\|_F
    \in
    \left[0,\sqrt{\min\{\lceil\alpha_k\rceil,\lceil\alpha_{k-1}\rceil\}}\right].
    \label{eq:overleap_practice}
\end{equation}

\par We now combine the two signals into the momentum-control rule. The design principle is simple:
\begin{itemize}
    \item a smaller $\alpha_k$ indicates a potentially larger kernel-related subspace and therefore calls for smaller momentum;
    \item a larger $\tilde{\beta}_k$ indicates more reliable sampled directions and therefore supports larger momentum.
\end{itemize}
Hence, the adaptive momentum parameter $\mu_k$ in PRIME-SR should be increasing in both $\alpha_k$ and $\tilde{\beta}_k$. We use the following practical rule:
\begin{equation}
    \mu_k
    :=
    1-
    \left(
    1-\sqrt{
    \frac{\tilde{\beta}_k}{
    \sqrt{\min\{\lceil\alpha_k\rceil,\lceil\alpha_{k-1}\rceil\}}
    }}
    \right)
    \left(
    1-\left(\frac{\alpha_k}{r_k}\right)^{1/4}
    \right).
    \label{eq:adaptive_mu_practice}
\end{equation}
The resulting PRIME-SR update is
\begin{equation}
    \Delta\btheta_k
    =
    -O_k\left(\lambda I + O_k^\top O_k\right)^{-1}
    \left(\mu_k O_k^\top \Delta\btheta_{k-1} + \bar{E}_k\right)
    + \mu_k \Delta\btheta_{k-1}.
    \label{eq:prime_sr_update}
\end{equation}

\par For practical computation, we directly use the spectral decomposition $T_k = V_k \tilde{\Sigma}_k^2 V_k^\top$. Then $\alpha_k$ admits the equivalent form
\begin{equation}
    \alpha_k
    =
    \frac{\left(\sum_{i=1}^{r_k}\varsigma_{k,i}^2\right)^2}
         {\sum_{i=1}^{r_k}\varsigma_{k,i}^4}
    =
    \frac{\mathrm{tr}(\tilde{\Sigma}_k^2)^2}
         {\mathrm{tr}(\tilde{\Sigma}_k^4)},
    \label{eq:beta_k_practical}
\end{equation}
so it can be computed directly from the spectrum of $T_k$. The matrix $V_{k,\alpha}^{\mathcal R}$ is formed by the $\lceil\alpha_k\rceil$ leading eigenvectors in $V_k$, and $r_k$ is the numerical rank obtained by truncating the spectrum of $\tilde{\Sigma}_k^2$ using a tolerance $\varepsilon_r$. As in our implementation, we use the MATLAB-default choice in \texttt{rank} command: $\varepsilon_r = N_s \varepsilon_m$, where $\varepsilon_m$ denotes machine precision.

\par The additional computational and storage overhead of PRIME-SR relative to SPRING is modest. The extra cost is dominated by the computation of $\tilde{\beta}_k$, which requires $\mathcal O\left(N_s\lceil\alpha_k\rceil\lceil\alpha_{k-1}\rceil\right)$ operations per iteration, while the additional storage cost is $\mathcal O\left(N_s\lceil\alpha_k\rceil\right)$ for storing $V_{k,\alpha}^{\mathcal R}$. In our experiments, $N_s=1000$ and typically $\alpha_k\approx 20\sim 30$, so the overhead is relatively small. Algorithm~\ref{alg:adaptive_spring} summarizes PRIME-SR.

\begin{algorithm}[htbp]
    \caption{PRIME-SR}
    \label{alg:adaptive_spring}
    \KwIn{Initial parameter $\btheta_0\in\R^{N_p}$, sample size $N_s\in\N$, step-size schedule $\eta_k$, regularization parameter $\lambda>0$, norm-constraint parameter $C>0$ and rank tolerance $\varepsilon_r$.}

    $\Delta\btheta_{-1}:=\boldsymbol{0},\ \tilde{\beta}_0=1$;

    \While{the stopping criterion is not met}{
        Sample $\calB_k:=\{X_{k,1},\dots,X_{k,N_s}\}$ from $\pi_{\btheta_k}$;

        Construct $O_k\in\R^{N_p\times N_s}$ and $\bar{E}_k\in\R^{N_s}$ from $\calB_k$;

        Compute the eigendecomposition $T_k=O_k^\top O_k=V_k\tilde{\Sigma}_k^2V_k^\top$;

        Compute $\alpha_k$ via Eq.~\eqref{eq:beta_k_practical}, and compute $r_k$ using tolerance $\varepsilon_r$ on $\tilde{\Sigma}_k^2$;

        Form $V_{k,\alpha}^{\mathcal R}$ using the $\lceil\alpha_k\rceil$ leading eigenvectors of $T_k$;

        \If{$k\ge 1$}{
            Compute $\tilde{\beta}_k$ by Eq.~\eqref{eq:overleap_practice};
        }

        Compute $\mu_k$ by Eq.~\eqref{eq:adaptive_mu_practice};

        Update $\btheta_{k+1}$ by Algorithm \ref{alg:spring} with $\mu_k$;

        Record $\alpha_k$ and $V_{k,\alpha}^{\mathcal R}$, set $k:=k+1$, and update $\eta_k$.
    }

    \KwOut{Optimized parameters and energy.}
\end{algorithm}

\begin{remark}
At present, the convergence analysis developed in Section~\ref{sec:theory} does not directly extend to PRIME-SR, since the momentum parameter in PRIME-SR is updated adaptively, whereas our current theory is established for SPRING with a fixed momentum parameter. Nevertheless, the numerical results in the following section show that PRIME-SR provides a clear practical advantage: it substantially improves stability and robustness over fixed-$\mu$ SPRING, avoids manual tuning, and achieves performance comparable to, and in some cases better than, the optimally tuned SPRING.
\end{remark}
    \section{Numerical Experiments}
\label{sec:experiment}

\par In this section, we evaluate the proposed PRIME-SR method and compare it with SPRING using fixed, hand-tuned momentum parameters $\mu$. The goal of these experiments is not to claim uniform superiority over the best fixed choice of $\mu$, but rather to assess whether PRIME-SR can (i) achieve performance comparable to near-optimal fixed-$\mu$ SPRING runs, and (ii) provide improved robustness against unstable momentum choices in practical VMC settings.

\par All experiments are conducted on an Ubuntu 20.04.1 system equipped with an NVIDIA GeForce RTX 4090 GPU and CUDA 12.0. Spin-lattice experiments are implemented using NetKet~\cite{netket3:2022}, a VMC framework based on JAX \cite{jax2018github}, while electronic-structure experiments are performed using VMCNet \cite{vmcnet2024github}.

\par For spin-lattice models, we consider the TFI model and the Heisenberg model, both with periodic boundary conditions (see Appendix~\ref{sec:tfi_heisenberg_define} for details):
\begin{itemize}
    \item TFI model
    \begin{equation*}
        \scrH = -\sum_{<i,j>}\sigma_i^z\sigma_j^z - h\sum_{j}\sigma_j^x.
    \end{equation*}

    \item Heisenberg model
    \begin{equation*}
        \scrH = \sum_{<i,j>}\sigma_i^x\sigma_j^x+\sigma_i^y\sigma_j^y+\sigma_i^z\sigma_j^z.
    \end{equation*}
\end{itemize}
We test the 1D-Heisenberg model with $N=100$ sites, and the 2D-Heisenberg and 2D-TFI models with $N=10\times 10$ sites; for the 2D-TFI model we use transverse field strengths $h=2,3,4$. These settings follow \cite{armegioiu2025functional}. For all lattice experiments, we use RBM wavefunction with $D=5N$ hidden units, initialized with random seed \texttt{jax.random.PRNGKey(0)} and using 64-bit precision. Each run uses $K=10{,}000$ optimization iterations with $N_s=1000$ MCMC samples per iteration. We employ a burn-in of $3000$ steps, single-spin-flip proposals, and $10$ MCMC transitions between optimization iterations. We apply mean-centered local-energy clipping at $5$ standard deviations, following common VMC practice~\cite{pfau2020ab,goldshlager2024kaczmarz}. Reported energy trajectories are smoothed using a sliding window average over 100 iterations, as in \cite{goldshlager2024kaczmarz}.

\par For electronic systems, we consider $\mathrm{C}$, $\mathrm{N}$, and $\mathrm{O}$ atoms, as well as $\mathrm{LiH}$, $\mathrm{N}_2$ and $\mathrm{CO}$ molecules. For $\mathrm{N}_2$, we use the equilibrium bond length $2.016$ Bohr, and for $\mathrm{CO}$ we use $2.173$ Bohr, following \cite{goldshlager2024kaczmarz}. Benchmark ground-state energies for $\mathrm{C}$, $\mathrm{N}$, $\mathrm{O}$, $\mathrm{N}_2$, and $\mathrm{CO}$ are taken from \cite{goldshlager2024kaczmarz}, while the benchmark for $\mathrm{LiH}$ is taken from \cite{pfau2020ab}. A standard FermiNet architecture with 16 dense determinants and 32-bit precision is used, consistent with~\cite{goldshlager2024kaczmarz}. Each electronic experiment runs for $K=100{,}000$ iterations with $N_s=1000$ samples per iteration. Gaussian all-electron proposals are used, with adaptive standard deviation to maintain an acceptance ratio of approximately 50\% (See Appendix \ref{sec:ferminet_detail} for details). Reported energy trajectories are smoothed using a sliding window average over 3000 iterations. 

\par Across all experiments, we use the inverse-time step-size schedule $\eta_k=\dfrac{\eta_0}{1+ck}$ with $c=10^{-4}$. We set $\eta_0=0.02$ for all spin-lattice systems and for $\mathrm{C}$, $\mathrm{N}$, $\mathrm{O}$, and $\mathrm{LiH}$, and $\eta_0=0.002$ for $\mathrm{N}2$ and $\mathrm{CO}$, matching \cite{goldshlager2024kaczmarz}. We use regularization $\lambda=10^{-3}$ and a norm constraint $C=10^{-3}$ throughout. For fixed-$\mu$ 
SPRING, we tune $\mu=0,0.2,0.4,0.8,0.9,0.95,0.995$ for spin-lattice models, and we exclude $\mu=0.995$ for electronic systems due to stability issues. For PRIME-SR, we set the rank tolerance as $\varepsilon_r=N_s\varepsilon_m$, where $\varepsilon_m$ denotes machine precision.

\subsection{Experiments on Spin-Lattice Models}

\par Figures~\ref{fig:heisenberg_energy_mu} and \ref{fig:2D_Ising_adaptive} report results for the 2D-TFI model with $h=2,3,4$ and for the 1D/2D-Heisenberg models. The corresponding $\mu_k$ trajectories are plotted every 100 iterations. Across all tested lattice systems, PRIME-SR consistently reaches energy levels comparable to those obtained by the optimal or near-optimal fixed-$\mu$ runs. At the same time, the fixed-$\mu$ baselines exhibit a clear spread in performance across different choices, indicating that careful tuning is important for obtaining strong results.

\begin{figure}[H]
\centering
\begin{subfigure}{\linewidth}
\centering
\includegraphics[width=0.48\linewidth]{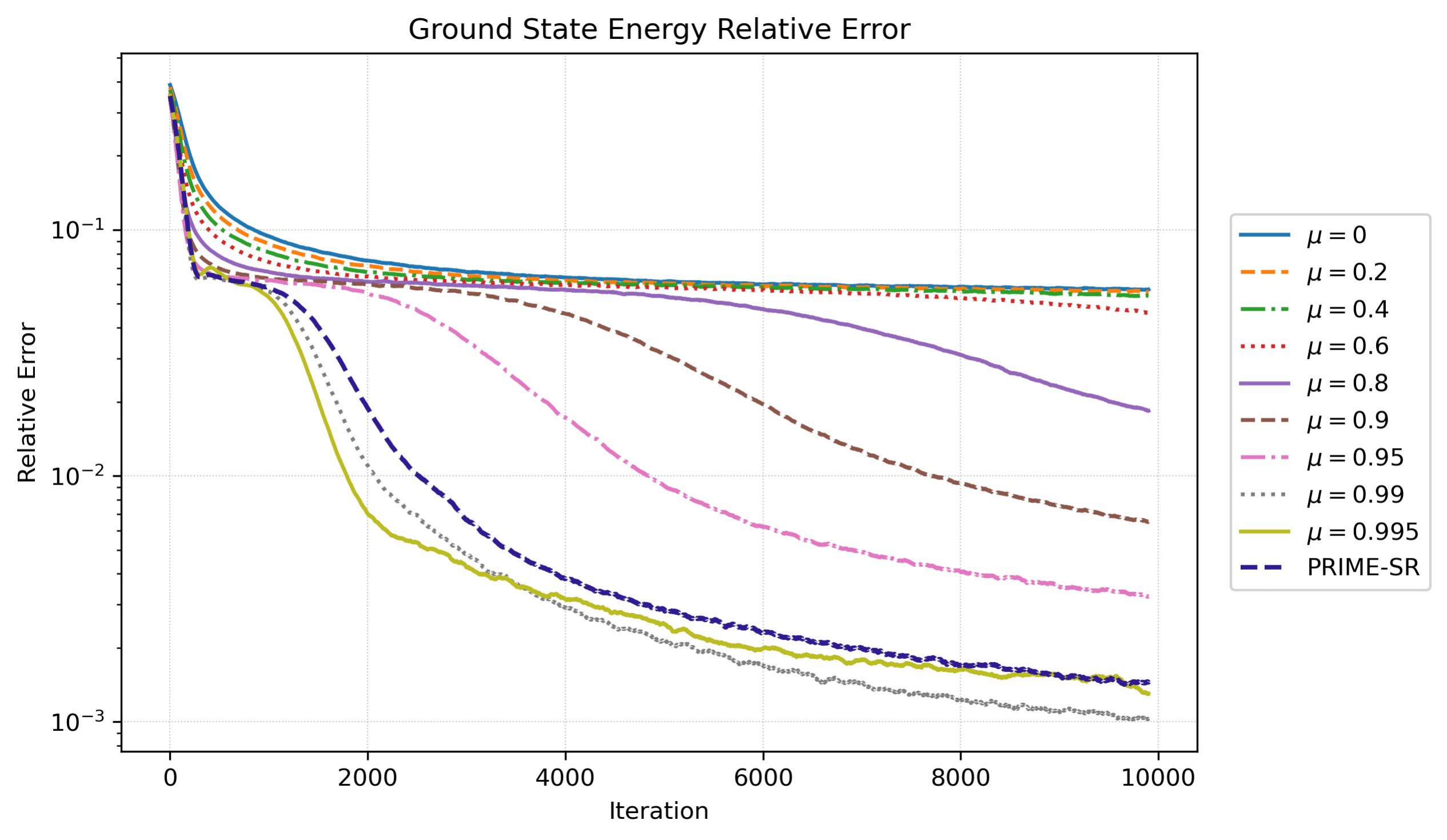}
\hfill
\includegraphics[width=0.42\linewidth]{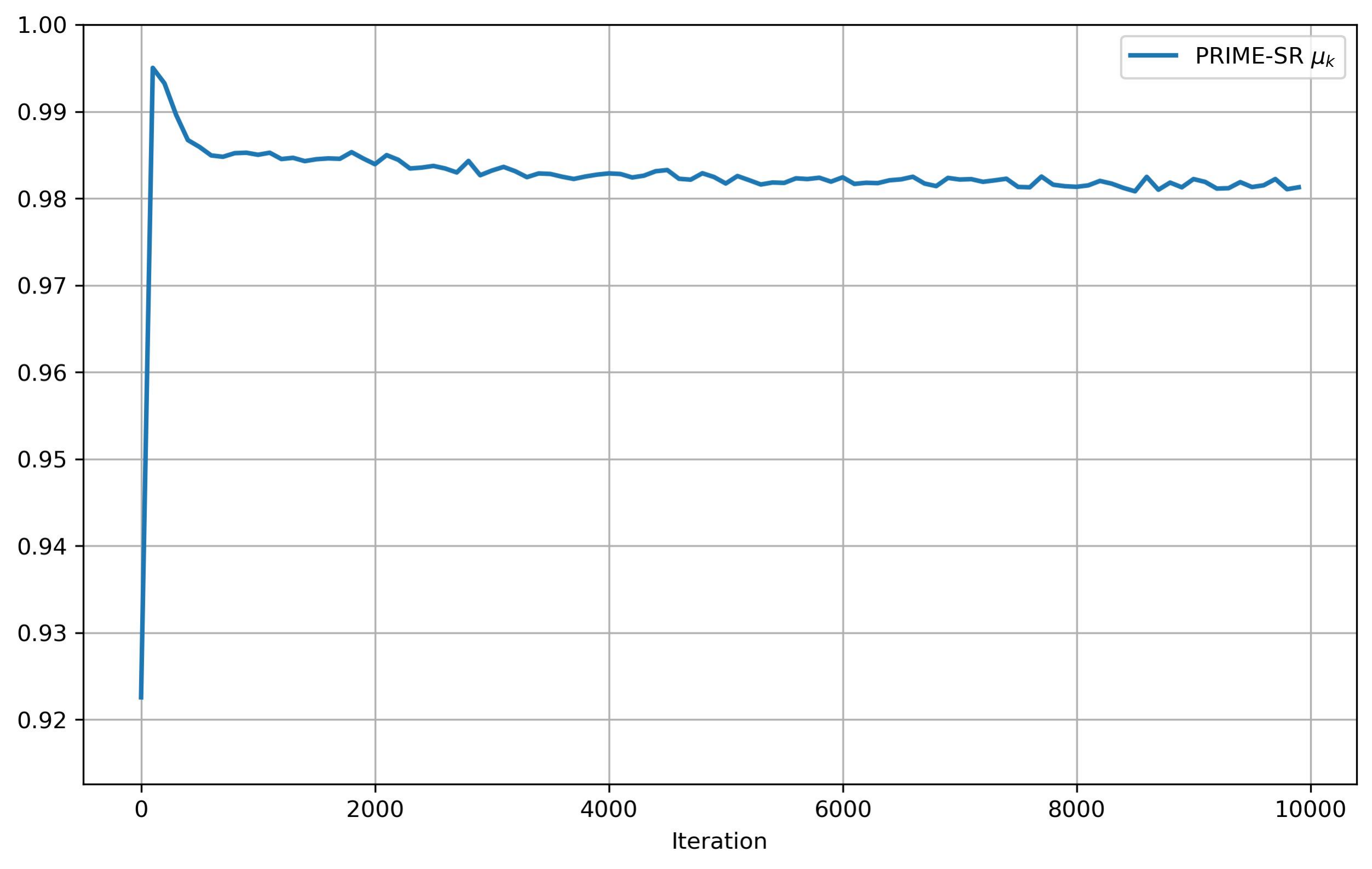}
\caption{1D-Heisenberg model with $N=100$ sites. Left: relative energy error. Right: $\mu_k$.}
\end{subfigure}

\vspace{0.5em}

\begin{subfigure}{\linewidth}
\centering
\includegraphics[width=0.48\linewidth]{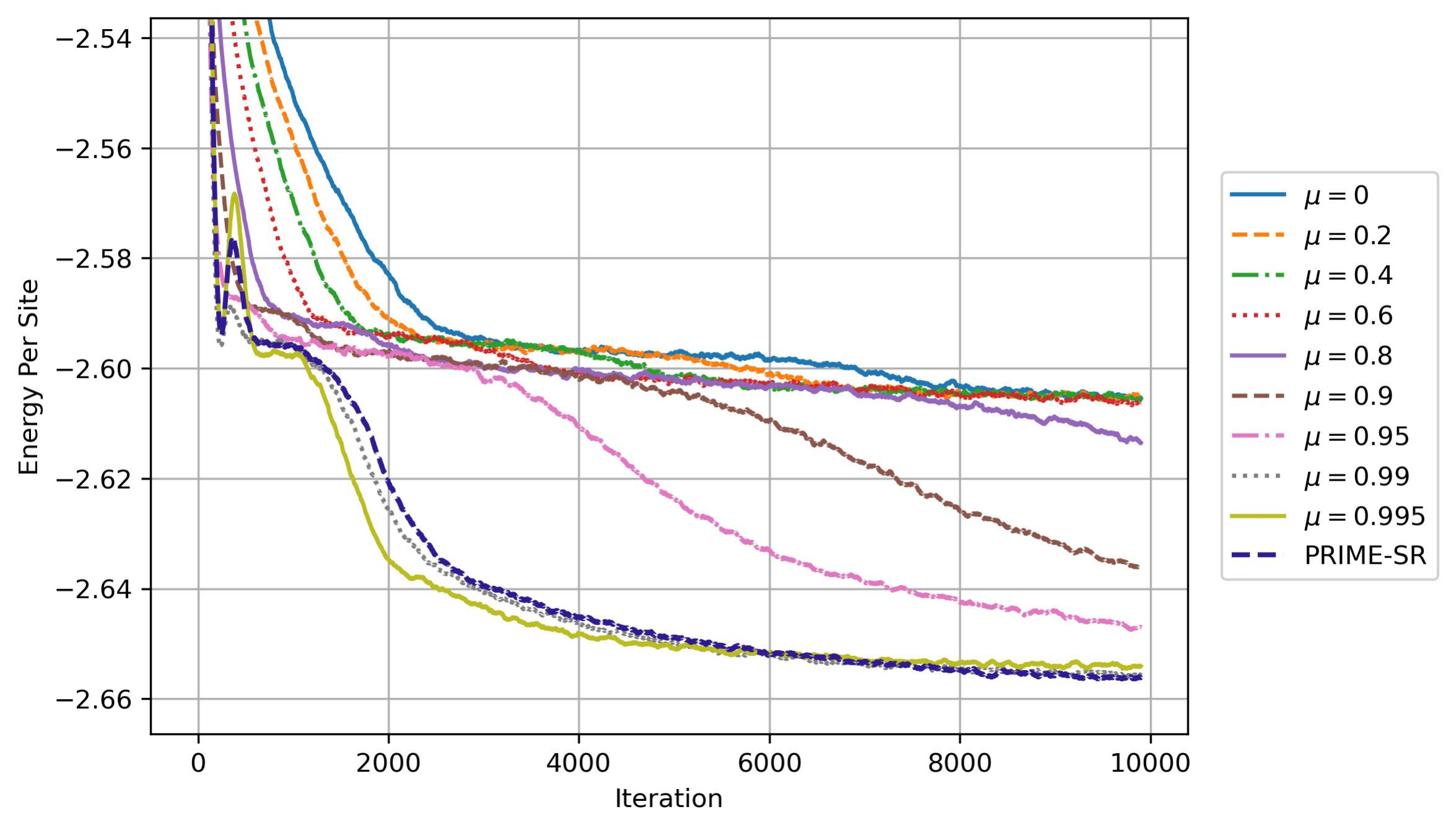}
\hfill
\includegraphics[width=0.42\linewidth]{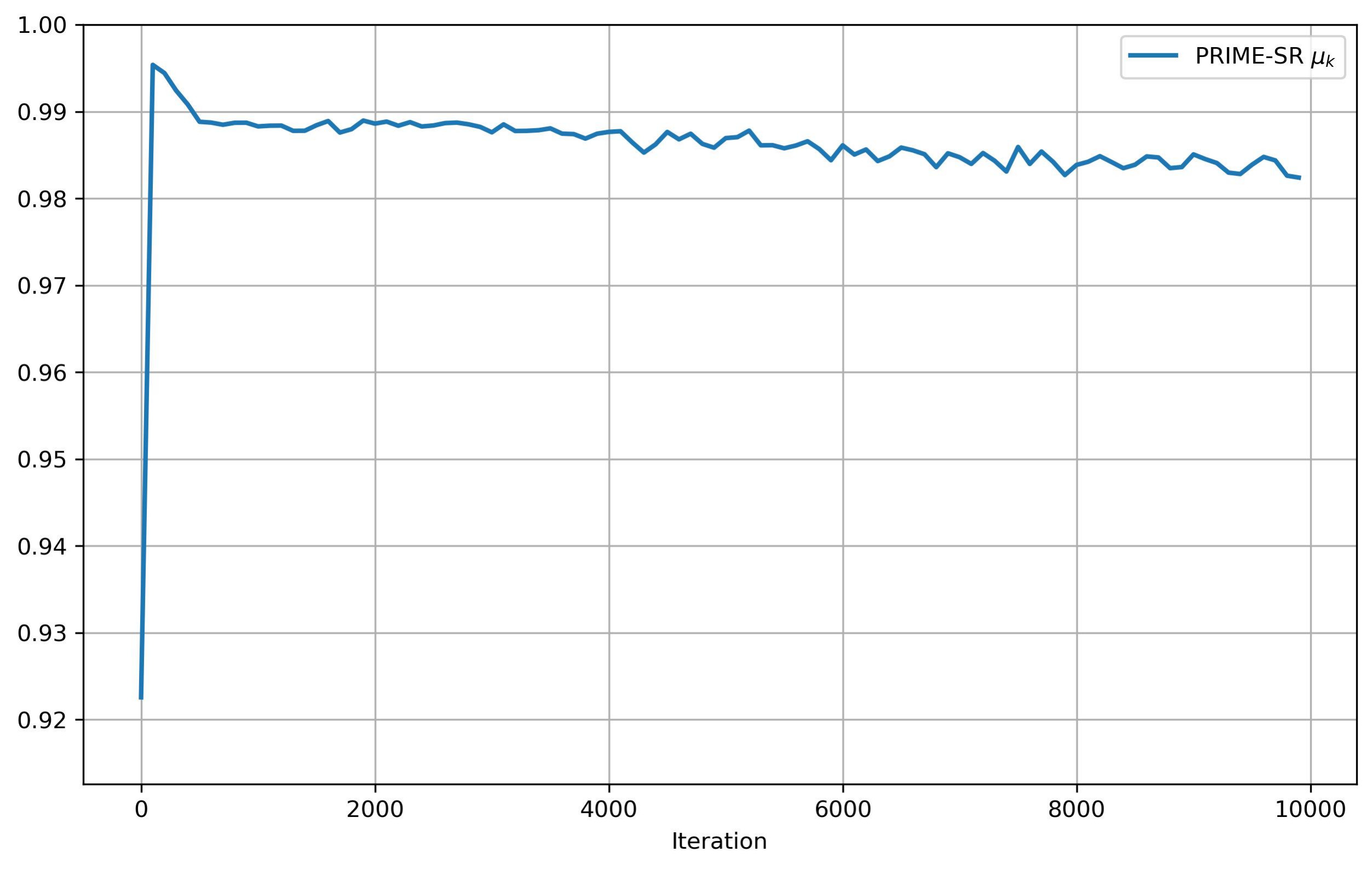}
\caption{2D-Heisenberg model with $N=10\times10$ sites. Left: relative energy error. Right: $\mu_k$.}
\end{subfigure}

\caption{PRIME-SR on the Heisenberg model. Left column: relative energy error. Right column: adaptive momentum parameter $\mu_k$.}

\label{fig:heisenberg_energy_mu}

\end{figure}

\par Values of $\mu$ that perform well on the 2D-TFI problems are not always equally effective on the Heisenberg models, and vice versa. This model dependence makes it difficult to choose a single fixed momentum that works reliably across different systems. In contrast, PRIME-SR remains competitive throughout these tests without requiring problem-specific tuning, which is precisely the practical advantage of the proposed adaptive strategy.

\subsection{Sensitivity of Initialization on Electronic Systems}
\label{sec:initial_sensitive}

\par For electronic systems, we observe that fixed-$\mu$ SPRING is also sensitive to initialization, i.e., to the initial parameter $\btheta_0$, especially for $\mu\ge 0.9$. We test five initializations corresponding to random seeds \texttt{jax.random.PRNGKey($i$)} for $i=0,1,2,3,4$. Fig.~\ref{fig:spring_senstive_initial} reports results for $\mathrm{N}_2$ and $\mathrm{CO}$ molecules with $\mu=0.9,0.95$. The dependence on initialization is pronounced: for some seeds, the optimization remains stable and reaches low energy error, whereas for others, the same hyperparameter setting leads to visibly worse behavior and can even lead to divergence. This indicates that, in the electronic setting, the performance of fixed-$\mu$ SPRING depends not only on the choice of $\mu$, but also strongly on the initialization.

\begin{figure}[H]
\centering

\begin{subfigure}{\linewidth}
\centering
\includegraphics[width=0.48\linewidth]{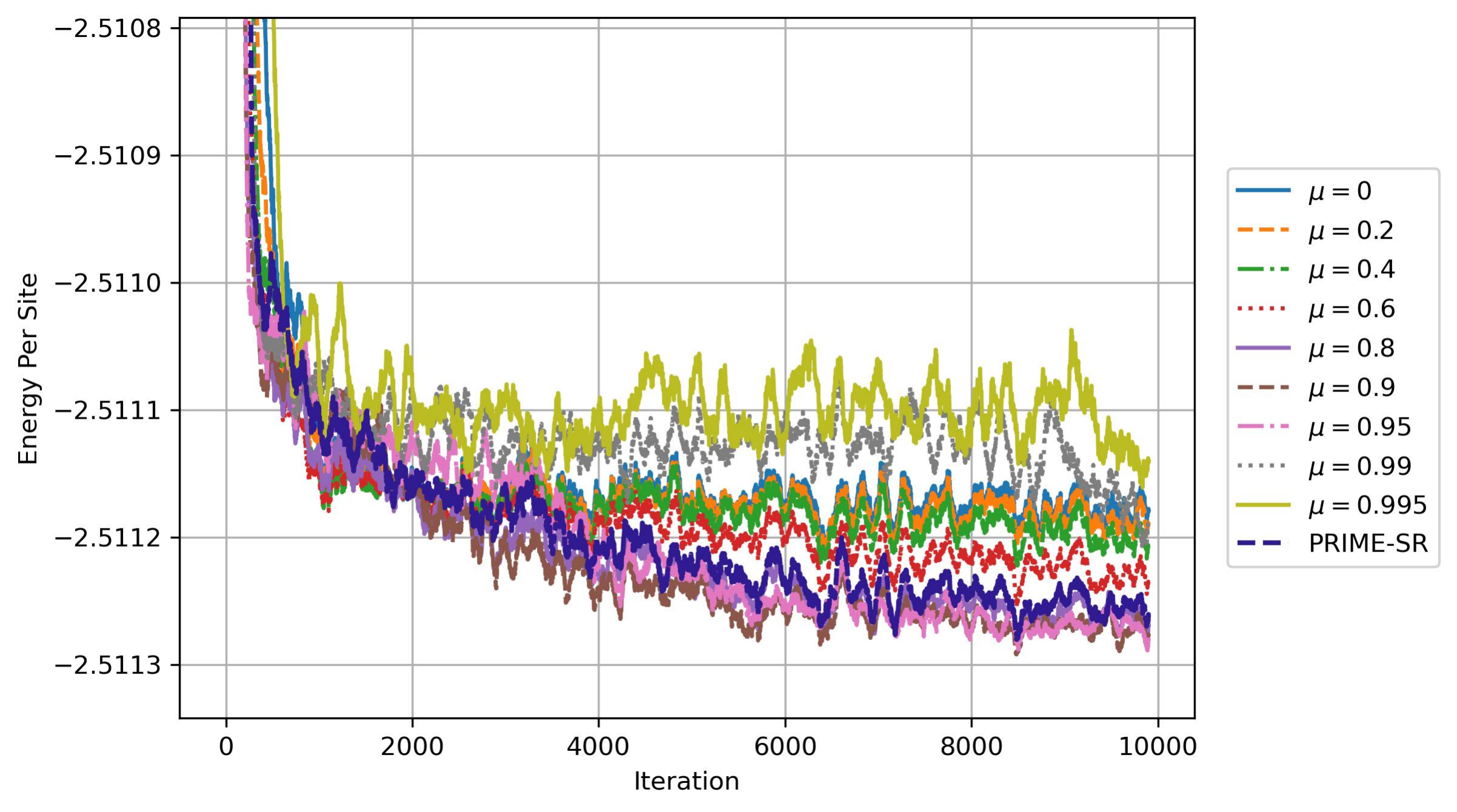}
\hfill
\includegraphics[width=0.42\linewidth]{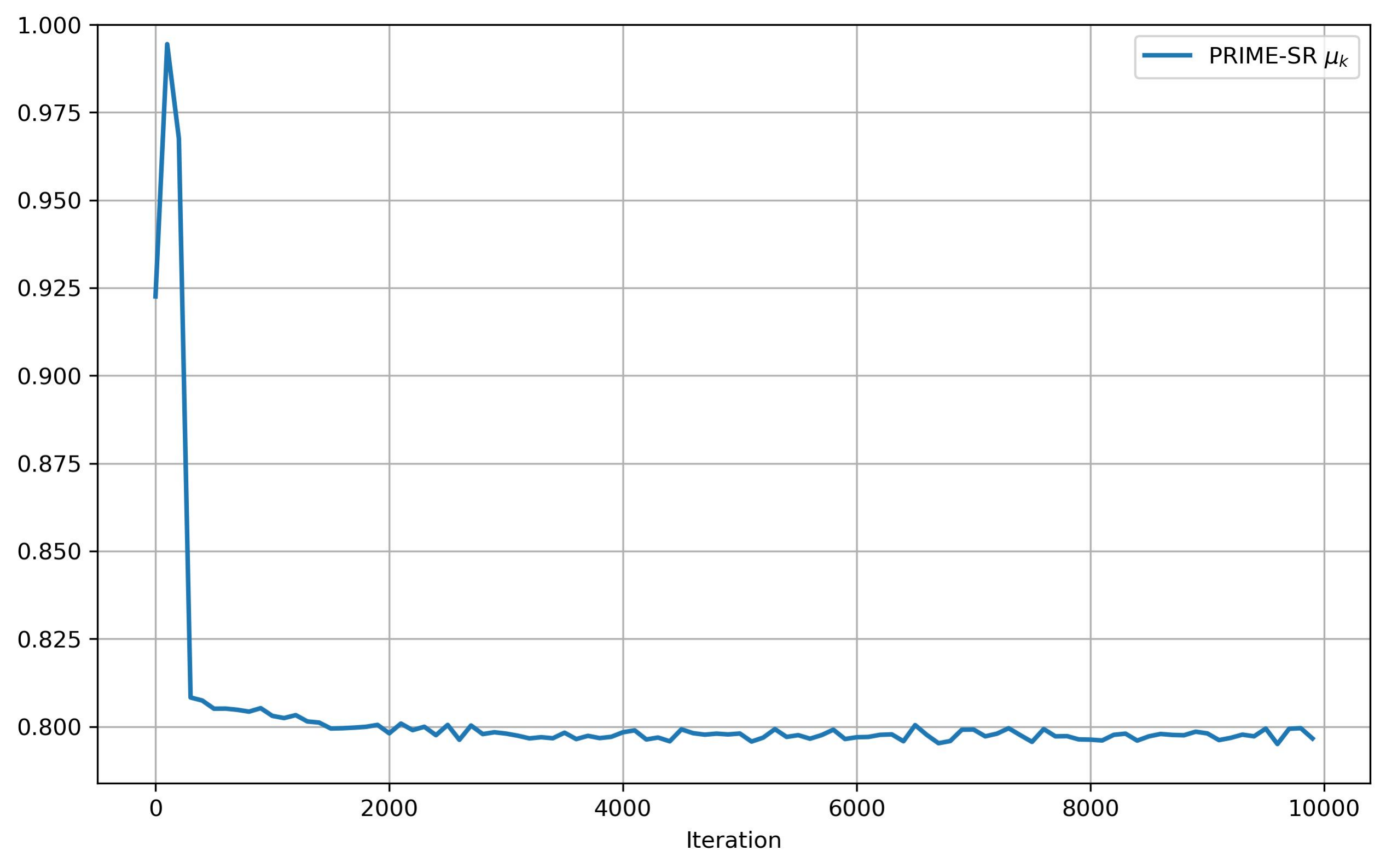}
\caption{$h=2$. Left: relative energy error. Right: $\mu_k$.}
\end{subfigure}

\vspace{0.5em}

\begin{subfigure}{\linewidth}
\centering
\includegraphics[width=0.48\linewidth]{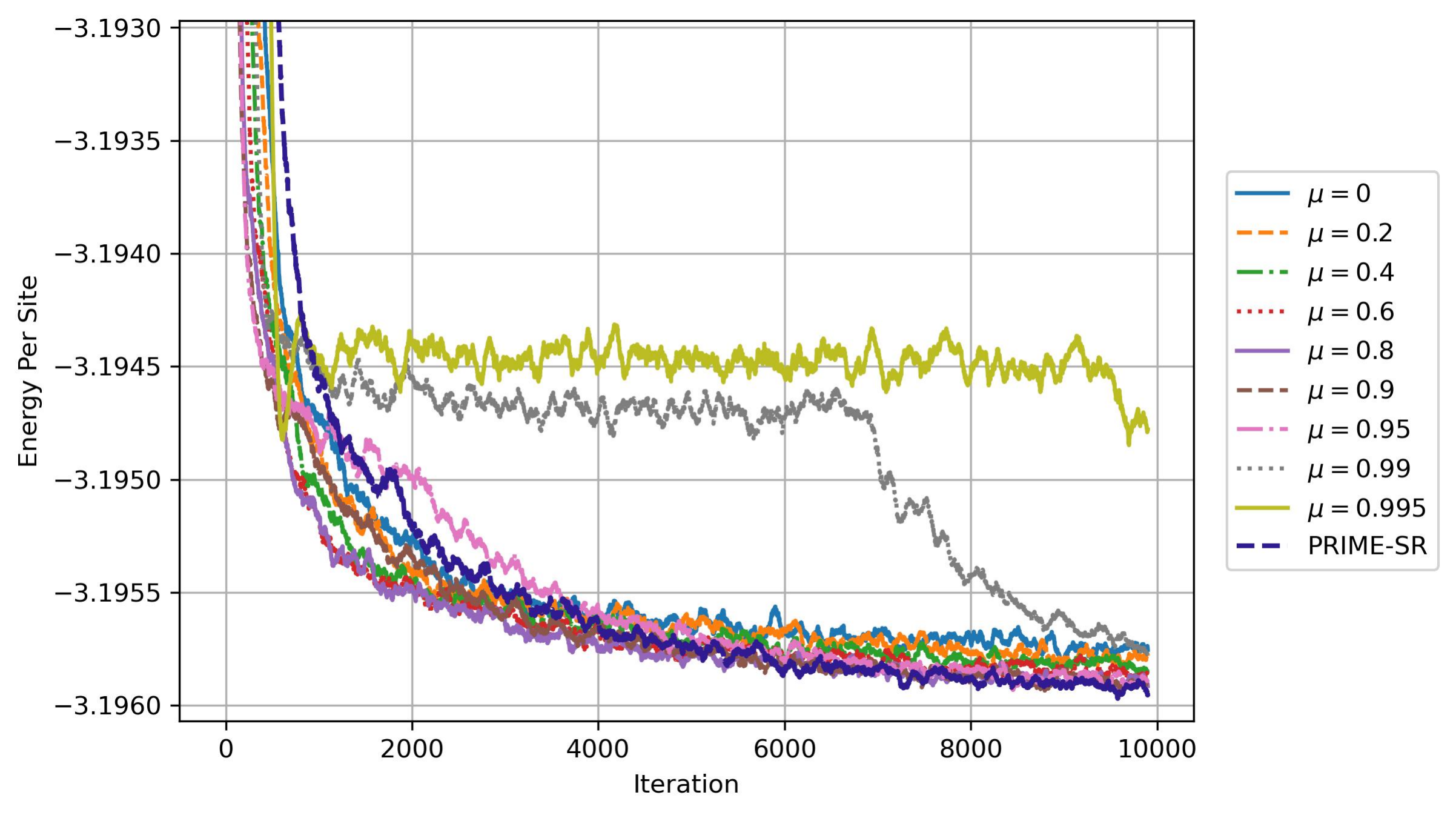}
\hfill
\includegraphics[width=0.42\linewidth]{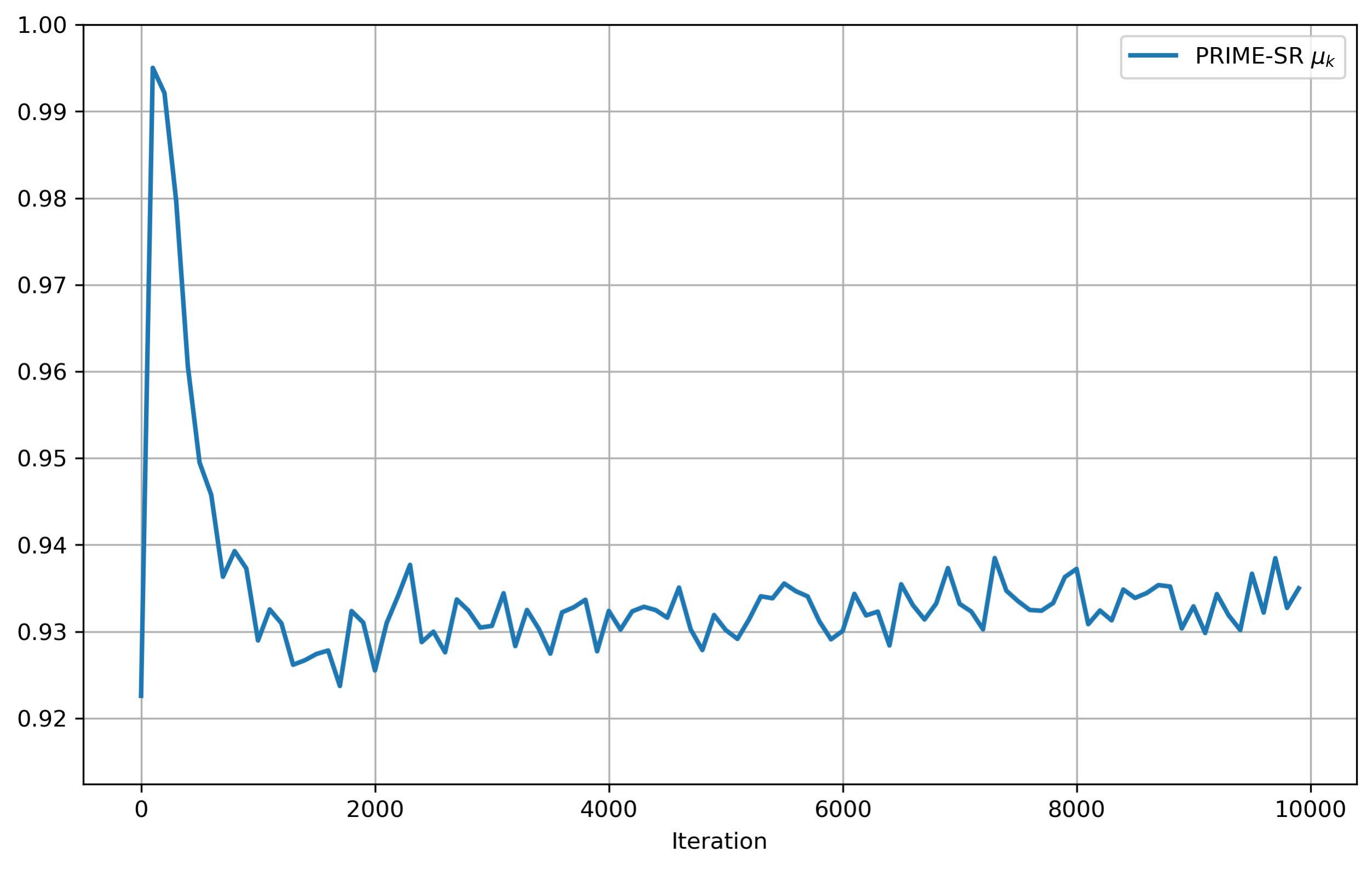}
\caption{$h=3$. Left: relative energy error. Right: $\mu_k$.}
\end{subfigure}

\vspace{0.5em}

\begin{subfigure}{\linewidth}
\centering
\includegraphics[width=0.48\linewidth]{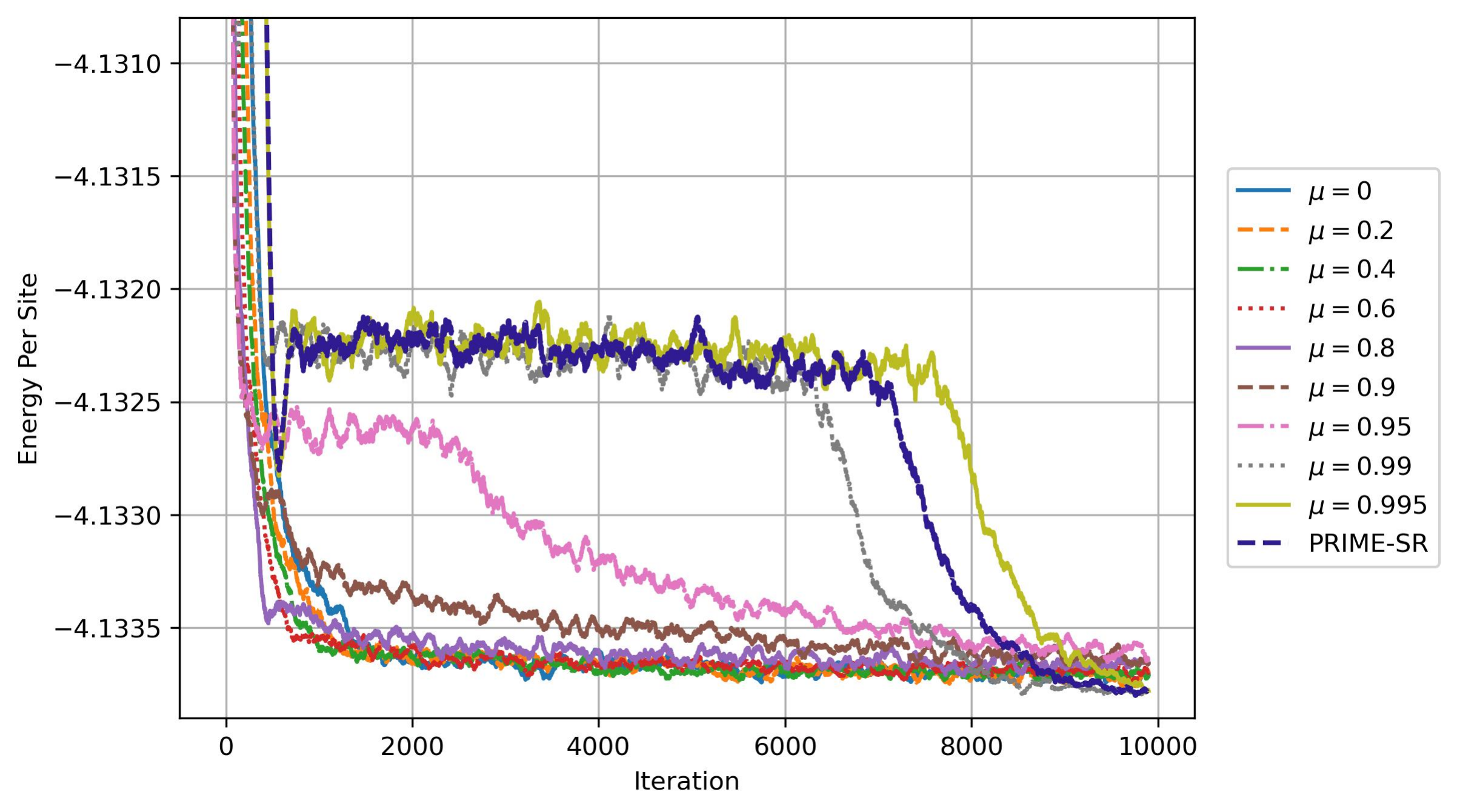}
\hfill
\includegraphics[width=0.42\linewidth]{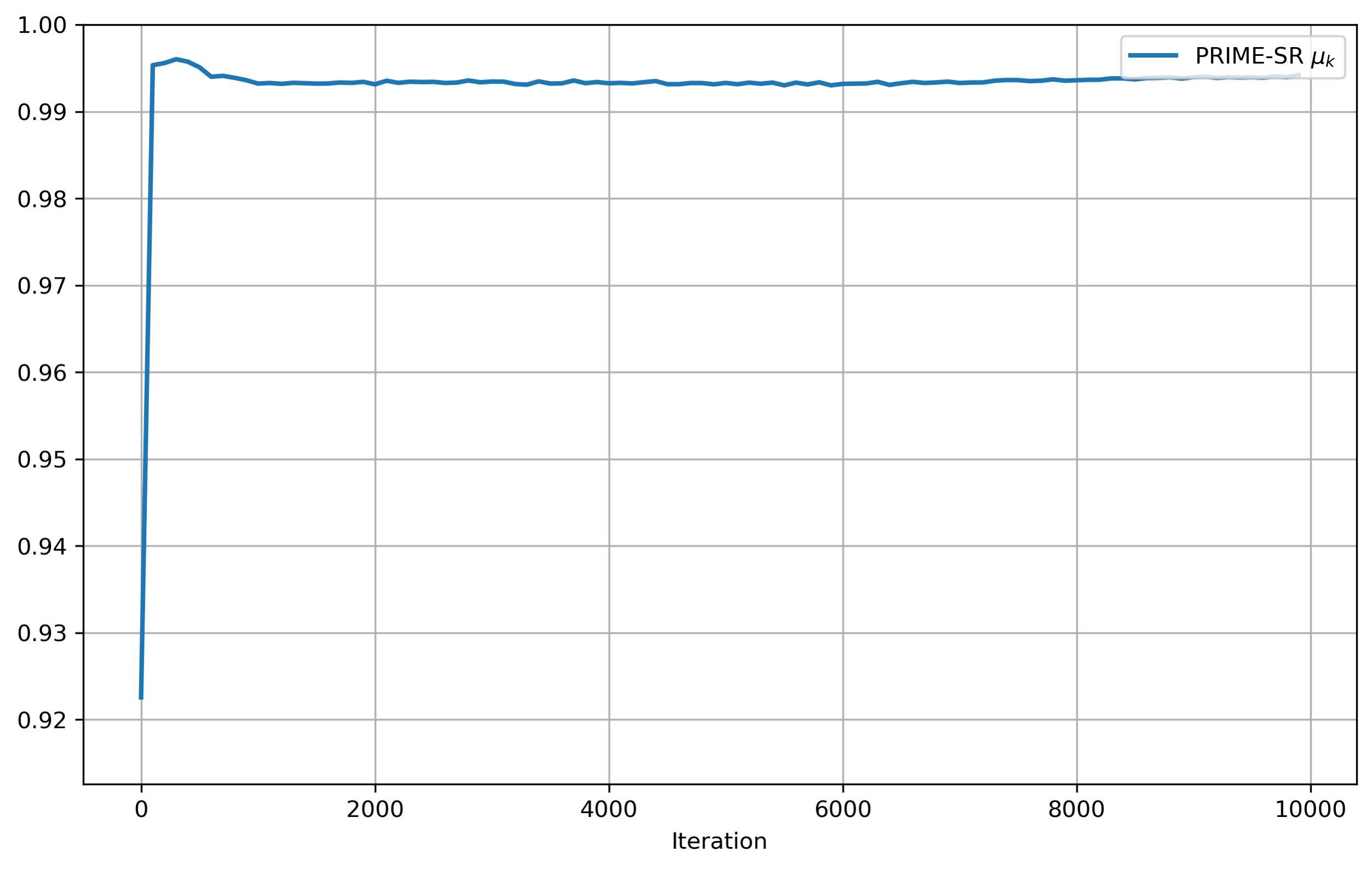}
\caption{$h=4$. Left: relative energy error. Right: $\mu_k$.}
\end{subfigure}

\caption{PRIME-SR on the 2D-TFI model with $N=10\times10$ sites. Left column: relative energy error. Right column: adaptive momentum parameter $\mu_k$.}

\label{fig:2D_Ising_adaptive}

\end{figure}

% ==========

\par As a direct comparison, Fig.~\ref{fig:adaptive_senstive_initial} shows the corresponding results of PRIME-SR on the $\mathrm{N}_2$ and $\mathrm{CO}$ molecules across the same set of random seeds. In contrast to fixed-$\mu$ SPRING, all runs remain stable and attain similar final accuracies. The reduction in seed sensitivity is one of the main practical advantages of PRIME-SR and highlights its substantially improved robustness in the electronic setting.

\subsection{Experiments on Atomic Systems}

\par We next report experiments on the atoms $\mathrm{C}$, $\mathrm{N}$, and $\mathrm{O}$, comparing fixed-$\mu$ SPRING and PRIME-SR. Due to instability at large $\mu$, we exclude some $\mu$ values from the comparison. Specifically, for $\mathrm{C}$ atom, we compare against $\mu=0,0.2,0.4,0.6,0.8,0.9,0.95,0.99$, while for $\mathrm{N}$ and $\mathrm{O}$ atoms we use $\mu=0,0.2,0.4,0.6,0.8,0.9,0.95$. Unstable runs for $\mathrm{N}$ and $\mathrm{O}$ atoms with $\mu=0.99$ are reported in Appendix~\ref{sec:spring_unstable_N_O}. Figure~\ref{fig:compare_spring_atom_seed_0} shows results for random seed 0, and the results of additional seeds are provided in Appendix~\ref{sec:atom_random_seed}. The trajectories of $\mu_k$ are plotted every 1000 iterations.

\begin{figure}[H]
    \centering
    \begin{subfigure}[t]{0.48\linewidth}
        \centering
        \includegraphics[width=\linewidth]{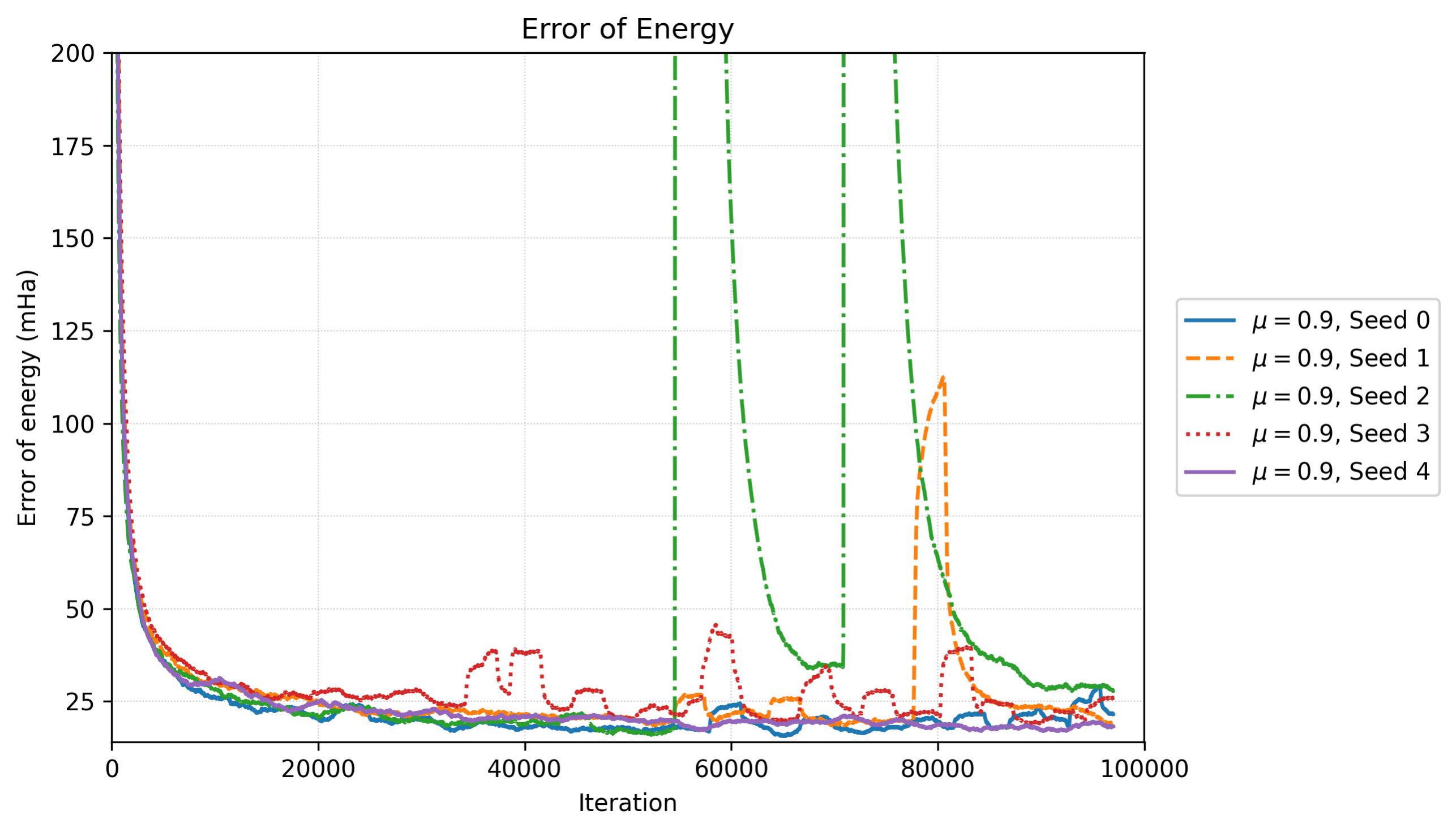}
        \caption{$\mathrm{N}_2$, $\mu=0.9$. Relative energy error.}
    \end{subfigure}
    \hfill
    \begin{subfigure}[t]{0.48\linewidth}
        \centering
        \includegraphics[width=\linewidth]{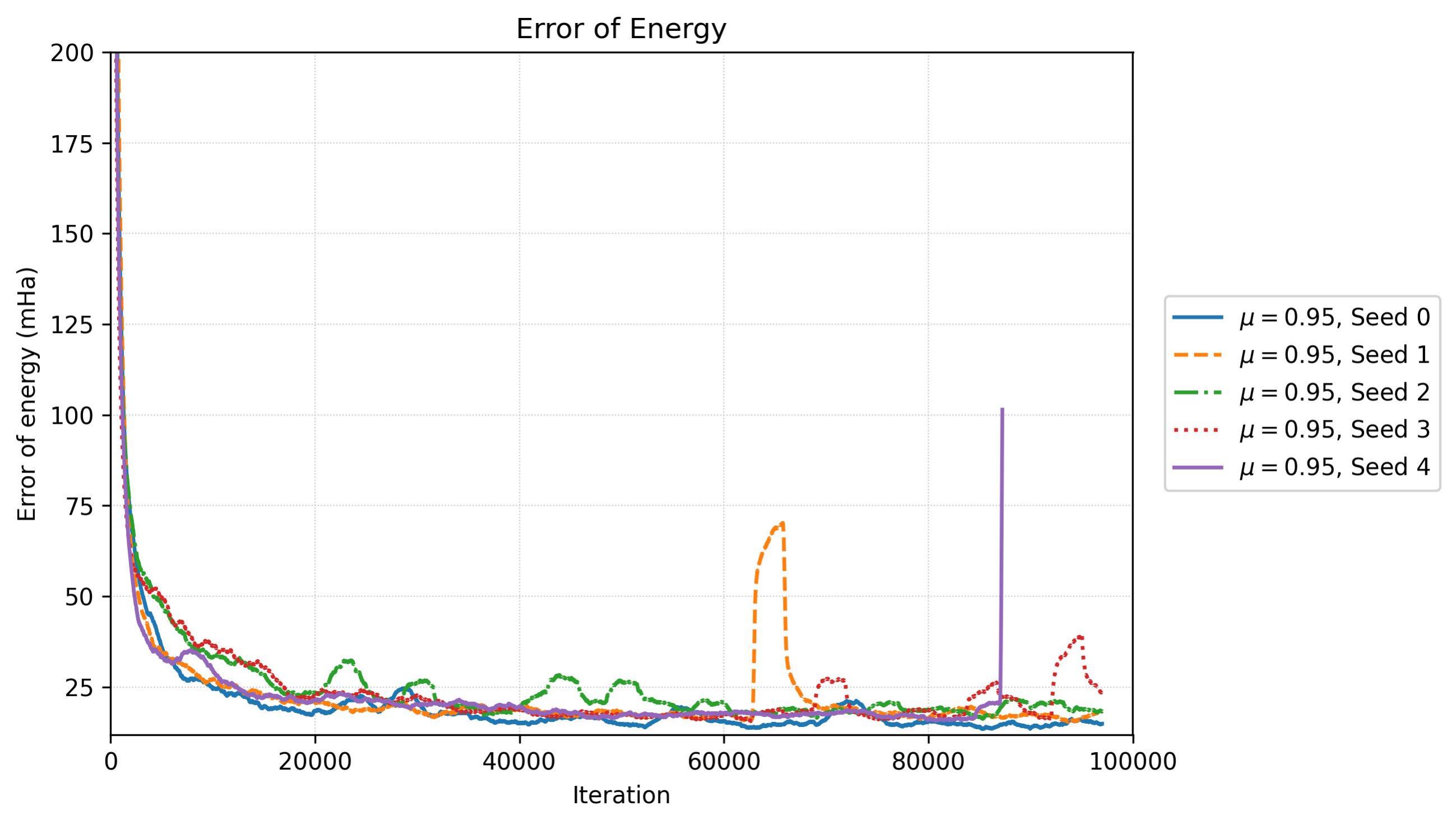}
        \caption{$\mathrm{N}_2$, $\mu=0.95$. Relative energy error.}
    \end{subfigure}

    \vspace{0.5em}

    \begin{subfigure}[t]{0.48\linewidth}
        \centering
        \includegraphics[width=\linewidth]{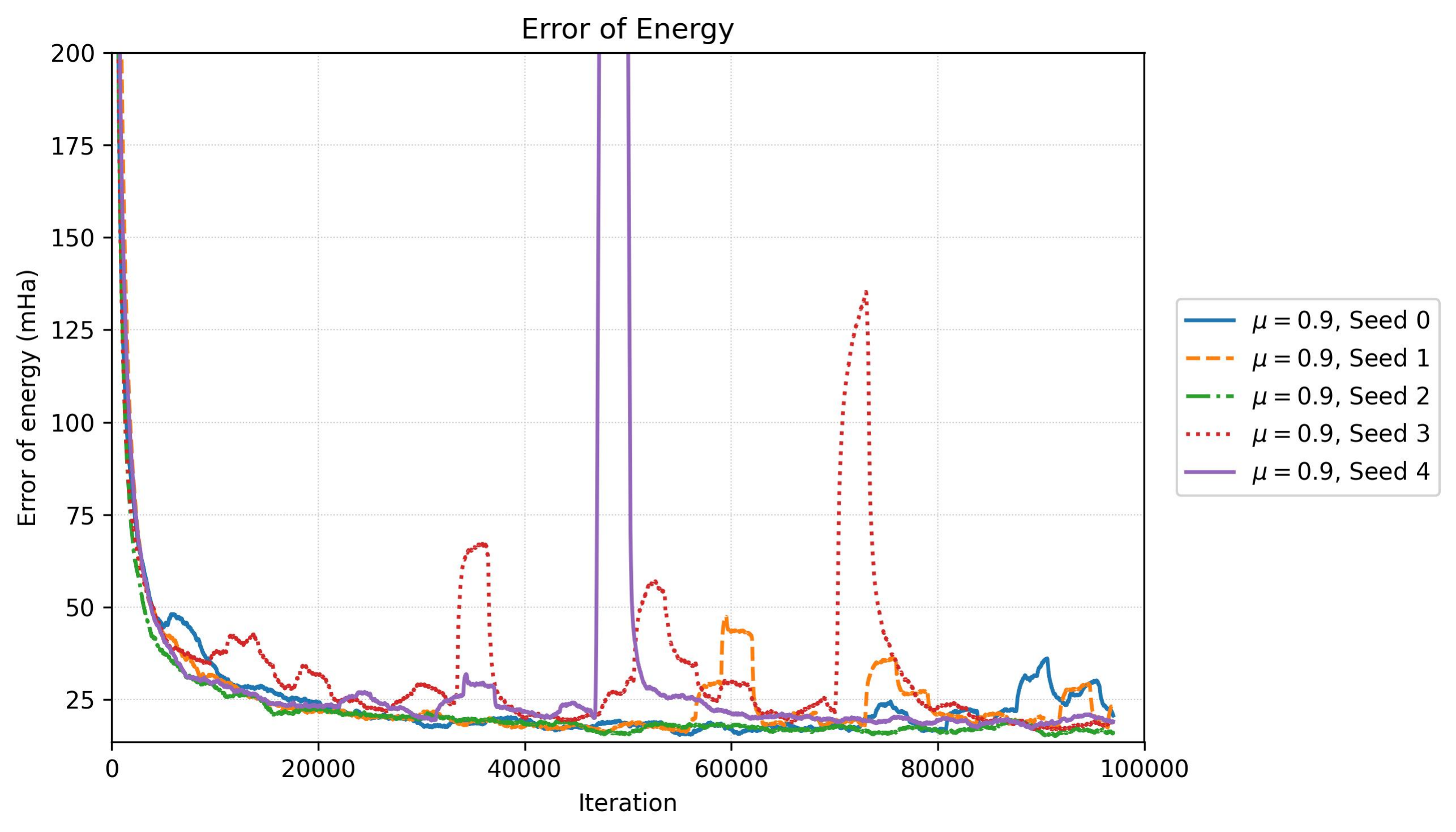}
        \caption{$\mathrm{CO}$, $\mu=0.9$. Relative energy error.}
    \end{subfigure}
    \hfill
    \begin{subfigure}[t]{0.48\linewidth}
        \centering
        \includegraphics[width=\linewidth]{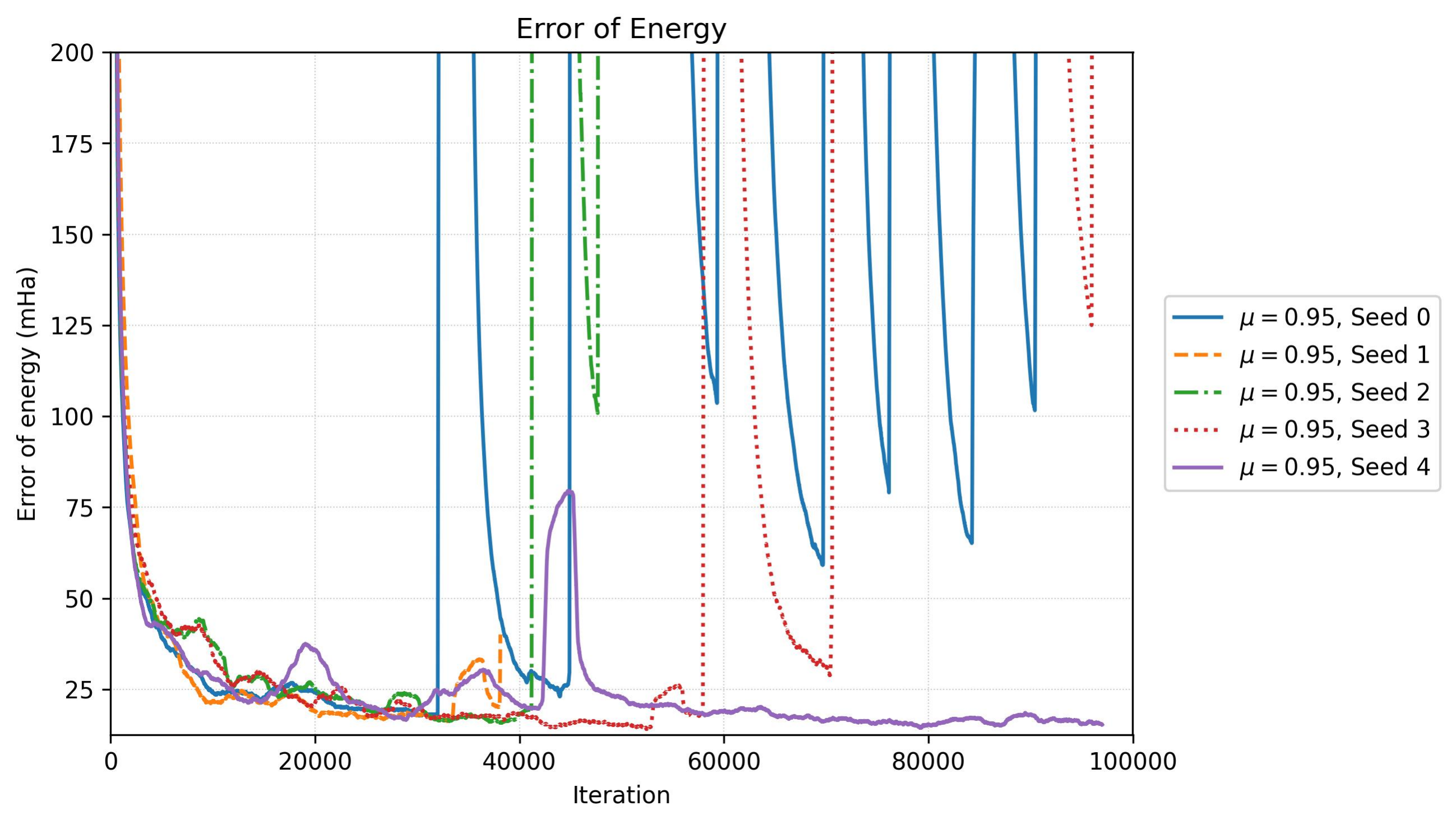}
        \caption{$\mathrm{CO}$, $\mu=0.95$. Relative energy error.}
    \end{subfigure}

    \caption{Sensitivity of fixed-$\mu$ SPRING to initialization on $\mathrm{N}_2$ (top row) and $\mathrm{CO}$ (bottom row) for $\mu=0.9,0.95$.}

    \label{fig:spring_senstive_initial}
\end{figure}

\begin{figure}[H]
    \centering
    \begin{subfigure}[t]{0.48\textwidth}
        \centering
        \includegraphics[width=\linewidth]{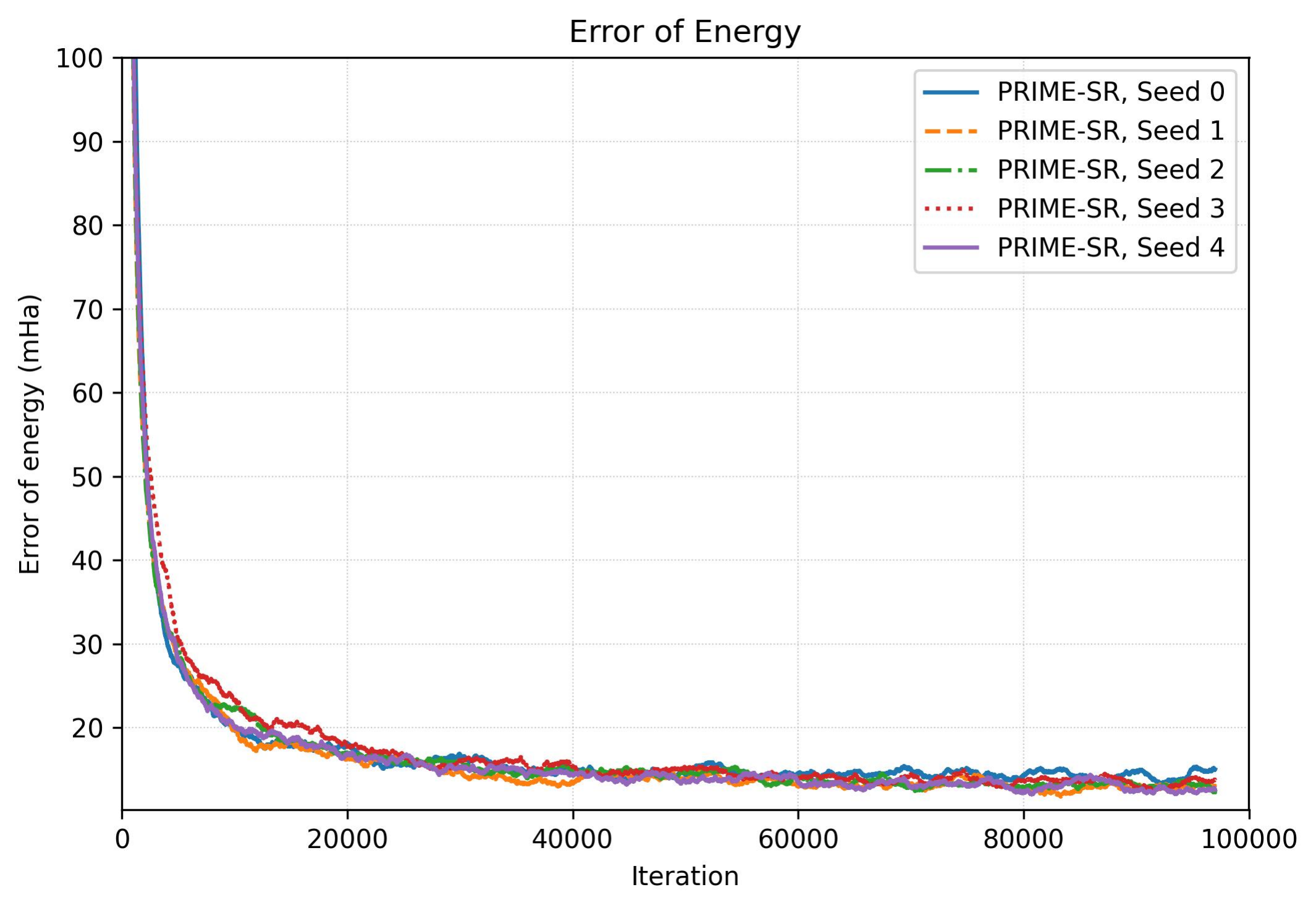}
        \caption{$\mathrm{N}_2$. Relative energy error.}
    \end{subfigure}
    \hfill
    \begin{subfigure}[t]{0.48\textwidth}
        \centering
        \includegraphics[width=\linewidth]{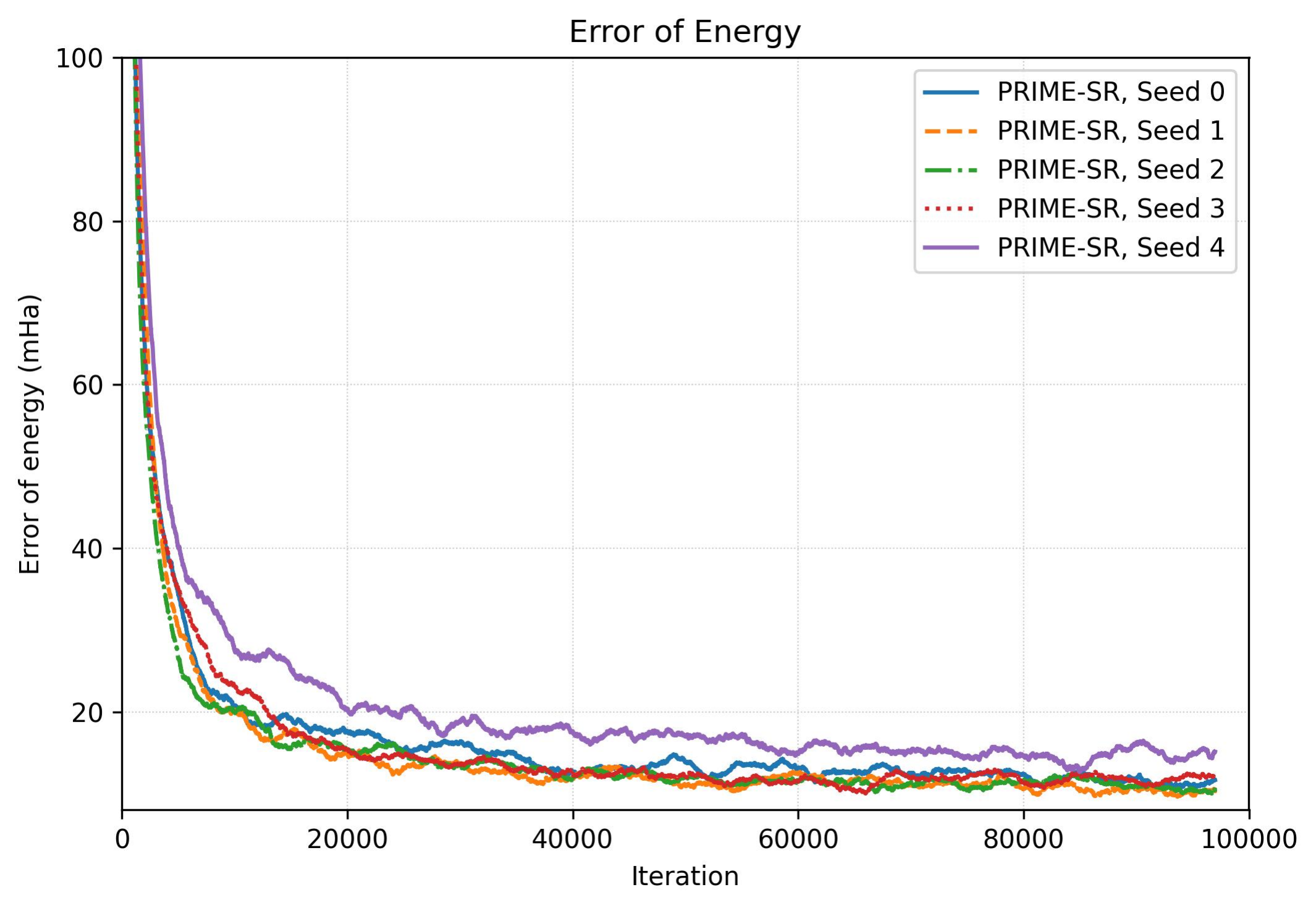}
        \caption{$\mathrm{CO}$. Relative energy error.}
    \end{subfigure}
    \caption{Sensitivity of PRIME-SR to initialization on $\mathrm{N}_2$ (left) and $\mathrm{CO}$ (right).}
    \label{fig:adaptive_senstive_initial}
\end{figure}

\par Across all three atomic systems, PRIME-SR remains stable throughout training and reaches final accuracies that are comparable to the optimal or near-optimal fixed-$\mu$ baselines. In particular, it consistently avoids the unstable large $\mu$ behavior while retaining the optimization benefits of momentum reuse. Together with the multi-seed results reported in Appendix~\ref{sec:compare_spring_random_seed}, these experiments indicate that PRIME-SR improves robustness to initialization without introducing a noticeable loss in accuracy.

\begin{figure}[H]
    \centering
    
    \begin{subfigure}{\linewidth}
    \centering
    \includegraphics[width=0.48\linewidth]{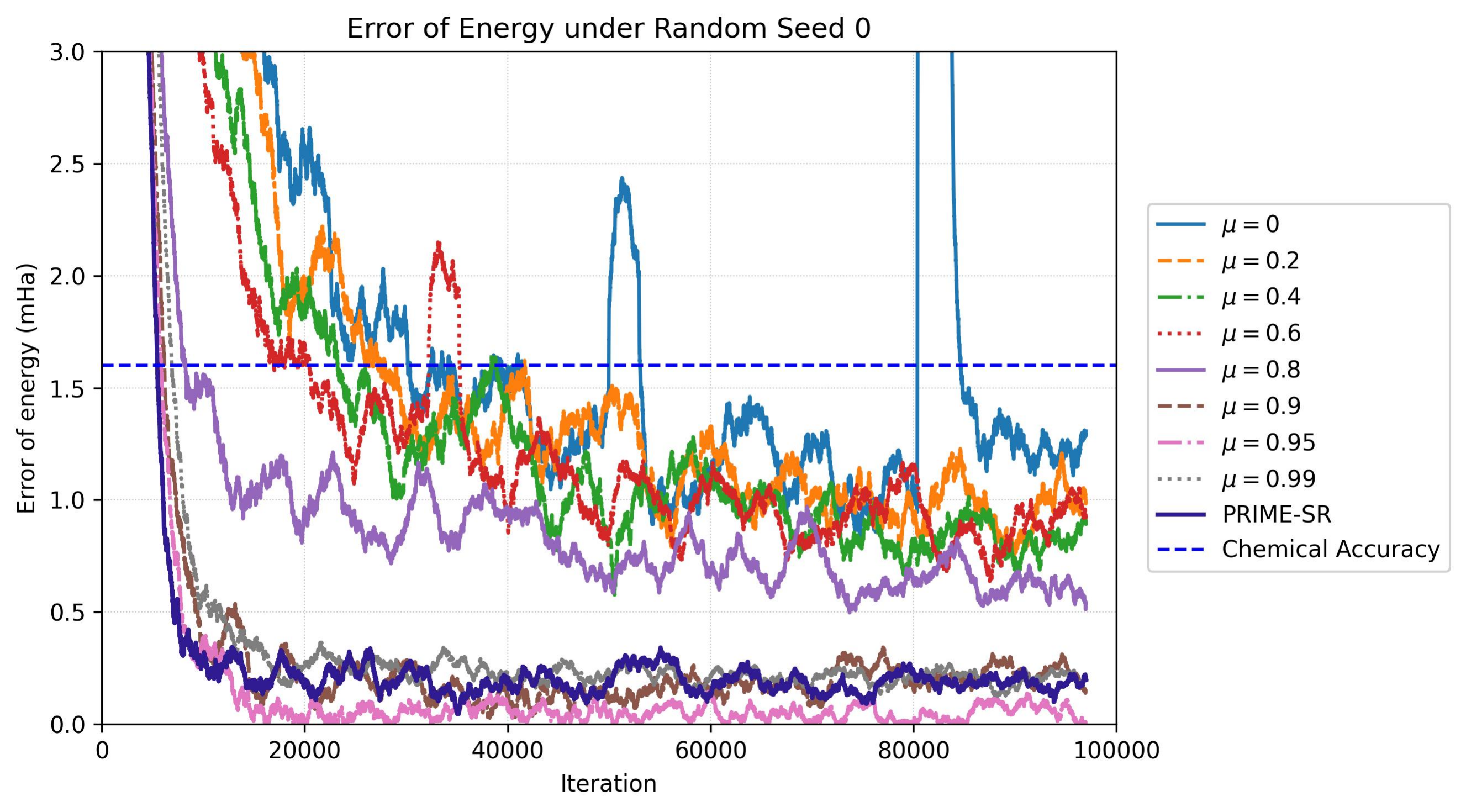}
    \hfill
    \includegraphics[width=0.42\linewidth]{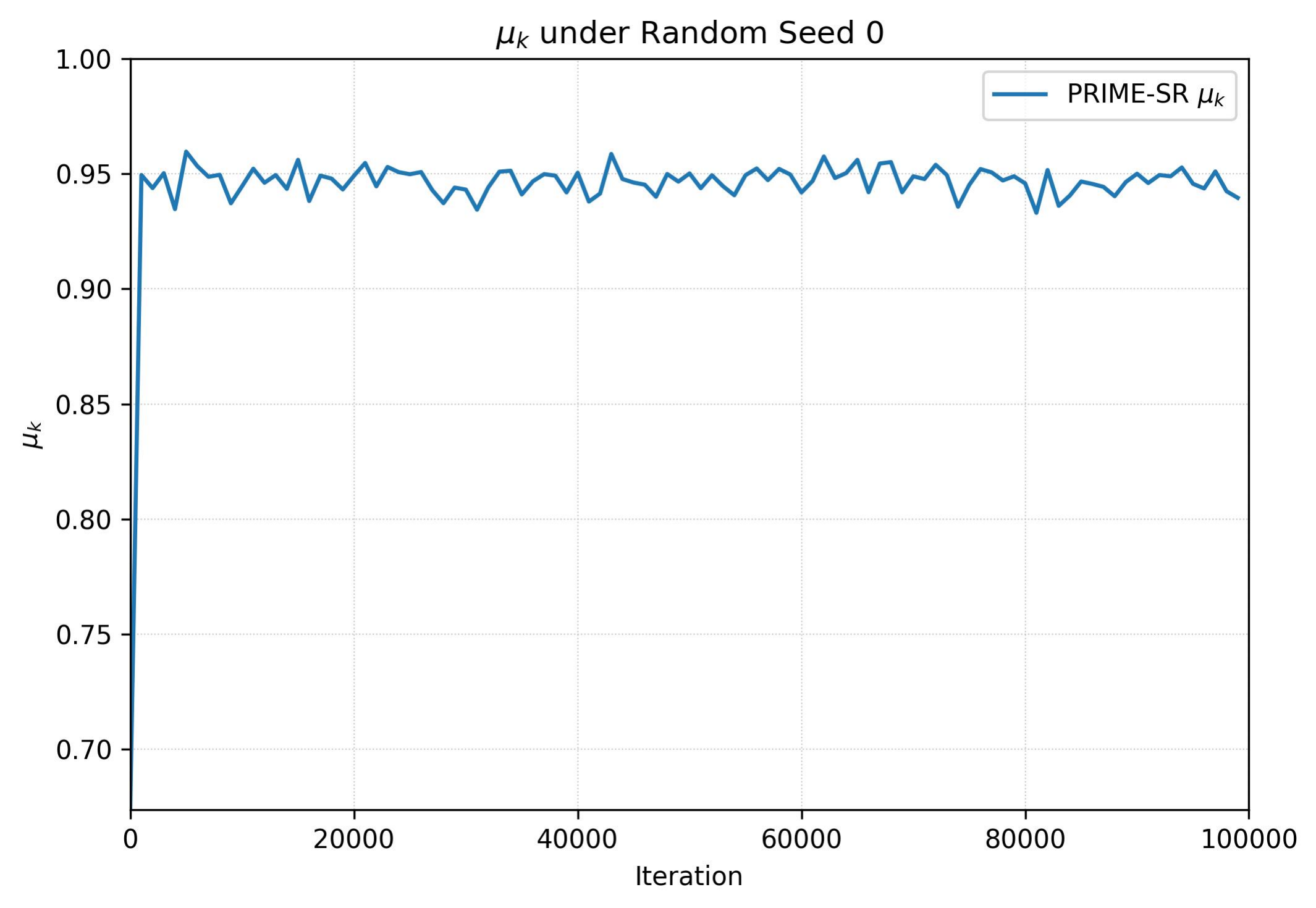}
    \caption{$\mathrm{C}$ atom. Left: relative energy error. Right: $\mu_k$.}
    \end{subfigure}
    
    \vspace{0.5em}
    
    \begin{subfigure}{\linewidth}
    \centering
    \includegraphics[width=0.48\linewidth]{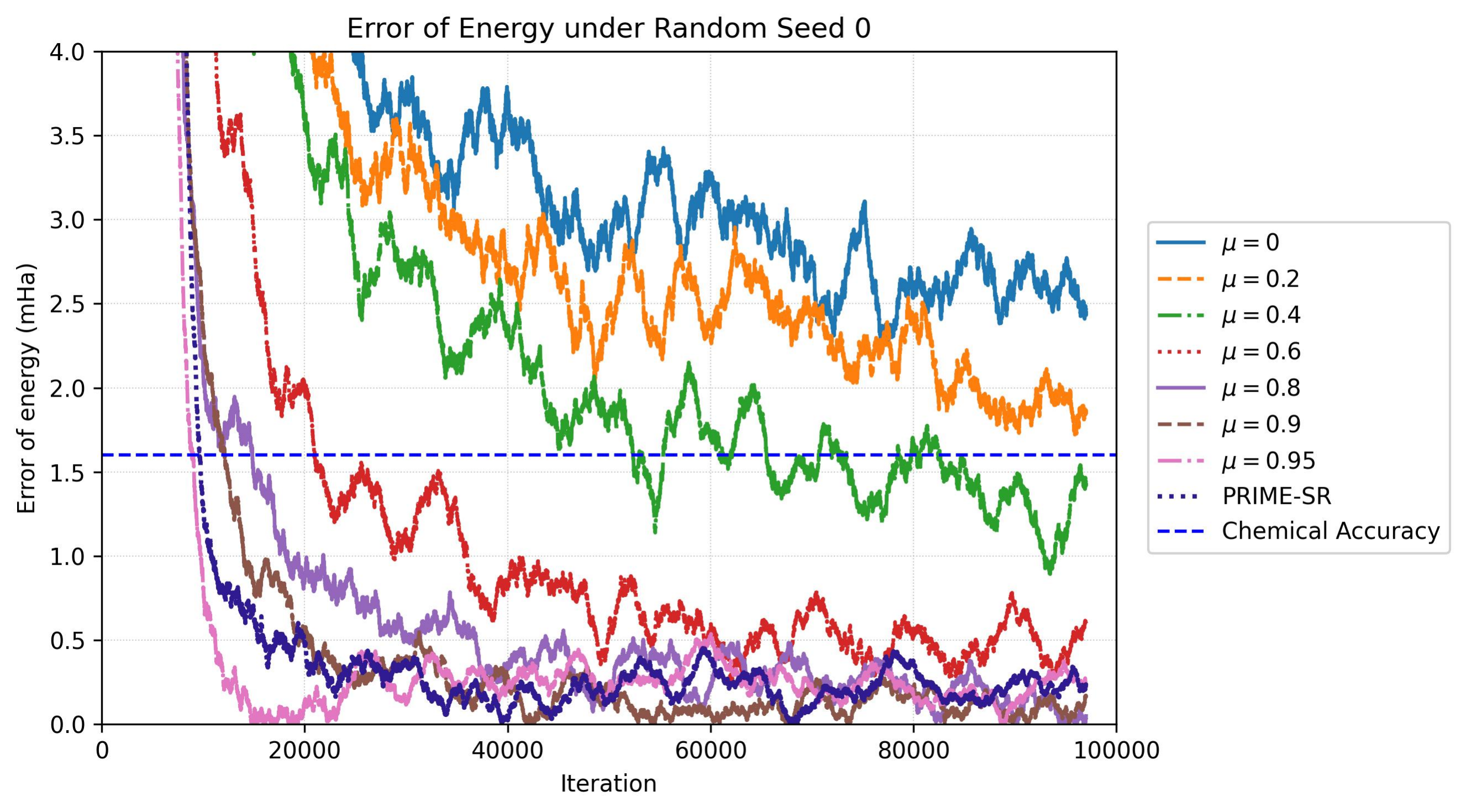}
    \hfill
    \includegraphics[width=0.42\linewidth]{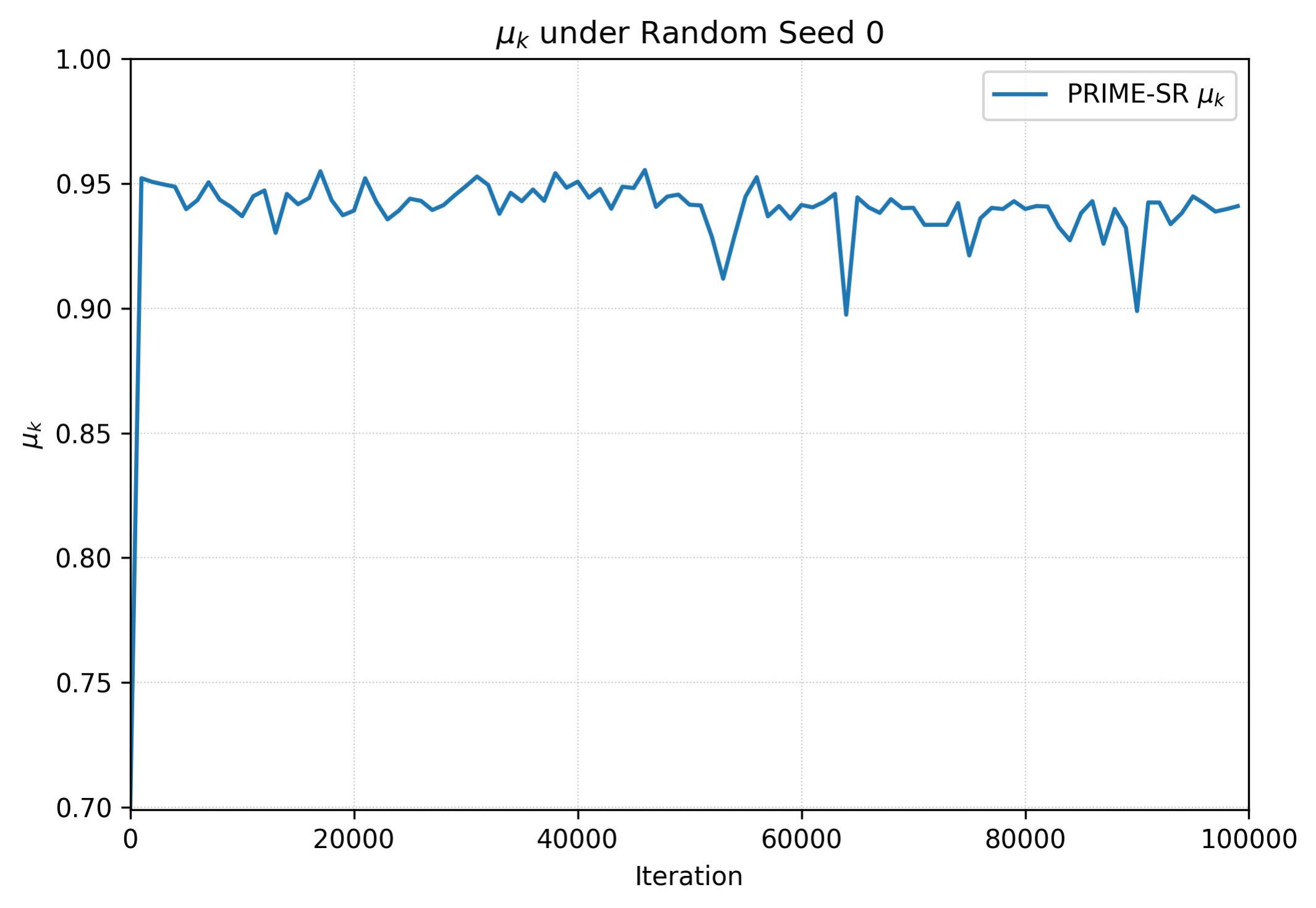}
    \caption{$\mathrm{N}$ atom. Left: relative energy error. Right: $\mu_k$.}
    \end{subfigure}
    
    \vspace{0.5em}
    
    \begin{subfigure}{\linewidth}
    \centering
    \includegraphics[width=0.48\linewidth]{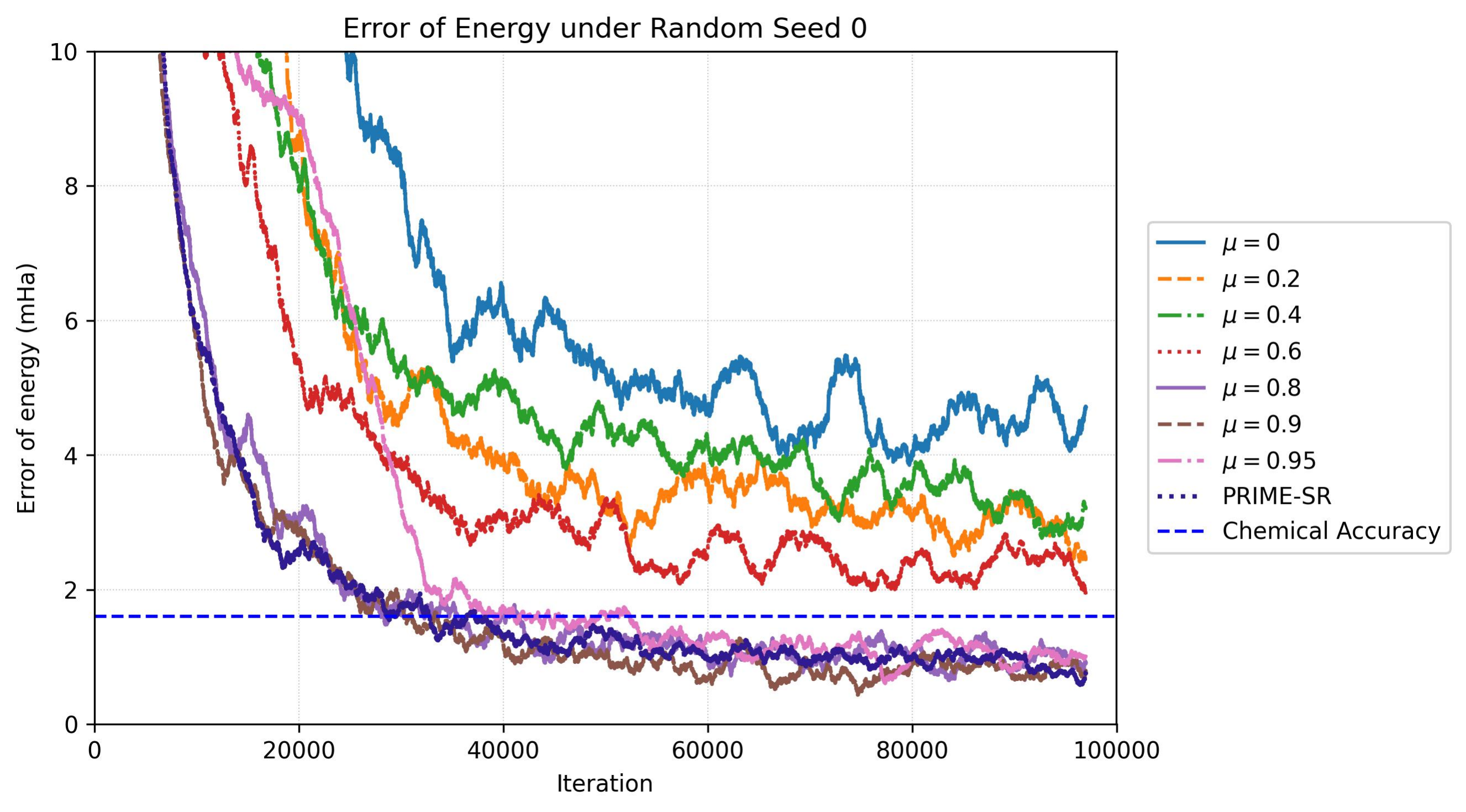}
    \hfill
    \includegraphics[width=0.42\linewidth]{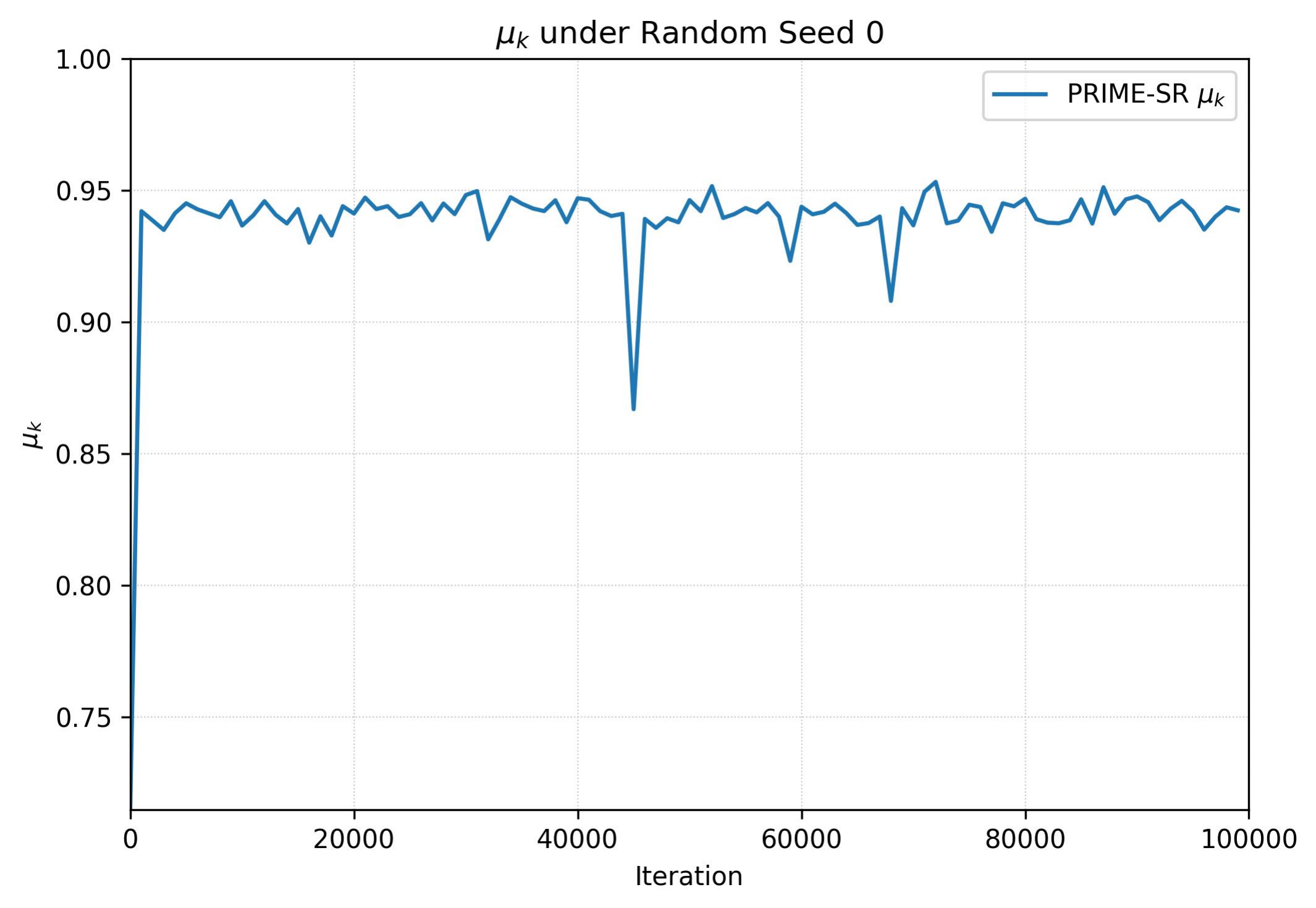}
    \caption{$\mathrm{O}$ atom. Left: relative energy error. Right: $\mu_k$.}
    \end{subfigure}
    
    \caption{Comparison of fixed-$\mu$ SPRING and PRIME-SR on $\mathrm{C}$, $\mathrm{N}$, $\mathrm{O}$ atoms for random seed 0.}
    \label{fig:compare_spring_atom_seed_0}

\end{figure}

\subsection{Experiments on Molecular Systems}

\par We finally consider the $\mathrm{LiH}$, $\mathrm{N}_2$, and $\mathrm{CO}$ molecules. We compare against fixed-$\mu$ SPRING with $\mu=0$, $0.2$, $0.4$, $0.6$, $0.8$, $0.9$, $0.95$, $0.99$ for $\mathrm{LiH}$ molecular, and $\mu=0$, $0.2$, $0.4$, $0.6$, $0.8$, $0.9$, $0.95$ for $\mathrm{N}_2$ and $\mathrm{CO}$ molecules. Unstable runs for $\mathrm{N}_2$ and $\mathrm{CO}$ molecules with $\mu=0.99$ are reported in Appendix~\ref{sec:spring_unstable_N_O}. Figure~\ref{fig:compare_spring_mol_seed_0} shows results for random seed 0, and results of additional seeds are reported in Appendix~\ref{sec:mol_random_seed}.

\par The molecular systems show the same overall trend. PRIME-SR remains stable over the full optimization horizon and reaches accuracies comparable to optimal fixed-$\mu$ SPRING. For $\mathrm{CO}$, PRIME-SR can even outperform the best stable fixed-$\mu$ run in our experiments. The additional multi-seed results in Appendix~\ref{sec:compare_spring_random_seed} further support the conclusion that PRIME-SR provides a more robust alternative to fixed-$\mu$ SPRING on molecular problems.

\begin{figure}[H]
    \centering
    
    \begin{subfigure}{\linewidth}
    \centering
    \includegraphics[width=0.48\linewidth]{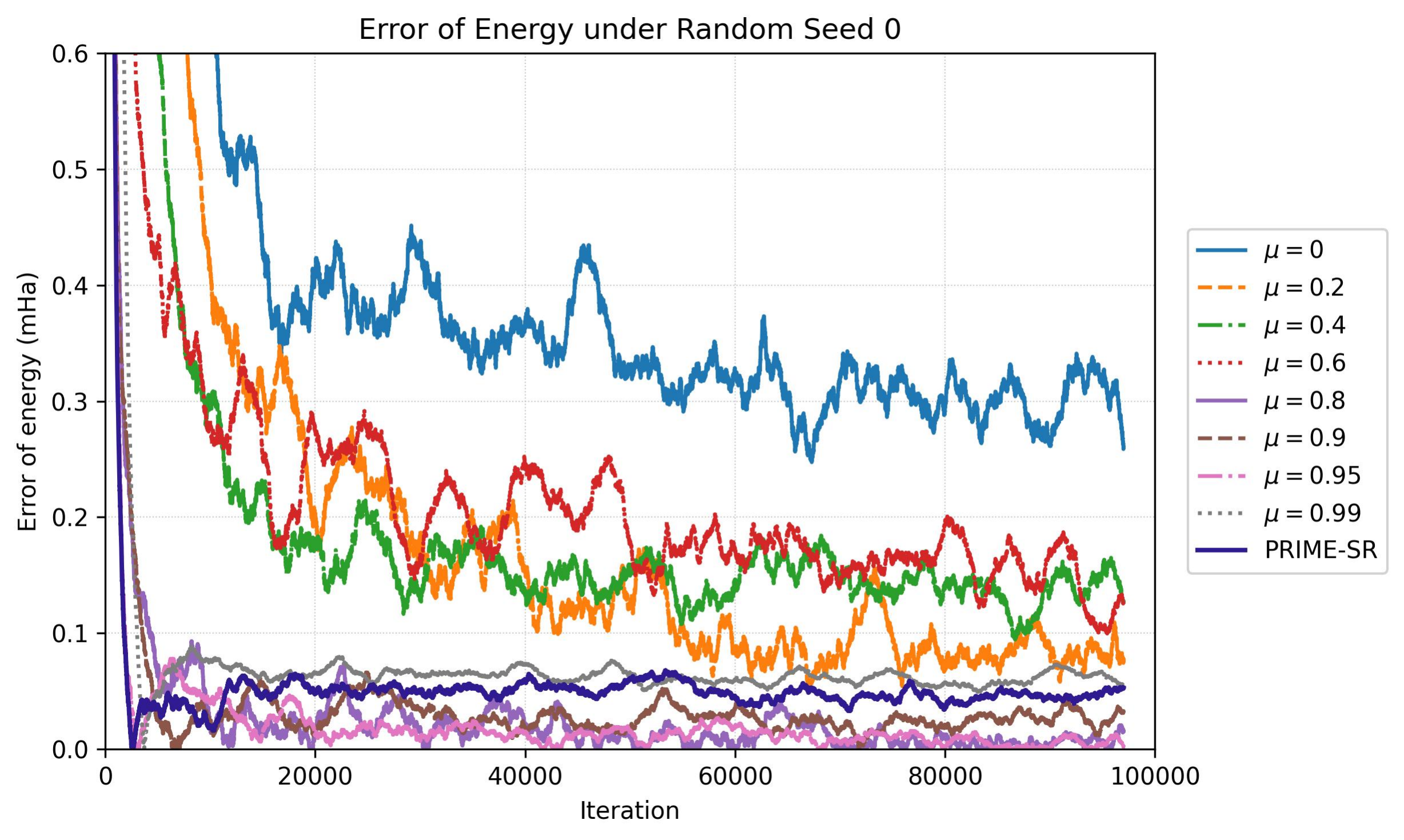}
    \hfill
    \includegraphics[width=0.42\linewidth]{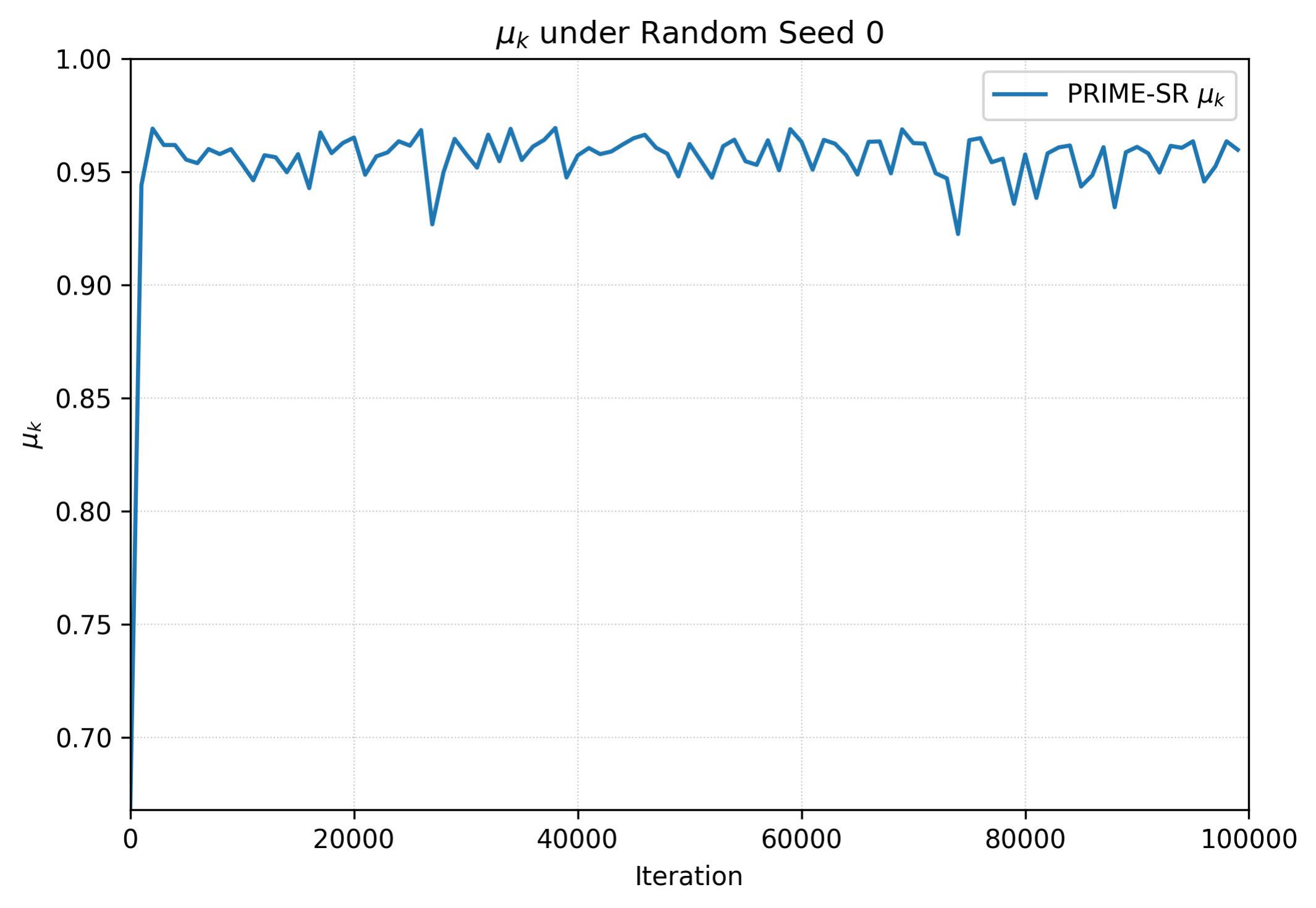}
    \caption{$\mathrm{LiH}$ molecule. Left: relative energy error. Right: $\mu_k$.}
    \end{subfigure}
    
    \vspace{0.5em}
    
    \begin{subfigure}{\linewidth}
    \centering
    \includegraphics[width=0.48\linewidth]{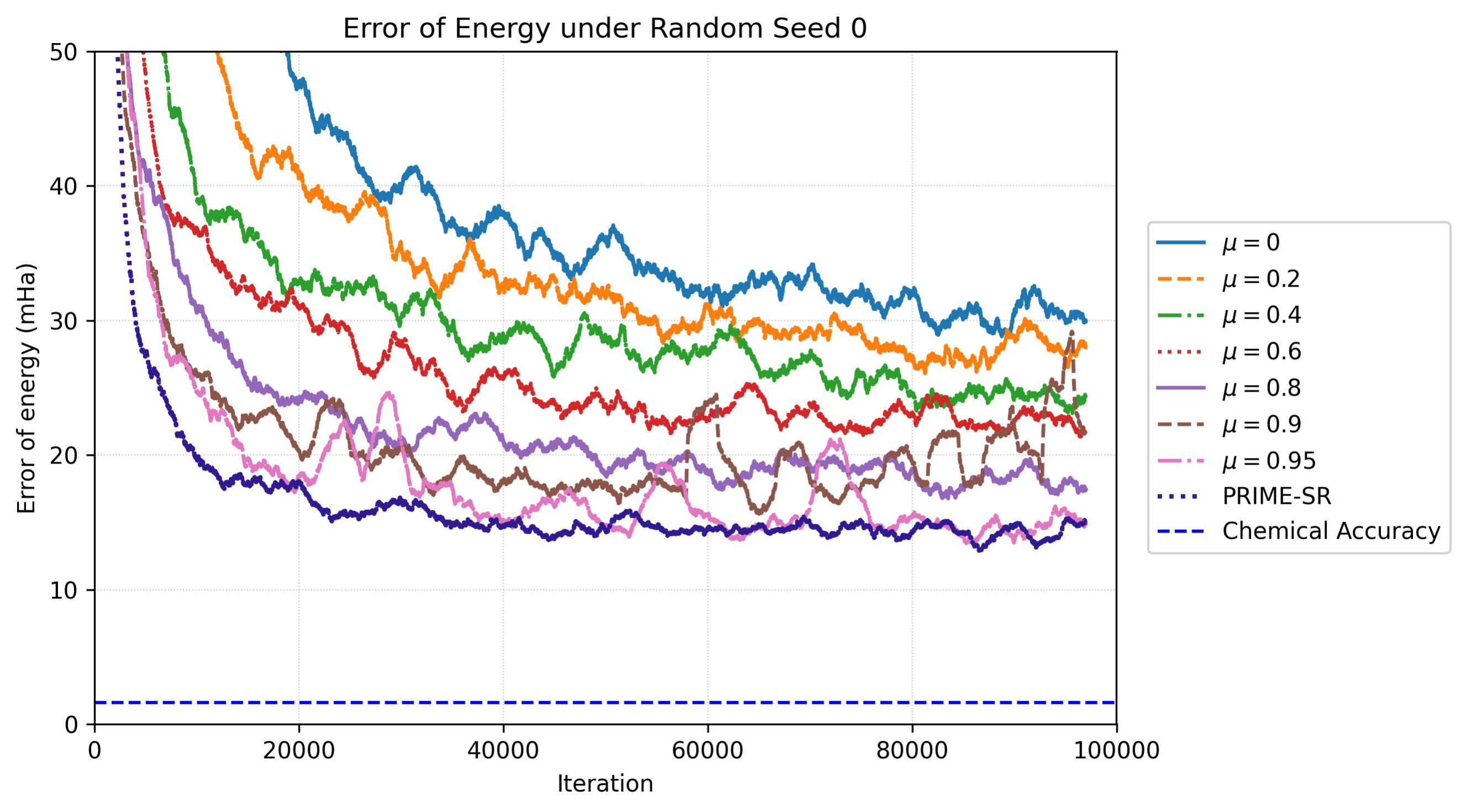}
    \hfill
    \includegraphics[width=0.42\linewidth]{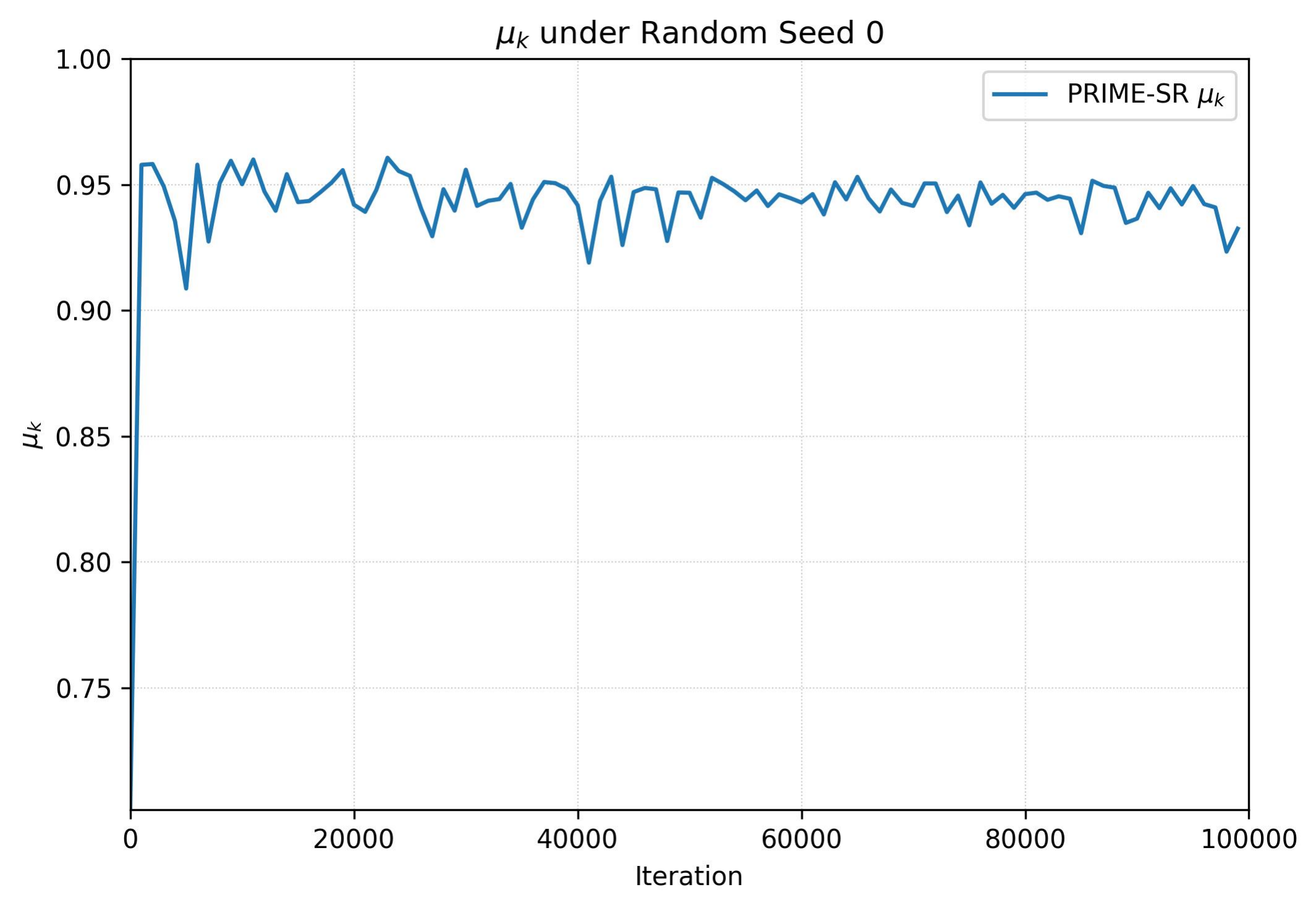}
    \caption{$\mathrm{N}_2$ molecular. Left: relative energy error. Right: $\mu_k$.}
    \end{subfigure}
    
    \vspace{0.5em}
    
    \begin{subfigure}{\linewidth}
    \centering
    \includegraphics[width=0.48\linewidth]{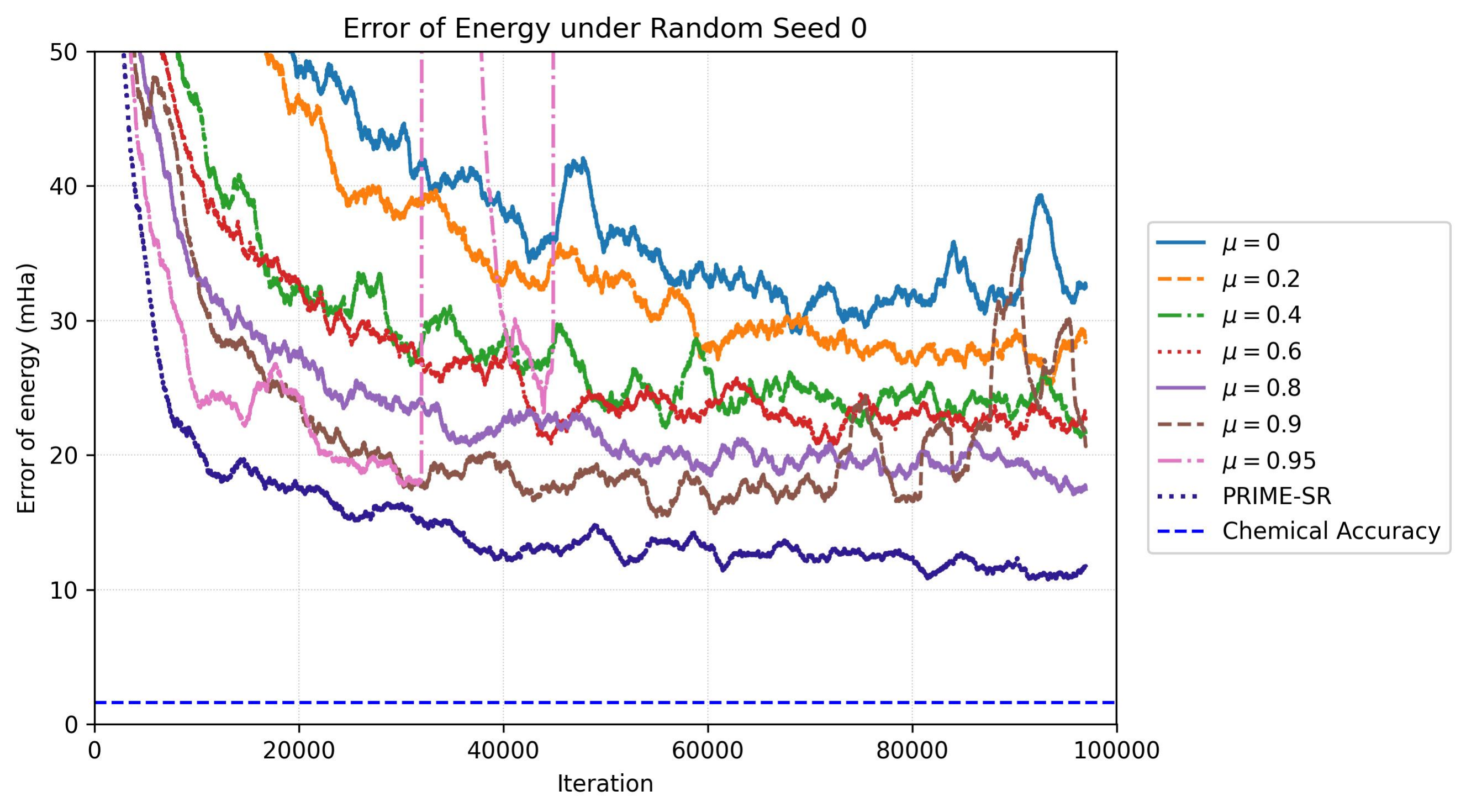}
    \hfill
    \includegraphics[width=0.42\linewidth]{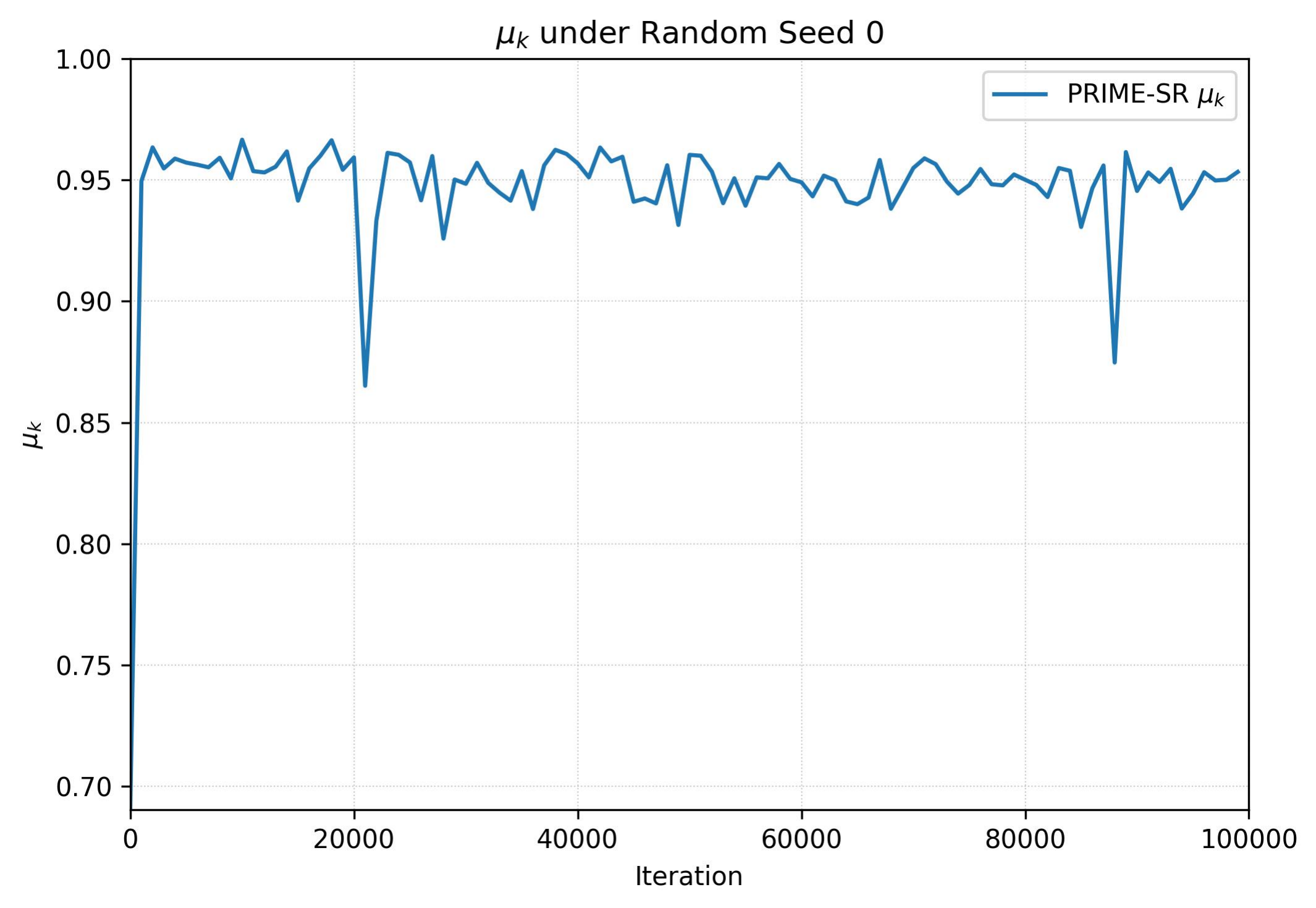}
    \caption{$\mathrm{CO}$ molecular. Left: relative energy error. Right: $\mu_k$.}
    \end{subfigure}
    
    \caption{Comparison of fixed-$\mu$ SPRING and PRIME-SR on $\mathrm{LiH}$, $\mathrm{N}_2$, and $\mathrm{CO}$ molecules for random seed 0.}
    
    \label{fig:compare_spring_mol_seed_0}

\end{figure}
    \section{Conclusion and Discussion}
\label{sec:conclusion}

\par VMC has become a central tool for studying strongly correlated quantum systems, where the expressiveness of modern neural network wavefunctions must be matched by stable and efficient optimization algorithms. Among existing approaches, SPRING, the momentum-accelerated variant of SR, has demonstrated state-of-the-art performance in VMC optimization. However, this strong practical performance relies on carefully tuned momentum parameters, and the stability and convergence behavior of SPRING have remained insufficiently understood.

\par In this work, we investigated the stability and convergence properties of SPRING for wavefunction optimization in VMC. We provided a theoretical characterization of the role of the momentum parameter $\mu$, showing that the regimes $0 \le \mu < 1$ and $\mu = 1$ are governed by fundamentally different mechanisms. In particular, we established convergence guarantees for $\mu < 1$ under mild assumptions, and constructed explicit counterexamples demonstrating that $\mu = 1$ can induce divergence through uncontrolled growth along kernel-related directions of the SR matrix. Building on these insights and numerical observations, we proposed PRIME-SR, a tuning-free momentum-adaptive SR method that controls momentum reuse through spectral flatness and subspace-overlap indicators derived from the SR matrix. Extensive experiments on spin-lattice systems as well as atomic and molecular electronic systems show that PRIME-SR achieves performance comparable to optimally or near-optimally tuned fixed-$\mu$ SPRING while substantially improving robustness.

\par Several directions for future work remain. From a theoretical perspective, it would be desirable to extend the current analysis beyond idealized sampling assumptions and fixed-$\mu$ settings, and to better understand the mechanisms behind the instability occasionally observed in electronic-structure runs even when $\mu<1$. Such behavior likely reflects a subtle interplay between sampling noise and SR ill-conditioning, and may also depend on the specific Hamiltonian and wavefunction parameterization. On the algorithmic side, exploring even simpler or theoretically justified indicators of sampling quality or curvature structure could further improve the adaptive mechanism. Finally, the kernel-range viewpoint underlying our design is not specific to SPRING, and similar adaptive principles may be applicable to other SR-based or geometry-aware optimization methods.

    \paragraph{Acknowledgements.} The work of Xin Liu was supported in part by the National Natural Science Foundation of China (12125108, 12288201), and RGC grant JLFS/P-501/24 for the CAS AMSS–PolyU Joint Laboratory in Applied Mathematics.

    \normalem
    \bibliographystyle{plain}
    \bibliography{ref}

    \appendix
    \section{Proof of \cref{the:full_spring_convergence}}
\label{sec:proof_full_convergence}

\par To prove \cref{the:full_spring_convergence}, we first establish the following lemmas.
\begin{lem}
    Under \cref{assume:1}, for any $\btheta\in \R^{N_p}$,
    \begin{equation*}
        \norm{g(\btheta)}\le 2\sqrt{C_m},\quad \norm{S(\btheta)}_2 \le \sqrt{C_m}.
    \end{equation*}
    \label{lem:g_S_bounded}
\end{lem}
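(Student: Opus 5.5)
The plan is to bound both quantities directly from their expectation representations. The only tools needed are Jensen's (or Lyapunov's) inequality and Cauchy--Schwarz, together with the fourth-moment bounds in \cref{assume:1}(1).

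Write $G(X):=\nabla_{\btheta}\log\abs{\psi_{\btheta}(X)}$, so that $O(\btheta;X)=G(X)-\Expect[G(X)]$.

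\textbf{Bound on $S(\btheta)$.} First, the centered second moment is dominated by the uncentered one:
\begin{equation*}
\Expect\norm{O(\btheta;X)}^2=\Expect\norm{G(X)}^2-\norm{\Expect G(X)}^2\le \Expect\norm{G(X)}^2 .
\end{equation*}
Next, by Jensen's inequality, $\Expect\norm{G(X)}^2\le (\Expect\norm{G(X)}^4)^{1/2}\le \sqrt{C_m}$.

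Since $S(\btheta)$ is positive semidefinite, its spectral norm is at most its trace. Hence
\begin{equation*}
\norm{S(\btheta)}_2\le \mathrm{tr}\, S(\btheta)=\Expect\norm{O(\btheta;X)}^2\le\sqrt{C_m}.
\end{equation*}
Alternatively, $u^\top S(\btheta)u=\Expect|O^\top u|^2\le \Expect\norm{O}^2$ for any unit vector $u$, which gives the same bound.

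\textbf{Bound on $g(\btheta)$.} Start from Eq.~\eqref{eq:vmc_grad}, $g(\btheta)=2\Expect[\bar E O]$. By Cauchy--Schwarz,
\begin{equation*}
\norm{g(\btheta)}\le 2\lrbracket{\Expect \bar E^2}^{1/2}\lrbracket{\Expect\norm{O}^2}^{1/2}.
\end{equation*}
As above, $\Expect\bar E^2=\mathrm{Var}(\Eloc)\le \Expect\Eloc^2\le\sqrt{C_m}$. Combining this with the bound $\Expect\norm{O}^2\le\sqrt{C_m}$ from the first step gives
\begin{equation*}
\norm{g(\btheta)}\le 2\,C_m^{1/4}C_m^{1/4}=2\sqrt{C_m}.
\end{equation*}

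There is no genuine obstacle here. The only point needing care is that centering does not increase second moments, i.e.\ the variance is at most the uncentered second moment. One must also keep the constants exactly as stated, which is why the trace bound is used for $S(\btheta)$ rather than a cruder estimate.
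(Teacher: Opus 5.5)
Your proof is correct and follows essentially the same route as the paper: Cauchy--Schwarz on $\Expect[\bar{E}O]$, the fact that centering does not increase second moments, and $\Expect[Y^2]\le(\Expect[Y^4])^{1/2}$ to reach the fourth-moment bounds of \cref{assume:1}. The only difference is in the bound on $S(\btheta)$. The paper uses $\norm{S(\btheta)}_2\le\norm{S(\btheta)}_F\le\Expect\norm{O(\btheta;X)O(\btheta;X)^\top}_F=\Expect\norm{O(\btheta;X)}^2$ instead of the trace, which gives the identical constant, so it is not actually cruder. Your explicit variance inequality also correctly handles a step that the paper writes as an equality.
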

\begin{proof}
    By the formulation of $g(\btheta)$ in Eq.~\eqref{eq:vmc_grad},
    \begin{equation*}
        \begin{split}
            \norm{g(\btheta)} &\le 2\Expect_{X\sim\pi_{\btheta}} \lrsquare{\abs{\bar{E}(\btheta;X)}\cdot\norm{O(\btheta;X)}}\\
            &\le 2\sqrt{\Expect_{X\sim\pi_{\btheta}} \lrsquare{\abs{\bar{E}(\btheta;X)}^2} \cdot \Expect_{X\sim\pi_{\btheta}} \lrsquare{\norm{O(\btheta;X)}^2}}\\
            &=2\sqrt{ \Expect_{X\sim\pi_{\btheta}}\lrsquare{ \abs{\Eloc(\btheta;X)-L(\btheta)}^2 } \cdot \Expect_{X\sim\pi_{\btheta}}\lrsquare{ \norm{\nabla_{\btheta}\log\abs{\psi_{\btheta}(X)} - \Expect_{X\sim \pi_{\btheta}} \lrsquare{ \nabla_{\btheta}\log\abs{\psi_{\btheta}(X)} }}^2 } }\\
            &\le 2\sqrt{ \Expect_{X\sim\pi_{\btheta}}\lrsquare{ \abs{\Eloc(\btheta;X)}^2 } \cdot \Expect_{X\sim\pi_{\btheta}}\lrsquare{ \norm{\nabla_{\btheta}\log\abs{\psi_{\btheta}(X)} }^2 } }\\
            &\le 2 \lrbracket{ \Expect_{X\sim\pi_{\btheta}}\lrsquare{ \abs{\Eloc(\btheta;X)}^4 } \cdot \Expect_{X\sim\pi_{\btheta}}\lrsquare{ \norm{\nabla_{\btheta}\log\abs{\psi_{\btheta}(X)} }^4 } }^{1/4}\le 2\sqrt{C_m},
        \end{split}
    \end{equation*}
    where the first inequality follows from Jensen inequality since the norm $\norm{\cdot}$ is convex, the second and fourth inequalities follow from the Cauchy-Schwarz inequality, the third follows from the property of random variables that $\Expect[\abs{Y-\Expect[Y]}^2]\le \Expect[Y^2]$, and the last inequality follows from \cref{assume:1}.

    Similarly, for $S(\btheta)$ we have,
    \begin{equation*}
        \begin{split}
            \norm{S(\btheta)}_F & =\norm{\Expect_{X\sim\pi_{\btheta}}\lrsquare{O(\btheta;X)O(\btheta;X)^\top}}_F\\
            &\le \Expect_{X\sim\pi_{\btheta}}\lrsquare{\norm{O(\btheta;X)O(\btheta;X)^\top}_F}\\
            &=\Expect_{X\sim\pi_{\btheta}}\lrsquare{\norm{O(\btheta;X)}^2}\\
            &=\Expect_{X\sim\pi_{\btheta}}\lrsquare{\norm{\nabla_{\btheta}\log\abs{\psi_{\btheta}(X)}}^2}\\
            &\le \sqrt{\Expect_{X\sim\pi_{\btheta}}\lrsquare{\norm{\nabla_{\btheta}\log\abs{\psi_{\btheta}(X)}}^4}}\le \sqrt{C}.
        \end{split}
    \end{equation*}
    Since the matrix norm satisfies $\norm{\cdot}_2\le \norm{\cdot}_F$, we obtain $\norm{S(\btheta)}_2\le \norm{S(\btheta)}_F\le \sqrt{C}$.
\end{proof}

\begin{lem}
    Under \cref{assume:1}, let $\{\btheta_k\}$ be generated by \eqref{eq:full-spring}, then for any $0\le \mu<1$,
    \begin{equation*}
        \sum_{k=0}^{\infty}\eta_k \norm{\Delta\btheta_k}^2 <\infty.
    \end{equation*}
\end{lem}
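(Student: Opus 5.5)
The plan is a Lyapunov-type descent argument for $L$ along \eqref{eq:full-spring}. I will use the quadratic upper bound of \cref{lem:2_upper_bound} together with the algebraic form of the update, and absorb the momentum cross term using $\mu<1$ and the step-size restriction in \cref{assume:1}(3).

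\textbf{Step 1: rewrite the gradient through the update.} Write $M_k:=\lambda I+S(\btheta_k)$. Since $S(\btheta_k)\succeq 0$, we have $M_k\succeq\lambda I$. The update in \eqref{eq:full-spring} is equivalent to
\begin{equation*}
g(\btheta_k)=2\bigl(\lambda\mu\Delta\btheta_{k-1}-M_k\Delta\btheta_k\bigr).
\end{equation*}
Taking the inner product with $\Delta\btheta_k$ and using Young's inequality on the cross term gives
\begin{equation*}
g(\btheta_k)^\top\Delta\btheta_k
=2\lambda\mu\,\Delta\btheta_{k-1}^\top\Delta\btheta_k-2\Delta\btheta_k^\top M_k\Delta\btheta_k
\le \lambda\mu\norm{\Delta\btheta_{k-1}}^2-\lambda(2-\mu)\norm{\Delta\btheta_k}^2 .
\end{equation*}
Here the $S(\btheta_k)$ part of $M_k$ is simply dropped, since it is nonnegative.

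\textbf{Step 2: one-step descent.} Apply \cref{lem:2_upper_bound} with $\btheta'=\btheta_{k+1}=\btheta_k+\eta_k\Delta\btheta_k$:
\begin{equation*}
L(\btheta_{k+1})-L(\btheta_k)\le \eta_k\lambda\mu\norm{\Delta\btheta_{k-1}}^2-\eta_k\Bigl(\lambda(2-\mu)-\tfrac{C_g\eta_k}{2}\Bigr)\norm{\Delta\btheta_k}^2 .
\end{equation*}
By \cref{assume:1}(3) we have $\eta_k\le\eta_0\le 2\lambda(1-\mu)/C_g$. Hence $C_g\eta_k/2\le\lambda(1-\mu)$, and the bracket is at least $\lambda$. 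Monotonicity $\eta_k\le\eta_{k-1}$ also gives $\eta_k\lambda\mu\norm{\Delta\btheta_{k-1}}^2\le\lambda\mu\,\eta_{k-1}\norm{\Delta\btheta_{k-1}}^2$. Therefore
\begin{equation*}
L(\btheta_{k+1})-L(\btheta_k)\le\lambda\mu\,\eta_{k-1}\norm{\Delta\btheta_{k-1}}^2-\lambda\,\eta_k\norm{\Delta\btheta_k}^2 .
\end{equation*}

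\textbf{Step 3: telescope.} Sum over $k=0,\dots,K$, using $\Delta\btheta_{-1}=0$. Each term $\eta_k\norm{\Delta\btheta_k}^2$ with $k\le K-1$ appears with net coefficient at most $-\lambda(1-\mu)$, and the last term appears with $-\lambda$. This yields
\begin{equation*}
\lambda(1-\mu)\sum_{k=0}^{K}\eta_k\norm{\Delta\btheta_k}^2\le L(\btheta_0)-L(\btheta_{K+1}).
\end{equation*}

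\textbf{Main obstacle: a uniform lower bound on $L$.} The only remaining point is that $L$ must be bounded below uniformly in $\btheta$. This follows directly from \cref{assume:1}(1):
\begin{equation*}
|L(\btheta)|\le\Expect_{X\sim\pi_{\btheta}}|\Eloc(\btheta;X)|\le\bigl(\Expect_{X\sim\pi_{\btheta}}[\Eloc(\btheta;X)^4]\bigr)^{1/4}\le C_m^{1/4}.
\end{equation*}
Alternatively, the variational principle gives $L\ge E_{\gs}$. Hence the right-hand side of the telescoped bound is at most $2C_m^{1/4}$, uniformly in $K$. Letting $K\to\infty$ gives $\sum_{k\ge0}\eta_k\norm{\Delta\btheta_k}^2\le 2C_m^{1/4}/(\lambda(1-\mu))<\infty$.

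This bound degenerates as $\mu\to1$, consistent with the step-size bound in \cref{assume:1}(3) vanishing in that limit.
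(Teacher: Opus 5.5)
Your proof is correct and follows essentially the same route as the paper. Both arguments use the descent lemma and rewrite $g(\btheta_k)=2\lambda\mu\Delta\btheta_{k-1}-2(\lambda I+S(\btheta_k))\Delta\btheta_k$. You split the momentum cross term with Young's inequality, while the paper uses the polarization identity, which merely keeps the extra nonpositive term $-\lambda\mu\eta_k\norm{\Delta\btheta_k-\Delta\btheta_{k-1}}^2$. Your index-shifted telescoping with $\eta_k\le\eta_{k-1}$ is the summed form of the paper's decreasing energy $F_k=L(\btheta_k)+\lambda\mu\eta_k\norm{\Delta\btheta_{k-1}}^2$, and your bound $\abs{L}\le C_m^{1/4}$ is a valid substitute for the paper's $L\ge E_{\gs}$.
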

\begin{proof}
    Under \cref{assume:1}, $g(\btheta)$ is $C_g$-Lipschitz continuous, by \cref{lem:2_upper_bound}, we have
    \begin{equation}
        L(\btheta_{k+1})-L(\btheta_k)\le g(\btheta_k)^\top\lrbracket{\btheta_{k+1}-\btheta_k} +\dfrac{C_g}{2}\norm{\btheta_{k+1}-\btheta_k}^2.
        \label{eq:2_upper_proof}
    \end{equation}
    From the update in \eqref{eq:full-spring}:
    \begin{equation*}
        g(\btheta_k) = 2\lambda\mu\Delta\btheta_{k-1} - 2(\lambda I +S(\btheta_k))\Delta\btheta_k,\quad \btheta_{k+1}-\btheta_k = \eta_k\Delta\btheta_k.
    \end{equation*}
    And further by $2\Delta\btheta_k^\top\Delta\btheta_{k-1}=\norm{\Delta\btheta_k}^2+\norm{\Delta\btheta_{k-1}}^2-\norm{\Delta\btheta_k-\Delta\btheta_{k-1}}^2$, we have
    \begin{equation}
    \begin{split}
        L(\btheta_{k+1})-L(\btheta_k) &\le \lrbracket{\lambda\mu + \dfrac{C_g}{2}\eta_k}\eta_k\norm{\Delta\btheta_k}^2 + \lambda\mu\eta_k \lrbracket{\norm{\Delta\btheta_{k-1}}^2 - \norm{\Delta\btheta_k-\Delta\btheta_{k-1}}^2} \\
        &\quad - 2\eta_k\Delta\btheta_k^\top \lrbracket{\lambda I +S(\btheta_k)}\Delta\btheta_k\\
        &\le \lrbracket{\lambda\mu + \dfrac{C_g}{2}\eta_k-2\lambda}\eta_k\norm{\Delta\btheta_k}^2 + \lambda\mu\eta_k\norm{\Delta\btheta_{k-1}}^2 - \lambda\mu\eta_k\norm{\Delta\btheta_k-\Delta\btheta_{k-1}}^2.
    \end{split}
    \label{eq:proof_1}
    \end{equation}
    Consider the discrete energy $F_k:=L(\btheta_k)+\lambda\mu\eta_k\norm{\Delta\btheta_{k-1}}^2 \ge E_{\gs}$, which is bounded from below. Thus, by Eq.~\eqref{eq:proof_1}, we have
    \begin{equation*}
        \begin{split}
            F_{k+1}-F_k&=L(\btheta_{k+1}) - L(\btheta_k) + \lambda\mu\eta_{k+1}\norm{\Delta\btheta_k}^2 - \lambda\mu\eta_k\norm{\Delta\btheta_{k-1}}^2\\
            &\le \lrbracket{\lambda\mu+\dfrac{C_g}{2}\eta_k-2\lambda}\eta_k\norm{\Delta\btheta_k}^2 + \lambda\mu\eta_{k+1}\norm{\Delta\btheta_k}^2 - \lambda\mu\eta_k\norm{\Delta\btheta_k-\Delta\btheta_{k-1}}^2 \\
            &(\text{by }\eta_{k+1}\le \eta_k \text{ and }\eta_k \le \eta_0)\\
            &\le \lrbracket{-2\lambda(1-\mu)+\dfrac{C_g}{2}\eta_0}\eta_k\norm{\Delta\btheta_k}^2 - \lambda\mu\eta_k\norm{\Delta\btheta_k-\Delta\btheta_{k-1}}^2 <0,
        \end{split}
    \end{equation*}
    where the last $<0$ is from the \cref{assume:1} on $\eta_0$. Therefore, $F_k$ decreases monotonically. Assume that $\lim_{k\to\infty}F_k = F^*$, then
    \begin{equation*}
        F^*-F_0=\sum_{k=0}^{\infty}F_{k+1}-F_k\le \lrbracket{-2\lambda(1-\mu)+\dfrac{C_g}{2}\eta_0}\sum_{k=0}^{\infty}\eta_k\norm{\Delta\btheta_k}^2 - \lambda\mu\sum_{k=0}^{\infty}\eta_k\norm{\Delta\btheta_k-\Delta\btheta_{k-1}}^2<0.
    \end{equation*}
    Hence, we obtain $\sum_{k=0}^{\infty}\eta_k\norm{\Delta\btheta_k}^2 <\infty$.
\end{proof}

\par Now, we are ready to prove Theorem \ref{the:full_spring_convergence}.
\begin{proof}[Proof of Theorem \ref{the:full_spring_convergence}]
    We first estimate $g(\btheta_k)^\top \Delta\btheta_k$, from the update in \eqref{eq:full-spring}, we have
    \begin{equation*}
        \begin{split}
            g(\btheta_k)^\top \Delta\btheta_k&=-\dfrac{1}{2}g(\btheta_k)^\top \lrbracket{\lambda I +S(\btheta_k)}^{-1}g(\btheta_k) + \lambda\mu g(\btheta_k)^\top \lrbracket{\lambda I +S(\btheta_k)}^{-1} \Delta\btheta_{k-1}\\
            & (\text{by } S(\btheta_k)\succeq 0 \text{ and from \cref{lem:g_S_bounded} } \norm{S(\btheta_k)}_2\le \sqrt{C_m})\\
            &\le -\dfrac{1}{2(\lambda+\sqrt{C_m})}\norm{g(\btheta_k)}^2 + \mu\norm{g(\btheta_k)}\norm{\Delta\btheta_{k-1}}\\
            &(\text{by Young inequality}) \\
            &\le -\dfrac{1}{2(\lambda+\sqrt{C_m})}\norm{g(\btheta_k)}^2 + \dfrac{\mu}{4(\lambda + \sqrt{C_m})}\norm{g(\btheta_k)}^2 + \mu(\lambda+\sqrt{C_m})\norm{\Delta\btheta_{k-1}}^2\\
            &=\dfrac{\mu-2}{4(\lambda+\sqrt{C_m})}\norm{g(\btheta_k)}^2 + \mu(\lambda+\sqrt{C_m})\norm{\Delta\btheta_{k-1}}^2.
        \end{split}
    \end{equation*}
    Substituting this inequality into Eq.~\eqref{eq:2_upper_proof}, we obtain
    \begin{equation*}
        \begin{split}
            L(\btheta_{k+1})-L(\btheta_k)&\le \dfrac{\mu-2}{4(\lambda+\sqrt{C_m})}\eta_k\norm{g(\btheta_k)}^2 + \mu(\lambda+\sqrt{C_m})\eta_k\norm{\Delta\btheta_{k-1}}^2 + \dfrac{C_g}{2}\eta_k^2\norm{\Delta\btheta_{k}}^2\\
            &(\text{by }\eta_k\le \eta_{k-1} \text{ and }\eta_k\le \eta_0)\\
            &\le \dfrac{\mu-2}{4(\lambda+\sqrt{C_m})}\eta_k\norm{g(\btheta_k)}^2 + \mu(\lambda+\sqrt{C_m})\eta_{k-1}\norm{\Delta\btheta_{k-1}}^2 + \dfrac{C_g\eta_0}{2}\eta_k\norm{\Delta\btheta_{k}}^2.
        \end{split}
    \end{equation*}
      Rearranging the terms, we obtain
      \begin{equation*}
          \dfrac{2-\mu}{4(\lambda+\sqrt{C_m})}\eta_k\norm{g(\btheta_k)}^2 \le \mu(\lambda+\sqrt{C_m})\eta_{k-1}\norm{\Delta\btheta_{k-1}}^2 + \dfrac{C_g\eta_0}{2}\eta_k\norm{\Delta\btheta_k}^2 + L(\btheta_k)-L(\btheta_{k-1}).
      \end{equation*}
        Summing from $k=1$ to $k=K$, we obtain
    \begin{equation*}
        \begin{split}
        \dfrac{2-\mu}{4(\lambda+\sqrt{C_m})}\sum_{k=1}^K\eta_k\norm{g(\btheta_k)}^2&\le \mu(\lambda+\sqrt{C_m})\sum_{k=1}^K\eta_{k-1}\norm{\Delta\btheta_{k-1}}^2 + \dfrac{C_g\eta_0}{2}\sum_{k=1}^K\eta_k\norm{\Delta\btheta_k}^2 + L(\btheta_1)-L(\btheta_{K+1})\\
        &\le \lrbracket{\lambda\mu+\sqrt{C_m}\mu+\dfrac{C_g\eta_0}{2}}\sum_{k=0}^{\infty}\eta_k\norm{\Delta\btheta_k}^2 + L(\btheta_1)-E_{\gs}<\infty.
        \end{split}
    \end{equation*}
    Denote $C:=\dfrac{4(\lambda+\sqrt{C_m})}{2-\mu}\lrbracket{\lrbracket{\lambda\mu+\sqrt{C_m}\mu+\dfrac{C_g\eta_0}{2}}\sum_{k=0}^{\infty}\eta_k\norm{\Delta\btheta_k}^2 + L(\btheta_1)-E_{\gs}}$, then we have:
    \begin{equation*}
        \sum_{k=1}^K \eta_k\norm{g(\btheta_k)}^2 \le C.
    \end{equation*}
\end{proof}

\section{Proof of \cref{the:p_spring_convergence}}
\label{sec:proof_p_spring_convergence}

\subsection{Proof of Monte Carlo Estimators}
\label{sec:proof_sr_unbiased}

\par We begin with the proof of \cref{lem:sr_matrix_unbiased}, which states that the Monte Carlo estimator of the SR matrix is unbiased.

\begin{proof}[Proof of \cref{lem:sr_matrix_unbiased}]    
    Since $S(\btheta_k;\calB_k)=O(\btheta_k;\calB_k)O(\btheta_k;\calB_k)^\top$, by the definition of $O(\btheta_k;\calB_k)$ in Eqs.~ \eqref{eq:O_E_single} and \eqref{eq:O_E_mc_form}, we have:
    {\small\begin{equation}
        \begin{split}
    S(\btheta_k;\calB_k) &= \dfrac{1}{N_s-1}\sum_{i=1}^{N_s}\lrbracket{\nabla_{\btheta}\log\abs{\psi_{\btheta_k}(X_{k,i})} - \dfrac{1}{N_s}\sum_{j=1}^{N_s}\nabla_{\btheta}\log\abs{\psi_{\btheta_k}(X_{k,j})}}\lrbracket{\nabla_{\btheta}\log\abs{\psi_{\btheta_k}(X_{k,i})} - \dfrac{1}{N_s}\sum_{j=1}^{N_s}\nabla_{\btheta}\log\abs{\psi_{\btheta_k}(X_{k,j})}}^\top\\
    &=\dfrac{1}{N_s-1}\lrbracket{ \sum_{i=1}^{N_s} \nabla_{\btheta}\log\abs{\psi_{\btheta_k}(X_{k,i})}\nabla_{\btheta}\log\abs{\psi_{\btheta_k}(X_{k,i})}^\top - \dfrac{1}{N_s}\sum_{i=1}^{N_s}\nabla_{\btheta}\log\abs{\psi_{\btheta_k}(X_{k,i})} \sum_{j=1}^{N_s}\nabla_{\btheta}\log\abs{\psi_{\btheta_k}(X_{k,j})}^\top}\\
    &=\dfrac{1}{N_s}\sum_{i=1}^{N_s}\nabla_{\btheta}\log\abs{\psi_{\btheta_k}(X_{k,i})}\nabla_{\btheta}\log\abs{\psi_{\btheta_k}(X_{k,i})}^\top - \dfrac{1}{N_s(N_s-1)}\sum_{1\le i\neq j\le N_s}\nabla_{\btheta}\log\abs{\psi_{\btheta_k}(X_{k,i})}\nabla_{\btheta}\log\abs{\psi_{\btheta_k}(X_{k,j})}^\top.
        \end{split}
        \label{eq:sr_matrix_decomp}
    \end{equation}}
    By \cref{assume:sample}, for any $1\le i\le N_s$ and $i\neq j$,
    \begin{equation*}
        \begin{split}
            \Expect_{\calB_k}\lrsquare{ \nabla_{\btheta}\log\abs{\psi_{\btheta_k}(X_{k,i})}\nabla_{\btheta}\log\abs{\psi_{\btheta_k}(X_{k,i})}^\top } &=  \Expect_{X\sim\pi_{\btheta_k}}\lrsquare{ \nabla_{\btheta}\log\abs{\psi_{\btheta_k}(X)}\nabla_{\btheta}\log\abs{\psi_{\btheta_k}(X)}^\top }\\
        \Expect_{\calB_k}\lrsquare{\nabla_{\btheta}\log\abs{\psi_{\btheta_k}(X_{k,i})}\nabla_{\btheta}\log\abs{\psi_{\btheta_k}(X_{k,j})}^\top}&=\Expect_{X\sim\pi_{\btheta_k}}\lrsquare{\nabla_{\btheta}\log\abs{\psi_{\btheta_k}(X)}}\Expect_{X\sim\pi_{\btheta_k}}\lrsquare{\nabla_{\btheta}\log\abs{\psi_{\btheta_k}(X)}}^\top.
        \end{split}
    \end{equation*}
    Therefore,
    \begin{equation*}
        \begin{split}
            \Expect_{\calB_k}\lrsquare{S(\btheta_k;\calB_k)}&=\Expect_{X\sim\pi_{\btheta_k}}\lrsquare{ \nabla_{\btheta}\log\abs{\psi_{\btheta_k}(X)}\nabla_{\btheta}\log\abs{\psi_{\btheta_k}(X)}^\top } - \Expect_{X\sim\pi_{\btheta_k}}\lrsquare{\nabla_{\btheta}\log\abs{\psi_{\btheta_k}(X)}}\Expect_{X\sim\pi_{\btheta_k}}\lrsquare{\nabla_{\btheta}\log\abs{\psi_{\btheta_k}(X)}}^\top\\
            &=S(\btheta_k).
        \end{split}
    \end{equation*}
\end{proof}

\par Next, we estimate the error induced by Monte Carlo sampling. Before doing so, we prove an auxiliary counting lemma that will be used in the subsequent bounds.

\begin{lem}
        Let $\calI:=\{(i_1,i_2,j_1,j_2):1\le i_1\neq j_1\le N,~1\le i_2\neq j_2\le N\}$. Then
        \begin{equation*}
            \sharp\{(i_1,i_2,j_1,j_2)\in\calI: i_1=i_2\text{ or }j_1=j_2\} = 2N(N-1)^2-N(N-1),
        \end{equation*}
    where $\sharp$ denotes the cardinality of a set.
    \label{lem:counter_number}
\end{lem}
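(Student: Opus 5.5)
The count follows from inclusion--exclusion over the two events that define the set. Let
\[
A:=\{(i_1,i_2,j_1,j_2)\in\calI: i_1=i_2\},\qquad B:=\{(i_1,i_2,j_1,j_2)\in\calI: j_1=j_2\}.
\]
The quantity in \cref{lem:counter_number} is $\sharp(A\cup B)=\sharp A+\sharp B-\sharp(A\cap B)$, so it suffices to compute these three cardinalities.

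\emph{Step 1: $\sharp A$.} The constraint $i_1=i_2$ leaves one free index $i:=i_1=i_2$, which has $N$ choices. The conditions defining $\calI$ then reduce to $j_1\neq i$ and $j_2\neq i$. These are independent, and each index has $N-1$ admissible values. Hence $\sharp A=N(N-1)^2$.

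\emph{Step 2: $\sharp B$.} By the symmetry $(i_1,i_2,j_1,j_2)\mapsto(j_1,j_2,i_1,i_2)$, which preserves $\calI$ and swaps the roles of $A$ and $B$, we get $\sharp B=\sharp A=N(N-1)^2$. One can also count directly: choose $j:=j_1=j_2$ in $N$ ways, then $i_1\neq j$ and $i_2\neq j$ independently in $(N-1)^2$ ways.

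\emph{Step 3: $\sharp(A\cap B)$.} Here $i_1=i_2=:i$ and $j_1=j_2=:j$, and membership in $\calI$ requires only $i\neq j$. This gives $N(N-1)$ ordered pairs, so $\sharp(A\cap B)=N(N-1)$.

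Combining the three steps gives
\[
\sharp(A\cup B)=2N(N-1)^2-N(N-1),
\]
which is the claimed formula. There is no real obstacle; the only point needing care is to impose both inequality constraints $i_1\neq j_1$ and $i_2\neq j_2$ after the identification in each case. In particular, in Step 1 the choices of $j_1$ and $j_2$ are unconstrained relative to each other, so they may coincide.
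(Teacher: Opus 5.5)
Your proof is correct and matches the paper's argument: inclusion--exclusion with $\sharp A=\sharp B=N(N-1)^2$ and $\sharp(A\cap B)=N(N-1)$. The swap $(i_1,i_2,j_1,j_2)\mapsto(j_1,j_2,i_1,i_2)$ is a valid substitute for the paper's ``by the same argument'' in computing $\sharp B$.
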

\begin{proof}
        We count the tuples in $\calI$ satisfying $i_1=i_2$ or $j_1=j_2$ as following.
\begin{enumerate}
    \item[(1)]  Count the case $i_1=i_2$.
    
        Choose $i:=i_1=i_2$ in $N$ ways. For each fixed $i$, we must choose $j_1\neq i$ and $j_2\neq i$, which can be done in $(N-1)^2$ ways. Hence there are $N(N-1)^2$ tuples with $i_1=i_2$.
        
    \item[(2)] Count the case $j_1=j_2$.

    By the same argument, there are $N(N-1)^2$ tuples with $j_1=j_2$.

    \item[(3)] Subtract the overlap.

    The overlap consists of tuples with both $i_1=i_2$ and $j_1=j_2$. Choose $i:=i_1=i_2$ in $N$ ways and then choose $j:=j_1=j_2$ with $j\neq i$, in $(N-1)$ ways. Thus the overlap has size $N(N-1)$.
    
\end{enumerate}
    Combining the three steps gives
    \begin{equation*}
        N(N-1)^2 + N(N-1)^2 - N(N-1) = 2N(N-1)^2 - N(N-1),
    \end{equation*}
    which completes the proof.
\end{proof}

\begin{lem}
    Under \cref{assume:1,assume:sample}, for any $k\ge 0$, 
    \begin{equation*}
        \Expect_{\calB_k}\lrsquare{\norm{g(\btheta_k)-g(\btheta_k;\calB_k)}^2}\le \dfrac{24C_m}{N_s},\quad \Expect_{\calB_k}\lrsquare{\norm{S(\btheta_k)-S(\btheta_k;\calB_k)}_F^2}\le \dfrac{6C_m}{N_s}.
    \end{equation*}
    \label{lem:grad_S_mc_error}
\end{lem}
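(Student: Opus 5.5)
We will prove both bounds by writing each estimator as a U-statistic-type expression in the i.i.d. samples and computing second moments term by term. Write $G_i:=\nabla_{\btheta}\log\abs{\psi_{\btheta_k}(X_{k,i})}$, $E_i:=\Eloc(\btheta_k;X_{k,i})$, and let $\bar G:=\Expect[G_i]$, $\bar E:=\Expect[E_i]$. For the SR matrix, we start from the decomposition \eqref{eq:sr_matrix_decomp}:
\begin{equation*}
S(\btheta_k;\calB_k)=\frac{1}{N_s}\sum_i G_iG_i^\top-\frac{1}{N_s(N_s-1)}\sum_{i\neq j}G_iG_j^\top .
\end{equation*}
The error $S(\btheta_k;\calB_k)-S(\btheta_k)$ then splits into $A_1:=\frac{1}{N_s}\sum_i(G_iG_i^\top-\Expect[GG^\top])$ and $A_2:=-\frac{1}{N_s(N_s-1)}\sum_{i\neq j}(G_iG_j^\top-\bar G\bar G^\top)$. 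We will bound $\Expect\norm{A_1+A_2}_F^2\le 2\Expect\norm{A_1}_F^2+2\Expect\norm{A_2}_F^2$, or treat the cross term directly.

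For $A_1$, independence and zero mean kill all cross terms. Hence $\Expect\norm{A_1}_F^2\le\frac{1}{N_s}\Expect\norm{G}^4\le C_m/N_s$.

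For $A_2$, we expand $\norm{\sum_{i\ne j}(G_iG_j^\top-\bar G\bar G^\top)}_F^2$ as a sum over quadruples $(i_1,j_1,i_2,j_2)\in\calI$. If the index sets $\{i_1,j_1\}$ and $\{i_2,j_2\}$ share no index, the expectation factorizes and vanishes. We expect the case where only the swapped indices coincide ($i_1=j_2$ or $j_1=i_2$) to need a separate check; either we bound it crudely as well, or we count it together with the others by using a slightly larger count. The surviving tuples are those counted by \cref{lem:counter_number} (plus the analogous swapped ones), giving $\calO(N_s^3)$ terms. Each term is bounded via Cauchy--Schwarz by $\Expect\norm{G}^4\le C_m$, noting that $\norm{\bar G}^4\le C_m$ by Jensen's inequality. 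Dividing by $N_s^2(N_s-1)^2$ yields $\calO(C_m/N_s)$. Collecting the constants should produce $6C_m/N_s$.

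The gradient is handled the same way. By \eqref{eq:grad_mc},
\begin{equation*}
g(\btheta_k;\calB_k)=\frac{2}{N_s-1}\sum_i(E_i-\tfrac{1}{N_s}\textstyle\sum_jE_j)(G_i-\tfrac{1}{N_s}\sum_l G_l)=\frac{2}{N_s}\sum_iE_iG_i-\frac{2}{N_s(N_s-1)}\sum_{i\neq j}E_iG_j .
\end{equation*}
This is the sample-covariance identity, and it mirrors \eqref{eq:sr_matrix_decomp} with $G_i^\top$ replaced by $E_i$. We then repeat the variance computation, using the bounds $\Expect[E^2\norm{G}^2]\le\sqrt{\Expect E^4\,\Expect\norm{G}^4}\le C_m$, $\abs{\bar E}^4\le C_m$, and $\norm{\bar G}^4\le C_m$. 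The extra factor $4$ from the prefactor $2$, together with the triangle-inequality splitting, accounts for the larger constant $24C_m/N_s$.

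The main obstacle is the careful combinatorial bookkeeping in the off-diagonal U-statistic terms. This means identifying exactly which index coincidences give nonzero expectations and keeping the constants tight enough to land on $6$ and $24$. If the exact constants do not match, we will accept any bound of the form $c\,C_m/N_s$, since only the $1/N_s$ rate matters downstream.
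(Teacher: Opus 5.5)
Your outline is the paper's proof step for step: the split from \eqref{eq:sr_matrix_decomp}, $\norm{a+b}^2\le 2\norm{a}^2+2\norm{b}^2$, vanishing of disjoint quadruples, \cref{lem:counter_number}, and a per-term Cauchy--Schwarz bound by $C_m$. The gap is in the constants, which the lemma states explicitly.

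You are right that the swapped coincidences cannot be dropped. For distinct $i,j,l$, conditioning on the shared sample gives (with your $\bar E=L(\btheta_k)$)
\begin{gather*}
\Expect\langle E_iG_j-\bar E\bar G,\,E_lG_i-\bar E\bar G\rangle=\tfrac12 L(\btheta_k)\,\bar G^\top g(\btheta_k),\\
\Expect\langle G_iG_j^\top-\bar G\bar G^\top,\,G_lG_i^\top-\bar G\bar G^\top\rangle_F=\bar G^\top S(\btheta_k)\bar G,
\end{gather*}
and neither vanishes in general. Once these terms are included, there are $N_s(N_s-1)(4N_s-6)$ overlapping quadruples, twice the number in \cref{lem:counter_number}. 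Bounding each term by $C_m$ then yields $2C_m/N_s+8C_m/N_s=10C_m/N_s$ for $S$ and $8C_m/N_s+32C_m/N_s=40C_m/N_s$ for $g$. So ``collecting the constants'' does not give $6$ and $24$, and falling back to an unspecified $c\,C_m/N_s$ proves a weaker lemma than the one stated. Note that the paper reaches $6$ and $24$ only because its sum runs over $i_1=i_2$ or $j_1=j_2$, so it omits exactly the swapped terms you flagged. Copying its count is therefore not a valid repair, although the stated constants are in fact correct.

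What is missing is a sharper treatment of the single-overlap terms. Compute them exactly by conditioning on the shared sample, rather than bounding each by $C_m$. For the gradient, for each ordered triple of distinct indices, the four types ($i_1=i_2$, $j_1=j_2$, $i_1=j_2$, $j_1=i_2$) contribute in total
\begin{align*}
&\mathrm{Var}(E)\norm{\bar G}^2+L(\btheta_k)^2\,\mathrm{tr}\,S(\btheta_k)+L(\btheta_k)\,\bar G^\top g(\btheta_k)\\
&\qquad\le\bigl(\mathrm{Var}(E)+L(\btheta_k)^2\bigr)\bigl(\norm{\bar G}^2+\mathrm{tr}\,S(\btheta_k)\bigr)=\Expect[E^2]\,\Expect\norm{G}^2\le C_m .
\end{align*}
The inequality uses $\abs{\bar G^\top g(\btheta_k)}\le 2\norm{\bar G}\sqrt{\mathrm{Var}(E)\,\mathrm{tr}\,S(\btheta_k)}$ and Cauchy--Schwarz in $\R^2$.

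Now add the $2N_s(N_s-1)$ fully coincident quadruples, each at most $C_m$. The off-diagonal part of $g(\btheta_k;\calB_k)-g(\btheta_k)$ then has second moment at most $4C_m/(N_s-1)$. Hence, for $N_s\ge 2$,
\[
\Expect_{\calB_k}\norm{g(\btheta_k)-g(\btheta_k;\calB_k)}^2\le 8C_m/N_s+8C_m/(N_s-1)\le 24C_m/N_s .
\]
For $S$, the four types sum to $2\norm{\bar G}^2\,\mathrm{tr}\,S(\btheta_k)+2\bar G^\top S(\btheta_k)\bar G\le(\Expect\norm{G}^2)^2\le C_m$. The same bookkeeping gives $2C_m/N_s+2C_m/(N_s-1)\le 6C_m/N_s$.

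Alternatively, replace $G_i,E_i$ by $G_i-\bar G$, $E_i-L(\btheta_k)$ at the outset. This leaves both estimators unchanged and kills all single-overlap and diagonal/off-diagonal cross terms. The price is that you must control $\Expect\norm{G-\bar G}^4\le 3\Expect\norm{G}^4$ in the diagonal part.
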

\begin{proof}  
     We first bound the error of $g(\btheta_k;\calB_k)$. By the definition of $g(\btheta_k;\calB_k)$ in Eqs.~\eqref{eq:grad_mc} and \eqref{eq:O_and_bar_E}, and similarly to Eq.~\eqref{eq:sr_matrix_decomp}, we have
     \begin{equation*}
         g(\btheta_k;\calB_k) = \dfrac{2}{N_s}\sum_{i=1}^{N_s}\Eloc(\btheta_k;X_{k,i})\nabla_{\btheta}\log\abs{\psi_{\btheta_k}(X_{k,i})} - \dfrac{2}{N_s(N_s-1)}\sum_{1\le i\neq j\le N_s} \Eloc(\btheta_k;X_{k,i})\nabla_{\btheta}\log\abs{\psi_{\btheta_k}(X_{k,j})}.
     \end{equation*}
    Hence
    {\small\begin{equation*}
        \begin{split}
            g(\btheta_k;\calB_k)& - g(\btheta_k)= \dfrac{2}{N_s}\sum_{i=1}^{N_s}\lrbracket{ \Eloc(\btheta_k;X_{k,i})\nabla_{\btheta}\log\abs{\psi_{\btheta_k}(X_{k,i})} - \Expect_{X\sim\pi_{\btheta_k}}\lrsquare{ \Eloc(\btheta_k;X)\nabla_{\btheta}\log\abs{\psi_{\btheta_k}(X)} } }\\
            &-\dfrac{2}{N_s(N_s-1)}\sum_{1\le i\neq j\le N_s}\lrbracket{ \Eloc(\btheta_k;X_{k,i})\nabla_{\btheta}\log\abs{\psi_{\btheta_k}(X_{k,j})} - \Expect_{X\sim\pi_{\btheta_k}}\lrsquare{ \Eloc(\btheta_k;X) }\Expect_{X\sim\pi_{\btheta_k}}\lrsquare{\nabla_{\btheta}\log\abs{\psi_{\btheta_k}(X)} }}.
        \end{split}
    \end{equation*}}
    Using $\norm{a+b}^2\le 2\norm{a}^2 + 2\norm{b}^2$, we obtain
    {\small\begin{equation*}
        \begin{split}
            &\norm{g(\btheta_k;\calB_k) - g(\btheta_k)}^2  \le \dfrac{8}{N_s^2}\norm{\sum_{i=1}^{N_s}\lrbracket{ \Eloc(\btheta_k;X_{k,i})\nabla_{\btheta}\log\abs{\psi_{\btheta_k}(X_{k,i})} - \Expect_{X\sim\pi_{\btheta_k}}\lrsquare{ \Eloc(\btheta_k;X)\nabla_{\btheta}\log\abs{\psi_{\btheta_k}(X)} } }}^2\\
            &+\dfrac{8}{N_s^2(N_s-1)^2}\norm{\sum_{1\le i\neq j\le N_s}\lrbracket{ \Eloc(\btheta_k;X_{k,i})\nabla_{\btheta}\log\abs{\psi_{\btheta_k}(X_{k,j})} - \Expect_{X\sim\pi_{\btheta_k}}\lrsquare{ \Eloc(\btheta_k;X) }\Expect_{X\sim\pi_{\btheta_k}}\lrsquare{\nabla_{\btheta}\log\abs{\psi_{\btheta_k}(X)} }}}^2.
        \end{split}
    \end{equation*}}
    Taking expectation over $\calB_k$ and using \cref{assume:sample}, the cross terms involving different samples vanish. Therefore,
    {\small\begin{equation*}
        \begin{split}
       &\Expect_{\calB_k}\lrsquare{\norm{g(\btheta_k)-g(\btheta_k;\calB_k)}^2}\le \dfrac{8}{N_s}\Expect_{X\sim\pi_{\btheta_k}}\lrsquare{ \norm{\Eloc(\btheta_k;X)\nabla_{\btheta}\log\abs{\psi_{\btheta_k}(X)} - \Expect_{X\sim\pi_{\btheta_k}}\lrsquare{\Eloc(\btheta_k;X)\nabla_{\btheta}\log\abs{\psi_{\btheta_k}(X)}}}^2 }\\
    &+\dfrac{8}{N_s^2(N_s-1)^2}\sum_{\substack{i_1\neq j_1\text{ and }i_2\neq j_2,\\i_1=i_2 \text{ or }j_1=j_2}}\Expect_{(X,X')\sim \pi_{\btheta_k}^{\otimes^2}}\lrsquare{ \norm{ \Eloc(\btheta_k;X)\nabla_{\btheta}\log\abs{\psi_{\btheta_k}(X')} - \Expect_{X\sim\pi_{\btheta_k}}\lrsquare{ \Eloc(\btheta_k;X) }\Expect_{X\sim\pi_{\btheta_k}}\lrsquare{ \nabla_{\btheta}\log\abs{\psi_{\btheta_k}(X)} }}^2 }.
        \end{split}
    \end{equation*}}
       Then, by \cref{lem:counter_number}, the inequality $\Expect[\abs{Y-\Expect[Y]}^2]\le \Expect[\abs{Y}^2]$, and the Cauchy-Schwarz inequality, we have
   {\small \begin{equation*}
        \begin{split}
       \Expect_{\calB_k}\lrsquare{\norm{g(\btheta_k)-g(\btheta_k;\calB_k)}^2}&\le \dfrac{8}{N_s}\Expect_{X\sim\pi_{\btheta_k}}\lrsquare{ \norm{\Eloc(\btheta_k;X)\nabla_{\btheta}\log\abs{\psi_{\btheta_k}(X)}}^2 } + \dfrac{16}{N_s}\Expect_{(X,X')\sim \pi_{\btheta_k}^{\otimes^2}}\lrsquare{ \norm{\Eloc(\btheta_k;X)\nabla_{\btheta}\log\abs{\psi_{\btheta_k}(X')}}^2 } \\
       &\le \dfrac{24}{N_s}\sqrt{ \Expect_{X\sim\pi_{\btheta_k}}\lrsquare{ \norm{\Eloc(\btheta_k;X)}^4 }\Expect_{X\sim\pi_{\btheta_k}}\lrsquare{ \norm{\nabla_{\btheta}\log\abs{\psi_{\btheta_k}(X)}}^4 } }\le \dfrac{24C_m}{N_s}.
        \end{split}
    \end{equation*}}

   The bound for $S(\btheta_k;\calB_k)$ is similar:
    {\small\begin{equation*}
        \begin{split}
        &\Expect_{\calB_k}\lrsquare{\norm{S(\btheta_k)-S(\btheta_k;\calB_k)}_F^2}\\
        &\le \dfrac{2}{N_s}\Expect_{X\sim\pi_{\btheta_k}} \lrsquare{ \norm{ \nabla_{\btheta}\log\abs{\psi_{\btheta_k}(X)}\nabla_{\btheta}\log\abs{\psi_{\btheta_k}(X)}^\top -\Expect_{X\sim\pi_{\btheta_k}}\lrsquare{ \nabla_{\btheta}\log\abs{\psi_{\btheta_k}(X)}\nabla_{\btheta}\log\abs{\psi_{\btheta_k}(X)}^\top } }_F^2 }\\
        &+\dfrac{4}{N_s}\Expect_{(X,X')\sim\pi_{\btheta_k}^{\otimes^2}} \lrsquare{ \norm{ \nabla_{\btheta}\log\abs{\psi_{\btheta_k}(X)}\nabla_{\btheta}\log\abs{\psi_{\btheta_k}(X')}^\top -\Expect_{X\sim\pi_{\btheta_k}}\lrsquare{ \nabla_{\btheta}\log\abs{\psi_{\btheta_k}(X)}}\Expect_{X\sim\pi_{\btheta_k}}\lrsquare{\nabla_{\btheta}\log\abs{\psi_{\btheta_k}(X)}^\top } }_F^2 }\\
        &\le \dfrac{2}{N_s}\Expect_{X\sim\pi_{\btheta_k}} \lrsquare{ \norm{ \nabla_{\btheta}\log\abs{\psi_{\btheta_k}(X)}\nabla_{\btheta}\log\abs{\psi_{\btheta_k}(X)}^\top }_F^2 } +\dfrac{4}{N_s}\Expect_{(X,X')\sim\pi_{\btheta_k}^{\otimes^2}} \lrsquare{ \norm{ \nabla_{\btheta}\log\abs{\psi_{\btheta_k}(X)}\nabla_{\btheta}\log\abs{\psi_{\btheta_k}(X')}^\top }_F^2 }\\
        &=\dfrac{2}{N_s}\Expect_{X\sim\pi_{\btheta_k}} \lrsquare{ \norm{ \nabla_{\btheta}\log\abs{\psi_{\btheta_k}(X)} }_F^4 } + \dfrac{4}{N_s}\Expect_{(X,X')\sim\pi_{\btheta_k}^{\otimes^2}} \lrsquare{ \norm{ \nabla_{\btheta}\log\abs{\psi_{\btheta_k}(X)}}^2 \norm{ \nabla_{\btheta}\log\abs{\psi_{\btheta_k}(X')}}^2 }\\
        &\le \dfrac{2C_m}{N_s} + \dfrac{4}{N_s}\sqrt{\lrbracket{\Expect_{X\sim\pi_{\btheta_k}} \lrsquare{ \norm{ \nabla_{\btheta}\log\abs{\psi_{\btheta_k}(X)} }_F^4 } }^2}\le \dfrac{6C_m}{N_s}.
        \end{split}
    \end{equation*}}
\end{proof}

\begin{lem}
    Under \cref{assume:1,assume:sample}, let $\{\btheta_k,\Delta\btheta_{k}\}$ be generated by \eqref{eq:p-spring}, then for any $0\le \mu<1$, $k\ge 0$,
    \begin{equation*}
        \begin{split}
            \norm{ \Expect\lrsquare{ \Delta\btheta_k-\Delta\btheta_k^{\star}|\scrF_{k-1} } }^2 &\le \dfrac{6C_m\Expect\lrsquare{\norm{\Delta\btheta_k}^2|\scrF_{k-1}}}{\lambda^2 N_s},\\
            \Expect\lrsquare{ \norm{\Delta\btheta_k-\Delta\btheta_k^{\star}}^2|\scrF_{k-1} } &\le \lrbracket{1+2\mu^2 \norm{\Delta\btheta_{k-1}}^2+\dfrac{2C_m}{\lambda^2}} \dfrac{12C_m}{\lambda^2 N_s},
        \end{split}
    \end{equation*}
    where $\Delta\btheta_k^{\star}:=\lrbracket{\lambda I +S(\btheta_k)}^{-1}\lrbracket{\lambda\mu \Delta\btheta_{k-1}-\dfrac{1}{2}g(\btheta_k)}$, and $\scrF_{k-1}:=\sigma\lrbracket{\{X_{i,j}\}_{0\le i\le k-1,1\le j\le N_s}}$ is the $\sigma$-algebra generated by previous samples.
    \label{lem:biased_direction_error}
\end{lem}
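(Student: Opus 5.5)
Abbreviate $A_k:=\lambda I+S(\btheta_k;\calB_k)$, $A_k^\star:=\lambda I+S(\btheta_k)$ and $b_k:=\lambda\mu\Delta\btheta_{k-1}-\tfrac12 g(\btheta_k;\calB_k)$, and let $b_k^\star$ be $b_k$ with $g(\btheta_k)$ in place of $g(\btheta_k;\calB_k)$. The starting point is the resolvent-type identity
\begin{equation*}
\Delta\btheta_k-\Delta\btheta_k^\star=(A_k^\star)^{-1}\bigl(A_k^\star\Delta\btheta_k-b_k^\star\bigr)=(A_k^\star)^{-1}\Bigl(\bigl(S(\btheta_k)-S(\btheta_k;\calB_k)\bigr)\Delta\btheta_k-\tfrac12\bigl(g(\btheta_k;\calB_k)-g(\btheta_k)\bigr)\Bigr).
\end{equation*}
It uses $A_k\Delta\btheta_k=b_k$, and hence $A_k^\star\Delta\btheta_k=b_k+(S(\btheta_k)-S(\btheta_k;\calB_k))\Delta\btheta_k$. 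Three facts are then available: $\btheta_k$ and $\Delta\btheta_{k-1}$ are $\scrF_{k-1}$-measurable, $\norm{(A_k^\star)^{-1}}\le 1/\lambda$ because both SR matrices are positive semidefinite, and $\Expect[g(\btheta_k;\calB_k)\mid\scrF_{k-1}]=g(\btheta_k)$ by \cref{lem:grad_unbiased} under \cref{assume:sample}.

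\emph{First (bias) bound.} Take the conditional expectation of the identity. The gradient term vanishes by unbiasedness, and $(A_k^\star)^{-1}$ comes out of the expectation since it is $\scrF_{k-1}$-measurable. This gives $\Expect[\Delta\btheta_k-\Delta\btheta_k^\star\mid\scrF_{k-1}]=(A_k^\star)^{-1}\Expect[(S(\btheta_k)-S(\btheta_k;\calB_k))\Delta\btheta_k\mid\scrF_{k-1}]$. I then take norms, apply conditional Jensen, and use $\norm{\cdot}_2\le\norm{\cdot}_F$ and Cauchy--Schwarz in the conditional expectation. The result is the square root of $\Expect[\norm{S(\btheta_k)-S(\btheta_k;\calB_k)}_F^2\mid\scrF_{k-1}]\le 6C_m/N_s$ (from \cref{lem:grad_S_mc_error}) times $\Expect[\norm{\Delta\btheta_k}^2\mid\scrF_{k-1}]$. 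Squaring yields exactly $6C_m\Expect[\norm{\Delta\btheta_k}^2\mid\scrF_{k-1}]/(\lambda^2N_s)$.

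\emph{Second (mean-square) bound.} Here I apply $\norm{a+b}^2\le 2\norm a^2+2\norm b^2$ to the identity. The gradient piece contributes at most $\tfrac{2}{4\lambda^2}\cdot 24C_m/N_s=12C_m/(\lambda^2N_s)$, which is the constant ``$1$'' term. For the matrix piece, $\Delta\btheta_k$ must be controlled deterministically so that it can be pulled out of the expectation. Since $\norm{A_k^{-1}}\le 1/\lambda$, we have $\norm{\Delta\btheta_k}\le\mu\norm{\Delta\btheta_{k-1}}+\norm{g(\btheta_k;\calB_k)}/(2\lambda)$. However, $g(\btheta_k;\calB_k)$ is random and not uniformly bounded by the moment assumptions, so the bound couples it with the matrix error.

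This coupling is the step I expect to be the main obstacle. The cleaner route is to rewrite the matrix piece as $(S(\btheta_k)-S(\btheta_k;\calB_k))\Delta\btheta_k^\star+(S(\btheta_k)-S(\btheta_k;\calB_k))(\Delta\btheta_k-\Delta\btheta_k^\star)$. The first part is then bounded by $\norm{\Delta\btheta_k^\star}^2\cdot 6C_m/N_s$, where $\norm{\Delta\btheta_k^\star}^2\le 2\mu^2\norm{\Delta\btheta_{k-1}}^2+\norm{g(\btheta_k)}^2/(2\lambda^2)\le 2\mu^2\norm{\Delta\btheta_{k-1}}^2+2C_m/\lambda^2$ by \cref{lem:g_S_bounded}; this produces the $2\mu^2\norm{\Delta\btheta_{k-1}}^2+2C_m/\lambda^2$ terms. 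The second part needs an alternative handling. The option I would try first is to use the symmetric form $\Delta\btheta_k-\Delta\btheta_k^\star=A_k^{-1}\bigl((S(\btheta_k)-S(\btheta_k;\calB_k))\Delta\btheta_k^\star-\tfrac12(g(\btheta_k;\calB_k)-g(\btheta_k))\bigr)$, obtained by writing $A_k(\Delta\btheta_k-\Delta\btheta_k^\star)=b_k-A_k\Delta\btheta_k^\star$. This form involves only $\Delta\btheta_k^\star$, which is $\scrF_{k-1}$-measurable, together with $\norm{A_k^{-1}}\le1/\lambda$. With it, the whole mean-square bound follows directly, up to collecting the constants into the stated form $\bigl(1+2\mu^2\norm{\Delta\btheta_{k-1}}^2+2C_m/\lambda^2\bigr)12C_m/(\lambda^2N_s)$.
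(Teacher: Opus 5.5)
Your proposal is correct and follows the paper's proof. The bias bound uses the same identity $\Delta\btheta_k-\Delta\btheta_k^\star=(\lambda I+S(\btheta_k))^{-1}\bigl((S(\btheta_k)-S(\btheta_k;\calB_k))\Delta\btheta_k+\tfrac12(g(\btheta_k)-g(\btheta_k;\calB_k))\bigr)$ together with conditional unbiasedness and Cauchy--Schwarz. For the mean-square bound, your final "symmetric form" is the paper's rewriting with the resolvent difference taken in the opposite order: the paper uses $(\lambda I+S(\btheta_k))^{-1}(S(\btheta_k)-S(\btheta_k;\calB_k))(\lambda I+S(\btheta_k;\calB_k))^{-1}$ acting on $\lambda\mu\Delta\btheta_{k-1}-\tfrac12 g(\btheta_k)$, which gives exactly the same constants, so the intermediate splitting you considered first is not needed.
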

\begin{proof}
    We first prove the first inequality.
   \begin{align*}
            \Delta\btheta_k&-\Delta\btheta_k^{\star} = \lrbracket{\lambda I +S(\btheta_k;\calB_k)}^{-1}\lrbracket{\lambda\mu\Delta\btheta_{k-1}-\dfrac{1}{2}g(\btheta_k;\calB_k)} -  \lrbracket{\lambda I +S(\btheta_k)}^{-1}\lrbracket{\lambda\mu\Delta\btheta_{k-1}-\dfrac{1}{2}g(\btheta_k)}\\
            &=\lrsquare{ \lrbracket{\lambda I + S(\btheta_k;\calB_k)}^{-1}-\lrbracket{\lambda I + S(\btheta_k)}^{-1} }\lrbracket{ \lambda\mu\Delta\btheta_{k-1}-\dfrac{1}{2}g(\btheta_k;\calB_k) } + \dfrac{1}{2}\lrbracket{\lambda I +S(\btheta_k)}^{-1}\lrbracket{g(\btheta_k)-g(\btheta_k;\calB_k)}\\
            &=\lrbracket{\lambda I +S(\btheta_k)}^{-1} \lrbracket{S(\btheta_k)-S(\btheta_k;\calB_k)} \lrbracket{\lambda I +S(\btheta_k;\calB_k)}^{-1}\lrbracket{ \lambda\mu\Delta\btheta_{k-1}-\dfrac{1}{2}g(\btheta_k;\calB_k) }\\
            &\quad\quad+ \dfrac{1}{2}\lrbracket{\lambda I +S(\btheta_k)}^{-1}\lrbracket{g(\btheta_k)-g(\btheta_k;\calB_k)}\\
            &=\lrbracket{\lambda I +S(\btheta_k)}^{-1} \lrbracket{S(\btheta_k)-S(\btheta_k;\calB_k)}\Delta\btheta_k+ \dfrac{1}{2}\lrbracket{\lambda I +S(\btheta_k)}^{-1}\lrbracket{g(\btheta_k)-g(\btheta_k;\calB_k)}.
    \end{align*}
    By \cref{assume:sample}, taking the conditional expectation given $\scrF_{k-1}$ yields
    \begin{equation*}
        \Expect\lrsquare{ \Delta\btheta_k-\Delta\btheta_k^{\star}|\scrF_{k-1} } = \lrbracket{\lambda I +S(\btheta_k)}^{-1} \Expect\lrsquare{\lrbracket{S(\btheta_k)-S(\btheta_k;\calB_k)}\Delta\btheta_k|\scrF_{k-1}}.
    \end{equation*}
    Then by the Cauchy-Schwarz inequality and \cref{lem:grad_S_mc_error}, we have
    \begin{equation*}
        \begin{split}
            \norm{ \Expect\lrsquare{ \Delta\btheta_k-\Delta\btheta_k^{\star}|\scrF_{k-1} } }&\le \dfrac{1}{\lambda}\Expect\lrsquare{ \norm{S(\btheta_k)-S(\btheta_k;\calB_k)}_2\cdot\norm{\Delta\btheta_k}|\scrF_{k-1} }\\
            &\le \dfrac{1}{\lambda}\sqrt{ \Expect\lrsquare{ \norm{S(\btheta_k)-S(\btheta_k;\calB_k)}_2^2|\scrF_{k-1} } \Expect\lrsquare{\norm{\Delta\btheta_k}^2|\scrF_{k-1} }}\\
            &\le \dfrac{1}{\lambda}\sqrt{ \Expect\lrsquare{ \norm{S(\btheta_k)-S(\btheta_k;\calB_k)}_F^2|\scrF_{k-1} } \Expect\lrsquare{\norm{\Delta\btheta_k}^2|\scrF_{k-1} }}\\
            &\le \dfrac{1}{\lambda}\sqrt{\dfrac{6C_m}{N_s}}\sqrt{ \Expect\lrsquare{\norm{\Delta\btheta_k}^2|\scrF_{k-1} } }.
        \end{split}
    \end{equation*}
      Next, we prove the second inequality. On the other hand, we can rewrite $\Delta\btheta_k-\Delta\btheta_k^{\star}$ as 
    \begin{equation*}
        \begin{split}
            \Delta\btheta_k-\Delta\btheta_k^{\star}   &=\lrbracket{\lambda I +S(\btheta_k)}^{-1} \lrbracket{S(\btheta_k)-S(\btheta_k;\calB_k)} \lrbracket{\lambda I +S(\btheta_k;\calB_k)}^{-1}\lrbracket{ \lambda\mu\Delta\btheta_{k-1}-\dfrac{1}{2}g(\btheta_k) }\\
            &\quad\quad+ \dfrac{1}{2}\lrbracket{\lambda I +S(\btheta_k;\calB_k)}^{-1}\lrbracket{g(\btheta_k)-g(\btheta_k;\calB_k)}.
        \end{split}
    \end{equation*}
    Since $S(\btheta_k;\calB_k)\succeq 0$ a.s., we have $\norm{(\lambda I +S(\btheta_k;\calB_k))^{-1}}_2\le \frac{1}{\lambda}$ a.s. Therefore, almost surely,
    \begin{equation*}
        \begin{split}
            \norm{\Delta\btheta_k-\Delta\btheta_k^{\star}}&\le \dfrac{1}{\lambda^2}\norm{S(\btheta_k)-S(\btheta_k;\calB_k)}_F\cdot\norm{\lambda\mu \Delta\btheta_{k-1}-\dfrac{1}{2}g(\btheta_k)} + \dfrac{1}{2\lambda}\norm{g(\btheta_k)-g(\btheta_k;\calB_k)}\\
            \Longrightarrow\norm{\Delta\btheta_k-\Delta\btheta_k^{\star}}^2&\le \dfrac{2}{\lambda^4}\norm{S(\btheta_k)-S(\btheta_k;\calB_k)}_F^2\cdot\norm{\lambda\mu \Delta\btheta_{k-1}-\dfrac{1}{2}g(\btheta_k)}^2 + \dfrac{1}{2\lambda^2}\norm{g(\btheta_k)-g(\btheta_k;\calB_k)}^2.
        \end{split}
    \end{equation*}
      Taking the conditional expectation given $\scrF_{k-1}$ and using \cref{lem:g_S_bounded,lem:grad_S_mc_error} yields
    \begin{equation*}
        \begin{split}
            \Expect\lrsquare{ \norm{\Delta\btheta_k-\Delta\btheta_k^{\star}}^2|\scrF_{k-1} } &\le \dfrac{2\norm{\lambda\mu \Delta\btheta_{k-1}-\dfrac{1}{2}g(\btheta_k)}^2}{\lambda^4}\Expect\lrsquare{ \norm{S(\btheta_k)-S(\btheta_k;\calB_k)}_F^2|\scrF_{k-1} } \\
            &\qquad + \dfrac{1}{2\lambda^2}\Expect\lrsquare{ \norm{g(\btheta_k)-g(\btheta_k;\calB_k)}^2|\scrF_{k-1} }\\
            &\le \dfrac{12C_m}{\lambda^4N_s}\lrbracket{ 2\lambda^2\mu^2\norm{\Delta\btheta_{k-1}}^2+\dfrac{1}{2}\norm{g(\btheta_k)}^2 } + \dfrac{12C_m}{\lambda^2N_s}\\
            &\le \dfrac{12C_m}{\lambda^4N_s}\lrbracket{ 2\lambda^2\mu^2\norm{\Delta\btheta_{k-1}}^2+2C_m } + \dfrac{12C_m}{\lambda^2N_s}\\
            & = \lrbracket{1+2\mu^2 \norm{\Delta\btheta_{k-1}}^2+\dfrac{2C_m}{\lambda^2}} \dfrac{12C_m}{\lambda^2 N_s}.
        \end{split}
    \end{equation*}
\end{proof}

\subsection{Estimation of $\Delta\theta_k$}

\begin{lem}
    Under \cref{assume:1,assume:sample}, let $\{\btheta_k,\Delta\btheta_k\}$ be generated from \eqref{eq:p-spring}, for any $0\le \mu<1$, $k\ge 0$, we have:
    \begin{equation*}
        \Expect[\norm{\Delta\btheta_k}^2] \le \Expect[\norm{\Delta\btheta_0}^2] + \dfrac{4(1+\mu^2)C_m}{\lambda^2(1-\mu^2)^2} +\dfrac{24(1+\mu^2)C_m}{\lambda^2(1-\mu^2)^2}\cdot\dfrac{1}{N_s},
    \end{equation*}
    where $\Expect[\cdot]$ is the total expectation.
    \label{lem:expect_delta_theta_bound}
\end{lem}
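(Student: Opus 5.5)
The plan is to derive a one-step contractive recursion for $\norm{\Delta\btheta_k}^2$ that holds almost surely, take total expectations, and then unroll it. The estimate rests on three ingredients: the resolvent of the sampled SR matrix is uniformly bounded, the stochastic gradient has bounded second moment, and Young's inequality is applied with a $\mu$-dependent weight.

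\textbf{Step 1 (pathwise recursion).} Since $S(\btheta_k;\calB_k)=O(\btheta_k;\calB_k)O(\btheta_k;\calB_k)^\top\succeq 0$, we have $\norm{(\lambda I+S(\btheta_k;\calB_k))^{-1}}_2\le 1/\lambda$ a.s. Applying this to \eqref{eq:p-spring} gives
\begin{equation*}
\norm{\Delta\btheta_k}\le \mu\norm{\Delta\btheta_{k-1}}+\frac{1}{2\lambda}\norm{g(\btheta_k;\calB_k)} \quad \text{a.s.}
\end{equation*}
For $0<\mu<1$, square this and use $(a+b)^2\le(1+\epsilon)a^2+(1+1/\epsilon)b^2$ with $1+\epsilon=\frac{1+\mu^2}{2\mu^2}$. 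Then $(1+\epsilon)\mu^2=\frac{1+\mu^2}{2}=:\rho<1$ and $1+1/\epsilon=\frac{1+\mu^2}{1-\mu^2}$. This yields
\begin{equation*}
\norm{\Delta\btheta_k}^2\le \rho\norm{\Delta\btheta_{k-1}}^2+\frac{1+\mu^2}{4\lambda^2(1-\mu^2)}\norm{g(\btheta_k;\calB_k)}^2 .
\end{equation*}
For $\mu=0$ the bound holds directly, without Young's inequality.

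\textbf{Step 2 (second moment of the stochastic gradient).} Condition on $\scrF_{k-1}$; note that $\btheta_k$ is $\scrF_{k-1}$-measurable. By \cref{lem:grad_unbiased} under \cref{assume:sample}, the bias-variance identity gives
\begin{equation*}
\Expect[\norm{g(\btheta_k;\calB_k)}^2\mid\scrF_{k-1}]=\norm{g(\btheta_k)}^2+\Expect[\norm{g(\btheta_k;\calB_k)-g(\btheta_k)}^2\mid\scrF_{k-1}].
\end{equation*}
Applying \cref{lem:g_S_bounded} to the first term and \cref{lem:grad_S_mc_error} to the second, this is at most $4C_m+24C_m/N_s$. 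Taking total expectation then gives $\Expect\norm{\Delta\btheta_k}^2\le\rho\,\Expect\norm{\Delta\btheta_{k-1}}^2+\frac{(1+\mu^2)C_m}{\lambda^2(1-\mu^2)}(1+6/N_s)$.

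\textbf{Step 3 (unrolling).} Iterate the recursion down to $k=0$, bound $\rho^k\le1$, and sum the geometric series $\sum_j\rho^j\le 1/(1-\rho)=2/(1-\mu^2)$. This produces
\begin{equation*}
\Expect[\norm{\Delta\btheta_k}^2]\le \Expect[\norm{\Delta\btheta_0}^2]+\frac{2(1+\mu^2)C_m}{\lambda^2(1-\mu^2)^2}\Bigl(1+\frac{6}{N_s}\Bigr),
\end{equation*}
which is no larger than the claimed bound.

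The main point of care is Step 2: the moment bound must be applied conditionally, because $\btheta_k$ is random and depends on previous batches. This is legitimate since \cref{lem:g_S_bounded} and \cref{lem:grad_S_mc_error} hold uniformly in $\btheta$. The remaining difficulty is choosing the Young weight so that the contraction factor stays strictly below one and the constant takes the stated $(1-\mu^2)^{-2}$ form.
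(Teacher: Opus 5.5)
Your proof is correct and follows the paper's argument: the same pathwise bound from $\norm{(\lambda I+S(\btheta_k;\calB_k))^{-1}}_2\le 1/\lambda$, the same Young weight giving the contraction factor $\frac{1+\mu^2}{2}$, conditioning on $\scrF_{k-1}$, and summing the geometric series. The only difference is in bounding $\norm{g(\btheta_k;\calB_k)}^2$: the paper uses the pathwise split $\norm{g(\btheta_k;\calB_k)}^2\le 2\norm{g(\btheta_k;\calB_k)-g(\btheta_k)}^2+2\norm{g(\btheta_k)}^2$, whereas you use the conditional bias--variance identity (from unbiasedness of $g(\btheta_k;\calB_k)$), so your constants are exactly half of the stated ones.
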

\begin{proof}
    By $\Delta\btheta_k=(\lambda I +S(\btheta_k;\calB_k))^{-1}(\lambda\mu \Delta\btheta_{k-1}-\dfrac{1}{2}g(\btheta_k;\calB_k))$ and Young inequality, almost surely,
    \begin{equation*}
    \begin{split}
        \norm{\Delta\btheta_k}^2 &\le \lrbracket{\mu\norm{\Delta\btheta_{k-1}} + \dfrac{1}{2\lambda}\norm{g(\btheta_k;\calB_k)}}^2\\
        &\le \dfrac{\mu^2+1}{2}\norm{\Delta\btheta_{k-1}}^2 + \dfrac{1+\mu^2}{4\lambda^2 (1-\mu^2)}\norm{g(\btheta_k;\calB_k)}^2\\
            &\le \dfrac{\mu^2+1}{2}\norm{\Delta\btheta_{k-1}}^2 + \dfrac{1+\mu^2}{2\lambda^2(1-\mu^2)} \norm{g(\btheta_k;\calB_k)-g(\btheta_k)}^2 + \dfrac{1+\mu^2}{2\lambda^2(1-\mu^2)}\norm{g(\btheta_k)}^2.
    \end{split}
    \end{equation*}
Taking the conditional expectation given $\scrF_{k-1}$ and using \cref{lem:g_S_bounded,lem:grad_S_mc_error} yields
\begin{equation*}
    \Expect[\norm{\Delta\btheta_k}^2|\scrF_{k-1}]\le \dfrac{\mu^2+1}{2}\norm{\Delta\btheta_{k-1}}^2+ \dfrac{12(1+\mu^2)C_m}{\lambda^2(1-\mu^2)}\cdot\dfrac{1}{N_s} + \dfrac{2(1+\mu^2)C_m}{\lambda^2(1-\mu^2)}.
\end{equation*}
Taking total expectation, we obtain
\begin{equation*}
    \begin{split}
        \Expect[\norm{\Delta\btheta_k}^2] &\le \dfrac{\mu^2+1}{2}\Expect[\norm{\Delta\btheta_{k-1}}^2]+ \dfrac{12(1+\mu^2)C_m}{\lambda^2(1-\mu^2)}\cdot\dfrac{1}{N_s} + \dfrac{2(1+\mu^2)C_m}{\lambda^2(1-\mu^2)}\\
        &=\Expect[\norm{\Delta\btheta_{0}}^2]+\lrbracket{\dfrac{12(1+\mu^2)C_m}{\lambda^2(1-\mu^2)}\cdot\dfrac{1}{N_s} + \dfrac{2(1+\mu^2)C_m}{\lambda^2(1-\mu^2)}}\lrbracket{1+\dfrac{1+\mu^2}{2}+\cdots +\lrbracket{\dfrac{1+\mu^2}{2}}^{k-1}}\\
        &\le \Expect[\norm{\Delta\btheta_{0}}^2]+\lrbracket{\dfrac{12(1+\mu^2)C_m}{\lambda^2(1-\mu^2)}\cdot\dfrac{1}{N_s} + \dfrac{2(1+\mu^2)C_m}{\lambda^2(1-\mu^2)}}\sum_{i=0}^{\infty}\lrbracket{\dfrac{1+\mu^2}{2}}^{i}\\
        &=\Expect[\norm{\Delta\btheta_0}^2] + \dfrac{4(1+\mu^2)C_m}{\lambda^2(1-\mu^2)^2} +\dfrac{24(1+\mu^2)C_m}{\lambda^2(1-\mu^2)^2}\cdot\dfrac{1}{N_s}.
    \end{split}
\end{equation*}
\end{proof}

\begin{lem}
    Under \cref{assume:1,assume:sample}, let $\{\btheta_k,\Delta\btheta_k\}$ be generated from \eqref{eq:p-spring}, for any $0\le \mu<1$, there exists $C_1,C_2>0$, such that, for any $K\ge 1$, we have:
    \begin{equation*}
        \sum_{k=1}^K \eta_k \Expect[\norm{\Delta\btheta_k}^2] \le \dfrac{\Expect[L(\btheta_1)]-E_{\gs}}{\lambda(1-\mu)} + \dfrac{\mu\eta_0\Expect[\norm{\Delta\btheta_0}^2]}{1-\mu} + \dfrac{C_1}{\lambda(1-\mu)}\dfrac{\sum_{k=1}^K\eta_k}{N_s} + \dfrac{C_2}{\lambda(1-\mu)}\dfrac{\sum_{k=1}^K \eta_k}{N_s^2}.
    \end{equation*}
    \label{lem:sum_delta_theta}
\end{lem}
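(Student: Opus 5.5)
We follow the energy-decrease argument from the full-batch lemma ($\sum\eta_k\norm{\Delta\btheta_k}^2<\infty$), now in expectation, with the sampling error handled through \cref{lem:biased_direction_error,lem:expect_delta_theta_bound}. Start from \cref{lem:2_upper_bound}:
\[
L(\btheta_{k+1})-L(\btheta_k)\le \eta_k g(\btheta_k)^\top\Delta\btheta_k+\tfrac{C_g}{2}\eta_k^2\norm{\Delta\btheta_k}^2 .
\]
The stochastic update gives the exact identity $g(\btheta_k;\calB_k)=2\lambda\mu\Delta\btheta_{k-1}-2(\lambda I+S(\btheta_k;\calB_k))\Delta\btheta_k$. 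We therefore write $g(\btheta_k)=g(\btheta_k;\calB_k)+(g(\btheta_k)-g(\btheta_k;\calB_k))$. Substituting the identity and using $S(\btheta_k;\calB_k)\succeq 0$ together with the polarization $2\Delta\btheta_k^\top\Delta\btheta_{k-1}=\norm{\Delta\btheta_k}^2+\norm{\Delta\btheta_{k-1}}^2-\norm{\Delta\btheta_k-\Delta\btheta_{k-1}}^2$ yields, exactly as in Eq.~\eqref{eq:proof_1}, the bound
\[
\bigl(\lambda\mu-2\lambda+\tfrac{C_g}{2}\eta_k\bigr)\eta_k\norm{\Delta\btheta_k}^2+\lambda\mu\eta_k\norm{\Delta\btheta_{k-1}}^2+\eta_k(g(\btheta_k)-g(\btheta_k;\calB_k))^\top\Delta\btheta_k .
\]

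The main obstacle is the cross term $\eta_k(g(\btheta_k)-g(\btheta_k;\calB_k))^\top\Delta\btheta_k$. Its conditional mean given $\scrF_{k-1}$ does not vanish, because $\Delta\btheta_k$ depends on $\calB_k$. We split $\Delta\btheta_k=\Delta\btheta_k^\star+(\Delta\btheta_k-\Delta\btheta_k^\star)$, where $\Delta\btheta_k^\star$ is $\scrF_{k-1}$-measurable.
\begin{itemize}
\item The $\Delta\btheta_k^\star$ part has zero conditional expectation by \cref{lem:grad_unbiased}.
\item The remainder is bounded by Young's inequality, $\le \frac{\epsilon}{2}\norm{g-g(\cdot;\calB_k)}^2+\frac{1}{2\epsilon}\norm{\Delta\btheta_k-\Delta\btheta_k^\star}^2$. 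The first piece is controlled by \cref{lem:grad_S_mc_error} ($24C_m/N_s$), the second by the second inequality of \cref{lem:biased_direction_error}.
\item The factor $\norm{\Delta\btheta_{k-1}}^2$ appearing there is replaced, after taking total expectation, by the uniform bound of \cref{lem:expect_delta_theta_bound}.
\end{itemize}
This produces an additive error of the form $\eta_k(c_1/N_s+c_2/N_s^2)$. The $1/N_s^2$ term comes from the $1/N_s$ part of \cref{lem:expect_delta_theta_bound} multiplied by the $1/N_s$ prefactor. Choosing $\epsilon$ of order $1$ keeps the constants clean.

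Next, take total expectations and use the Lyapunov function $F_k:=\Expect[L(\btheta_k)]+\lambda\mu\eta_k\Expect\norm{\Delta\btheta_{k-1}}^2$, with $\eta_{k+1}\le\eta_k\le\eta_0$. The same telescoping as in the full-batch lemma gives
\[
F_{k+1}-F_k\le -\bigl(2\lambda(1-\mu)-\tfrac{C_g}{2}\eta_0\bigr)\eta_k\Expect\norm{\Delta\btheta_k}^2+\eta_k\Bigl(\tfrac{c_1}{N_s}+\tfrac{c_2}{N_s^2}\Bigr).
\]
By \cref{assume:1}(3), $\eta_0\le 2\lambda(1-\mu)/C_g$, so the coefficient $2\lambda(1-\mu)-\tfrac{C_g}{2}\eta_0$ is at least $\lambda(1-\mu)$. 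Sum from $k=1$ to $K$ and use $L\ge E_{\gs}$, noting that $F_1=\Expect[L(\btheta_1)]+\lambda\mu\eta_1\Expect\norm{\Delta\btheta_0}^2\le \Expect[L(\btheta_1)]+\lambda\mu\eta_0\Expect\norm{\Delta\btheta_0}^2$. Dividing by $\lambda(1-\mu)$ then gives the stated bound, with $C_1,C_2$ collecting $c_1,c_2$. These constants depend on $C_m,\lambda,\mu$ and on $\Expect\norm{\Delta\btheta_0}^2$, which is finite since $\norm{\Delta\btheta_0}\le\norm{g(\btheta_0;\calB_0)}/(2\lambda)$ and the fourth moments are bounded.

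The one delicate bookkeeping point is ensuring the Young's-inequality choice does not consume the $\lambda(1-\mu)$ margin. That margin must stay intact because it is what yields the $1/(\lambda(1-\mu))$ prefactors in the statement; the Young terms should therefore contribute only to the additive $\eta_k(c_1/N_s+c_2/N_s^2)$ error and not to the coefficient of $\eta_k\Expect\norm{\Delta\btheta_k}^2$.
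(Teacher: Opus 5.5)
Your proof is correct. Its skeleton matches the paper's: the descent lemma, the Lyapunov function $F_k$, the second inequality of \cref{lem:biased_direction_error} combined with \cref{lem:expect_delta_theta_bound} to produce the $1/N_s$ and $1/N_s^2$ terms, and telescoping. The difference is in how the gap between $\Delta\btheta_k$ and $\Delta\btheta_k^{\star}$ enters the estimate.

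The paper substitutes the full-batch identity $g(\btheta_k)=2\lambda\mu\Delta\btheta_{k-1}-2(\lambda I+S(\btheta_k))\Delta\btheta_k^{\star}$. It then bounds the cross term $2\eta_k\Delta\btheta_k^\top(\lambda I+S(\btheta_k))(\Delta\btheta_k-\Delta\btheta_k^{\star})$ by Young's inequality against $\norm{\Delta\btheta_k}^2$, with $\varepsilon=\frac{2\lambda(1-\mu)-C_g\eta_0}{4(\lambda+\sqrt{C_m})}$. This deliberately spends the surplus $\lambda(1-\mu)-\frac{C_g}{2}\eta_0$ of the descent margin, leaving exactly $-\lambda(1-\mu)\eta_k\Expect\norm{\Delta\btheta_k}^2$ after the Lyapunov step. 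That route needs only $\norm{S(\btheta_k)}_2\le\sqrt{C_m}$ and the second inequality of \cref{lem:biased_direction_error}. It uses neither the conditional unbiasedness of $g(\btheta_k;\calB_k)$ nor \cref{lem:grad_S_mc_error} directly.

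You substitute the stochastic identity instead, so the only defect is $(g(\btheta_k)-g(\btheta_k;\calB_k))^\top\Delta\btheta_k$. You split it into a martingale-difference part, which vanishes because $\Delta\btheta_k^{\star}$ is $\scrF_{k-1}$-measurable, and a product of two $\calO(N_s^{-1/2})$ errors.

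What your route buys:
\begin{itemize}
\item Neither of your Young terms involves $\norm{\Delta\btheta_k}^2$, so the worry in your last paragraph is moot. The coefficient $2\lambda(1-\mu)-\frac{C_g}{2}\eta_k\ge\lambda(1-\mu)$ is never touched.
\item Your constants $c_1,c_2$ do not contain the factor $(2\lambda(1-\mu)-C_g\eta_0)^{-1}$ that appears in the paper's $\tilde C_1,\tilde C_2$.
\item Consequently, your argument also covers the boundary case $\eta_0=2\lambda(1-\mu)/C_g$ permitted by \cref{assume:1}(3). There the paper's choice of $\varepsilon$ degenerates to $0$ and its constants are undefined.
\end{itemize}
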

\begin{proof}
    By \cref{lem:2_upper_bound}, and $g(\btheta_k)=2\lambda\mu\Delta\btheta_{k-1}-2(\lambda I +S(\btheta_k))\Delta\btheta_k^{\star}$, where $\Delta\btheta_k^{\star}$ is defined in \cref{lem:biased_direction_error}, we have
    \begin{align*}
    &L(\btheta_{k+1})-L(\btheta_k)\\
    &\le 2\lambda\mu \eta_k \Delta\btheta_k^\top \Delta\btheta_{k-1} - 2\eta_k \Delta\btheta_k(\lambda I +S(\btheta_k))\Delta\btheta_k^{\star} + \dfrac{C_g}{2}\eta_k^2\norm{\Delta\btheta_k}^2\\
            & = \lambda\mu\eta_k( \norm{\Delta\btheta_k}^2 + \norm{\Delta\btheta_{k-1}}^2 - \norm{\Delta\btheta_k-\Delta\btheta_{k-1}}^2 )-2\eta_k\Delta\btheta_k^\top (\lambda I +S(\btheta_k))\Delta\btheta_k \\
            &\qquad+ 2\eta_k\Delta\btheta_k^\top(\lambda I +S(\btheta_k))(\Delta\btheta_k-\Delta\btheta_k^{\star}) + \dfrac{C_g}{2}\eta_k^2\norm{\Delta\btheta_k}^2\\
            &(\text{by }\eta_k\le \eta_0)\\
            &\le\lambda\mu\eta_k\norm{\Delta\btheta_{k-1}}^2 + (\lambda(\mu-2)+\dfrac{C_g}{2}\eta_0)\eta_k\norm{\Delta\btheta_k}^2 + 2\eta_k\norm{\lambda I +S(\btheta_k)}_2 \norm{\Delta\btheta_k}\norm{\Delta\btheta_k-\Delta\btheta_k^{\star}}\\
            &(\text{by triangle inequality and Young inequality})\\
            &\le \lambda\mu\eta_k\norm{\Delta\btheta_{k-1}}^2 + (\lambda(\mu-2)+\dfrac{C_g}{2}\eta_0)\eta_k\norm{\Delta\btheta_k}^2 + 2\eta_k (\lambda + \norm{S(\btheta_k)}_F)\lrbracket{\varepsilon \norm{\Delta\btheta_k}^2 + \dfrac{1}{4\varepsilon}\norm{\Delta\btheta_k-\Delta\btheta_k^{\star}}^2}\\
            &(\text{by \cref{lem:g_S_bounded} and let }\varepsilon = \dfrac{2\lambda(1-\mu)-C_g\eta_0}{4(\lambda+\sqrt{C_m})})\\
            &\le \lambda\mu\eta_k\norm{\Delta\btheta_{k-1}}^2 + (\lambda(\mu-2)+\dfrac{C_g}{2}\eta_0)\eta_k\norm{\Delta\btheta_k}^2 + (\lambda(1-\mu)-\dfrac{C_g}{2}\eta_0)\eta_k\norm{\Delta\btheta_k}^2 \\
            &\qquad + \dfrac{2(\lambda+\sqrt{C_m})^2}{2\lambda(1-\mu)-C_g\eta_0}\eta_k\norm{\Delta\btheta_k-\Delta\btheta_k^{\star}}^2\\
            &=\lambda\mu\eta_k\norm{\Delta\btheta_{k-1}}^2 - \lambda\eta_k\norm{\Delta\btheta_k}^2 + \dfrac{2(\lambda+\sqrt{C_m})^2}{2\lambda(1-\mu)-C_g\eta_0} \eta_k\norm{\Delta\btheta_k-\Delta\btheta_k^{\star}}^2.
    \end{align*}
    Taking conditional expectation conditional on $\scrF_{k-1}$, and by \cref{lem:biased_direction_error}, we have
    \begin{equation*}
        \Expect[L(\btheta_{k+1})|\scrF_{k-1}]-L(\btheta_k)\le \lambda\mu\eta_k\norm{\Delta\btheta_{k-1}}^2 - \lambda\eta_k\Expect[\norm{\Delta\btheta_k}^2|\scrF_{k-1}] +\tilde{C}_1\dfrac{\eta_k}{N_s} + \tilde{C}_2\dfrac{\eta_k\norm{\Delta\btheta_{k-1}}^2}{N_s},
    \end{equation*}
    where $\tilde{C}_1:=\dfrac{24C_m(\lambda+\sqrt{C_m})^2}{\lambda^2(2\lambda(1-\mu)-C_g\eta_0)}\lrbracket{1+\dfrac{2C_m}{\lambda^2}}$, and $\tilde{C}_2:=\dfrac{48C_m\mu^2(\lambda+\sqrt{C_m})^2}{\lambda^2(2\lambda(1-\mu)-C_g\eta_0)}$.

    Taking total expectation,
    \begin{align*}
        \Expect[L(\btheta_{k+1})]-\Expect[L(\btheta_k)]&\le \lambda\mu\eta_k \Expect[\norm{\Delta\btheta_{k-1}}^2] - \lambda\eta_k\Expect[\norm{\Delta\btheta_k}^2] + \tilde{C}_2\dfrac{\eta_k\Expect[\norm{\Delta\btheta_{k-1}}^2]}{N_s} + \tilde{C}_1\dfrac{\eta_k}{N_s}\\
        & (\text{for the third term, by \cref{lem:expect_delta_theta_bound}})\\
        &\le \lambda\mu\eta_k \Expect[\norm{\Delta\btheta_{k-1}}^2] - \lambda\eta_k\Expect[\norm{\Delta\btheta_k}^2] + C_1\dfrac{\eta_k}{N_s} + C_2\dfrac{\eta_k}{N_s^2},
    \end{align*}
    where $C_1:=\tilde{C}_1+\lrbracket{\Expect[\norm{\Delta\btheta_0}^2] +\dfrac{4(1+\mu^2)C_m}{\lambda^2(1-\mu^2)^2}} \tilde{C}_2$, and $C_2:=\dfrac{24(1+\mu^2)C_m\tilde{C}_2}{\lambda^2(1-\mu^2)^2}$.

    Define the discrete energy $F_k:=\Expect[L(\btheta_k)]+\lambda\mu\eta_k\Expect[\norm{\Delta\btheta_{k-1}}^2]$, and then again by $\eta_{k+1}\le \eta_k$, we have
    \begin{align*}
        F_{k+1}-F_k&=\Expect[L(\btheta_{k+1})]-\Expect[L(\btheta_k)] + \lambda\mu\eta_{k+1}\Expect[\norm{\Delta\btheta_k}^2]-\lambda\mu\eta_k\Expect[\norm{\Delta\btheta_{k-1}}^2]\\
        &\le \lambda(\mu-1)\eta_k\Expect[\norm{\Delta\btheta_k}^2] + C_1\dfrac{\eta_k}{N_s} + C_2\dfrac{\eta_k}{N_s^2}.
    \end{align*}
    Rearranging terms and summing from $k=1$ to $k=K$,  we get 
    \begin{align*}
        \lambda(1-\mu)\sum_{k=1}^K\eta_k \Expect[\norm{\Delta \btheta_k}^2]&\le F_1-F_{K+1} +\dfrac{C_1}{N_s}\sum_{k=1}^K\eta_k + \dfrac{C_2}{N_s^2}\sum_{k=1}^K\eta_k\\
        &\le \Expect[L(\btheta_1)]-\Expect[L(\btheta_{K+1})] + \lambda\mu\eta_0\Expect[\norm{\Delta\btheta_0}^2]+\dfrac{C_1}{N_s}\sum_{k=1}^K\eta_k + \dfrac{C_2}{N_s^2}\sum_{k=1}^K\eta_k\\
        &\le \Expect[L(\btheta_1)]-E_{\gs} + \lambda\mu\eta_0\Expect[\norm{\Delta\btheta_0}^2]+\dfrac{C_1}{N_s}\sum_{k=1}^K\eta_k + \dfrac{C_2}{N_s^2}\sum_{k=1}^K\eta_k.
    \end{align*}
    Dividing both sides by $\lambda(1-\mu)$ yields the claim.
\end{proof}

\subsection{Proof of \cref{the:p_spring_convergence}}

\par Now, we are ready to prove Theorem \ref{the:p_spring_convergence}.
\begin{proof}[Proof of Theorem \ref{the:p_spring_convergence}]
    By \cref{lem:2_upper_bound}, we have:
    \begin{align*}
        L(\btheta_{k+1})-L(\btheta_k)&\le \eta_kg(\btheta_k)^\top \Delta\btheta_k + \dfrac{C_g}{2}\eta_k^2\norm{\Delta\btheta_k}^2\\
        &= \eta_kg(\btheta_k)^\top \Delta\btheta_k^{\star} + \eta_kg(\btheta_k)^\top (\Delta\btheta_k-\Delta\btheta_k^{\star}) + \dfrac{C_g}{2}\eta_k^2\norm{\Delta\btheta_k}^2.
    \end{align*}
    We start by estimating $g(\btheta_k)^\top \Delta\btheta_k^{\star}$. By \cref{lem:g_S_bounded} and Young inequality, we have
    \begin{align*}
        g(\btheta_k)^\top\Delta\btheta_k^{\star}& = -\dfrac{1}{2}g(\btheta_k)^\top (\lambda I+S(\btheta_k))^{-1}g(\btheta_k) + \lambda\mu g(\btheta_k)^\top (\lambda I +S(\btheta_k))^{-1}\Delta\btheta_{k-1} \\
        &\le -\dfrac{1}{2(\lambda+\sqrt{C_m})} \norm{g(\btheta_k)}^2 + \mu \norm{g(\btheta_k)} \norm{\Delta\btheta_{k-1}}\\
        &\le -\dfrac{1}{2(\lambda+\sqrt{C_m})} \norm{g(\btheta_k)}^2 + \dfrac{\mu}{8(\lambda+\sqrt{C_m})} \norm{g(\btheta_k)}^2 + 2\mu(\lambda +\sqrt{C_m}) \norm{\Delta\btheta_{k-1}}^2\\
        &=\dfrac{\mu-4}{8(\lambda+\sqrt{C_m})}\norm{g(\btheta_k)}^2 + 2\mu(\lambda +\sqrt{C_m})\norm{\Delta\btheta_{k-1}}^2.
    \end{align*}
And thus, we have
\begin{equation*}
    L(\btheta_{k+1})-L(\btheta_k)\le \dfrac{\mu-4}{8(\lambda+\sqrt{C_m})}\eta_k\norm{g(\btheta_k)}^2 + 2\mu(\lambda +\sqrt{C_m})\eta_k\norm{\Delta\btheta_{k-1}}^2+\eta_kg(\btheta_k)^\top (\Delta\btheta_k-\Delta\btheta_k^{\star}) + \dfrac{C_g}{2}\eta_k^2\norm{\Delta\btheta_k}^2.
\end{equation*}
Take the conditional expectation conditional on $\scrF_{k-1}$ and use Cauchy-Schwarz and Young inequalities, and by \cref{lem:biased_direction_error},
{\small\begin{align*}
    &\Expect[L(\btheta_{k+1})|\scrF_{k-1}] -L(\btheta_k)\\
    &\le \dfrac{\mu-4}{8(\lambda+\sqrt{C_m})}\eta_k\norm{g(\btheta_k)}^2 + 2\mu(\lambda +\sqrt{C_m})\eta_k\norm{\Delta\btheta_{k-1}}^2 + \eta_k \norm{g(\btheta_k)}\cdot\norm{\Expect[\Delta\btheta_k-\Delta\btheta_k^{\star}|\scrF_{k-1}]} + \dfrac{C_g}{2}\eta_k^2 \Expect[\norm{\Delta\btheta_k}^2|\scrF_{k-1}]\\
    &\le \dfrac{\mu-4}{8(\lambda+\sqrt{C_m})}\eta_k\norm{g(\btheta_k)}^2 + 2\mu(\lambda +\sqrt{C_m})\eta_k\norm{\Delta\btheta_{k-1}}^2 +\dfrac{\eta_k}{8(\lambda+\sqrt{C_m})}\norm{g(\btheta_k)}^2 + 2(\lambda+\sqrt{C_m})\eta_k\norm{\Expect[\Delta\btheta_k-\Delta\btheta_k^{\star}|\scrF_{k-1}]}^2\\
    &\qquad + \dfrac{C_g}{2}\eta_k^2 \Expect[\norm{\Delta\btheta_k}^2|\scrF_{k-1}]\\
    &<-\dfrac{1}{4(\lambda+\sqrt{C_m})}\eta_k\norm{g(\btheta_k)}^2+ 2\mu(\lambda +\sqrt{C_m})\eta_k\norm{\Delta\btheta_{k-1}}^2+ 2(\lambda+\sqrt{C_m})\eta_k\norm{\Expect[\Delta\btheta_k-\Delta\btheta_k^{\star}|\scrF_{k-1}]}^2+ \dfrac{C_g}{2}\eta_k^2 \Expect[\norm{\Delta\btheta_k}^2|\scrF_{k-1}]\\
    &\le -\dfrac{1}{4(\lambda+\sqrt{C_m})}\eta_k\norm{g(\btheta_k)}^2+ 2\mu(\lambda +\sqrt{C_m})\eta_k\norm{\Delta\btheta_{k-1}}^2+ \dfrac{12C_m(\lambda+\sqrt{C_m})}{\lambda^2N_s}\eta_k\Expect[\norm{\Delta\btheta_k}^2|\scrF_{k-1}]+ \dfrac{C_g}{2}\eta_k^2 \Expect[\norm{\Delta\btheta_k}^2|\scrF_{k-1}].
\end{align*}}
       Taking total expectation, and by $\eta_k\le \eta_{k-1}\le \eta_0$ and \cref{lem:expect_delta_theta_bound},
       {\small\begin{align*}
        &\Expect[L(\btheta_{k+1})]-\Expect[L(\btheta_k)]\\
        &\le  -\dfrac{1}{4(\lambda+\sqrt{C_m})}\eta_k\Expect[\norm{g(\btheta_k)}^2]+ 2\mu(\lambda +\sqrt{C_m})\eta_k\Expect[\norm{\Delta\btheta_{k-1}}^2]+ \dfrac{12C_m(\lambda+\sqrt{C_m})}{\lambda^2N_s}\eta_k\Expect[\norm{\Delta\btheta_k}^2]+ \dfrac{C_g}{2}\eta_k^2 \Expect[\norm{\Delta\btheta_k}^2]\\
        &\le -\dfrac{1}{4(\lambda+\sqrt{C_m})}\eta_k\Expect[\norm{g(\btheta_k)}^2]+ 2\mu(\lambda +\sqrt{C_m})\eta_{k-1}\Expect[\norm{\Delta\btheta_{k-1}}^2]+\tilde{C}_1\dfrac{\eta_k}{N_s}+\tilde{C}_2\dfrac{\eta_k}{N_s^2} + \dfrac{C_g\eta_0}{2}\eta_k \Expect[\norm{\Delta\btheta_k}^2],
       \end{align*}}
       where $\tilde{C}_1=\dfrac{12C_m(\lambda+\sqrt{C_m})}{\lambda^2}\lrbracket{\Expect[\norm{\Delta\btheta_0}^2] + \dfrac{4(1+\mu^2)C_m}{\lambda^2(1-\mu^2)^2}}$, and $\tilde{C}_2:=\dfrac{288C_m^2(1+\mu^2)(\lambda+\sqrt{C_m})}{\lambda^4(1-\mu^2)^2}$.

       Rearranging terms, we get:
       {\small\begin{align*}
           \eta_k\Expect[\norm{g(\btheta_k)}^2]\le &4(\lambda+\sqrt{C_m})(\Expect[L(\btheta_k)]-\Expect[L(\btheta_{k+1})]) + 8\mu(\lambda+\sqrt{C_m})^2\eta_{k-1}\Expect[\norm{\Delta\btheta_{k-1}}^2]\\
           &+4(\lambda+\sqrt{C_m})\tilde{C}_1\dfrac{\eta_k}{N_s} + 4(\lambda+\sqrt{C_m})\tilde{C}_2\dfrac{\eta_k}{N_s^2} +2C_g(\lambda+\sqrt{C_m})\eta_0\eta_k\Expect[\norm{\Delta\btheta_k}^2].
       \end{align*}}
       Summing from $k=1$ to $k=K$, and by \cref{lem:sum_delta_theta},
       {\small\begin{align*}
          \sum_{k=1}^K \eta_k\Expect[\norm{g(\btheta_k)}^2] &\le 4(\lambda+\sqrt{C_m})(\Expect[L(\btheta_1)]-\Expect[L(\btheta_{K+1})]) +4(\lambda+\sqrt{C_m})\tilde{C}_1\dfrac{\sum_{k=1}^K\eta_k}{N_s} +4(\lambda+\sqrt{C_m})\tilde{C}_2\dfrac{\sum_{k=1}^K\eta_k}{N_s^2}  \\
          &\qquad+ 2(\lambda+\sqrt{C_m})(4\mu(\lambda+\sqrt{C_m})+C_g\eta_0)\sum_{k=0}^{K}\eta_k\Expect[\norm{\Delta\btheta_{k}}^2]\\
          &\le 4(\lambda+\sqrt{C_m})(\Expect[L(\btheta_1)]-E_{\gs}) +4(\lambda+\sqrt{C_m})\tilde{C}_1\dfrac{\sum_{k=1}^K\eta_k}{N_s} +4(\lambda+\sqrt{C_m})\tilde{C}_2\dfrac{\sum_{k=1}^K\eta_k}{N_s^2}  \\
          &\qquad+ 2(\lambda+\sqrt{C_m})(4\mu(\lambda+\sqrt{C_m})+C_g\eta_0)\sum_{k=0}^{K}\eta_k\Expect[\norm{\Delta\btheta_{k}}^2]\\
          &\le M_1 +M_2 \dfrac{\sum_{k=1}^K\eta_k}{N_s} + M_3\dfrac{\sum_{k=1}^K\eta_k}{N_s^2},
       \end{align*}}
       where $M_1,M_2,M_3>0$ are constants defined as:
       {\small
       \begin{align*}
           M_1&:=2(\lambda+\sqrt{C_m})\lrbracket{2+\dfrac{4\mu(\lambda+\sqrt{C_m})+C_g\eta_0}{\lambda(1-\mu)}}\lrbracket{\Expect[L(\btheta_1)]-E_{\gs}} + \dfrac{2\eta_0(\lambda+\sqrt{C_m})}{1-\mu}\lrsquare{4\mu (\lambda+\sqrt{C_m})+C_g\eta_0}\Expect[\norm{\Delta\btheta_0}^2]\\
           M_2&:=2(\lambda+\sqrt{C_m})\lrbracket{2\tilde{C}_1 + \dfrac{4\mu(\lambda+\sqrt{C_m}C_1+C_gC_1\eta_0)}{\lambda(1-\mu)}}\\
           M_3&:=2(\lambda+\sqrt{C_m})\lrbracket{2\tilde{C}_2 + \dfrac{4\mu(\lambda+\sqrt{C_m}C_2+C_gC_2\eta_0)}{\lambda(1-\mu)}},
       \end{align*}
       }
       where $C_1,C_2>0$ are constants in \cref{lem:sum_delta_theta}. 
\end{proof}

\section{Proof of \cref{the:counter_less_1_converge}}
\label{sec:proof_counter}

\begin{proof}
According to the proof of \cref{the:full_spring_convergence} in Appendix~\ref{sec:proof_full_convergence}, it suffices to show that the gradient $g(\btheta)$ is uniformly bounded and Lipschitz continuous. It is therefore enough to prove that both $g(\btheta)$ and $\nabla^2L(\btheta)$ are uniformly bounded.

\medskip
\noindent\textbf{Continuous Gaussian example.}
Since a constant does not change the function value, we consider the normalized wavefunction
\begin{equation*}
    \psi_\theta(x)=C_\Sigma e^{-\frac14 (x-A\theta)^\top \Sigma^{-1}(x-A\theta)},
\qquad \bfx=(\bx_1,\dots,\bx_N),\quad \bx_i\in\R^3,
\end{equation*}
where\(A\in\mathbb R^{3N\times N_p}\),\(\Sigma\in\mathbb R^{3N\times 3N}\) is symmetric positive definite and \(C_\Sigma = (2\pi)^{-3N/4}\det (\Sigma)^{-1/4}\). Then \(\pi_{\btheta}(\bfx)=|\psi_{\btheta}(\bfx)|^2\) is the density of the Gaussian distribution $X\sim N(A\btheta,\Sigma)$.

The electronic Hamiltonian is
\begin{equation*}
    \scrH=-\frac12\sum_{i=1}^N \Delta_{\bx_i}
-\sum_{i=1}^N\sum_{I=1}^M \frac{Z_I}{\|\bx_i-\RR_I\|}
+\sum_{1\le i<j\le N}\frac1{\|\bx_i-\bx_j\|}
+C_{\rm nuc},\quad C_{\rm nuc}:=\sum_{1\le I<I'\le M}\frac{Z_I Z_{I'}}{\|\RR_I-\RR_{I'}\|}.
\end{equation*}
Hence $L(\btheta)=\braket{\psi_{\btheta}|\scrH|\psi_{\btheta}}$ can be decomposed into a kinetic part and a potential part. For the kinetic term, using integration by parts
\begin{equation*}
    -\dfrac{1}{2}\sum_{i=1}^N\braket{\psi_{\btheta}|\Delta_{\bx_i} |\psi_{\btheta}} = \dfrac{1}{2}\int_{\R^{3N}}\norm{\nabla_{\bfx}\psi_{\btheta}(\bfx)}^2\dd\bfx=\dfrac{1}{8}\Expect_{X\sim \pi_{\btheta}}\lrsquare{(X-A\btheta)^\top \Sigma^{-2}(X-A\btheta)}=\dfrac{1}{8}\text{tr}(\Sigma^{-1}),
\end{equation*}
where the last equality is from the the standard identity for centered Gaussian quadratic forms: if \(Y\sim N(0,\Sigma)\), then for any matrix \(M\), $\Expect[Y^\top MY]=\text{tr}(M\Sigma)$.

For the potential terms, we first rewrite $A$ and $\Sigma$ in block forms:
\begin{equation*}
    A=
\begin{pmatrix}
A_1\\ \vdots \\ A_N
\end{pmatrix},\quad \Sigma=\begin{pmatrix}
    \Sigma_{11} & \cdots & \Sigma_{1N}\\
    \vdots &\ddots & \vdots\\
    \Sigma_{N1} & \cdots & \Sigma_{NN}
\end{pmatrix},\quad A_i\in \R^{3\times N_p},~\Sigma_{ij}\in \R^{3\times 3}.
\end{equation*}
Then the random variable of each electronic coordination $X_i\sim N(A_i\btheta,\Sigma_{ii})$, and therefore
\[
\Expect_{X\sim \pi_{\btheta}}\!\left[\frac1{\|X_i-\RR_I\|}\right]
=
\frac{1}{(2\pi)^{3/2}(\det \Sigma_{ii})^{1/2}}
\int_{\mathbb R^3}\frac1{\|\by\|}
e^{
-\frac12 (\by-(A_i\btheta-\RR_I))^\top \Sigma_{ii}^{-1}(\by-(A_i\btheta-\RR_I))} \dd\by.
\]
For the electron-electron term, define $Y_{ij}:=X_i-X_j$, and \(Y_{ij}\) is again Gaussian with the mean and covariance
\begin{equation*}
    \mathbb E[Y_{ij}] = (A_i-A_j)\btheta,\quad \operatorname{Cov}(Y_{ij}) =\Sigma_{ii}+\Sigma_{jj}-\Sigma_{ij}-\Sigma_{ji}=:\Gamma_{ij}.
\end{equation*}
Therefore, we have
\[
\Expect_{X\sim \pi_{\btheta}}\!\left[\frac1{\|X_i-X_j\|}\right]
=
\Expect\!\left[\frac1{\|Y_{ij}\|}\right]
=
\frac{1}{(2\pi)^{3/2}(\det \Gamma_{ij})^{1/2}}
\int_{\mathbb R^3}\frac1{\|\by\|}
e^{
-\frac12 (\by-(A_i-A_j)\btheta)^\top \Gamma_{ij}^{-1}(\by-(A_i-A_j)\btheta)}
\dd\by.
\]

For any symmetric positive definite \(3\times 3\) matrix \(\Gamma\), define
\[
U_\Gamma(\bz)
:=
\frac{1}{(2\pi)^{3/2}(\det\Gamma)^{1/2}}
\int_{\mathbb R^3}\frac{1}{\|\by\|}
e^{-\frac12 (\by-\bz)^\top \Gamma^{-1}(\by-\bz)}\dd\by.
\]
Then the above two identities become
\[
\Expect_{X\sim \pi_{\btheta}}\!\left[\frac1{\|X_i-\RR_I\|}\right]=U_{\Sigma_{ii}}(A_i\btheta-\RR_I),
\qquad
\Expect_{X\sim \pi_{\btheta}}\!\left[\frac1{\|X_i-X_j\|}\right]=U_{\Gamma_{ij}}((A_i-A_j)\btheta).
\]
Hence
\[
L(\theta)
=
\frac18\,\operatorname{tr}(\Sigma^{-1})
-\sum_{i=1}^N\sum_{I=1}^M Z_I\,U_{\Sigma_{ii}}(A_i\btheta-\RR_I)
+\sum_{1\le i<j\le N}U_{\Gamma_{ij}}((A_i-A_j)\btheta)
+C_{\rm nuc}.
\]

\medskip
Let
\[
\gamma_\Gamma(\bw)
=
\frac{1}{(2\pi)^{3/2}(\det\Gamma)^{1/2}}
e^{-\frac12 \bw^\top \Gamma^{-1}\bw},
\]
then we have $U_\Gamma(\bz)=\displaystyle\int_{\R^3}\dfrac{1}{\norm{\by}}\gamma_{\Gamma}(\by-\bz)\dd\by$. Differentiating with respect to $\bz$ gives
\begin{equation*}
    \nabla_{\bz} U_\Gamma(\bz)
=
\int_{\mathbb R^3}\frac1{\|\by\|}\,\Gamma^{-1}(\by-\bz)\,\gamma_\Gamma(\by-\bz)\dd\by, \quad \nabla^2_{\bz} U_\Gamma(\bz)
=
\int_{\mathbb R^3}\frac1{\|\by\|}
\Big(\Gamma^{-1}(\by-\bz)(\by-\bz)^\top\Gamma^{-1}-\Gamma^{-1}\Big)\gamma_\Gamma(\by-\bz)\dd\by.
\end{equation*}
Since \(\Gamma^{-1}\) is fixed and positive definite, there exist constants \(C_1,C_2,C_3,c_1>0\) such that
\[
\|\Gamma^{-1}(\by-\bz)\gamma_\Gamma(\by-\bz)\|
\le C_1\|\by-\bz\|e^{-c_1\|\by-\bz\|^2},
\]
and
\begin{equation*}
\norm{\Big(\Gamma^{-1}(\by-\bz)(\by-\bz)^\top\Gamma^{-1}-\Gamma^{-1}\Big)\gamma_\Gamma(\by-\bz)}
\le (C_2+C_3\|\by-\bz\|^2)e^{-c_1\|\by-\bz\|^2}.    
\end{equation*}
Therefore
\begin{equation*}
    \norm{\nabla_{\bz} U_\Gamma(\bz)}\le C_1\int_{\R^3}\dfrac{\norm{\by-\bz}}{\norm{\by}}e^{-c_1\norm{\by-\bz}^2}\dd\by = C_1\int_{\R^3}\dfrac{\norm{\bu}}{\norm{\bu+\bz}}e^{-c_1\norm{\bu}^2}\dd\bu,
\end{equation*}
and
\begin{equation*}
    \|\nabla^2_{\bz} U_\Gamma(\bz)\|
\le
\int_{\mathbb R^3}\frac{C_2+C_3\|\by-\bz\|^2}{\norm{\by}}e^{-c_1\|\by-\bz\|^2}\dd\by = \int_{\mathbb R^3}\frac{C_2+C_3\|\bu\|^2}{\norm{\bu+\bz}}e^{-c_1\|\bu\|^2}\dd\bu,
\end{equation*}
where $\bu=\by-\bz$. We split the domain into $\{\|\bu+\bz\|\le 1\}\bigcup \{\|\bu+\bz\|>1\}$. On \(\{\|\bu+\bz\|\le 1\}\), using the local integrability of \(\|v\|^{-1}\) in \(\mathbb R^3\),
\[
\int_{\R^3}\dfrac{\norm{\bu}^k}{\norm{\bu+\bz}}e^{-c_1\norm{\bu}^2}\dd\bu
\le
\lrbracket{\sup_{\bu\in\mathbb R^3}\|\bu\|^ke^{-c_1\|\bu\|^2}}
\int_{\|\bv\|\le 1}\frac{1}{\|\bv\|}\dd\bv
<\infty,
\]
for \(k=0,1,2\). On \(\{\|\bu+\bz\|>1\}\), 
\[
\int_{\|\bu+\bz\|>1}\frac{(\|\bu\|^k)e^{-c_1\|\bu\|^2}}{\|\bu+\bz\|}\dd\bu
\le
\int_{\mathbb R^3}\|\bu\|^ke^{-c_1\|\bu\|^2}\dd\bu
<\infty.
\]
Hence
\[
\sup_{\bz\in\mathbb R^3}|\nabla_{\bz} U_\Gamma(\bz)|<\infty,
\qquad
\sup_{\bz\in\mathbb R^3}\|\nabla^2_{\bz} U_\Gamma(\bz)\|<\infty.
\]

\medskip
For $g(\btheta)$ and $\nabla^2L(\btheta)$, by the chain rule,
\[
g(\btheta)=
-\sum_{i=1}^N\sum_{I=1}^M Z_I\,A_i^\top \nabla U_{\Sigma_{ii}}(A_i\btheta-\RR_I)
+\sum_{1\le i<j\le N}(A_i-A_j)^\top \nabla U_{\Gamma_{ij}}((A_i-A_j)\btheta),
\]
and
\[
\nabla^2L(\theta)
=
-\sum_{i=1}^N\sum_{I=1}^M Z_I\,A_i^\top \nabla^2 U_{\Sigma_{ii}}(A_i\btheta-\RR_I)\,A_i
+\sum_{1\le i<j\le N}(A_i-A_j)^\top \nabla^2 U_{\Gamma_{ij}}((A_i-A_j)\btheta)(A_i-A_j).
\]
Using the uniform boundedness of \(\nabla_{\bz} U_\Gamma(\bz)\) and \(\nabla^2_{\bz} U_\Gamma(\bz)\), we obtain
\[
\sup_{\btheta\in\mathbb R^{N_p}}\|g(\btheta)\|<\infty,
\qquad
\sup_{\btheta\in\mathbb R^{N_p}}\|\nabla^2L(\btheta)\|<\infty.
\]
Therefore \(g(\theta)\) is uniformly bounded and globally Lipschitz continuous.

\medskip
\noindent\textbf{Discrete example.}
Consider
\[
\psi_\theta(\bfx)=2^{-N/2}e^{\i\bfx^\top A\btheta},
\qquad \bfx\in\{-1,1\}^N.
\]
Then
\[
L(\btheta)
= \braket{\psi_{\btheta}|\scrH|\psi_{\btheta}}
=
2^{-N}\sum_{\bfx,\mathbf{y}\in\{-1,1\}^N}\braket{\bfx|\scrH|\mathbf{y}} e^{\i(\mathbf{y}-\bfx)^\top A\btheta}.
\]
Therefore, we have 
\[
g(\btheta)
=2^{-N} \sum_{\bfx,\mathbf{y}\in \{-1,1\}^N} \i \braket{\bfx|\scrH|\mathbf{y}} A^\top (\mathbf{y}-\bfx)e^{\i (\mathbf{y}-\bfx)^\top A\btheta}
\]
and
\[
\nabla^2L(\theta)
=
-2^{-N}\sum_{\bfx,\mathbf{y}\in\{-1,1\}^N} \braket{\bfx|\scrH|\mathbf{y}}
A^\top (\mathbf{y}-\bfx)(\mathbf{y}-\bfx)^\top A e^{\i(\mathbf{y}-\bfx)^\top A\btheta}.
\]
Since \(x,y\in\{-1,1\}^N\), we have $\|\mathbf{y}-\bfx\|\le 2\sqrt{N}$. Hence
\[
\sup_{\btheta\in\mathbb R^{N_p}}\|g(\btheta)\|<\infty,
\qquad
\sup_{\btheta\in\mathbb R^{N_p}}\|\nabla^2L(\btheta)\|<\infty.
\]
Therefore \(g(\btheta)\) is uniformly bounded and globally Lipschitz continuous also in the discrete case.
\end{proof}

\section{Experimental Settings}
\label{sec:experiment_setting}

\subsection{Restricted Boltzmann Machine}
\label{sec:rbm_define}

\par The restricted Boltzmann machine (RBM) proposed in \cite{carleo2017solving}, is used as the wavefunction ans\"atz for spin-lattice models and takes the form
    \begin{equation*}
        \psi_{\btheta}(\bfx) = e^{\sum_{j=1}^N a_jx_j}\prod_{k=1}^D \cosh(b_k + \sum_{j=1}^N w_{kj}x_j),
    \end{equation*}
    where $\btheta = (a_j, b_k, w_{kj})_{j=1,\dots,N;\,k=1,\dots,D}$. 
For spin-lattice systems, the configuration satisfies $\bfx\in \{-1,1\}^N$.

\subsection{Electronic Hamiltonian}
In electron structure theory, the electronic Hamiltonian takes the form:
    \begin{equation}
        \scrH:=-\frac{1}{2}\sum_{i=1}^N\Delta_{\bx_i}-\sum_{i=1}^N\sum_{I=1}^M\frac{Z_I}{\norm{\bx_i-\RR_I}}+\sum_{1\le i<j \le N}\frac{1}{\norm{\bx_i-\bx_j}} + \sum_{1\le I < I'\le M} \dfrac{Z_I Z_{I'}}{\norm{\RR_I-\RR_{I'}}},
        \label{eq:elec_ham}
    \end{equation}
where $\{\RR_I\}_{I=1}^M\subseteq\R^3$ are nuclear positions and $\{Z_I\}_{I=1}^M\subseteq\N$ are nuclear charges.

\subsection{Transverse-Field Ising Model and Heisenberg Model}
\label{sec:tfi_heisenberg_define}

\par Both TFI and Heisenberg models consist of chains of Pauli matrices. The Pauli matrices are defined as
\begin{equation*}
    \sigma^x = \begin{pmatrix}
    0 & 1\\
    1 & 0
\end{pmatrix},\quad \sigma^y = \begin{pmatrix}
    0 & -i\\
    i & 0
\end{pmatrix}, \quad \sigma^z = \begin{pmatrix}
    1 & 0\\
    0 & -1
\end{pmatrix}.
\end{equation*}
The Pauli operator acting on site $i$ is defined as
\begin{equation*}
    \sigma_{i}^{\ell}:=I^{\otimes (i-1)} \otimes \sigma^{\ell} \otimes I^{\otimes (N-i)}\in \C^{2^N\times 2^N},\ell \in \{x,y,z\},~1\le i\le N,
\end{equation*}
where $\otimes$ denotes the Kronecker product. The TFI and Heisenberg Hamiltonians are defined as
\begin{itemize}
    \item TFI model: $ \scrH = -\sum_{<i,j>}\sigma_i^z\sigma_j^z - h\sum_{j}\sigma_j^x$, where $h>0$ is the transverse-field strength.
    
    \item Heisenberg model: $\scrH = \sum_{<i,j>}\sigma_i^x\sigma_j^x+\sigma_i^y\sigma_j^y+\sigma_i^z\sigma_j^z$.
\end{itemize}
Here $<i,j>$ denotes a nearest-neighbor pair.

\subsection{FermiNet Architecture and Hyperparameters}
\label{sec:ferminet_detail}

\begin{table}[H]
\centering
\caption{FermiNet architecture and MCMC hyperparameters used in all electronic-system experiments.}
\begin{tabular}{lc}
\toprule
Hyperparameter & Value \\
\midrule
One-electron stream width & 256 \\
Two-electron stream width & 16 \\
Number of equivariant layers & 4 \\
Backflow activation function & tanh \\
Number of determinants & 16 \\
Exponential envelope structure & Isotropic \\
Standard deviations for local energy clipping & 5 \\
Number of walkers & 1000 \\
MCMC burn-in steps & 5000 \\
MCMC steps between updates & 10 \\
\bottomrule
\end{tabular}

\label{tab:ferminet_hyperparameters}
\end{table}

\section{PRIME-SR vs. SPRING Across Random Seeds}
\label{sec:compare_spring_random_seed}

\subsection{Instability of SPRING with $\mu=0.99$ on $\mathrm{N}$, $\mathrm{O}$, $\mathrm{N}_2$ and $\mathrm{CO}$}
\label{sec:spring_unstable_N_O}

\begin{figure}[H] 
    \centering
    \begin{subfigure}[t]{0.45\linewidth}
        \centering
        \includegraphics[width=0.9\linewidth]{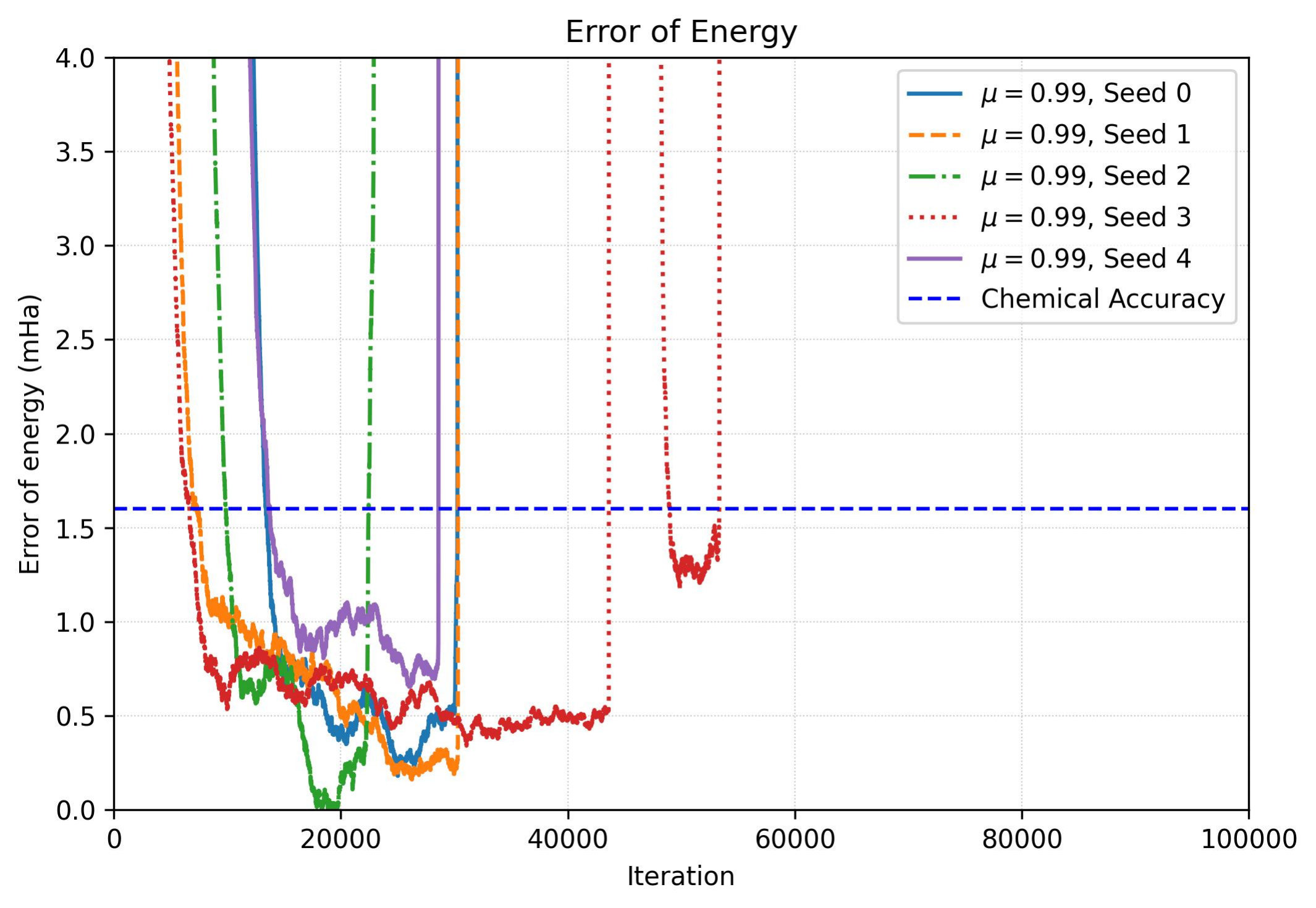}
        \caption{$\mathrm{N}$ (energy)}
    \end{subfigure}
    \hfill
    \begin{subfigure}[t]{0.45\linewidth}
        \centering
        \includegraphics[width=0.9\linewidth]{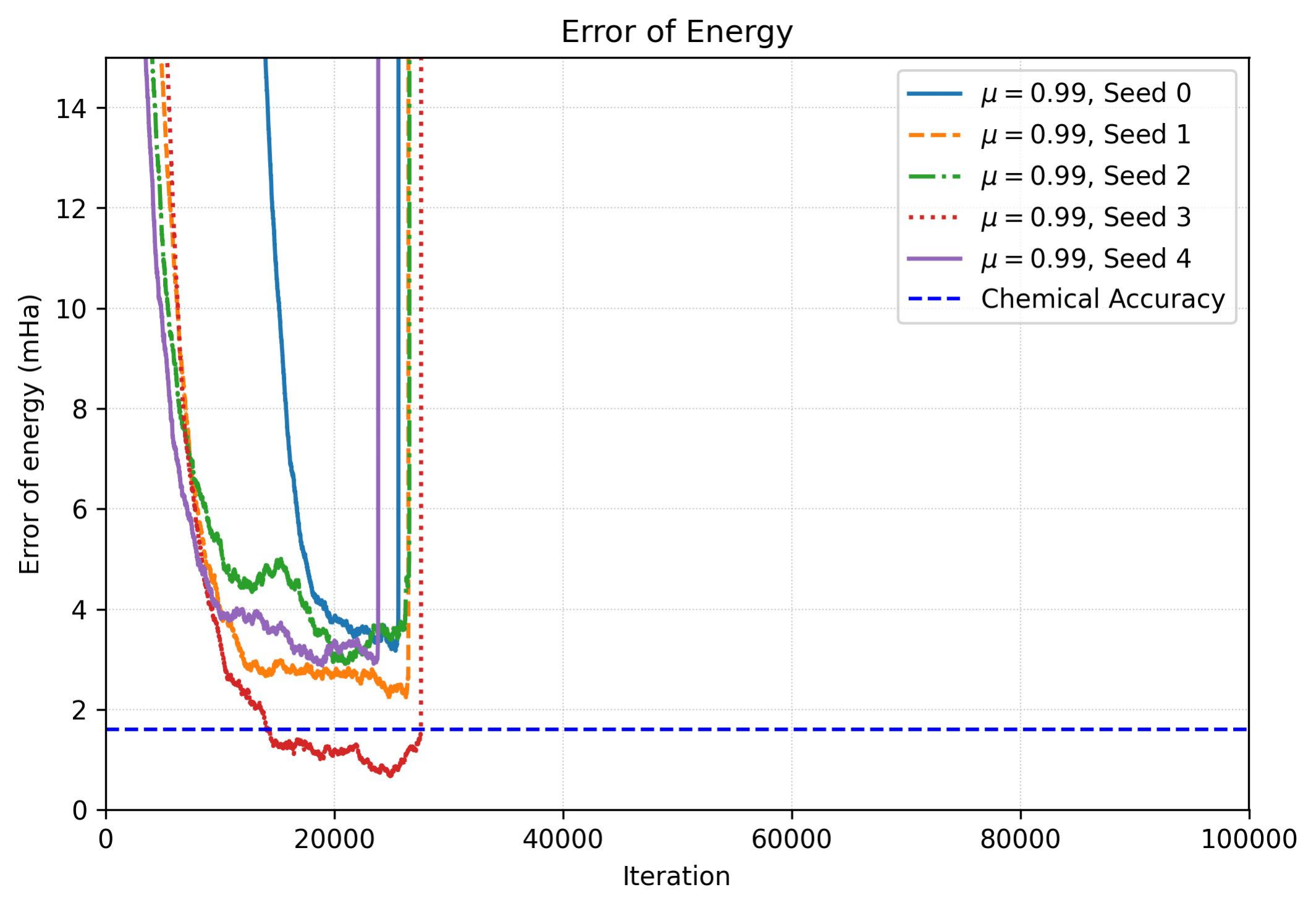}
        \caption{$\mathrm{O}$ (energy)}
    \end{subfigure}

    \vspace{0.4em}

    \begin{subfigure}[t]{0.49\linewidth}
        \centering
        \includegraphics[width=0.9\linewidth]{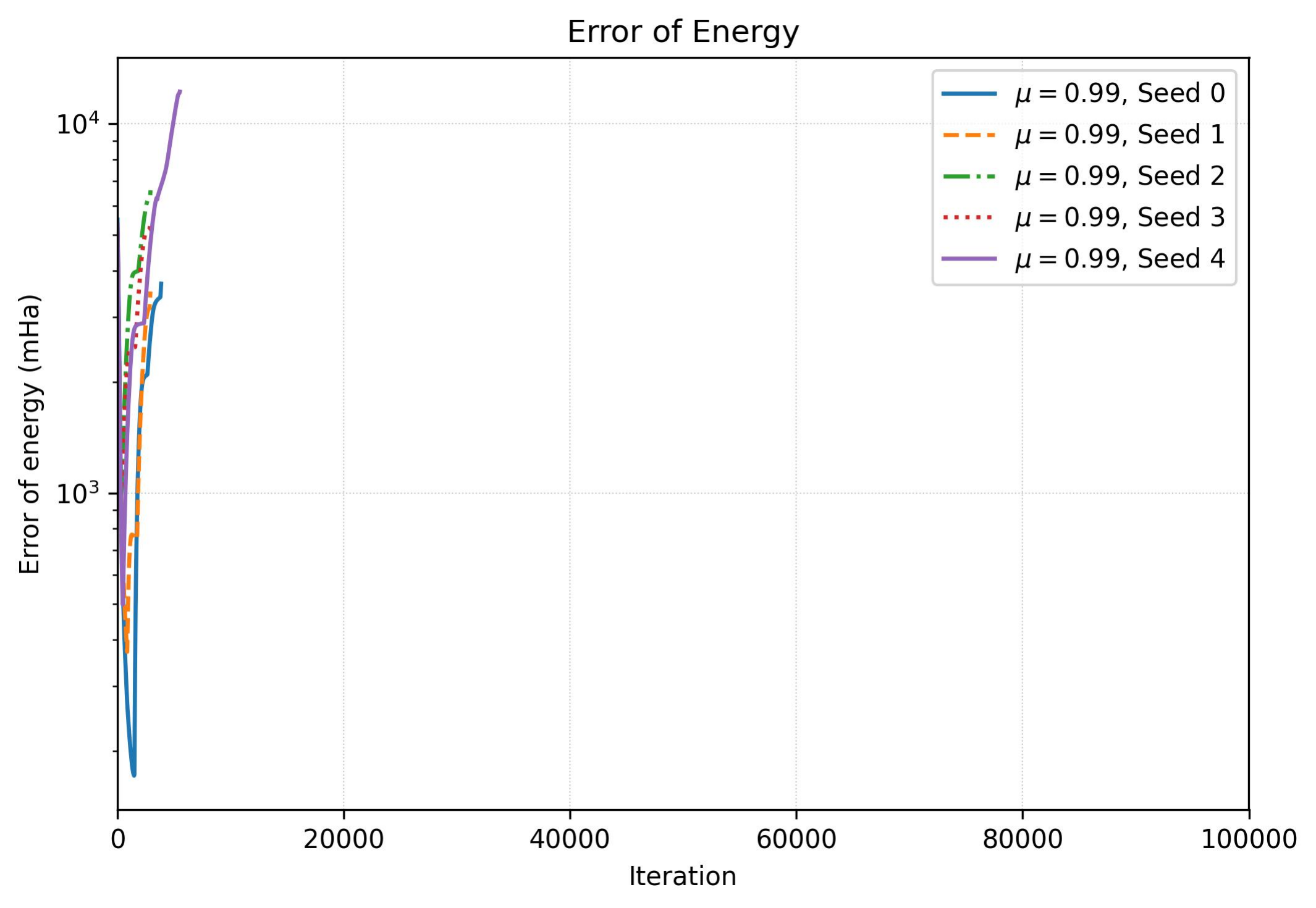}
        \caption{$\mathrm{N}_2$ (energy)}
    \end{subfigure}
    \hfill
    \begin{subfigure}[t]{0.49\linewidth}
        \centering
        \includegraphics[width=0.9\linewidth]{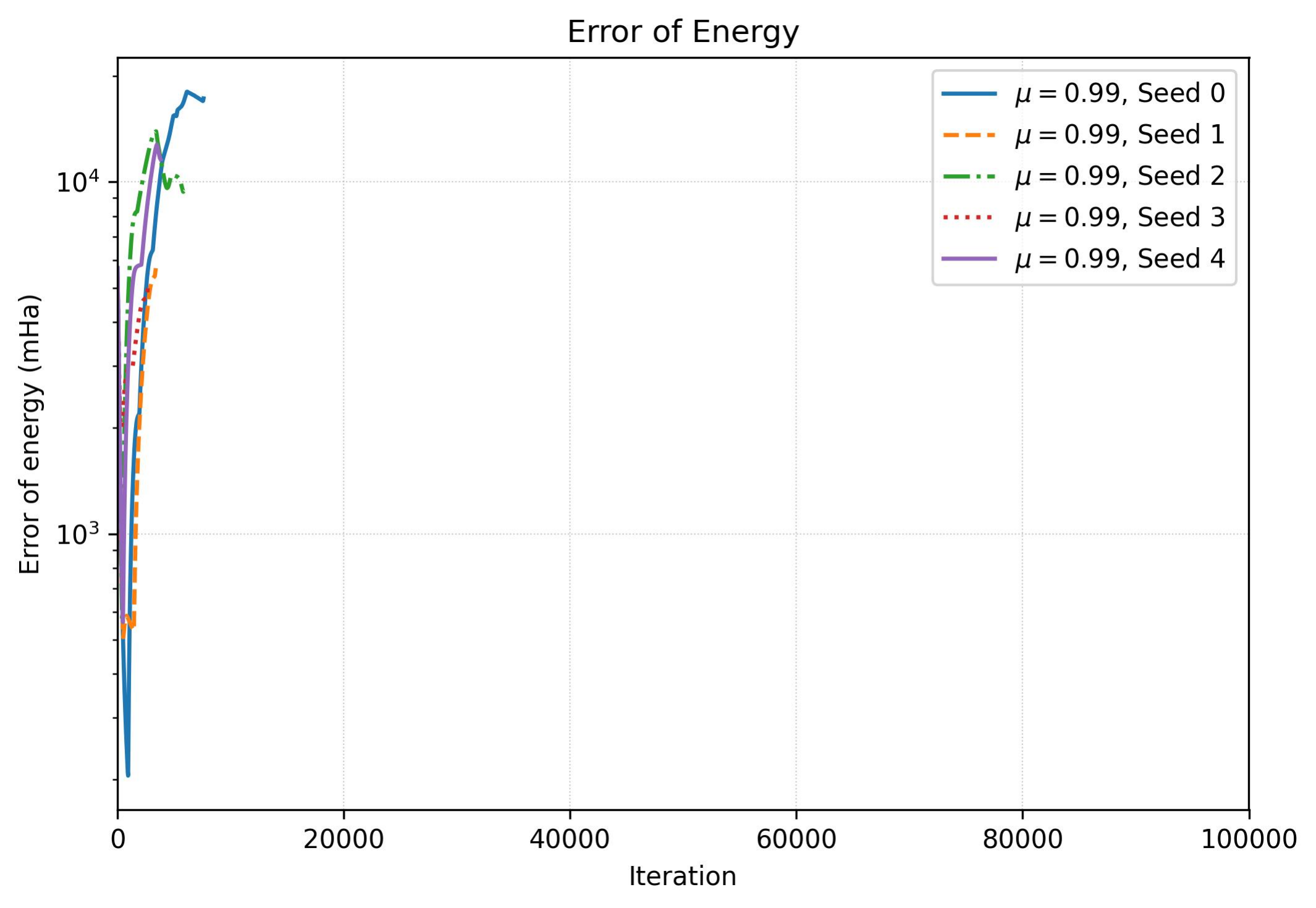}
        \caption{$\mathrm{CO}$ (energy)}
    \end{subfigure}

    \caption{Energy trajectories of SPRING with $\mu=0.99$ on $\mathrm{N}$, $\mathrm{O}$ $\mathrm{N}_2$ and $\mathrm{CO}$.}

    \label{fig:spring_unstable_N_O}
\end{figure}

\newpage

\subsection{Atomic Systems: Results Across Random Seeds}
\label{sec:atom_random_seed}

\textbf{Random seed 1.}

\begin{figure}[htbp]
    \centering
    
    \begin{subfigure}{\linewidth}
    \centering
    \includegraphics[width=0.48\linewidth]{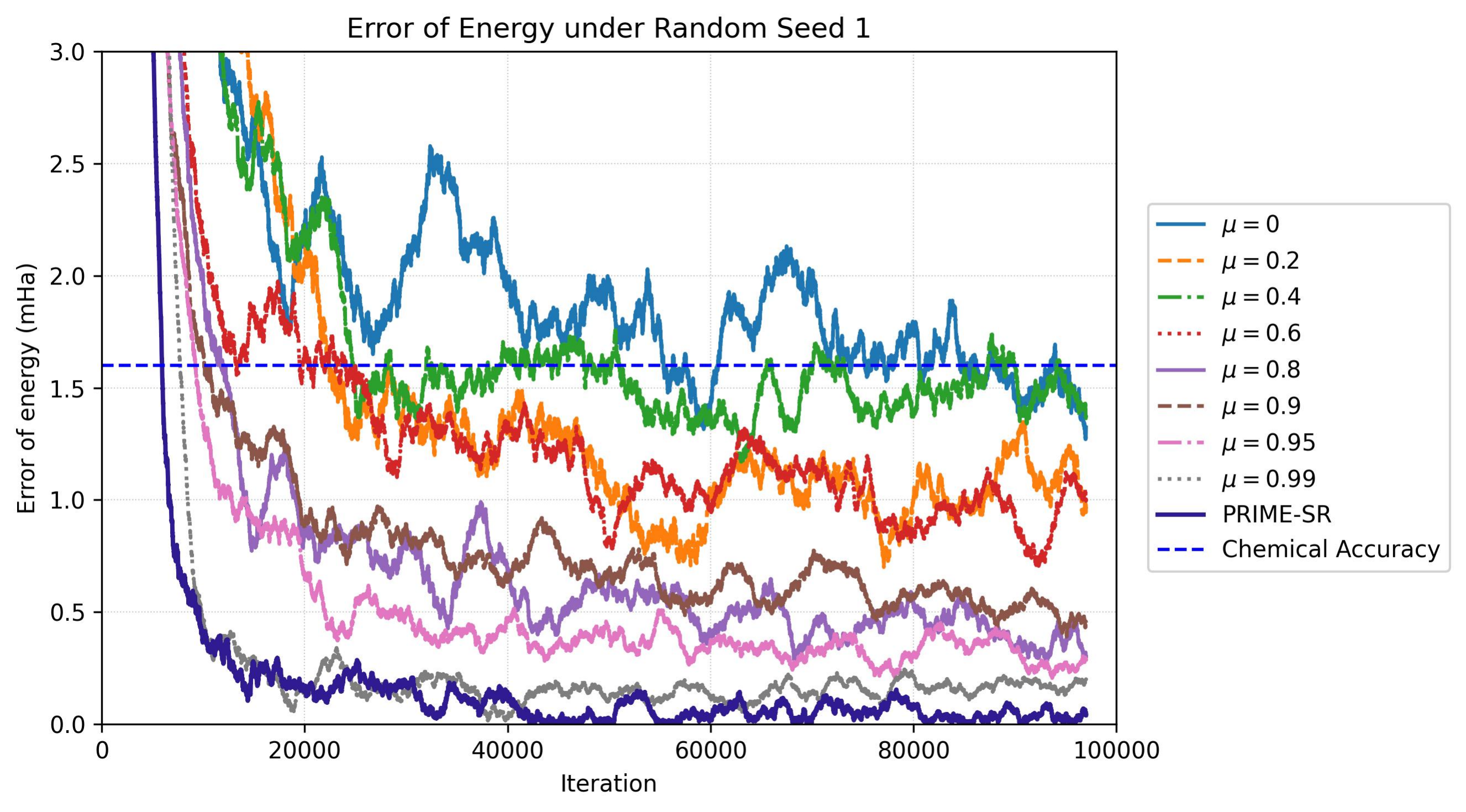}
    \hfill
    \includegraphics[width=0.42\linewidth]{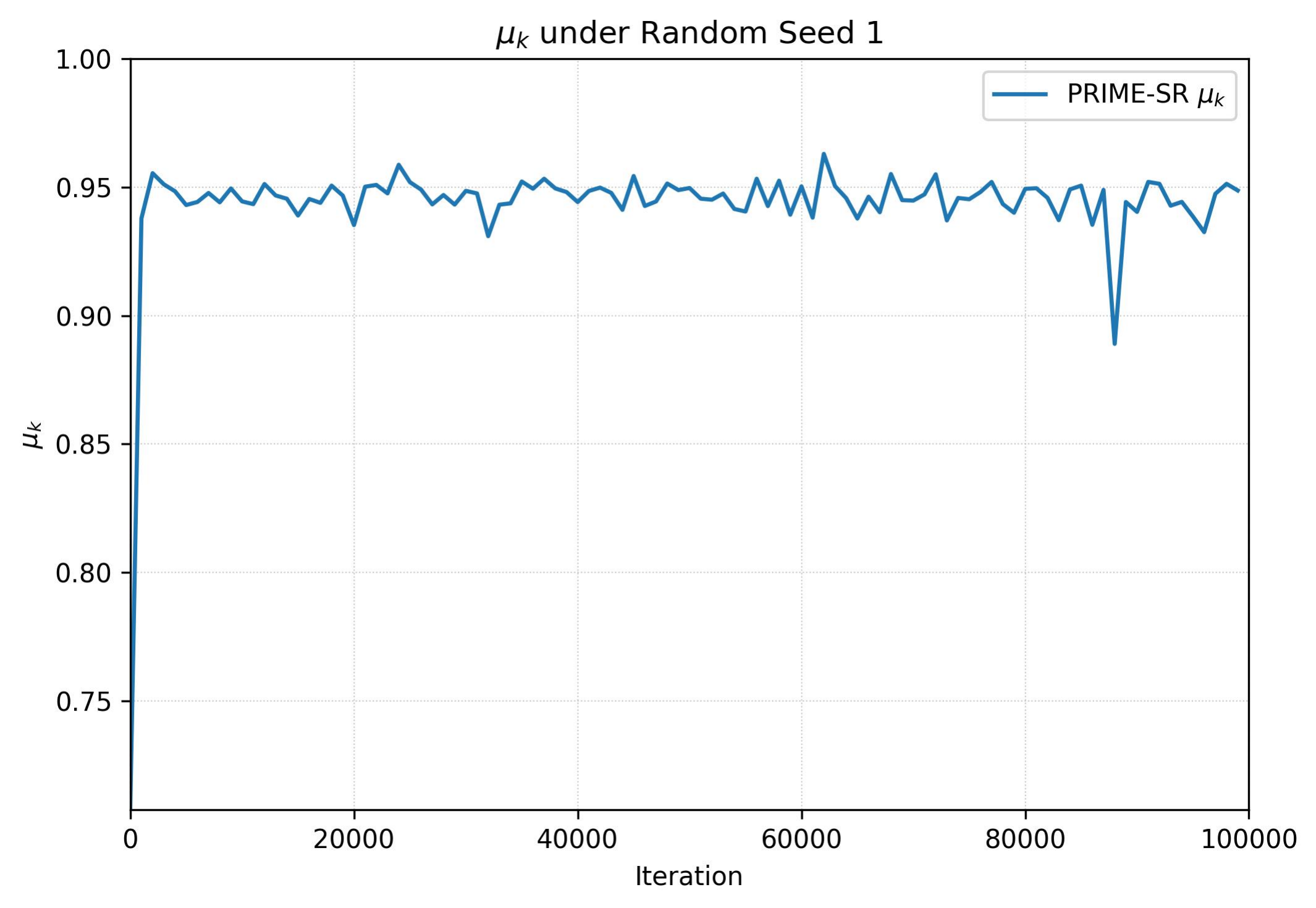}
    \caption{$\mathrm{C}$ atom. Left: relative energy error. Right: $\mu_k$.}
    \end{subfigure}
    
    \vspace{0.4em}
    
    \begin{subfigure}{\linewidth}
    \centering
    \includegraphics[width=0.48\linewidth]{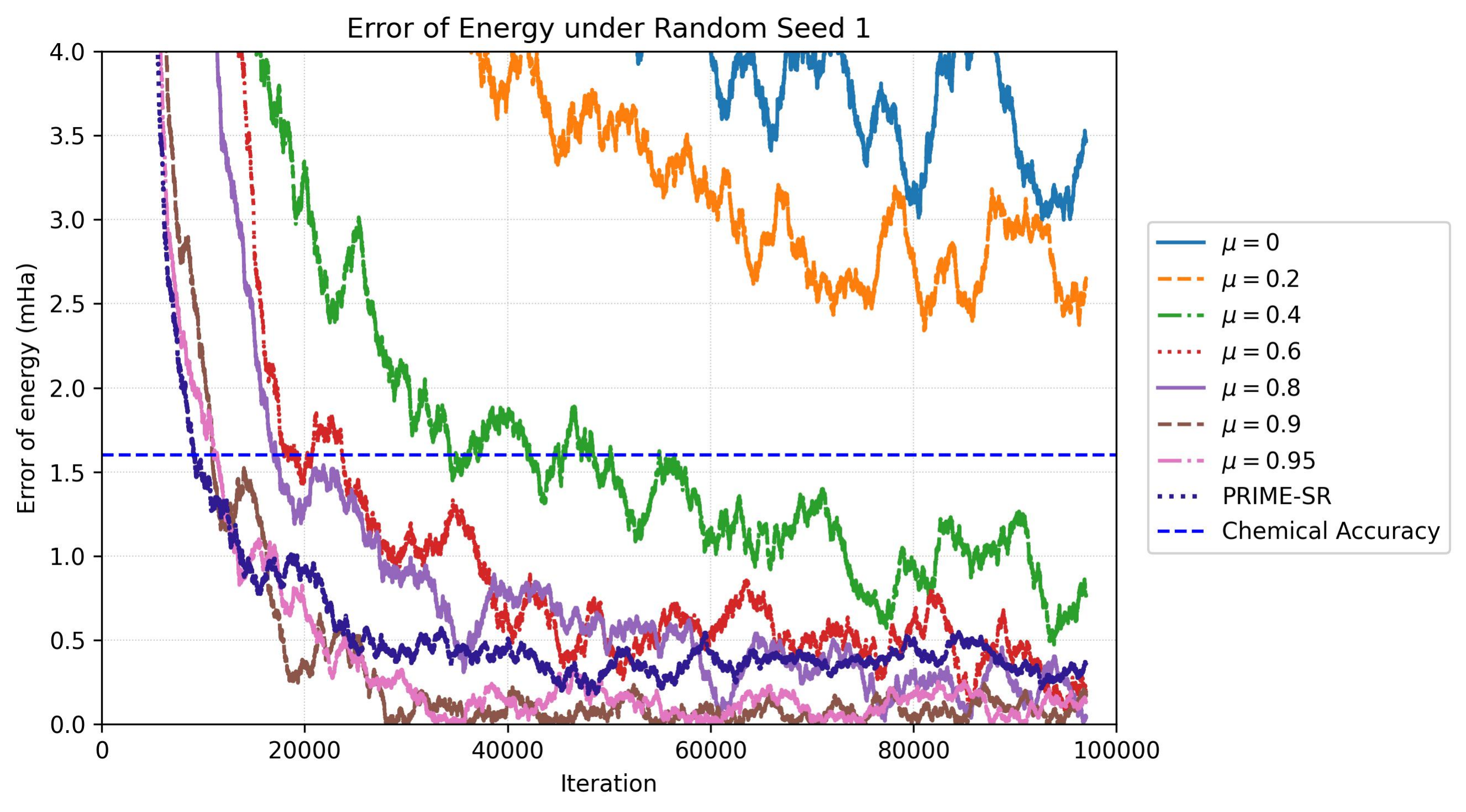}
    \hfill
    \includegraphics[width=0.42\linewidth]{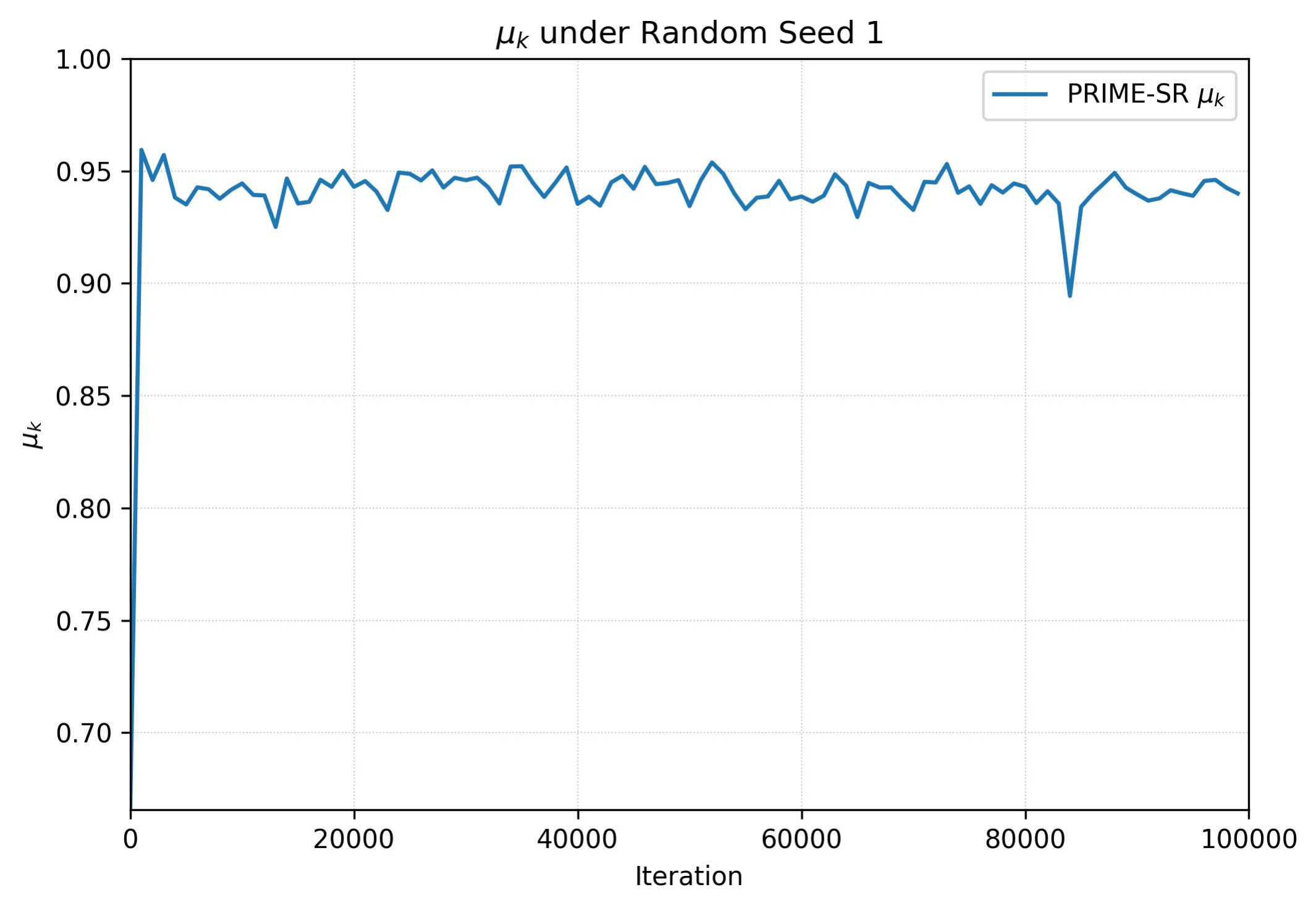}
    \caption{$\mathrm{N}$ atom. Left: relative energy error. Right: $\mu_k$.}
    \end{subfigure}
    
    \vspace{0.4em}
    
    \begin{subfigure}{\linewidth}
    \centering
    \includegraphics[width=0.48\linewidth]{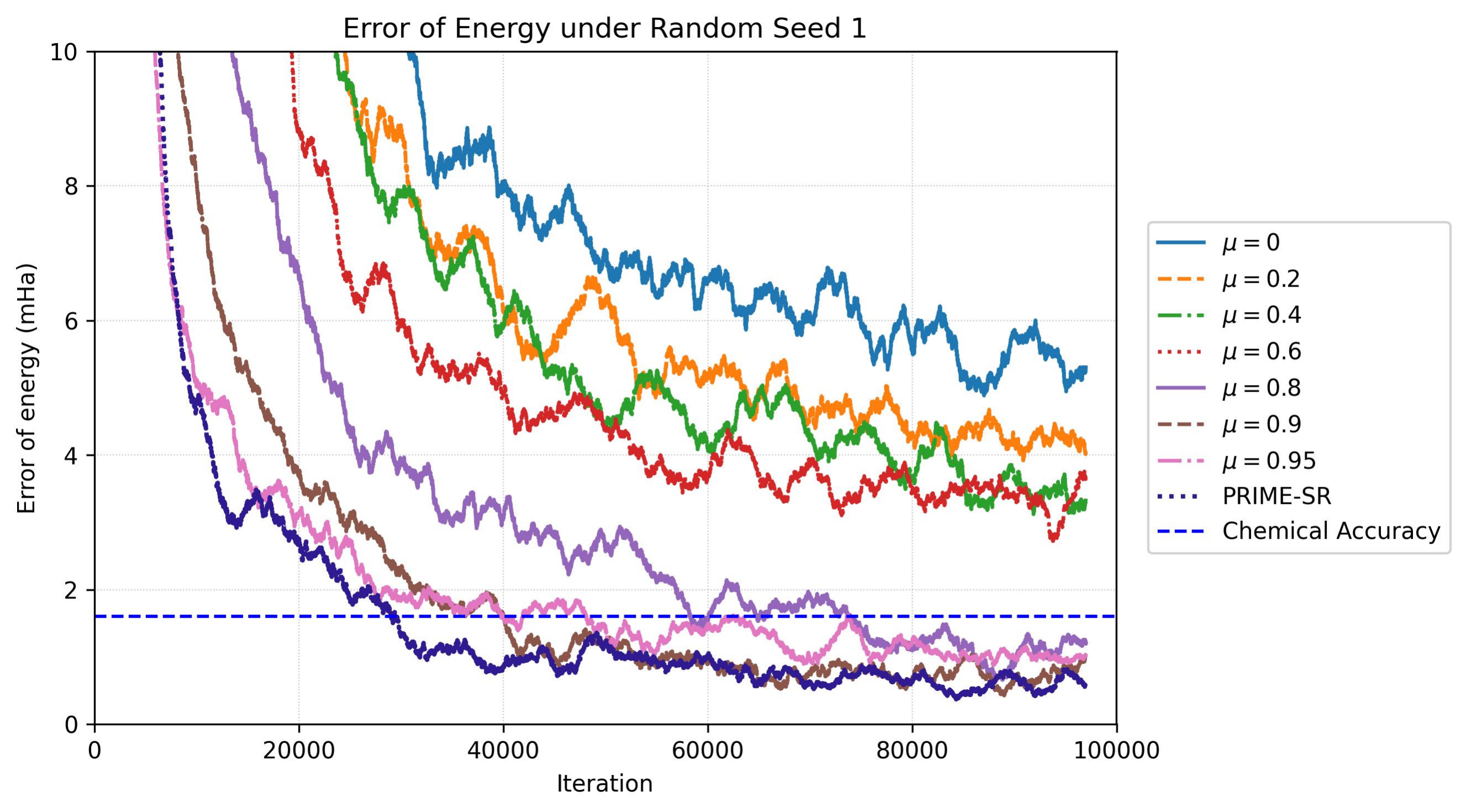}
    \hfill
    \includegraphics[width=0.42\linewidth]{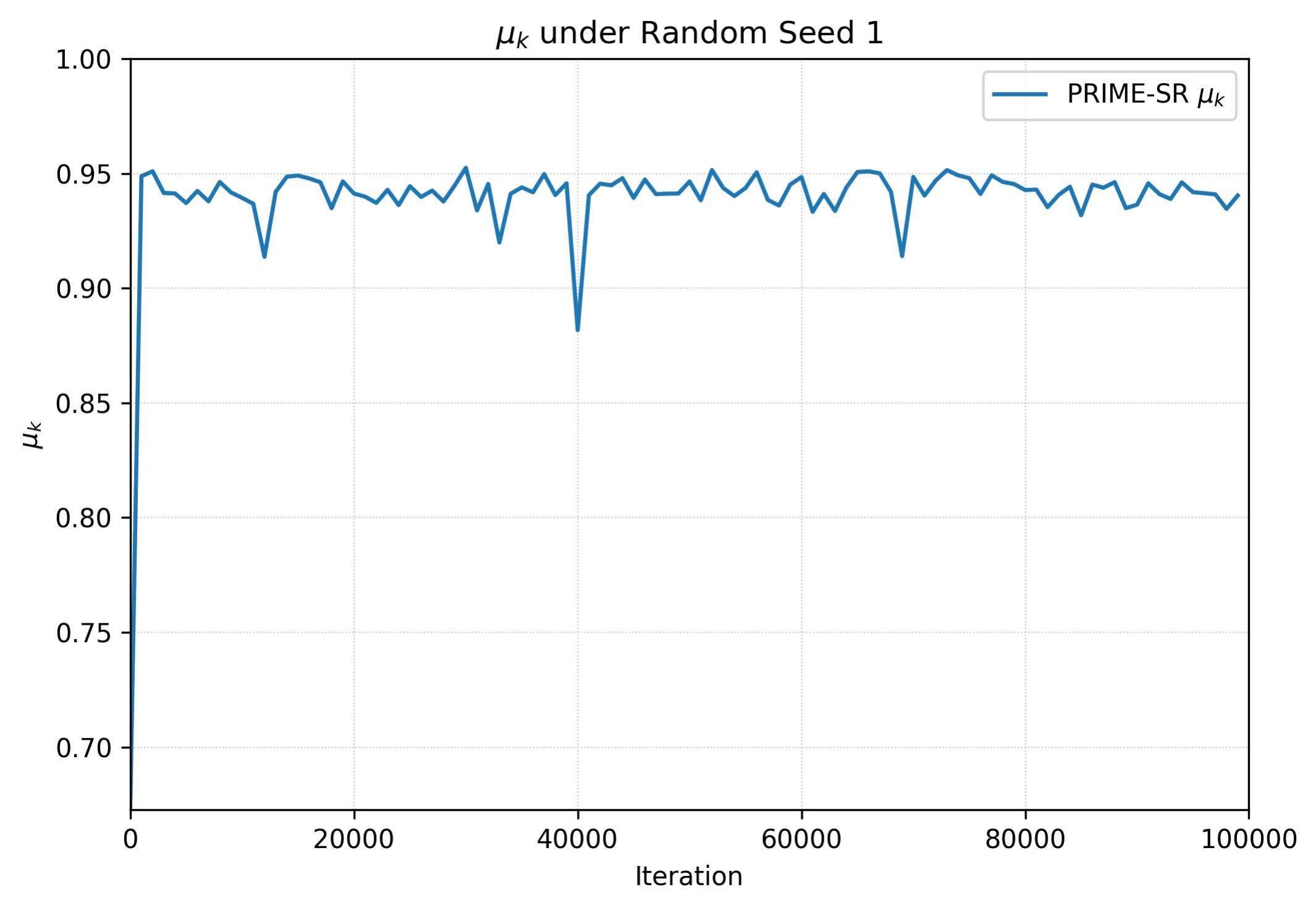}
    \caption{$\mathrm{O}$ atom. Left: relative energy error. Right: $\mu_k$.}
    \end{subfigure}
    
    \caption{Comparison of fixed-$\mu$ SPRING and PRIME-SR on $\mathrm{C}$, $\mathrm{N}$, $\mathrm{O}$ atoms for random seed 1.}
    \label{fig:compare_spring_atom_seed_1}

\end{figure}

 \newpage

\textbf{Random seed 2.}

\begin{figure}[htbp]
    \centering
    
    \begin{subfigure}{\linewidth}
    \centering
    \includegraphics[width=0.48\linewidth]{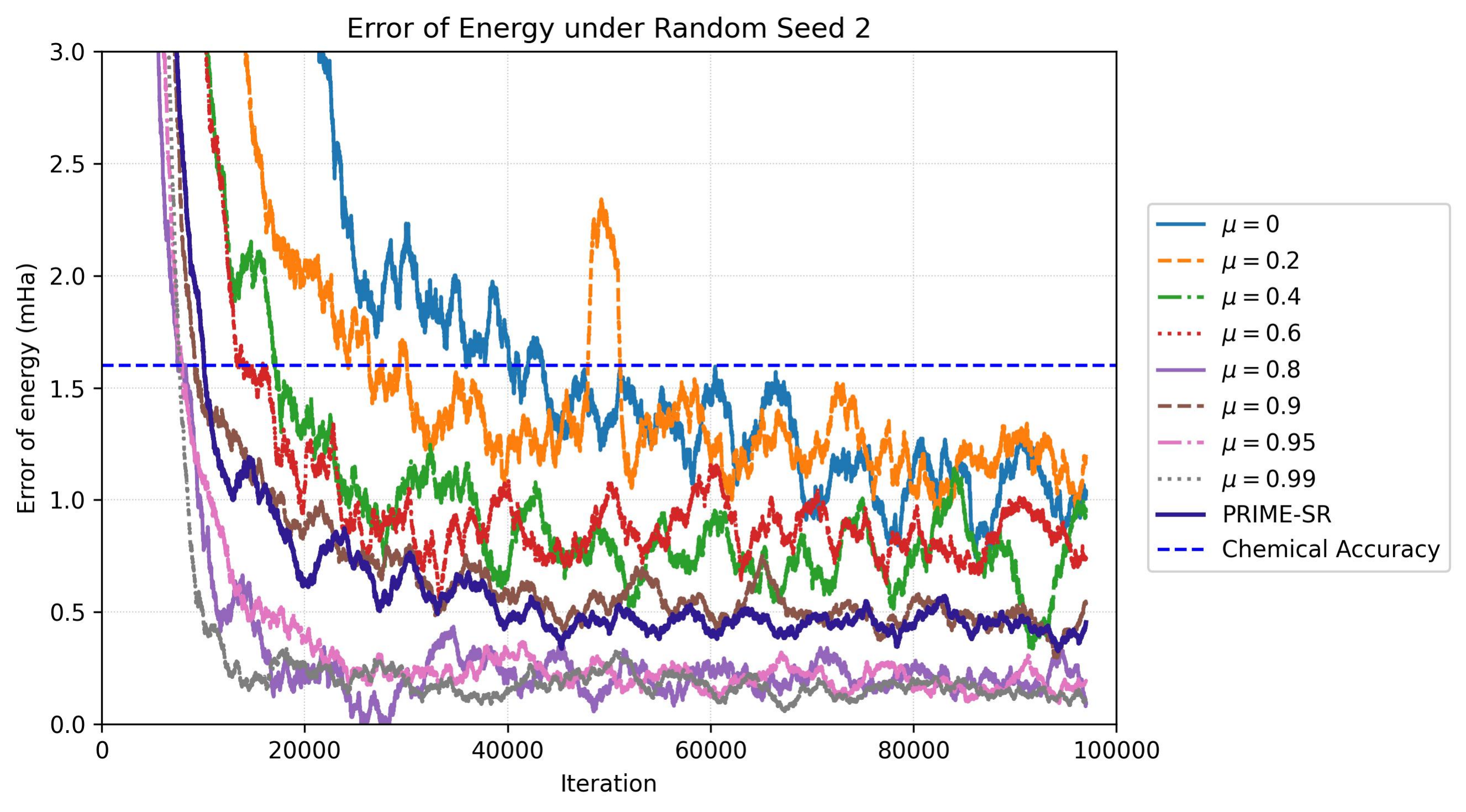}
    \hfill
    \includegraphics[width=0.42\linewidth]{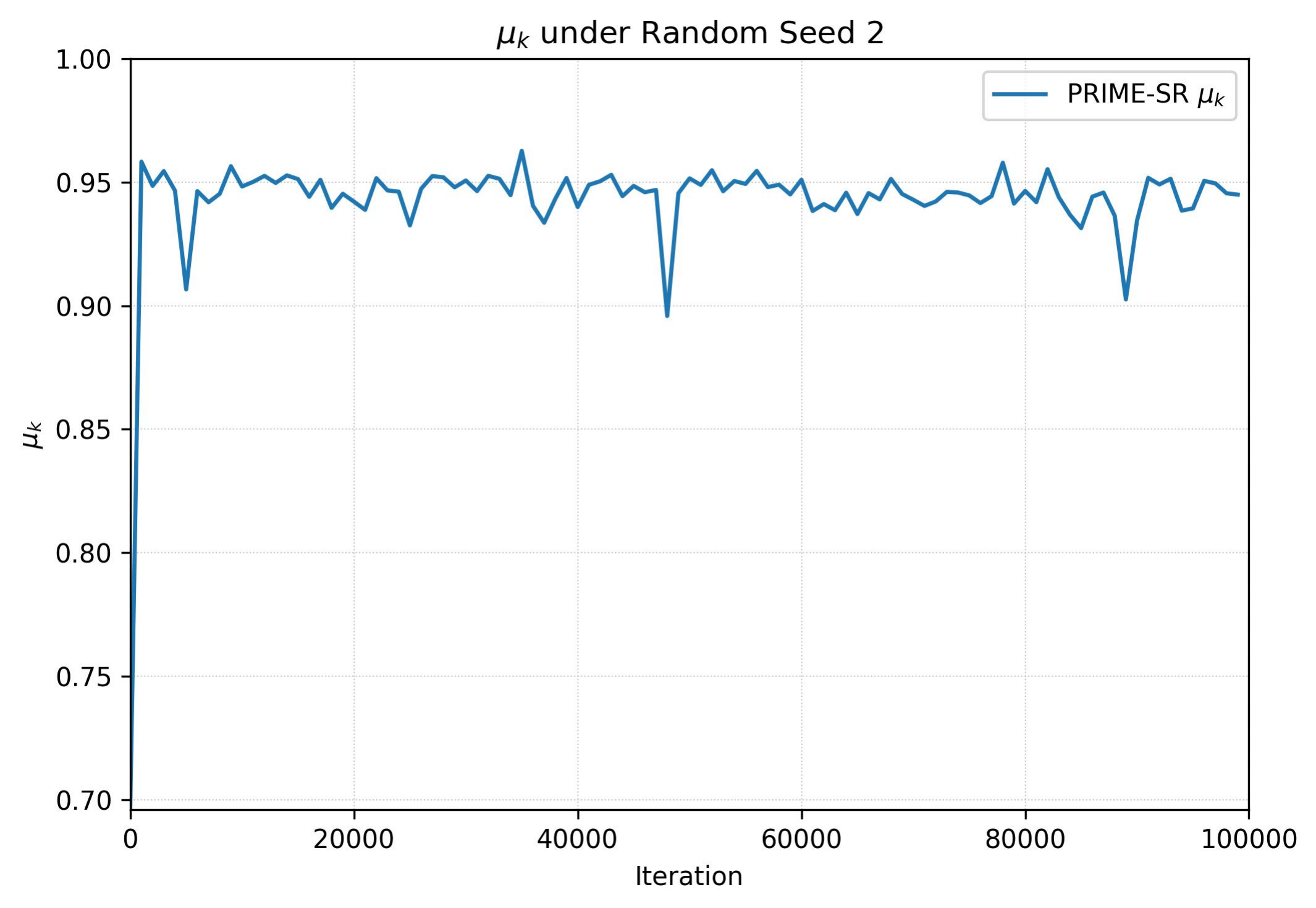}
    \caption{$\mathrm{C}$ atom. Left: relative energy error. Right: $\mu_k$.}
    \end{subfigure}
    
    \vspace{0.4em}
    
    \begin{subfigure}{\linewidth}
    \centering
    \includegraphics[width=0.48\linewidth]{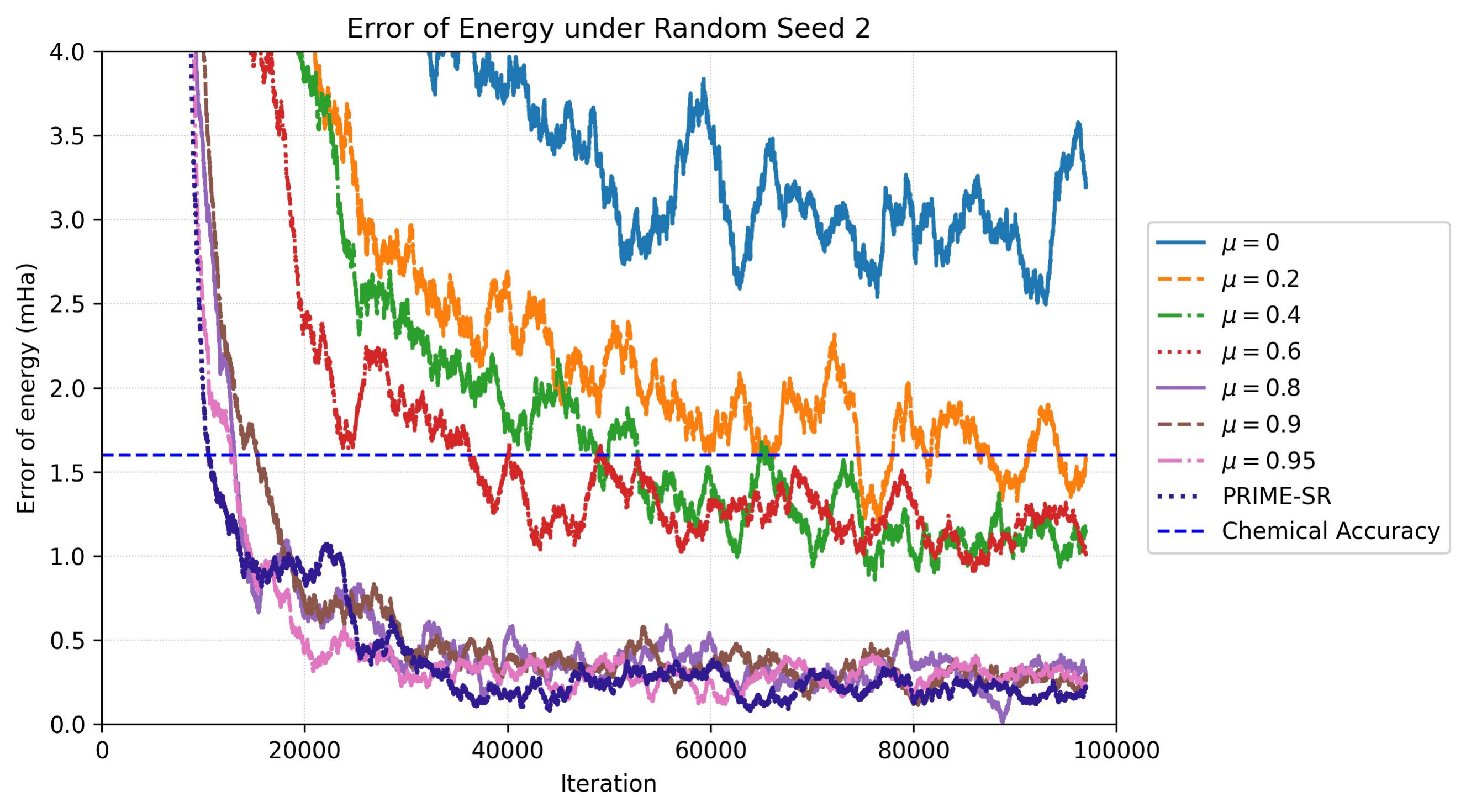}
    \hfill
    \includegraphics[width=0.42\linewidth]{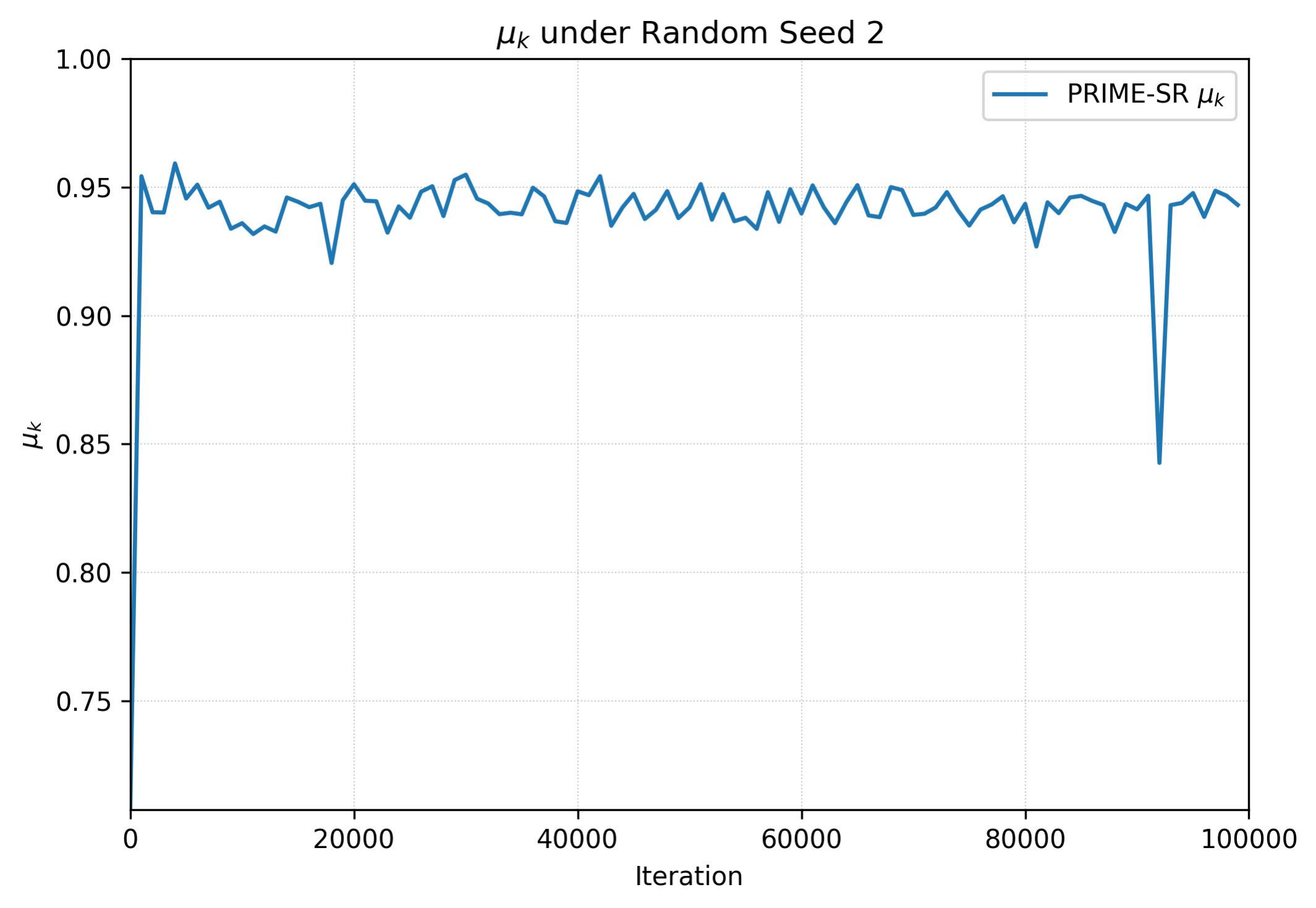}
    \caption{$\mathrm{N}$ atom. Left: relative energy error. Right: $\mu_k$.}
    \end{subfigure}
    
    \vspace{0.4em}
    
    \begin{subfigure}{\linewidth}
    \centering
    \includegraphics[width=0.48\linewidth]{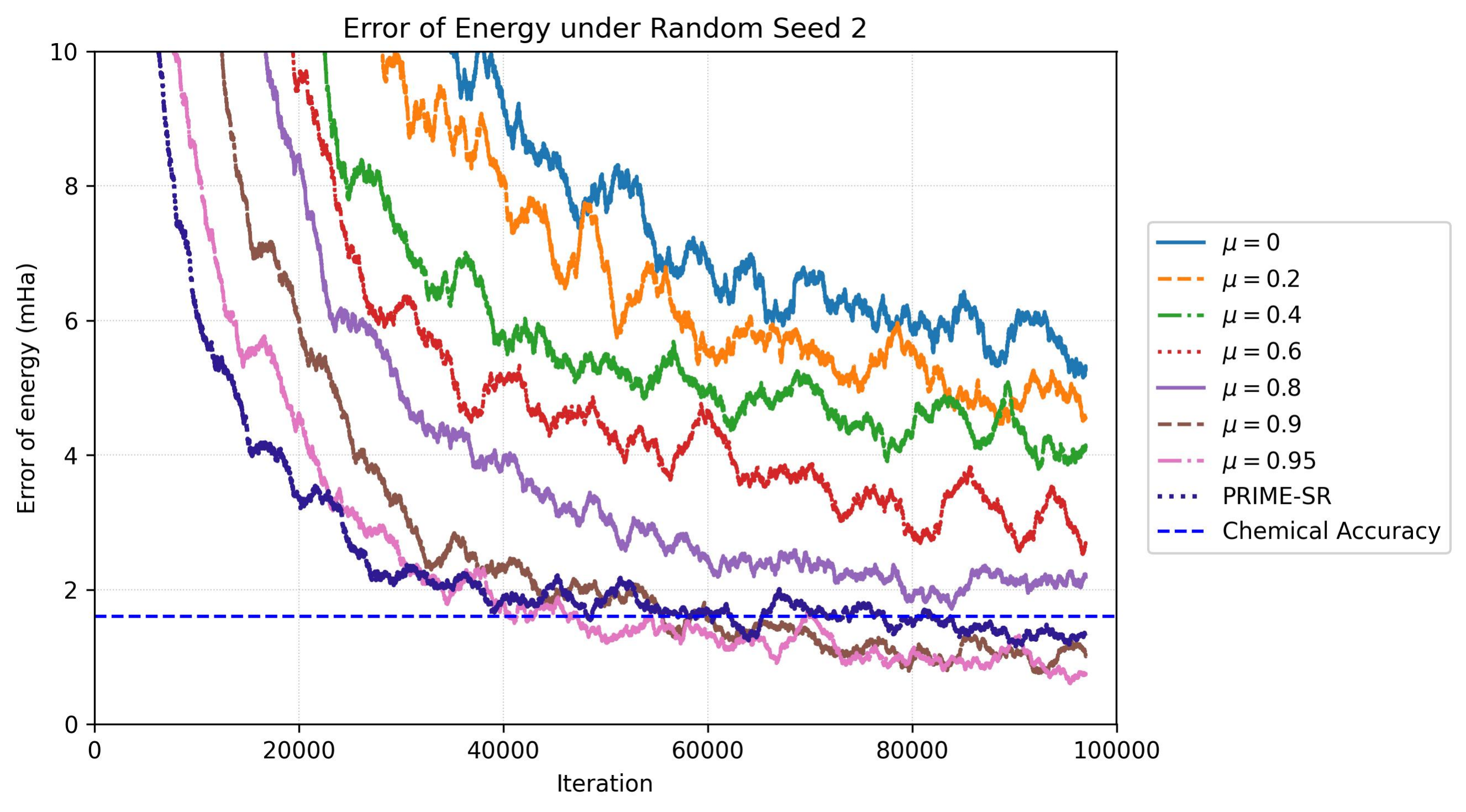}
    \hfill
    \includegraphics[width=0.42\linewidth]{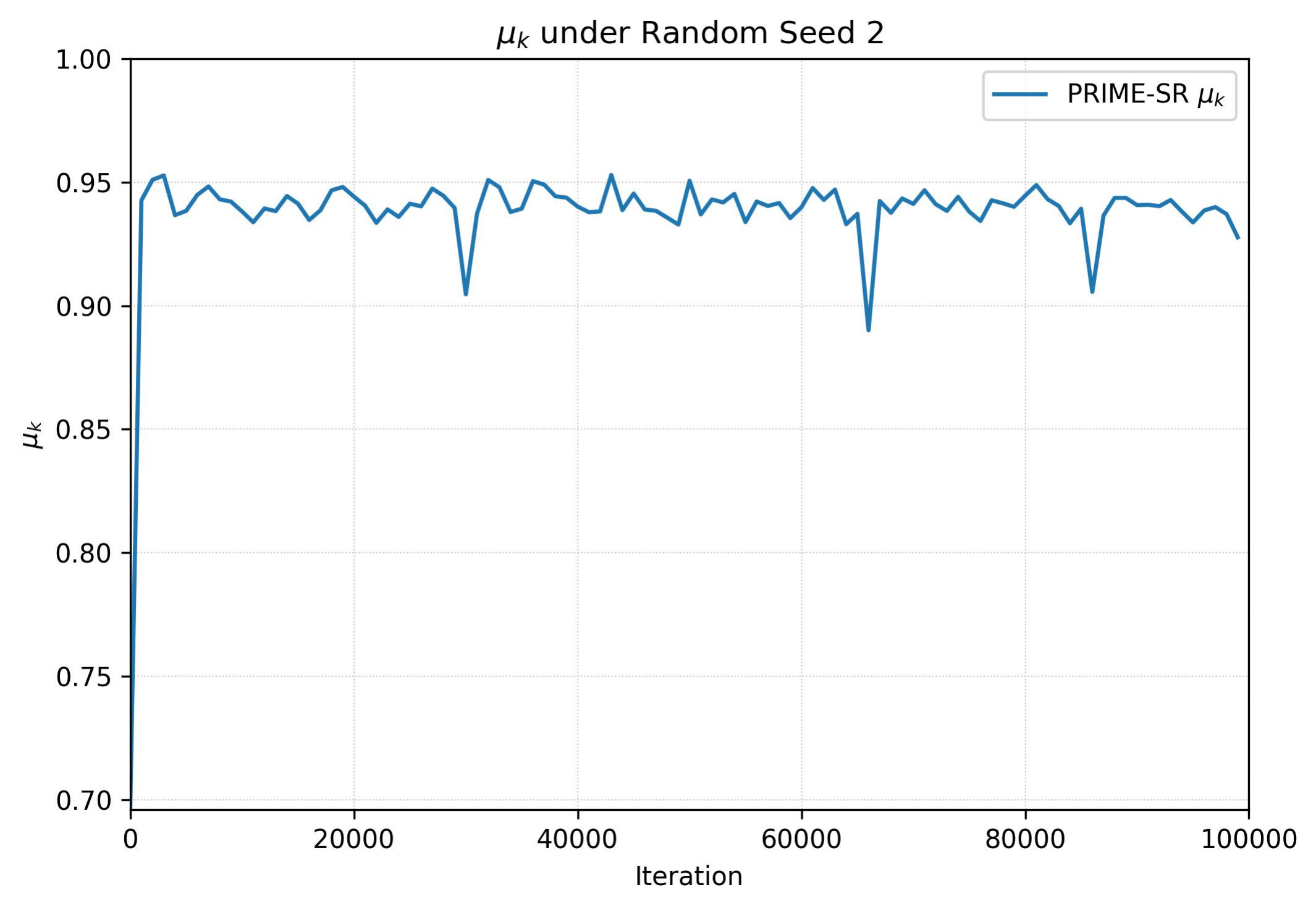}
    \caption{$\mathrm{O}$ atom. Left: relative energy error. Right: $\mu_k$.}
    \end{subfigure}
    
    \caption{Comparison of fixed-$\mu$ SPRING and PRIME-SR on $\mathrm{C}$, $\mathrm{N}$, $\mathrm{O}$ atoms for random seed 1.}
    \label{fig:compare_spring_atom_seed_2}

\end{figure}

 \newpage

\textbf{Random seed 3.}

\begin{figure}[htbp]
    \centering
    
    \begin{subfigure}{\linewidth}
    \centering
    \includegraphics[width=0.48\linewidth]{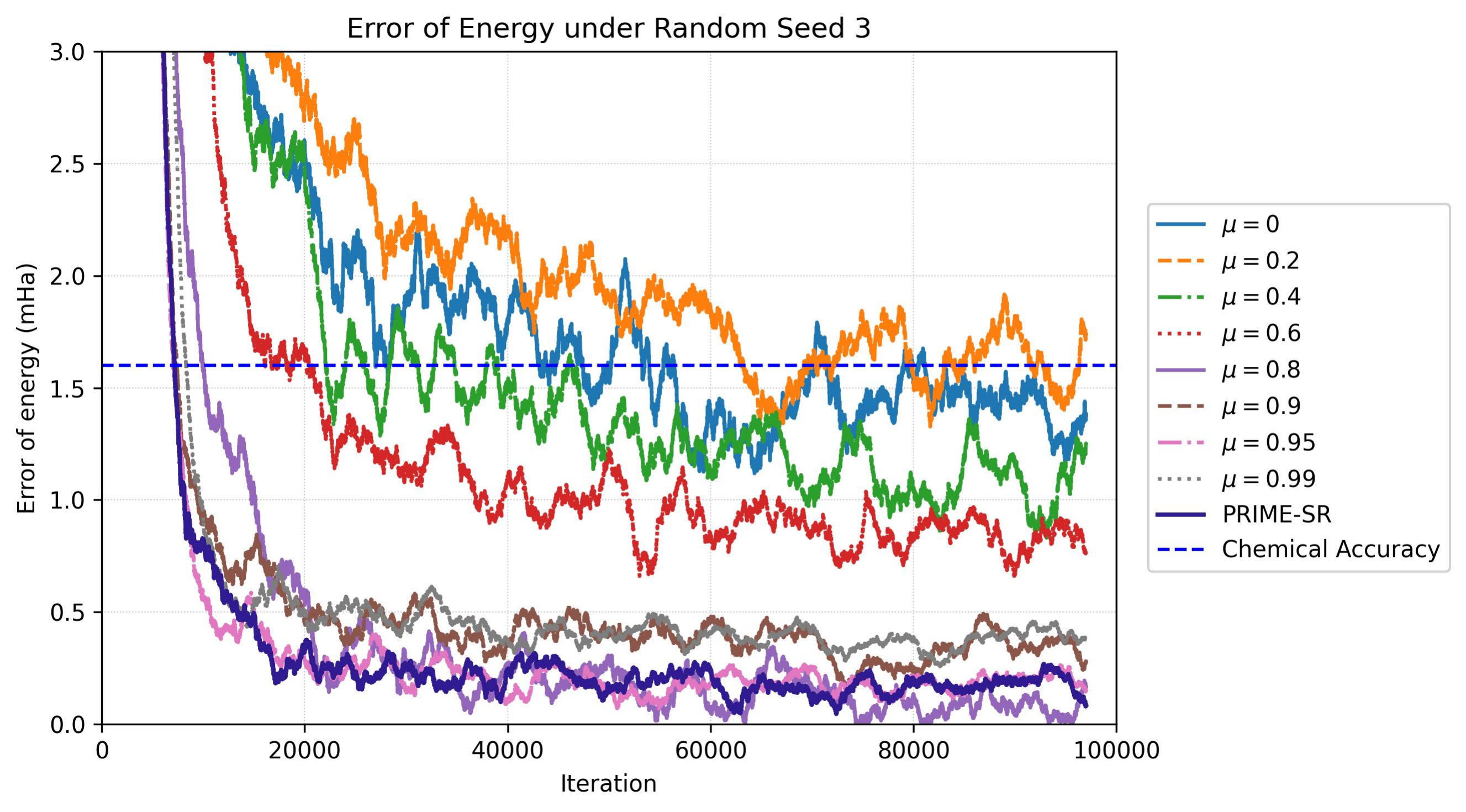}
    \hfill
    \includegraphics[width=0.42\linewidth]{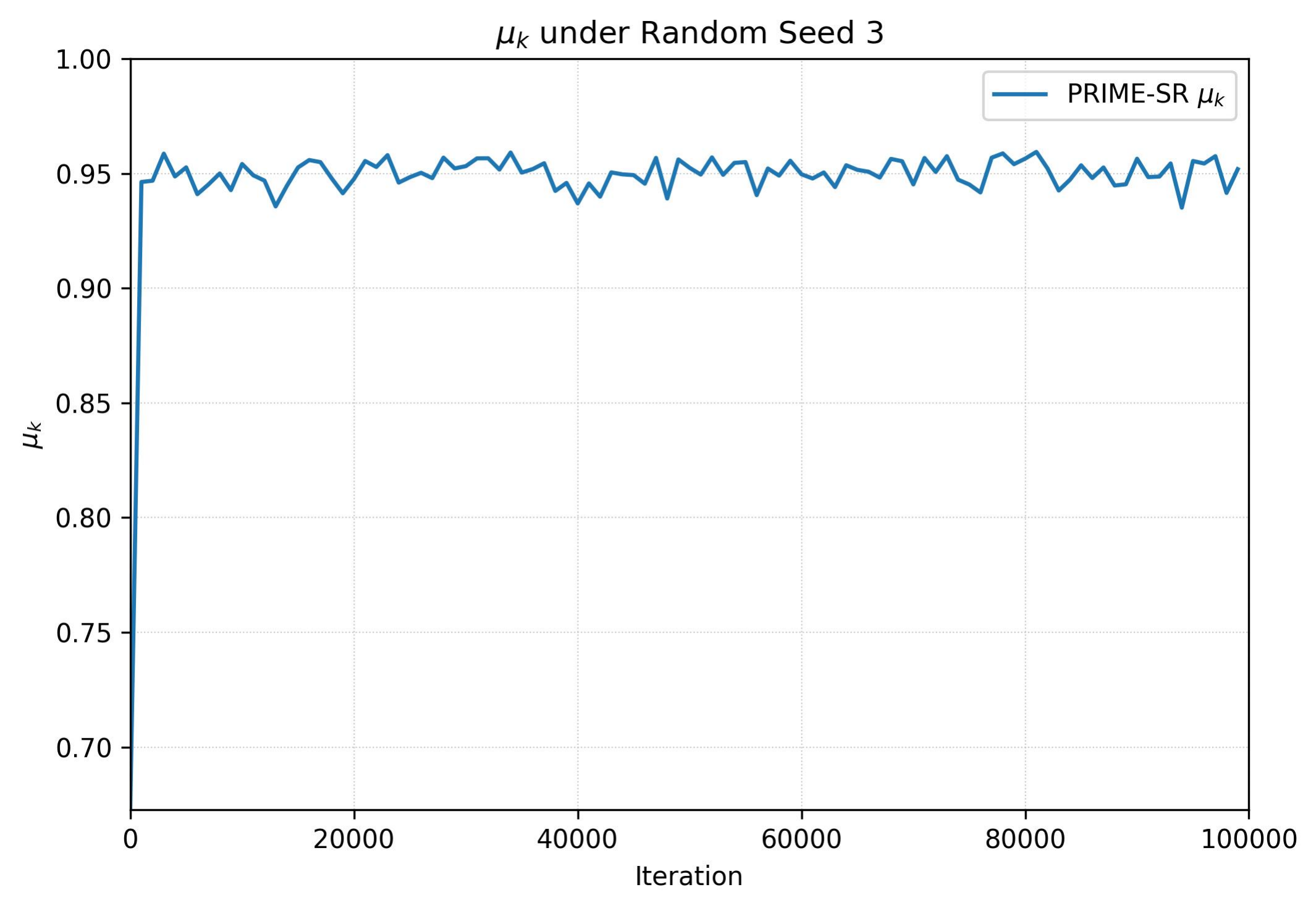}
    \caption{$\mathrm{C}$ atom. Left: relative energy error. Right: $\mu_k$.}
    \end{subfigure}
    
    \vspace{0.4em}
    
    \begin{subfigure}{\linewidth}
    \centering
    \includegraphics[width=0.48\linewidth]{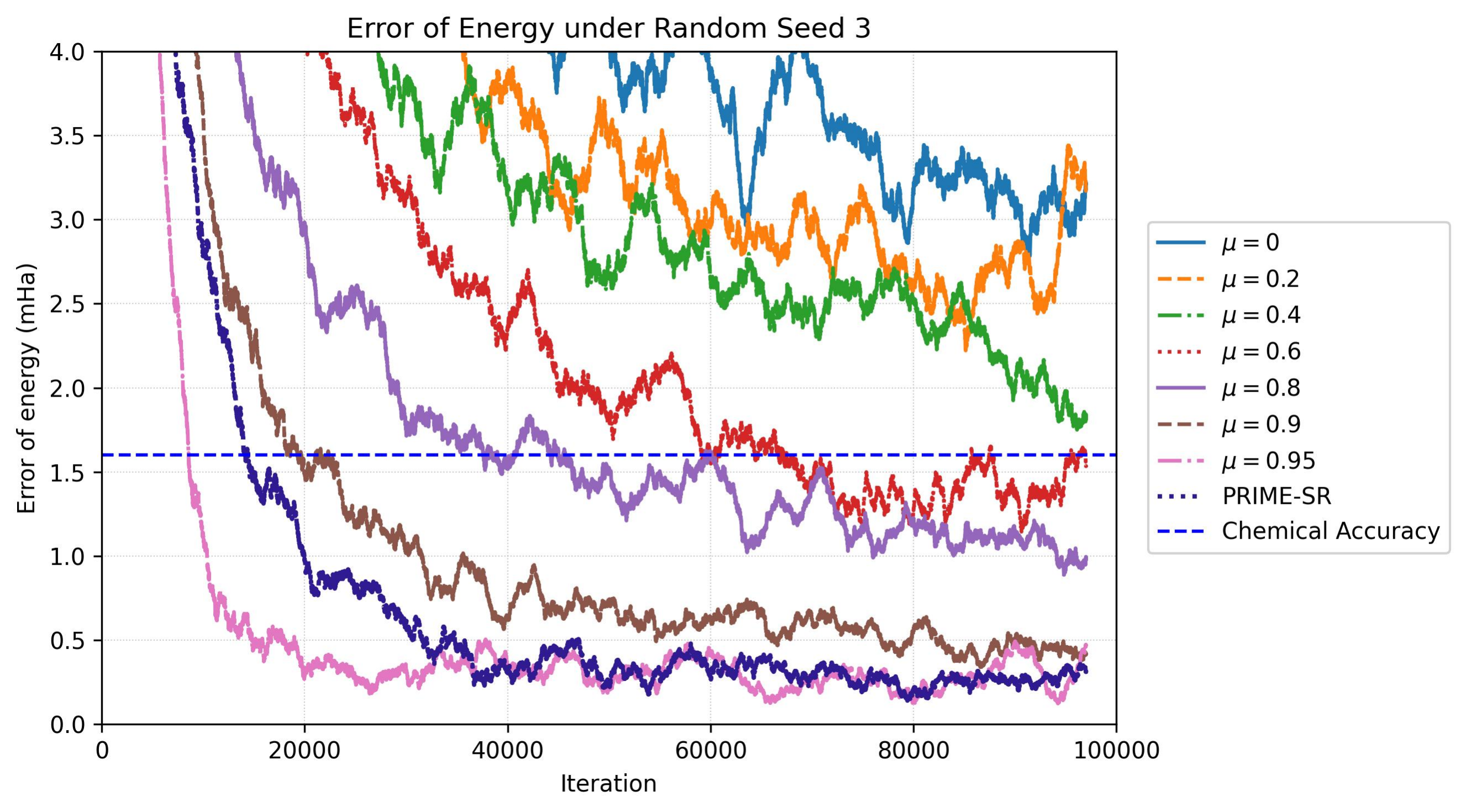}
    \hfill
    \includegraphics[width=0.42\linewidth]{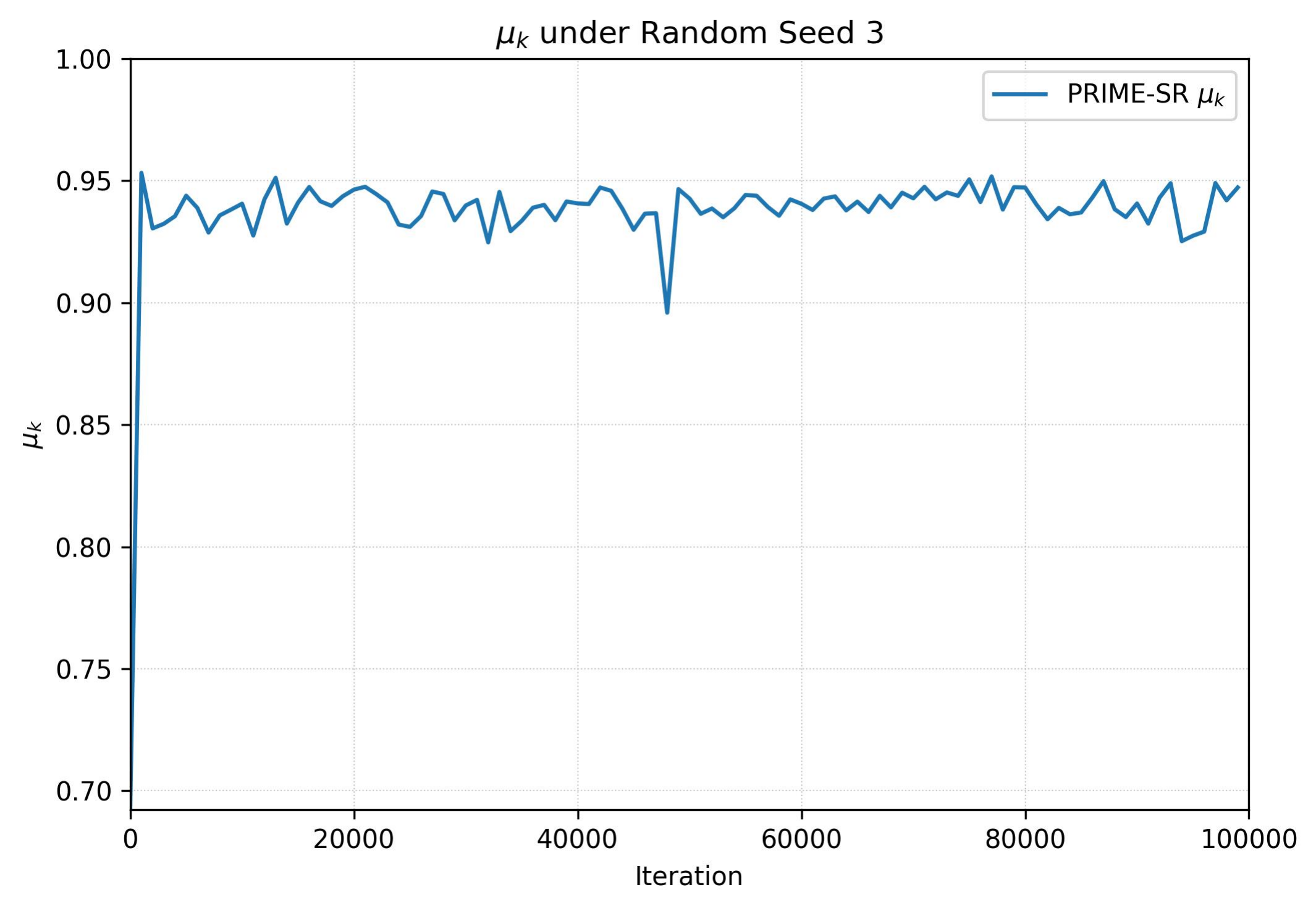}
    \caption{$\mathrm{N}$ atom. Left: relative energy error. Right: $\mu_k$.}
    \end{subfigure}
    
    \vspace{0.4em}
    
    \begin{subfigure}{\linewidth}
    \centering
    \includegraphics[width=0.48\linewidth]{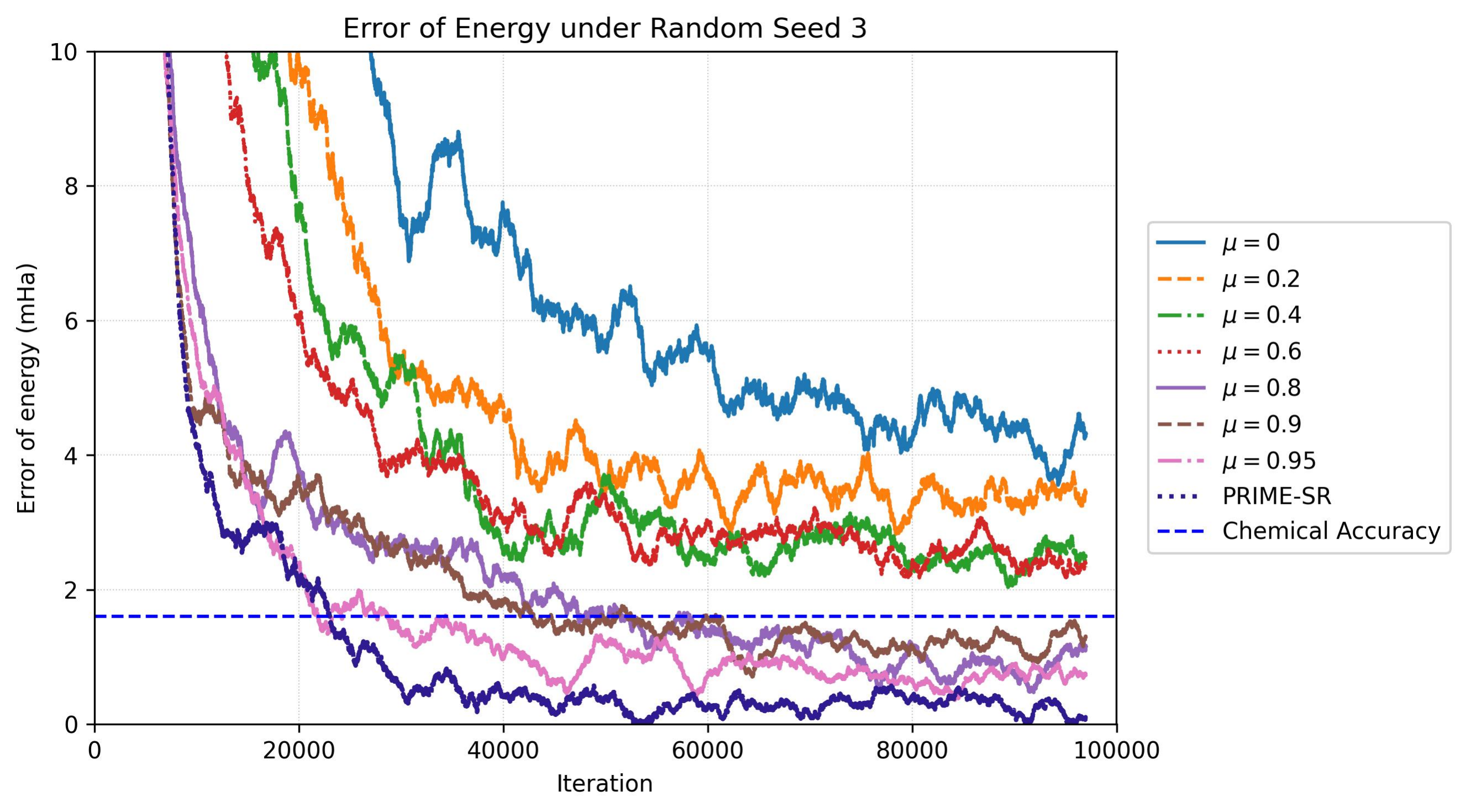}
    \hfill
    \includegraphics[width=0.42\linewidth]{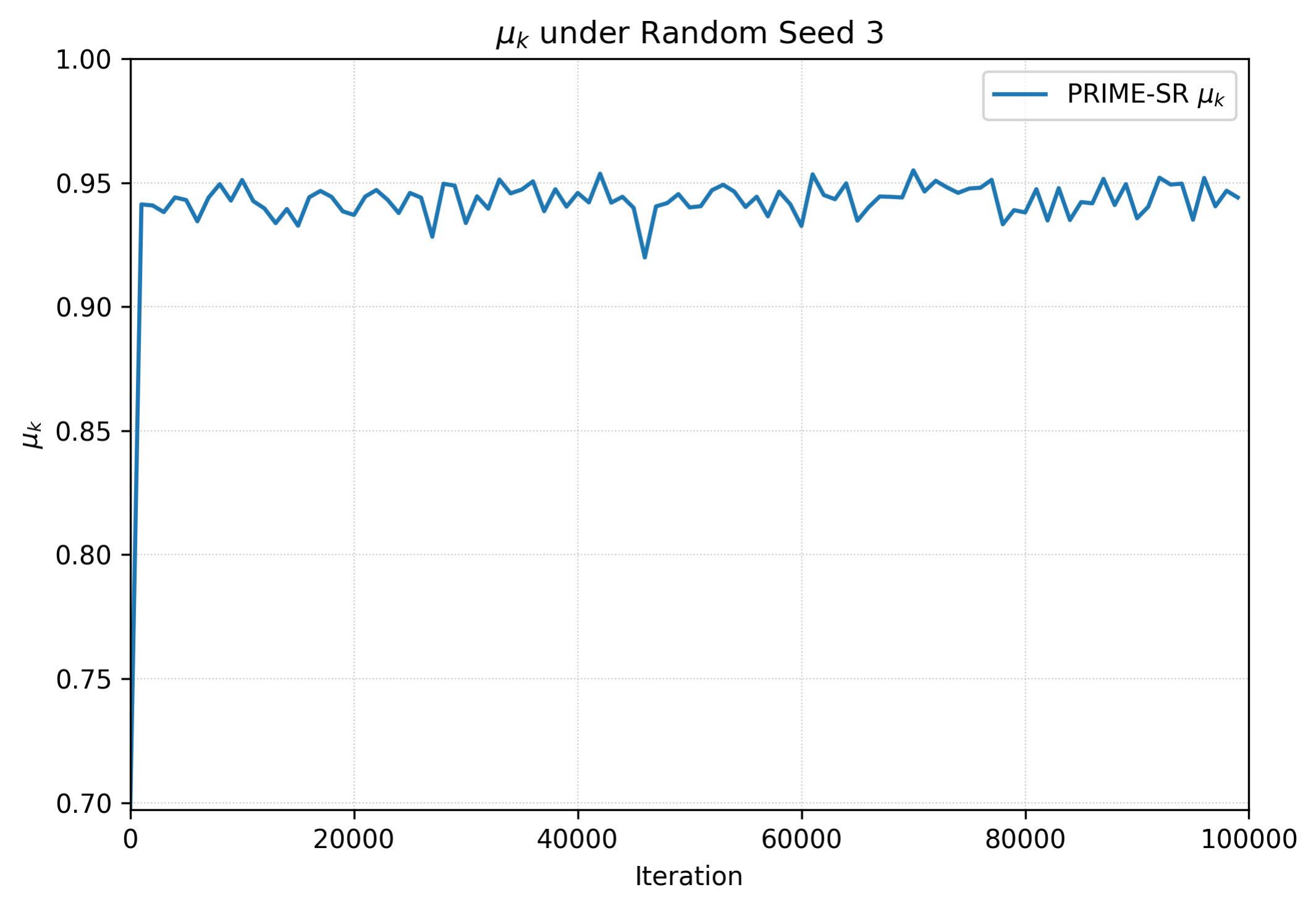}
    \caption{$\mathrm{O}$ atom. Left: relative energy error. Right: $\mu_k$.}
    \end{subfigure}
    
    \caption{Comparison of fixed-$\mu$ SPRING and PRIME-SR on $\mathrm{C}$, $\mathrm{N}$, $\mathrm{O}$ atoms for random seed 3.}
    \label{fig:compare_spring_atom_seed_3}

\end{figure}

 \newpage

\textbf{Random seed 4.}

\begin{figure}[htbp]
    \centering
    
    \begin{subfigure}{\linewidth}
    \centering
    \includegraphics[width=0.48\linewidth]{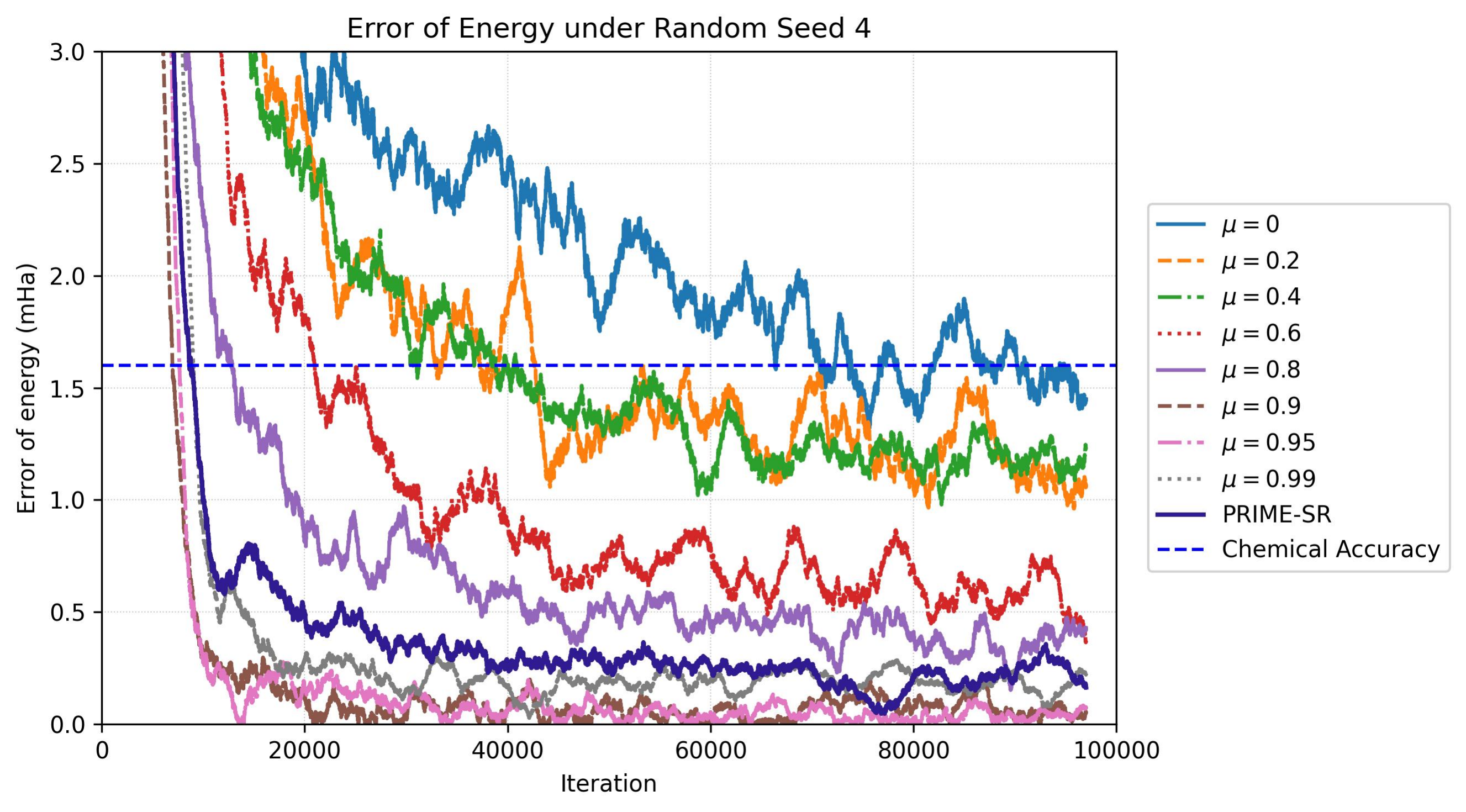}
    \hfill
    \includegraphics[width=0.42\linewidth]{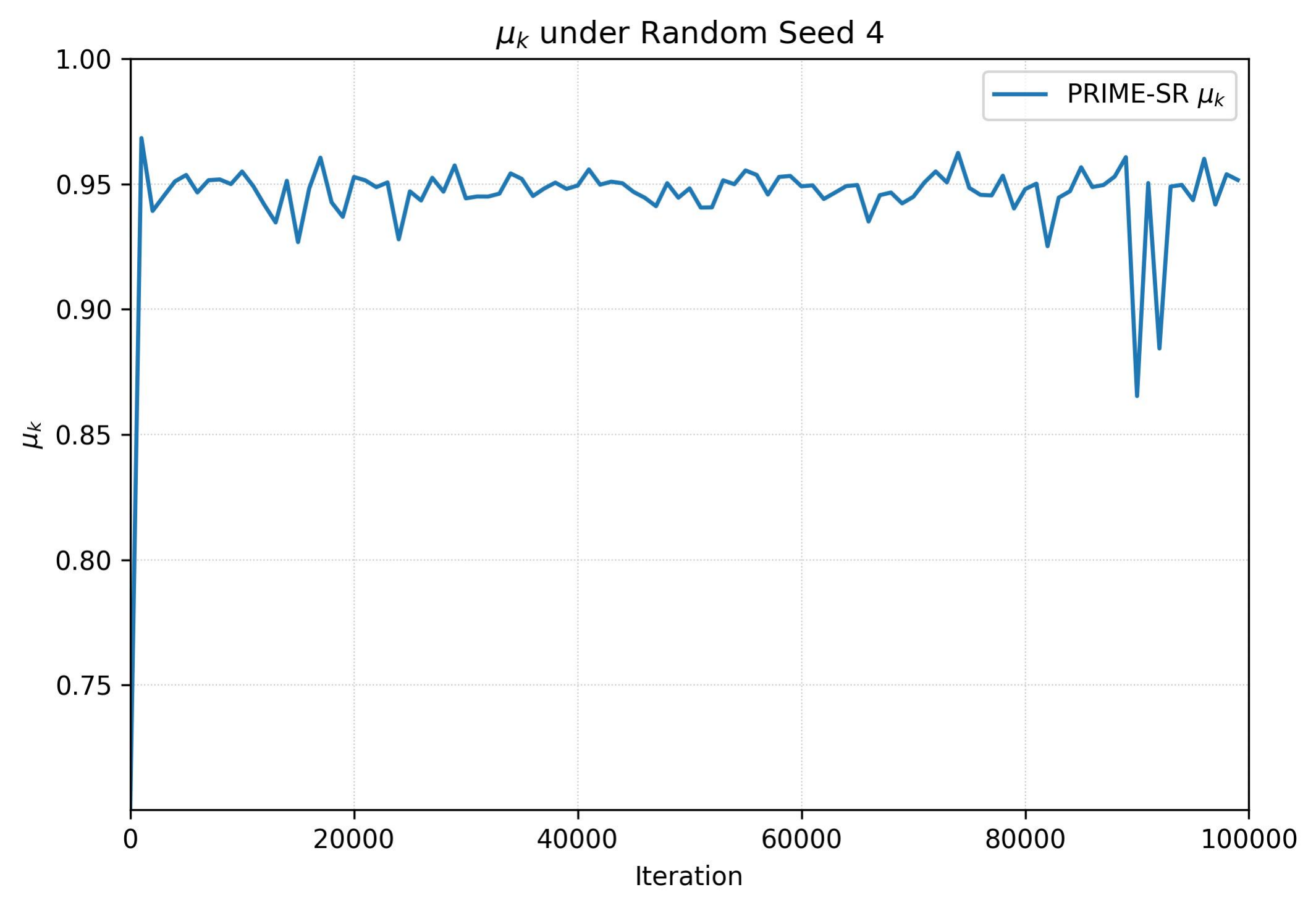}
    \caption{$\mathrm{C}$ atom. Left: relative energy error. Right: $\mu_k$.}
    \end{subfigure}
    
    \vspace{0.4em}
    
    \begin{subfigure}{\linewidth}
    \centering
    \includegraphics[width=0.48\linewidth]{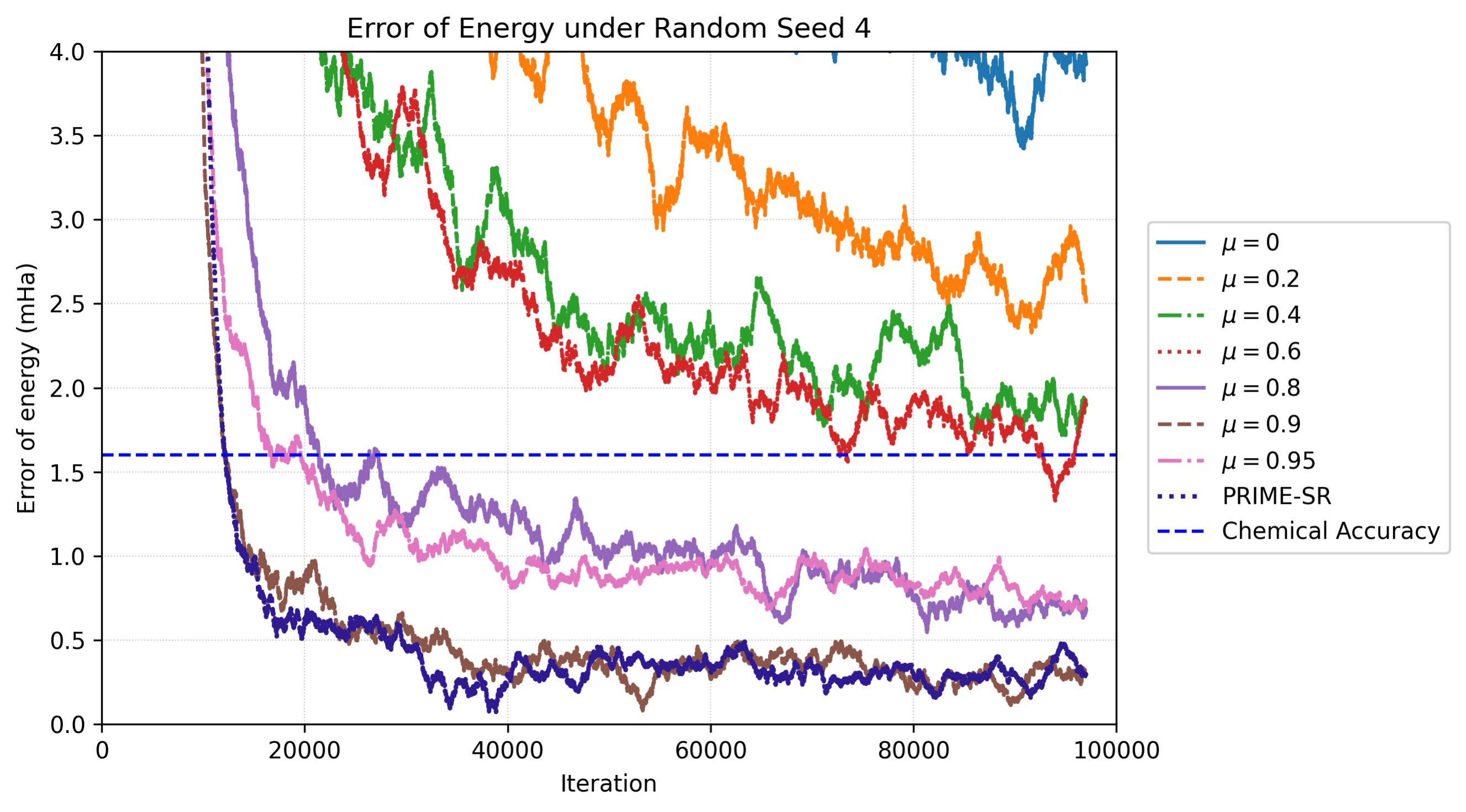}
    \hfill
    \includegraphics[width=0.42\linewidth]{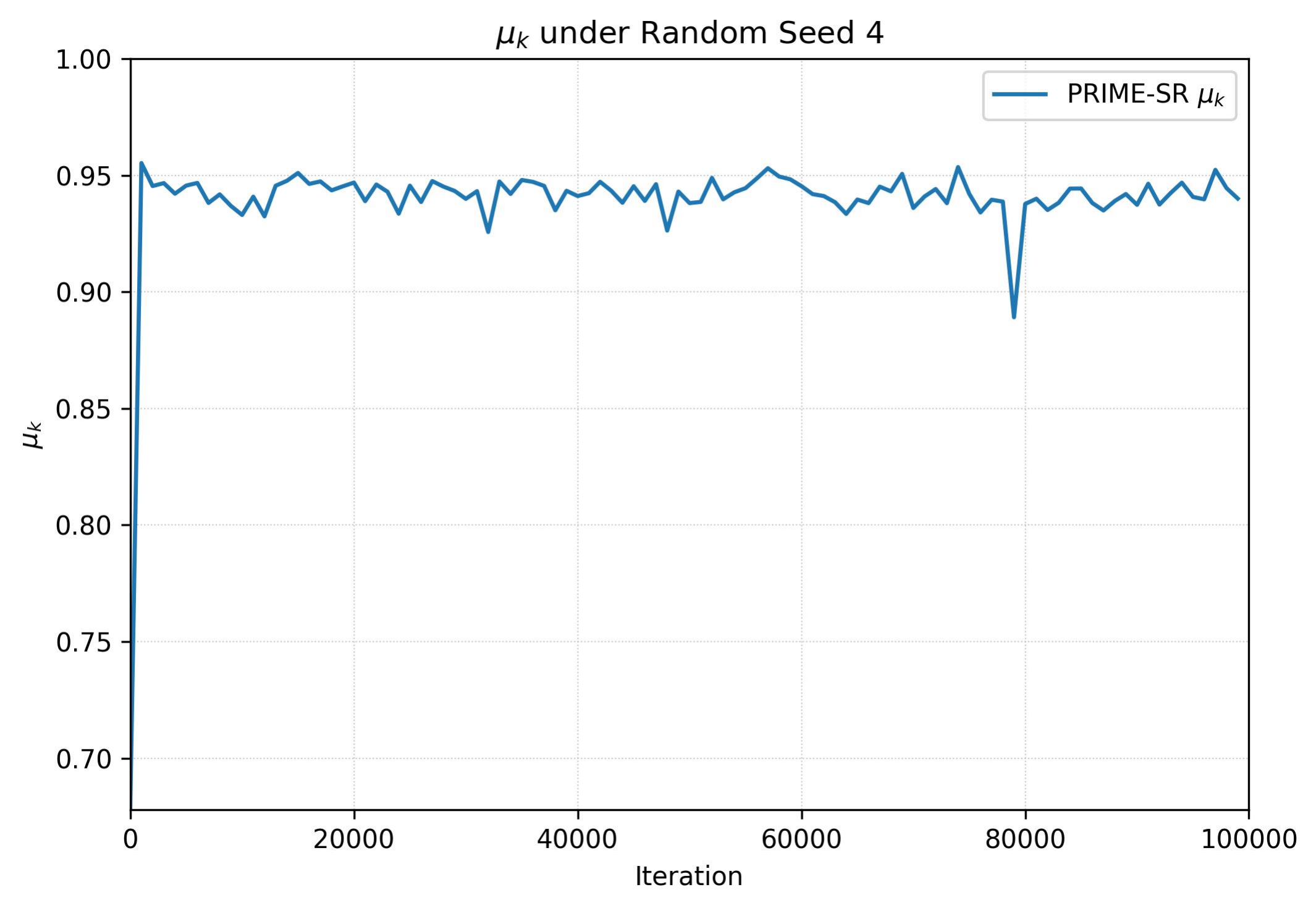}
    \caption{$\mathrm{N}$ atom. Left: relative energy error. Right: $\mu_k$.}
    \end{subfigure}
    
    \vspace{0.4em}
    
    \begin{subfigure}{\linewidth}
    \centering
    \includegraphics[width=0.48\linewidth]{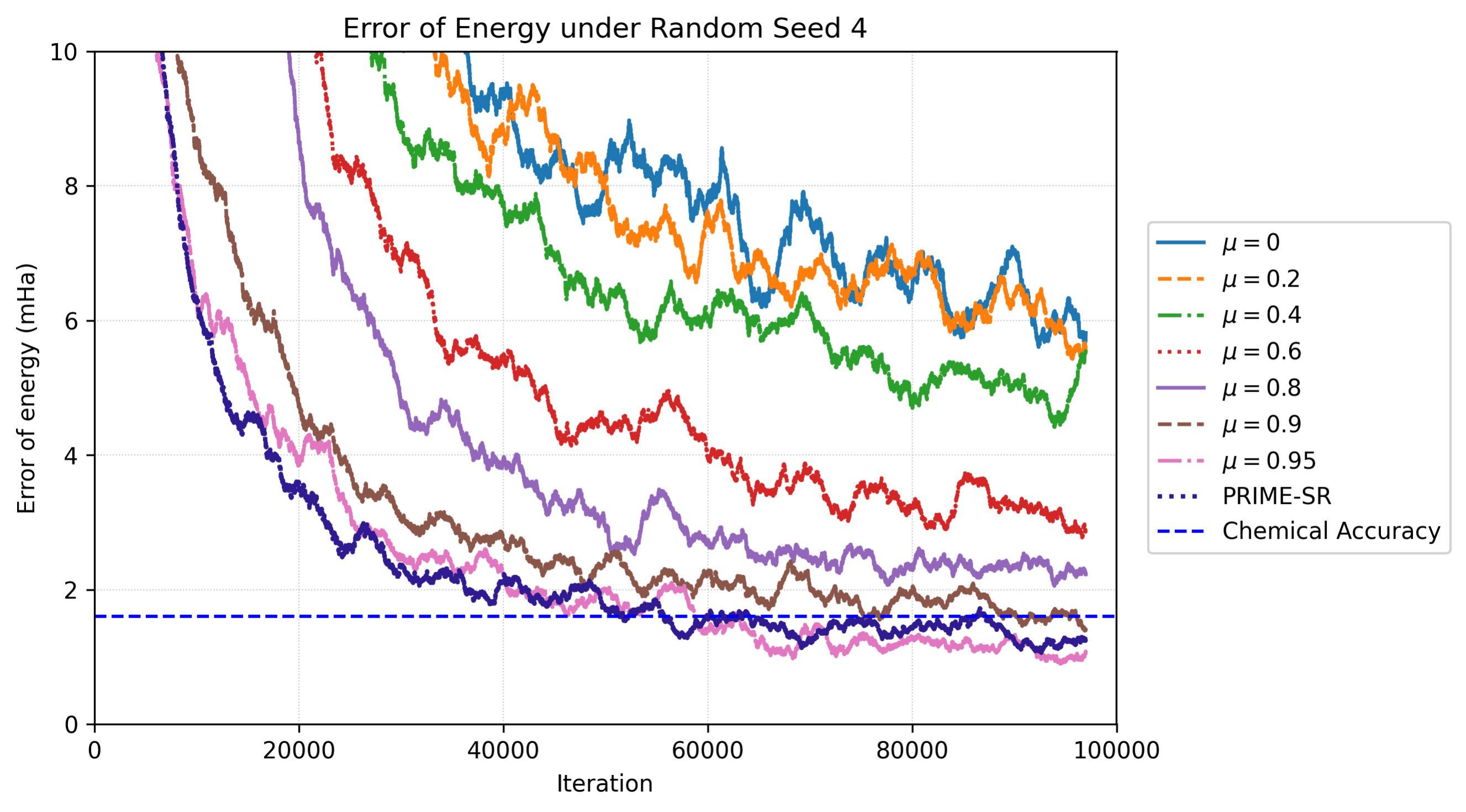}
    \hfill
    \includegraphics[width=0.42\linewidth]{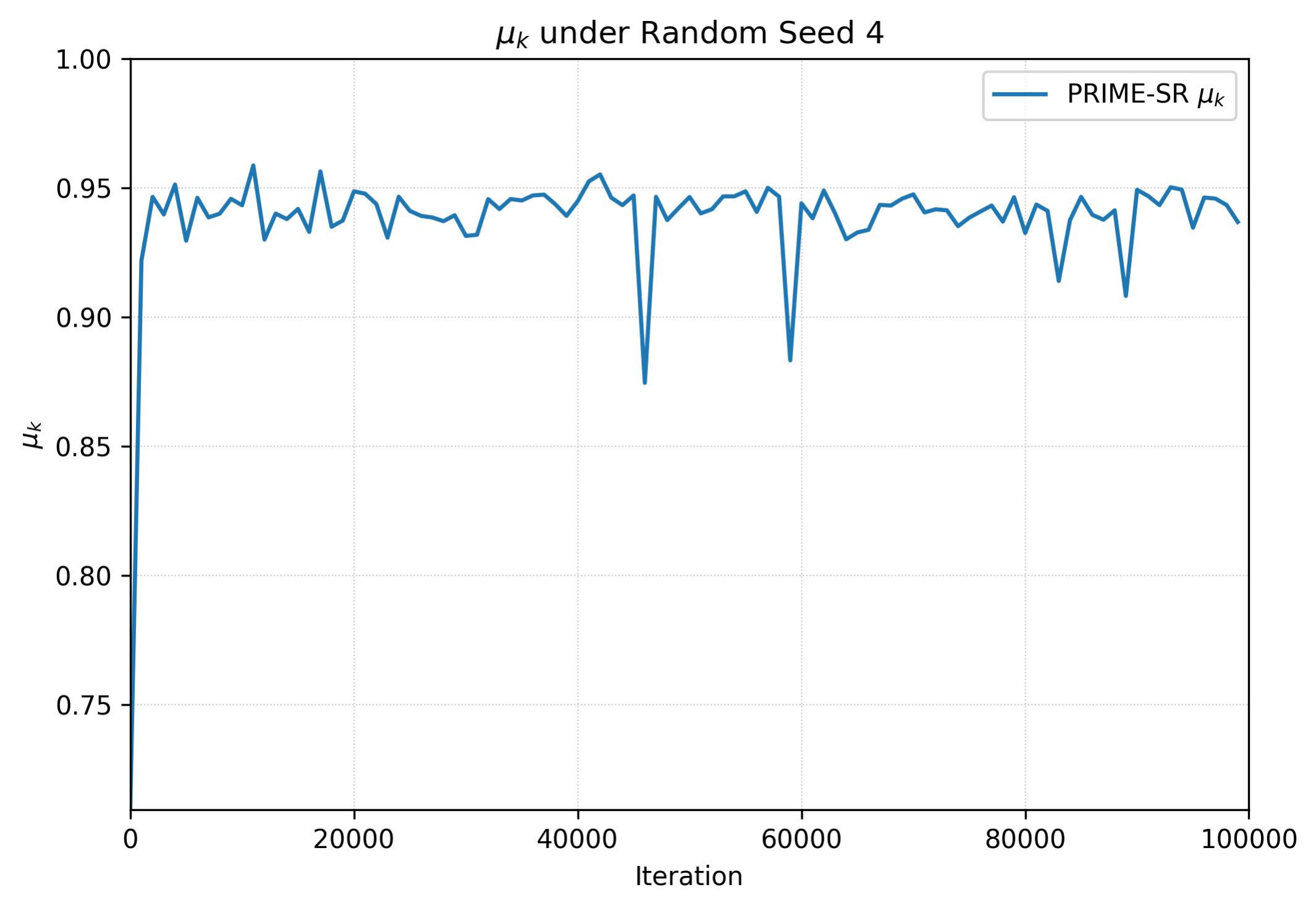}
    \caption{$\mathrm{O}$ atom. Left: relative energy error. Right: $\mu_k$.}
    \end{subfigure}
    
    \caption{Comparison of fixed-$\mu$ SPRING and PRIME-SR on $\mathrm{C}$, $\mathrm{N}$, $\mathrm{O}$ atoms for random seed 4.}
    \label{fig:compare_spring_atom_seed_4}

\end{figure}

 \newpage

\subsection{Molecular Systems: Results Across Random Seeds}
\label{sec:mol_random_seed}

\textbf{Random Seed 1.}

\begin{figure}[htbp]
    \centering
    
    \begin{subfigure}{\linewidth}
    \centering
    \includegraphics[width=0.48\linewidth]{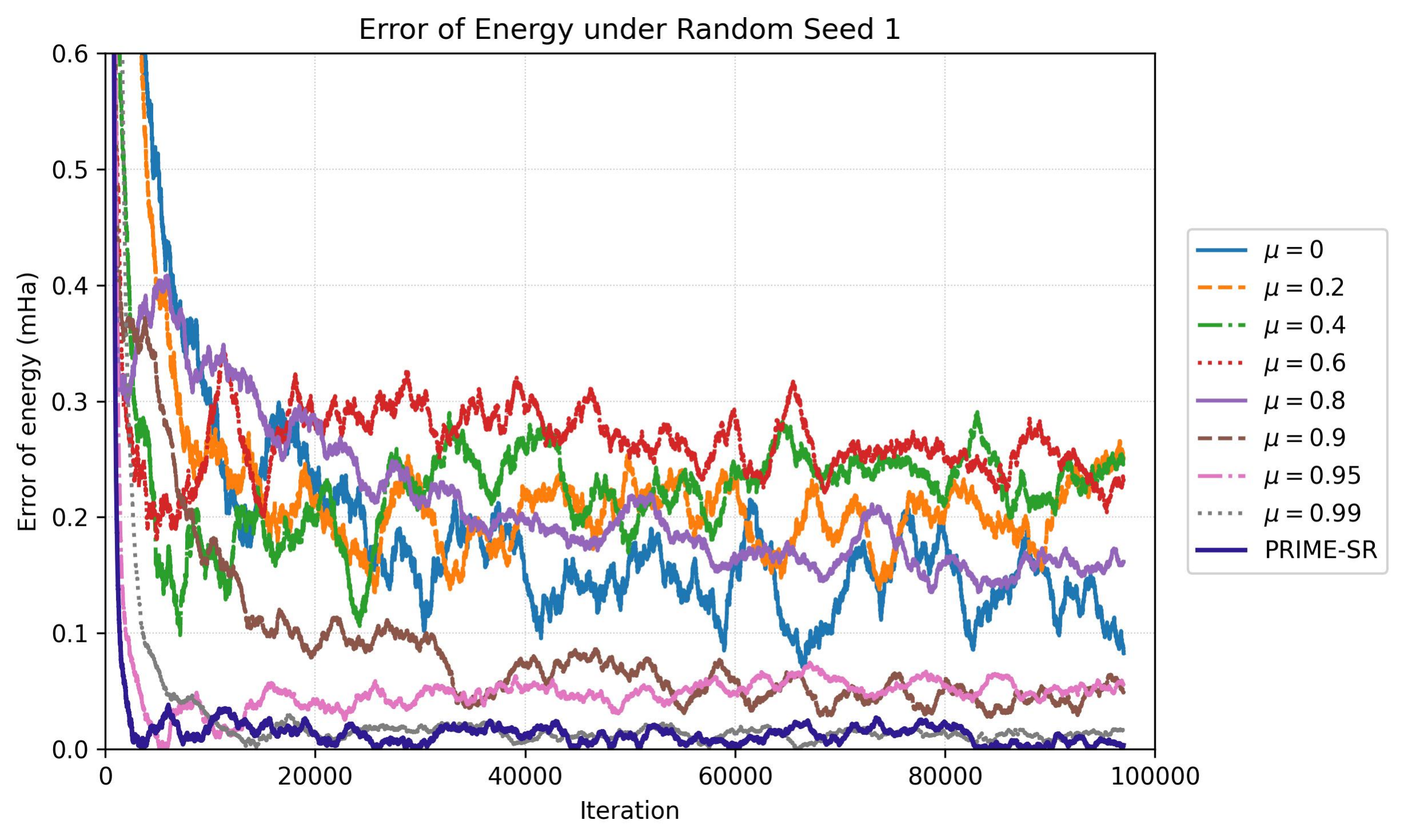}
    \hfill
    \includegraphics[width=0.42\linewidth]{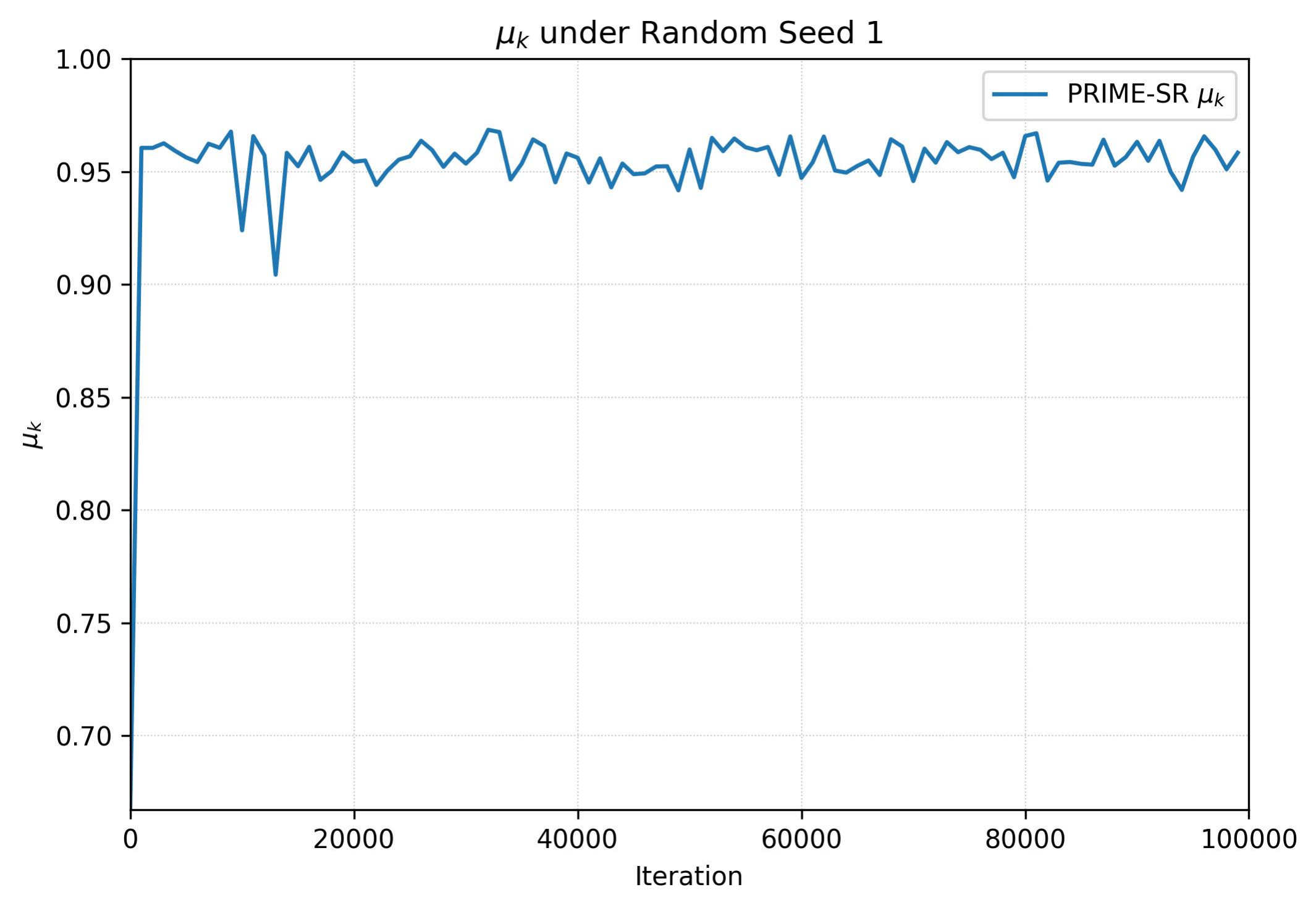}
    \caption{$\mathrm{LiH}$ molecular. Left: relative energy error. Right: $\mu_k$.}
    \end{subfigure}
    
    \vspace{0.4em}
    
    \begin{subfigure}{\linewidth}
    \centering
    \includegraphics[width=0.48\linewidth]{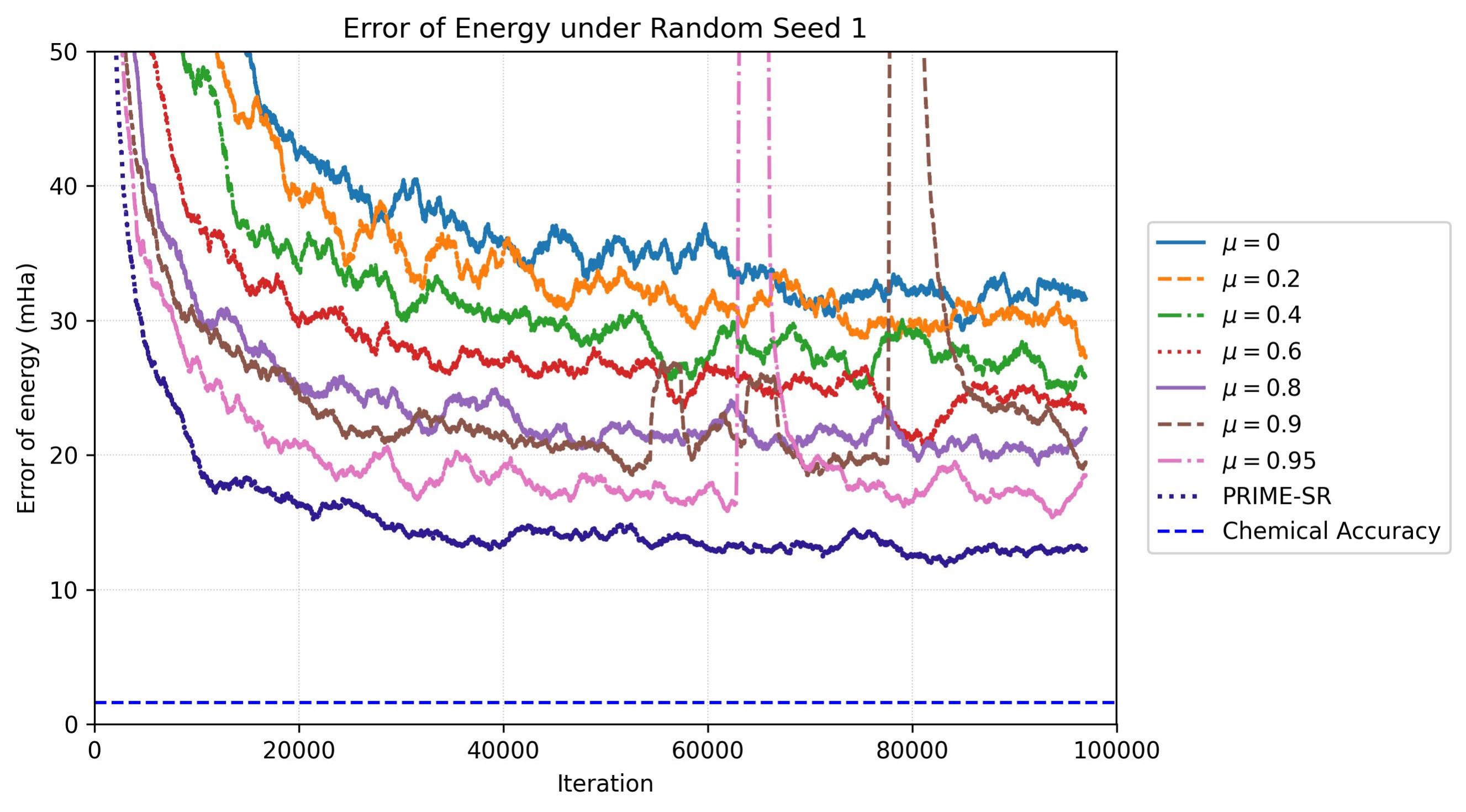}
    \hfill
    \includegraphics[width=0.42\linewidth]{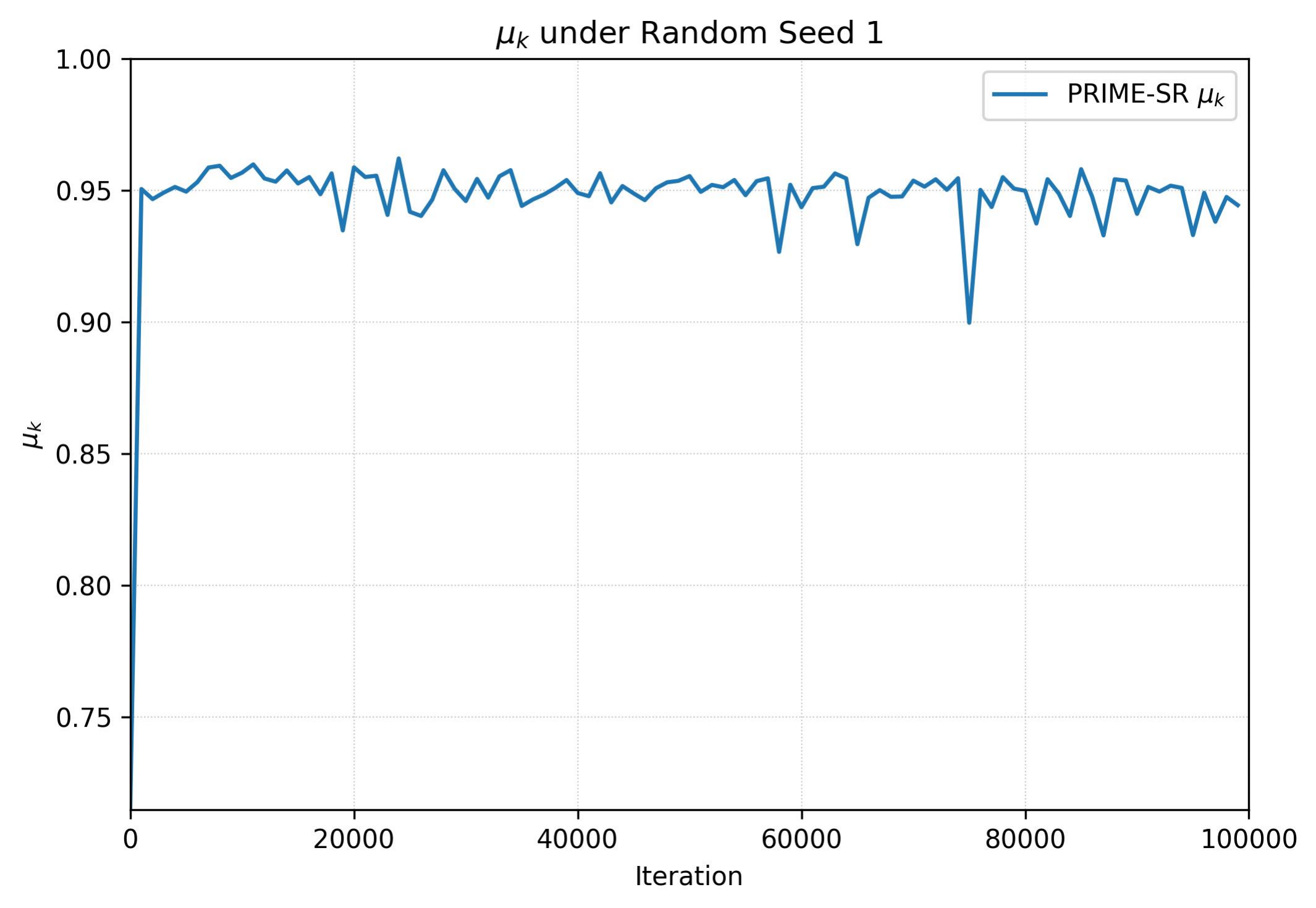}
    \caption{$\mathrm{N}_2$ molecular. Left: relative energy error. Right: $\mu_k$.}
    \end{subfigure}
    
    \vspace{0.4em}
    
    \begin{subfigure}{\linewidth}
    \centering
    \includegraphics[width=0.48\linewidth]{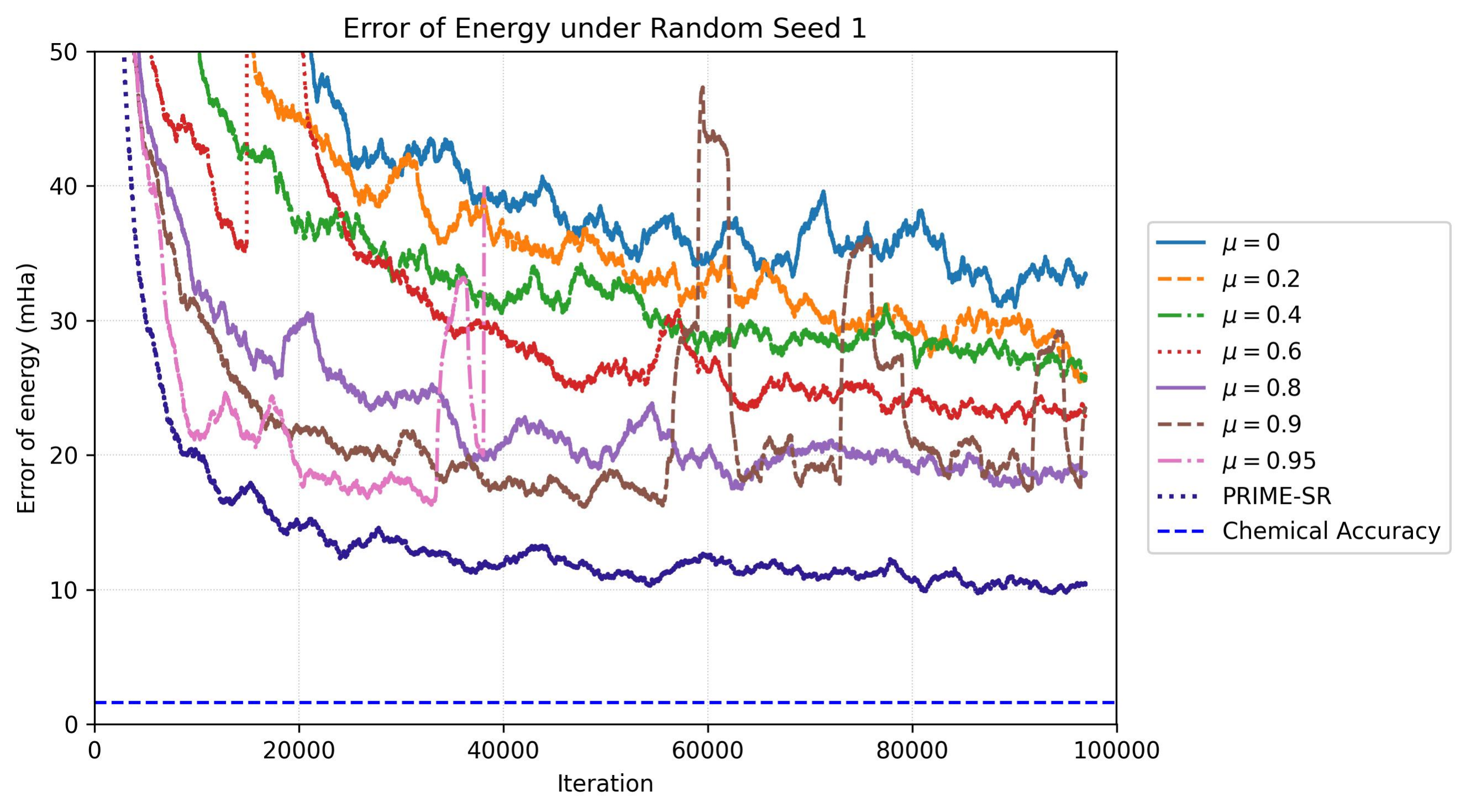}
    \hfill
    \includegraphics[width=0.42\linewidth]{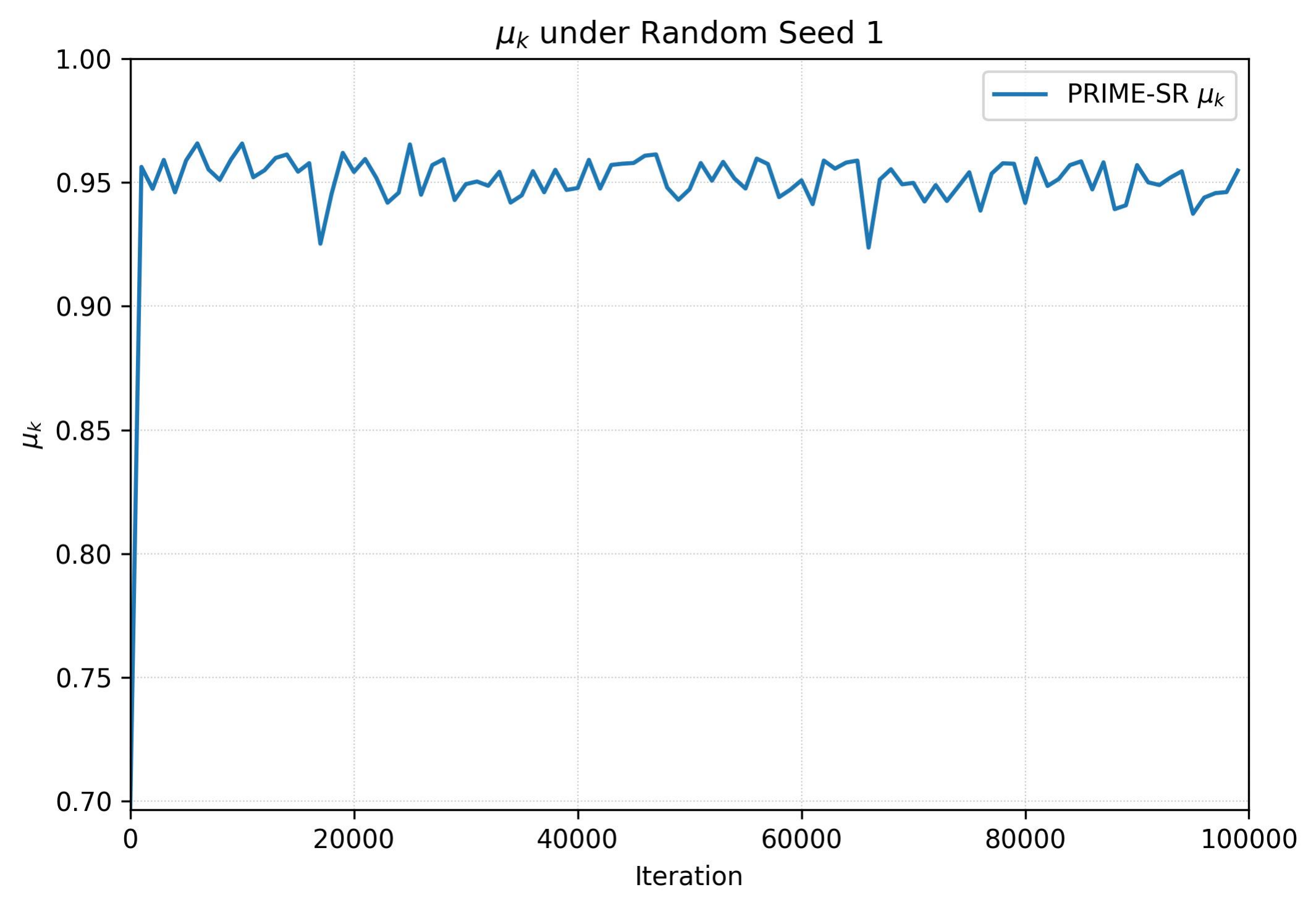}
    \caption{$\mathrm{CO}$ molecular. Left: relative energy error. Right: $\mu_k$.}
    \end{subfigure}
    
    \caption{Comparison of fixed-$\mu$ SPRING and PRIME-SR on $\mathrm{LiH}$, $\mathrm{N}_2$, and $\mathrm{CO}$ molecules for random seed 1.}
    
    \label{fig:compare_spring_mol_seed_1}

\end{figure}

 \newpage

\textbf{Random Seed 2.}

\begin{figure}[htbp]
    \centering
    
    \begin{subfigure}{\linewidth}
    \centering
    \includegraphics[width=0.48\linewidth]{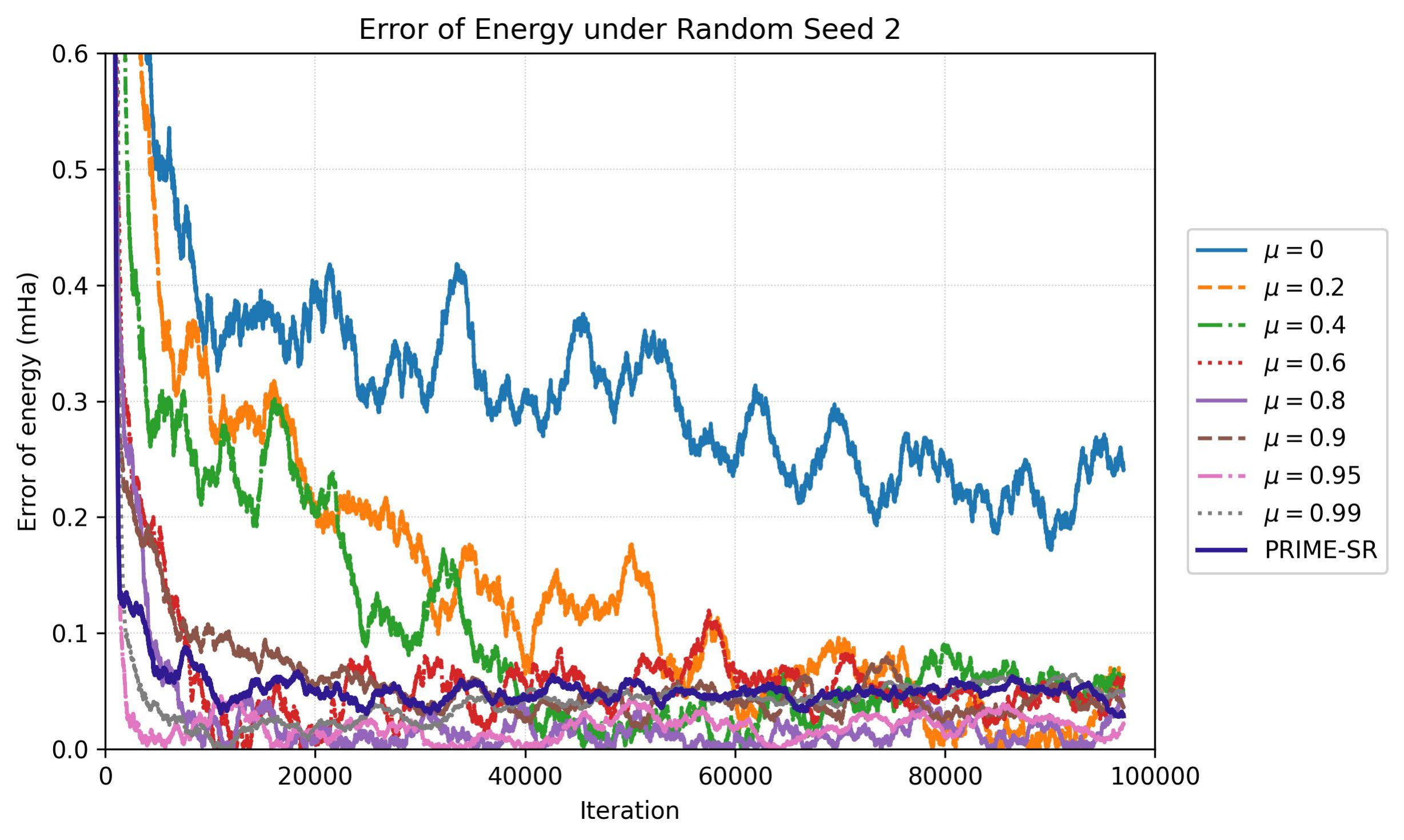}
    \hfill
    \includegraphics[width=0.42\linewidth]{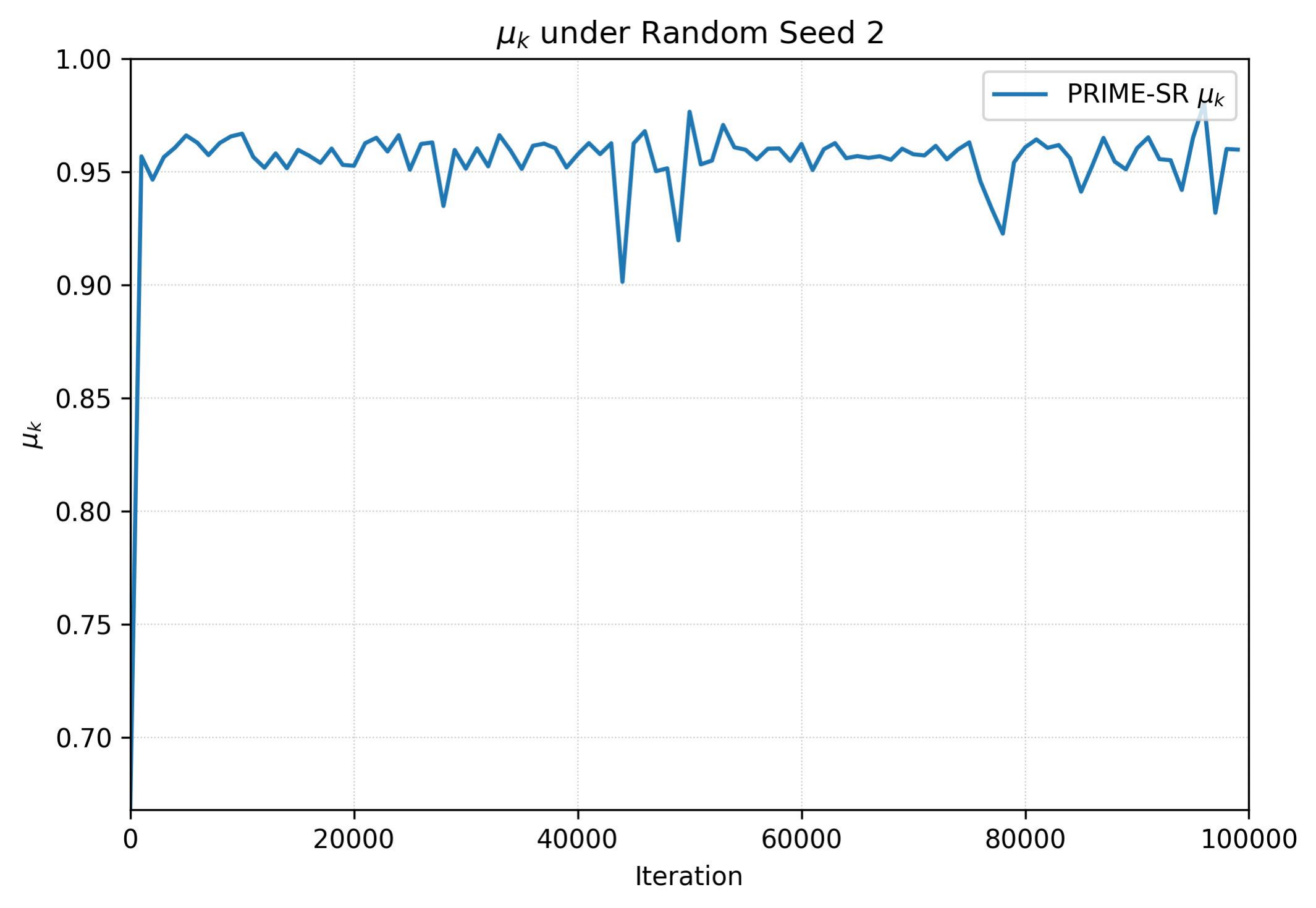}
    \caption{$\mathrm{LiH}$ molecular. Left: relative energy error. Right: $\mu_k$.}
    \end{subfigure}
    
    \vspace{0.4em}
    
    \begin{subfigure}{\linewidth}
    \centering
    \includegraphics[width=0.48\linewidth]{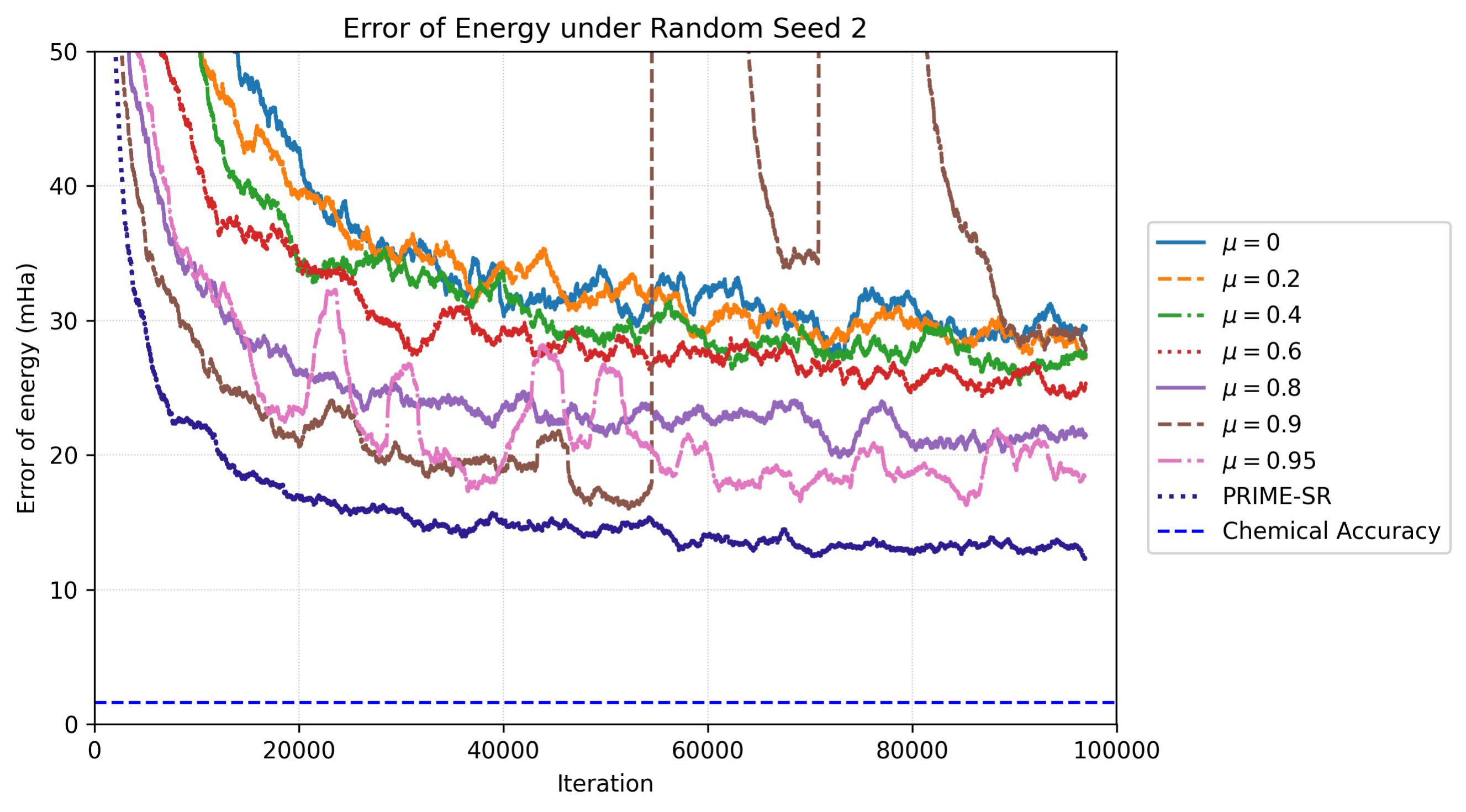}
    \hfill
    \includegraphics[width=0.42\linewidth]{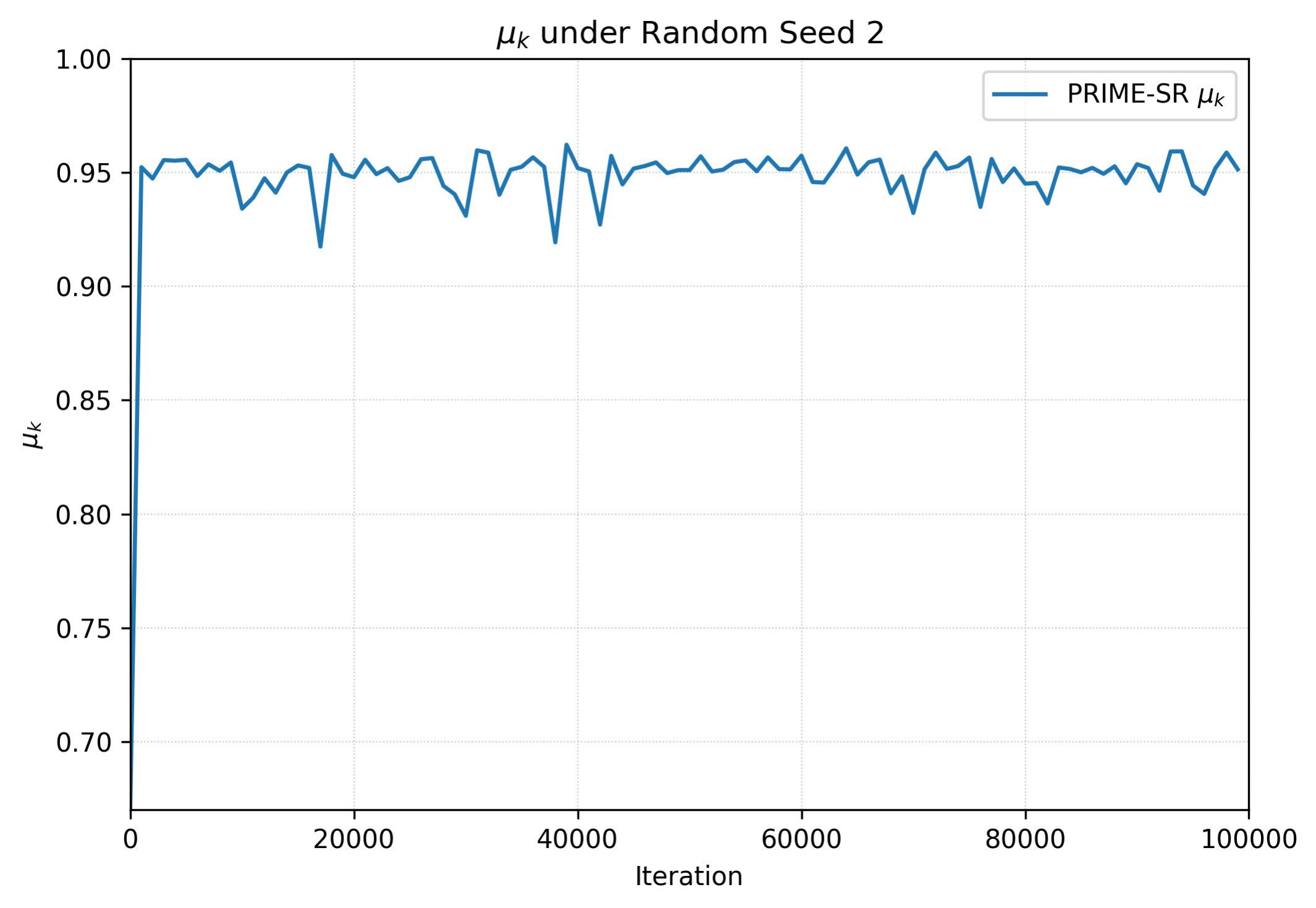}
    \caption{$\mathrm{N}_2$ molecular. Left: relative energy error. Right: $\mu_k$.}
    \end{subfigure}
    
    \vspace{0.4em}
    
    \begin{subfigure}{\linewidth}
    \centering
    \includegraphics[width=0.48\linewidth]{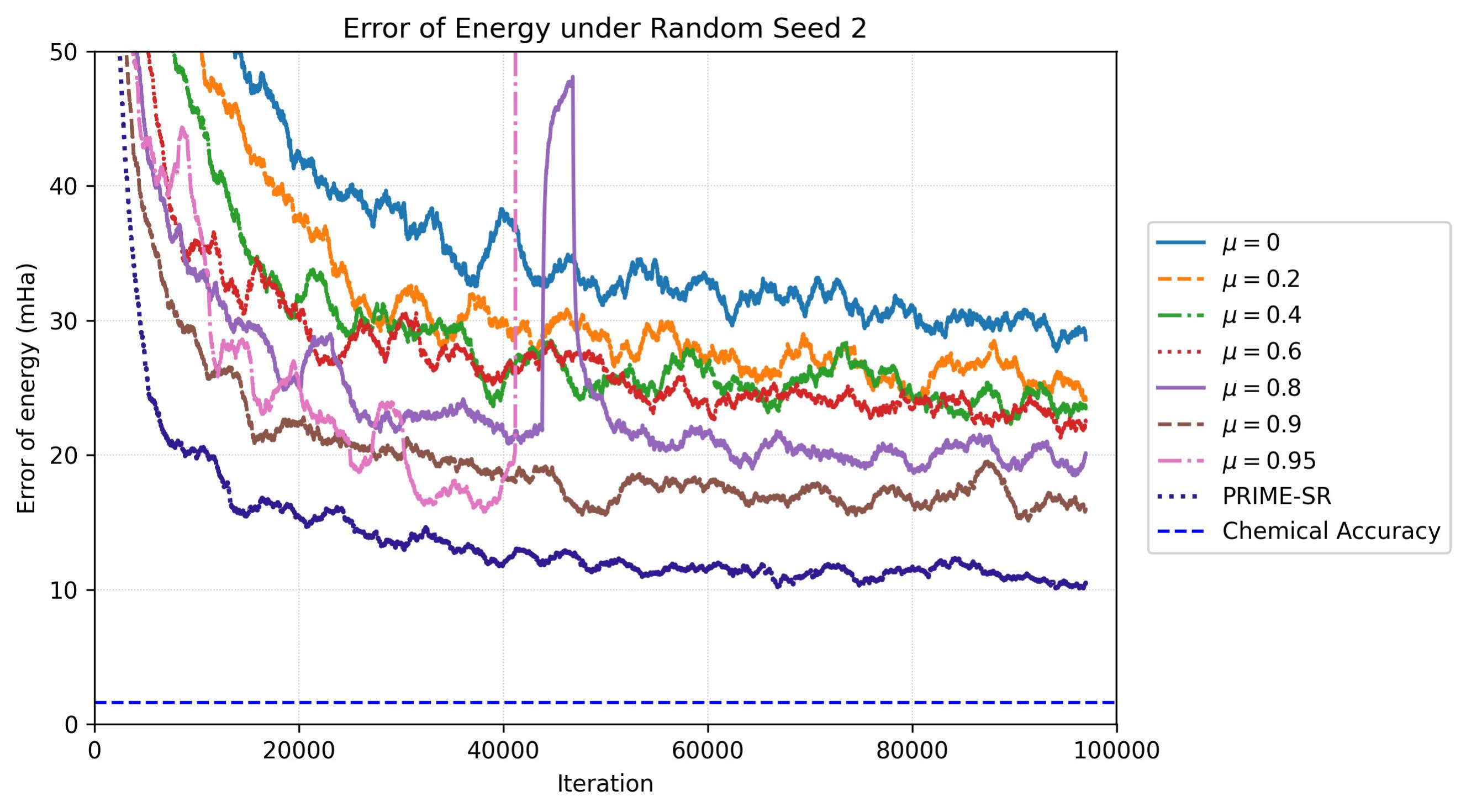}
    \hfill
    \includegraphics[width=0.42\linewidth]{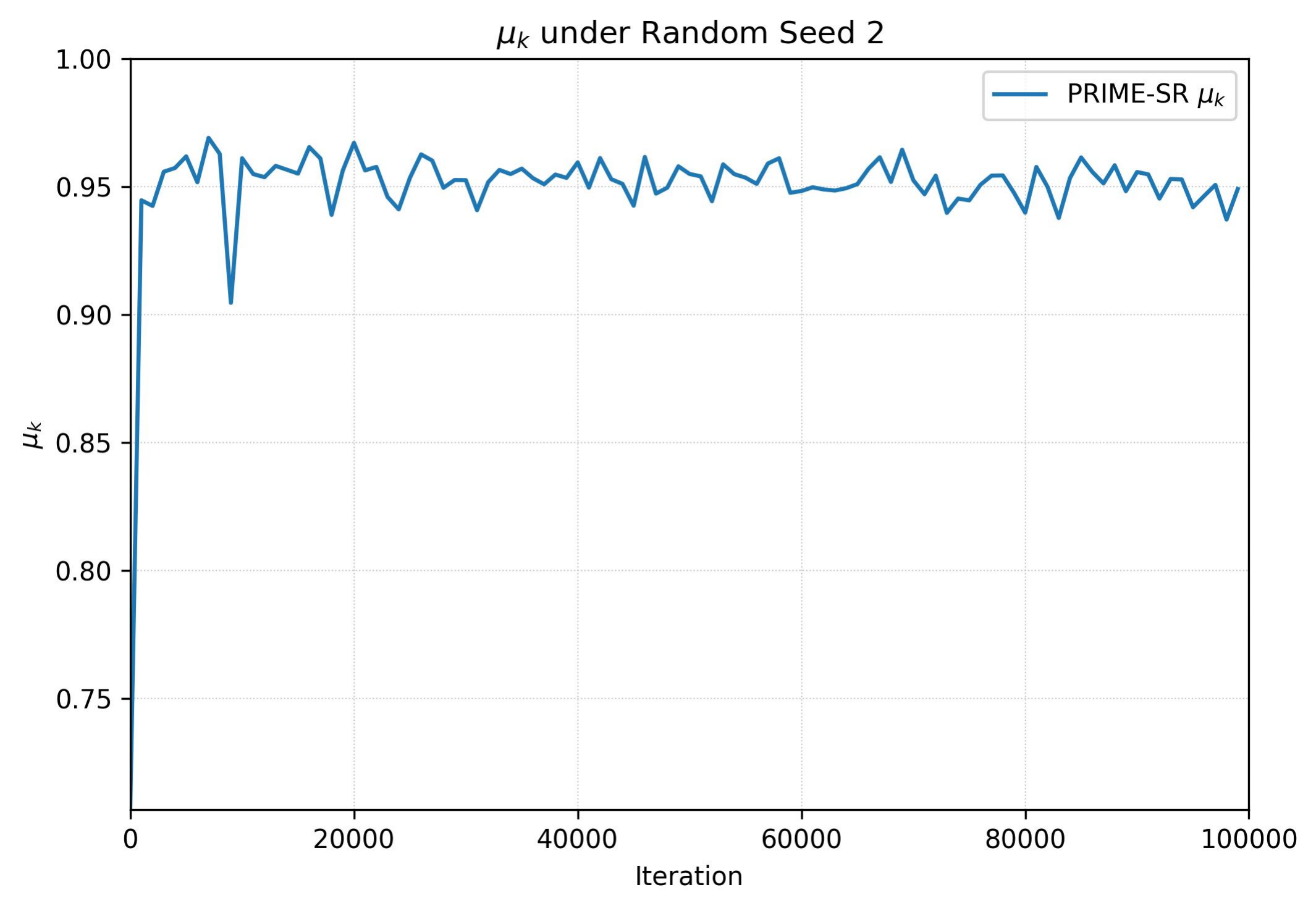}
    \caption{$\mathrm{CO}$ molecular. Left: relative energy error. Right: $\mu_k$.}
    \end{subfigure}
    
    \caption{Comparison of fixed-$\mu$ SPRING and PRIME-SR on $\mathrm{LiH}$, $\mathrm{N}_2$, and $\mathrm{CO}$ molecules for random seed 2.}
    
    \label{fig:compare_spring_mol_seed_2}

\end{figure}

 \newpage

\textbf{Random Seed 3.}

\begin{figure}[htbp]
    \centering
    
    \begin{subfigure}{\linewidth}
    \centering
    \includegraphics[width=0.48\linewidth]{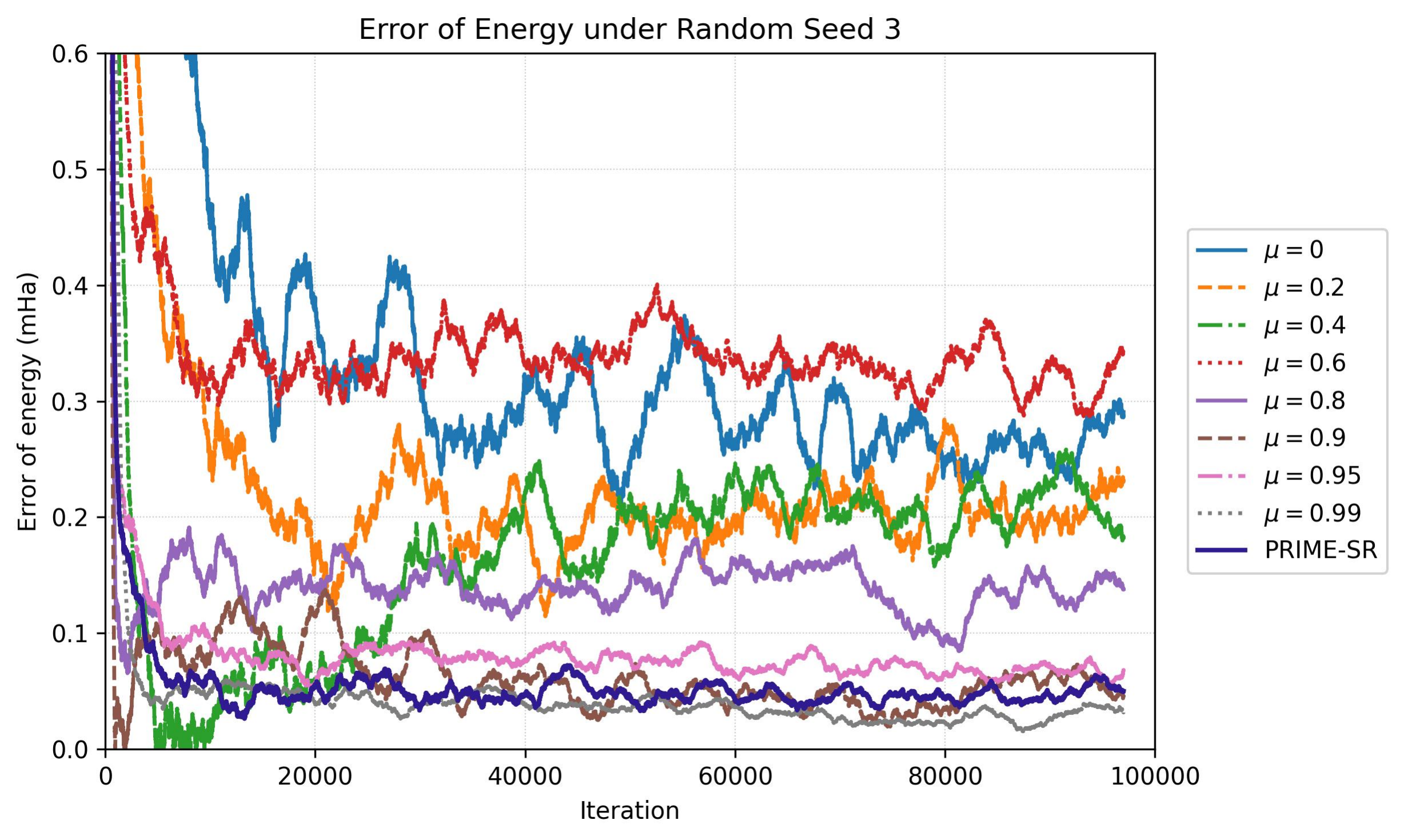}
    \hfill
    \includegraphics[width=0.42\linewidth]{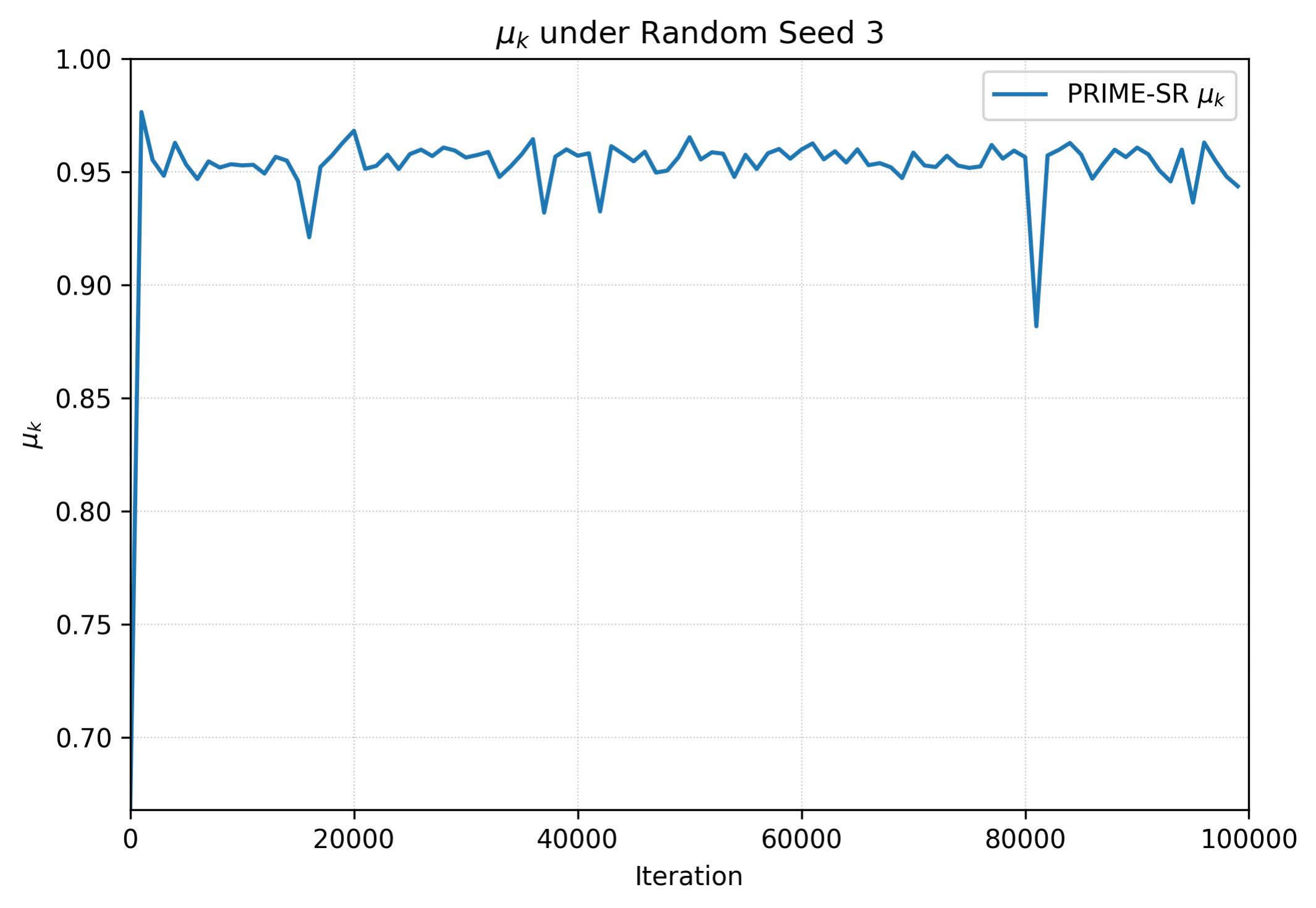}
    \caption{$\mathrm{LiH}$ molecular. Left: relative energy error. Right: $\mu_k$.}
    \end{subfigure}
    
    \vspace{0.4em}
    
    \begin{subfigure}{\linewidth}
    \centering
    \includegraphics[width=0.48\linewidth]{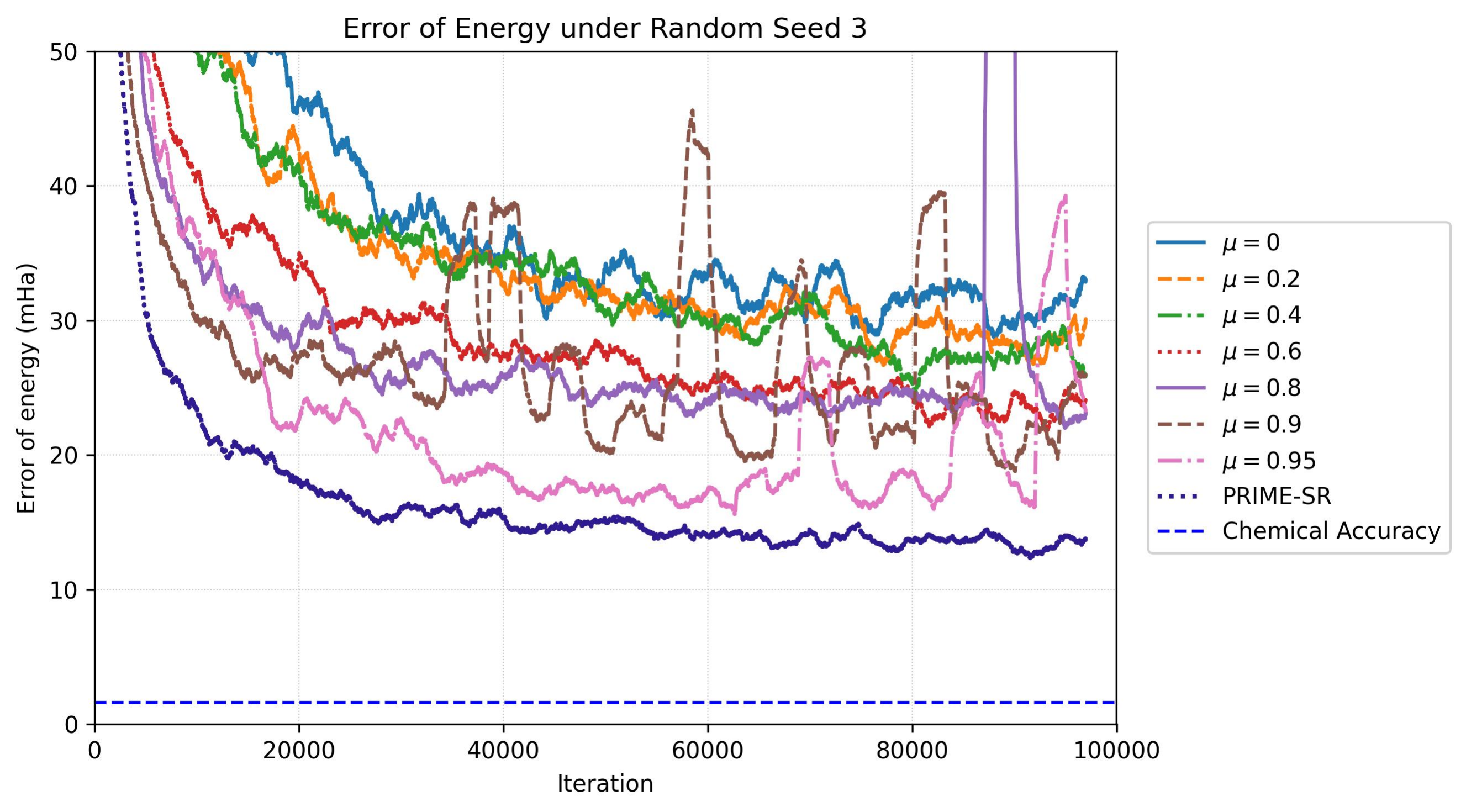}
    \hfill
    \includegraphics[width=0.42\linewidth]{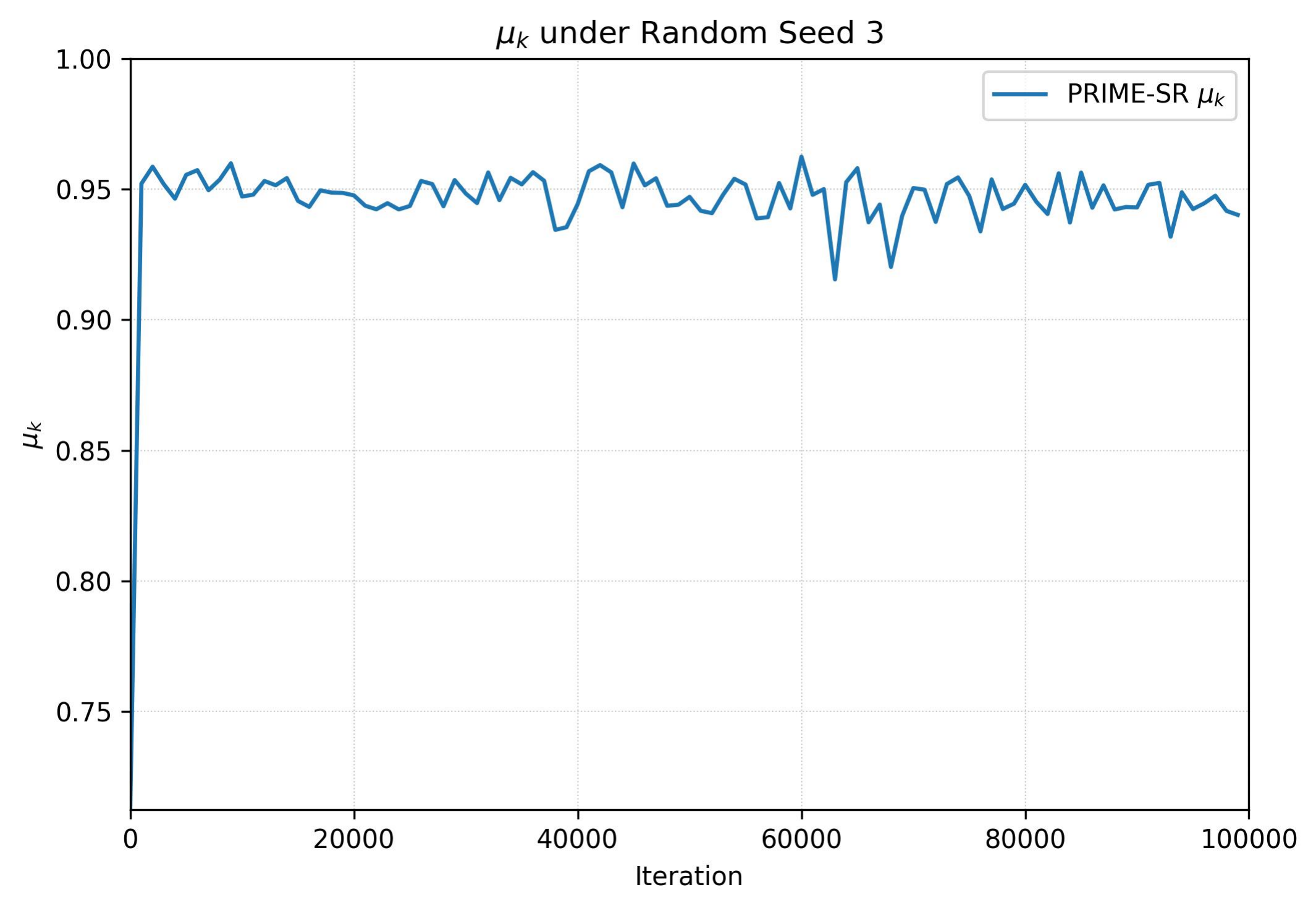}
    \caption{$\mathrm{N}_2$ molecular. Left: relative energy error. Right: $\mu_k$.}
    \end{subfigure}
    
    \vspace{0.4em}
    
    \begin{subfigure}{\linewidth}
    \centering
    \includegraphics[width=0.48\linewidth]{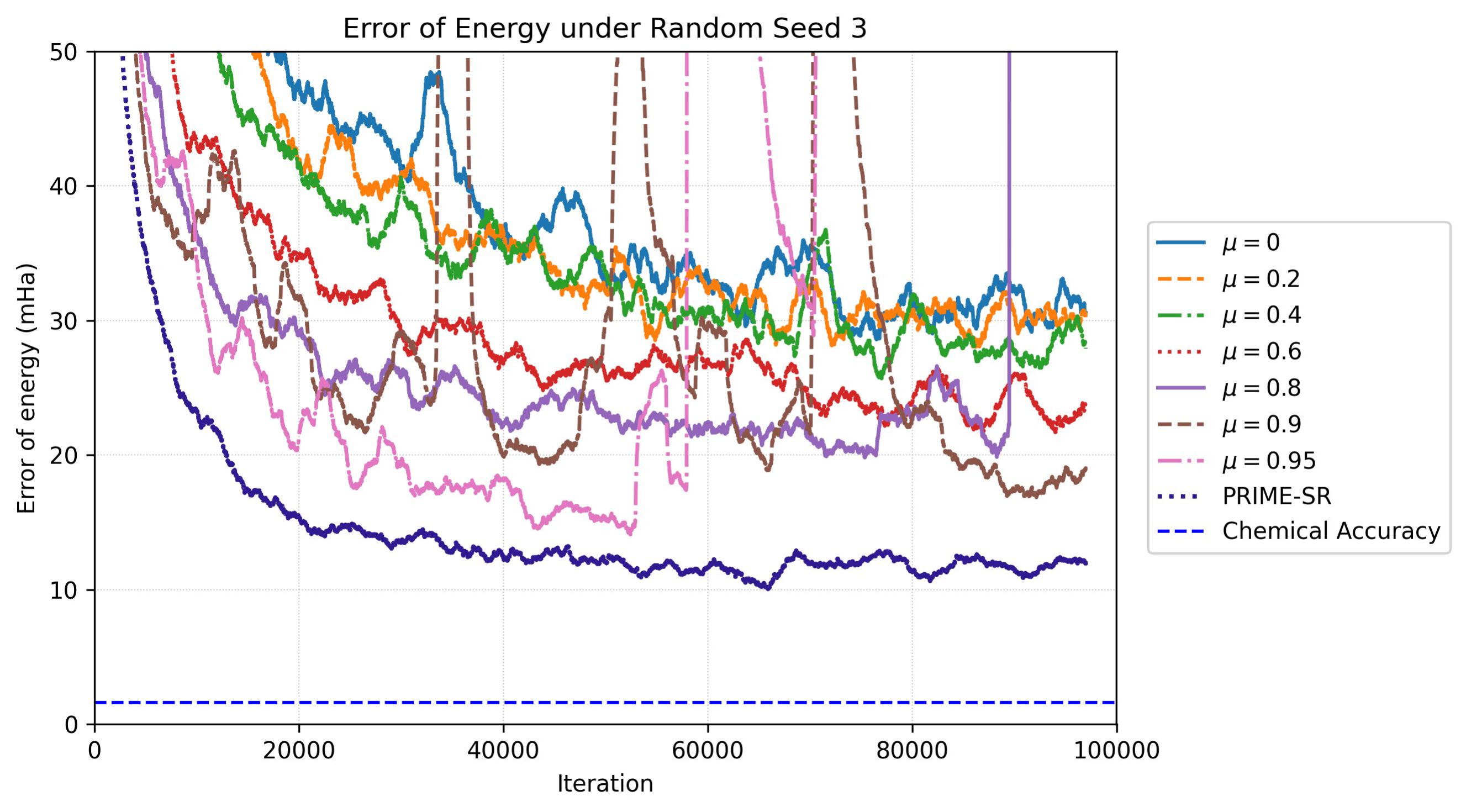}
    \hfill
    \includegraphics[width=0.42\linewidth]{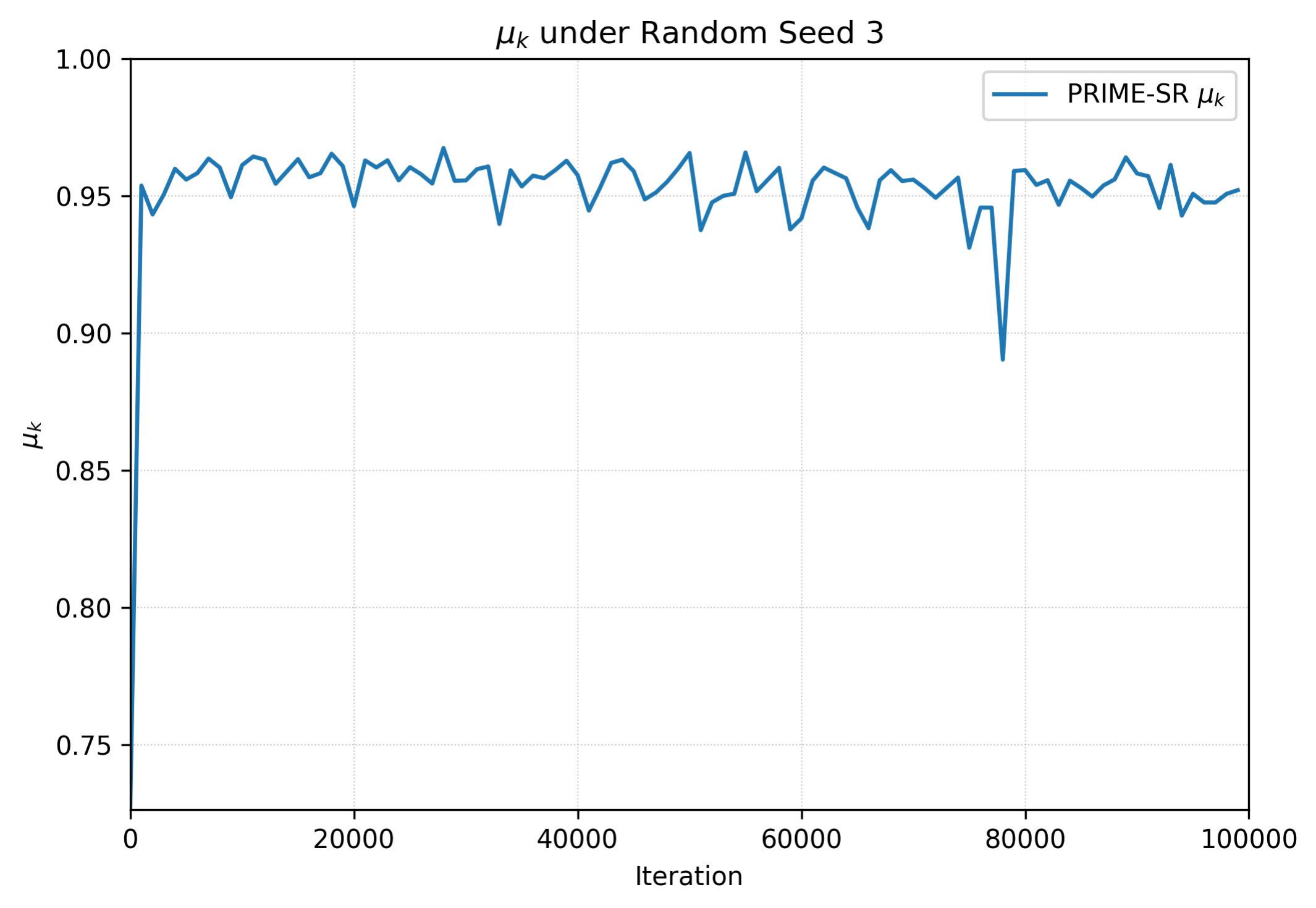}
    \caption{$\mathrm{CO}$ molecular. Left: relative energy error. Right: $\mu_k$.}
    \end{subfigure}
    
    \caption{Comparison of fixed-$\mu$ SPRING and PRIME-SR on $\mathrm{LiH}$, $\mathrm{N}_2$, and $\mathrm{CO}$ molecules for random seed 3.}
    
    \label{fig:compare_spring_mol_seed_3}

\end{figure}

 \newpage

\textbf{Random Seed 4.}

\begin{figure}[htbp]
    \centering
    
    \begin{subfigure}{\linewidth}
    \centering
    \includegraphics[width=0.48\linewidth]{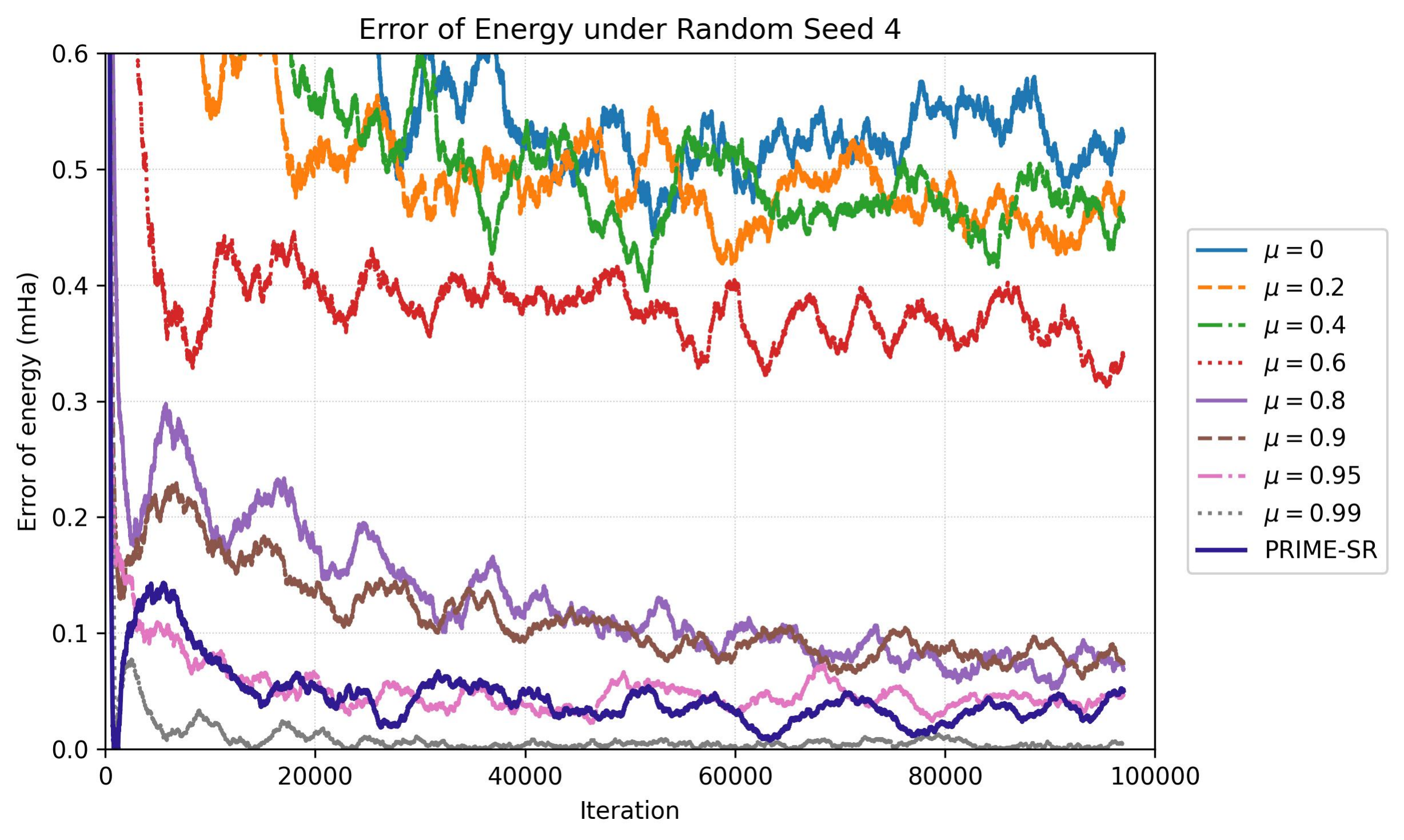}
    \hfill
    \includegraphics[width=0.42\linewidth]{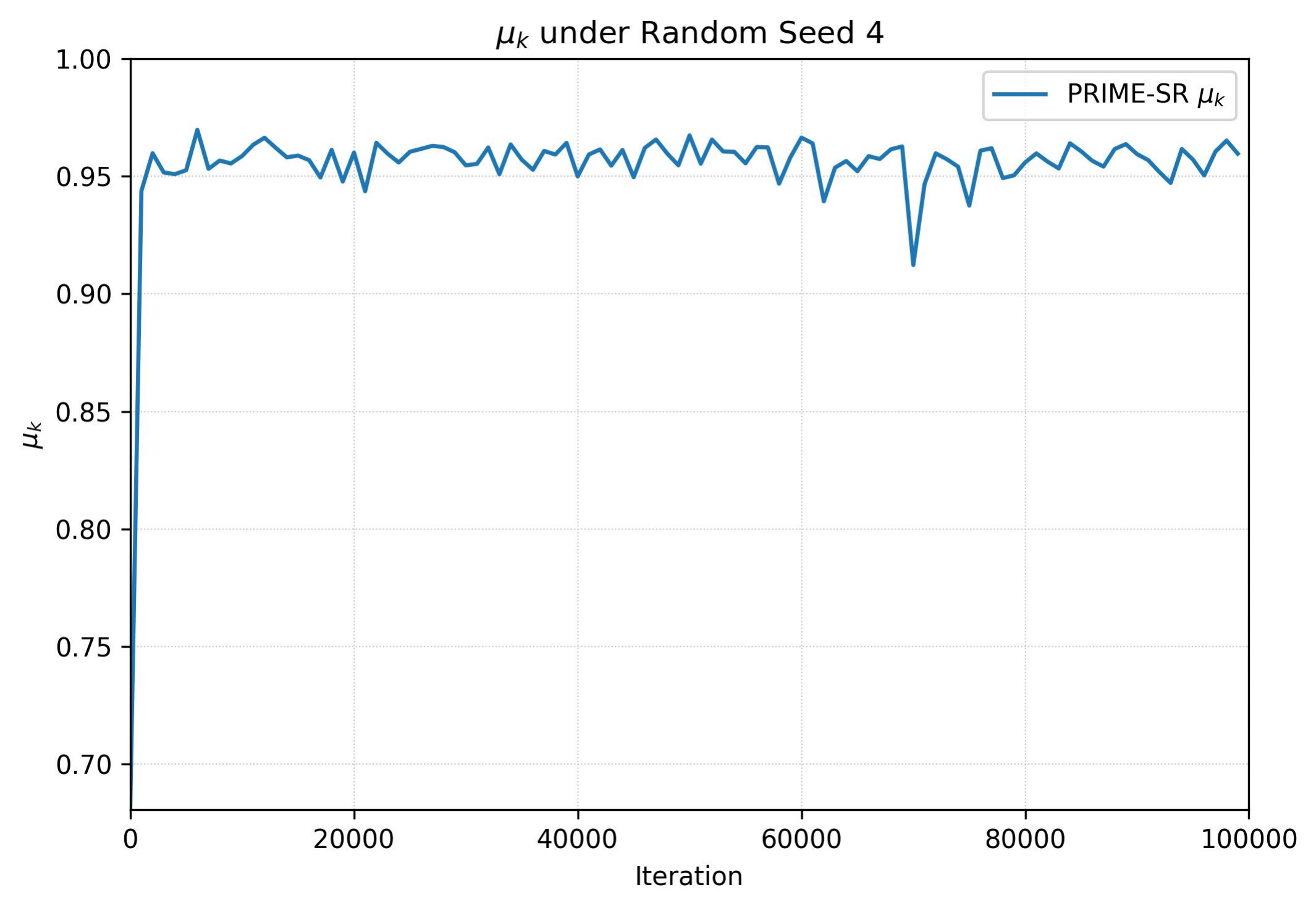}
    \caption{$\mathrm{LiH}$ molecular. Left: relative energy error. Right: $\mu_k$.}
    \end{subfigure}
    
    \vspace{0.4em}
    
    \begin{subfigure}{\linewidth}
    \centering
    \includegraphics[width=0.48\linewidth]{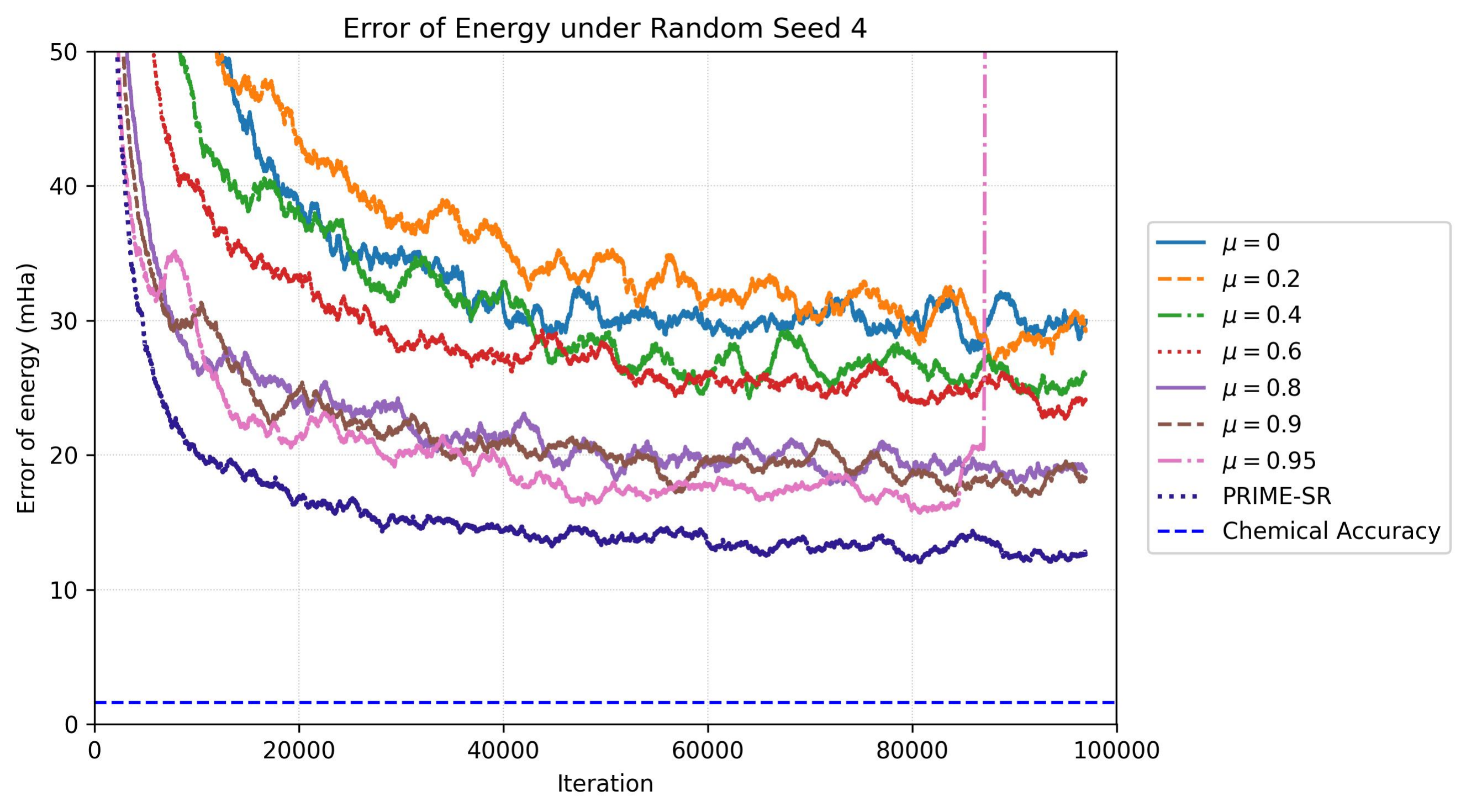}
    \hfill
    \includegraphics[width=0.42\linewidth]{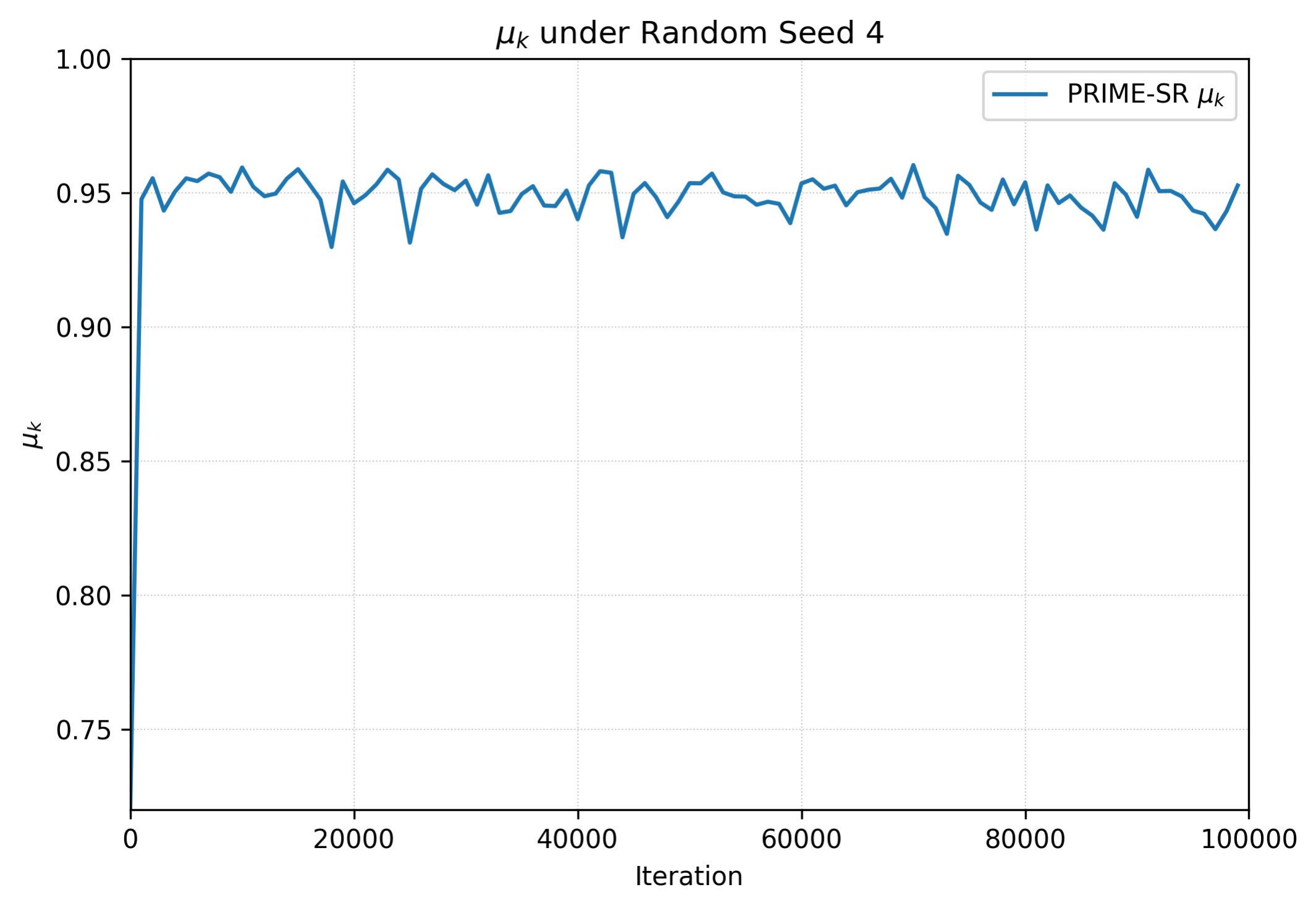}
    \caption{$\mathrm{N}_2$ molecular. Left: relative energy error. Right: $\mu_k$.}
    \end{subfigure}
    
    \vspace{0.4em}
    
    \begin{subfigure}{\linewidth}
    \centering
    \includegraphics[width=0.48\linewidth]{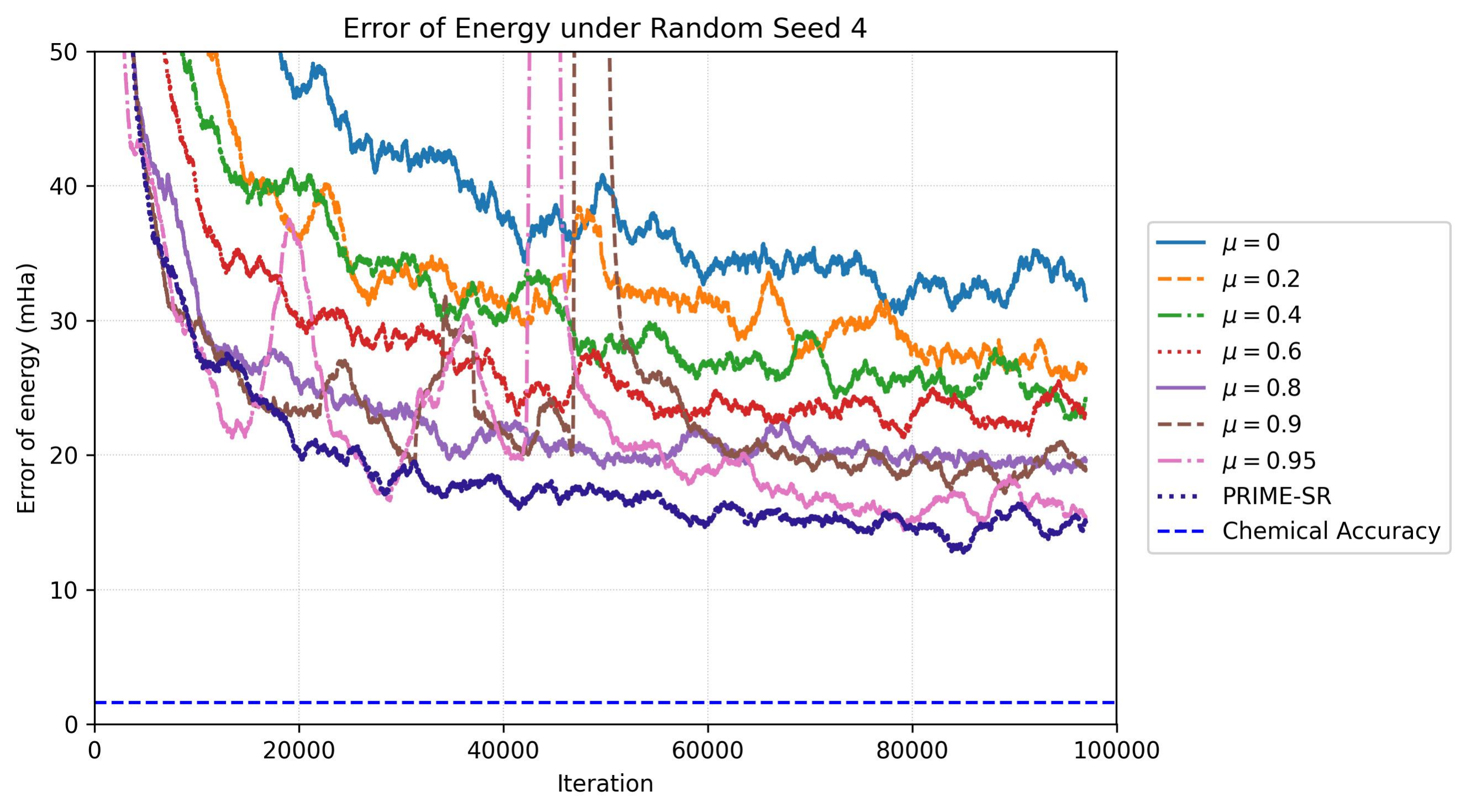}
    \hfill
    \includegraphics[width=0.42\linewidth]{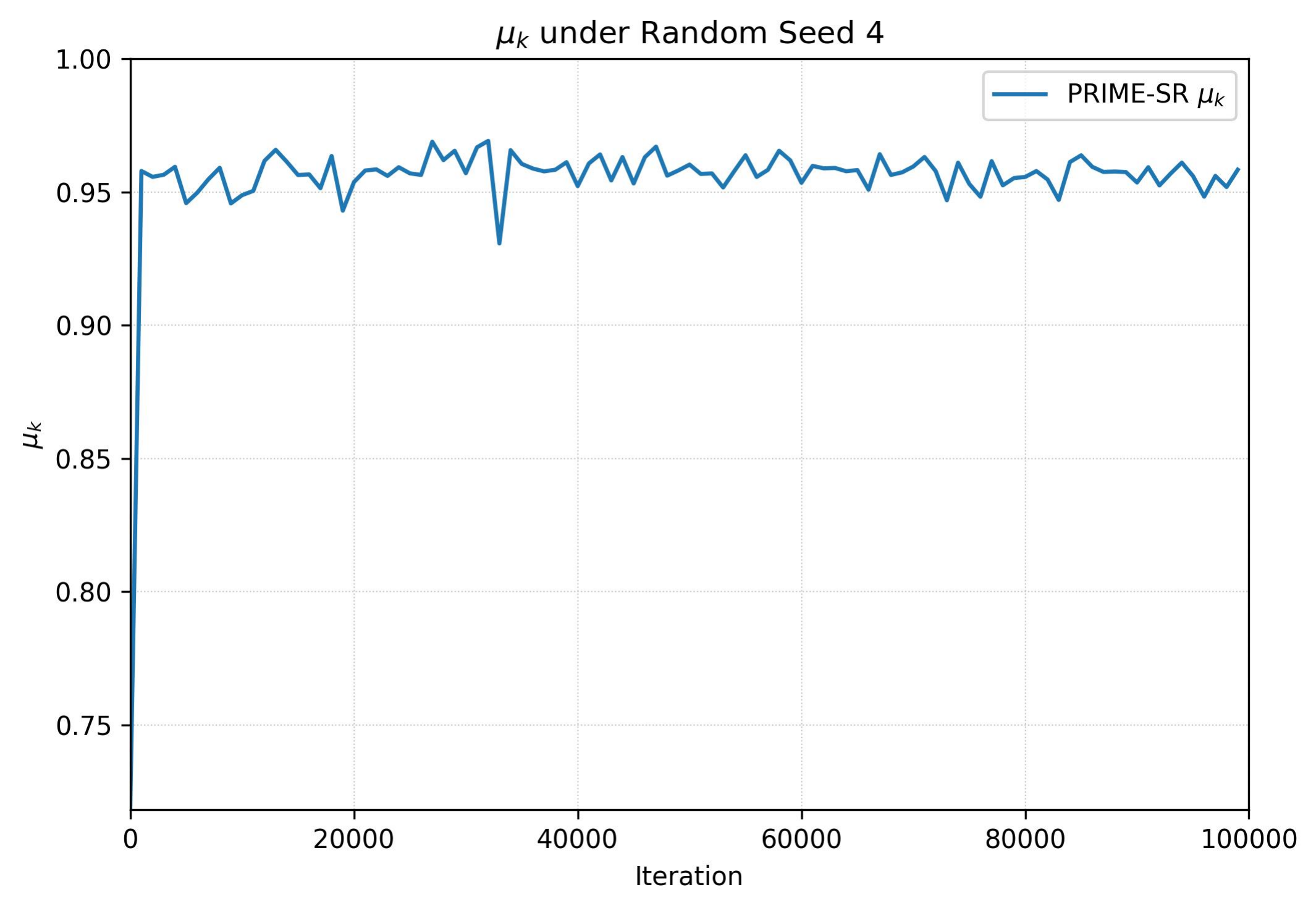}
    \caption{$\mathrm{CO}$ molecular. Left: relative energy error. Right: $\mu_k$.}
    \end{subfigure}
    
    \caption{Comparison of fixed-$\mu$ SPRING and PRIME-SR on $\mathrm{LiH}$, $\mathrm{N}_2$, and $\mathrm{CO}$ molecules for random seed 4.}
    
    \label{fig:compare_spring_mol_seed_4}

\end{figure}

\end{document}